 \documentclass{amsart}
 \usepackage[top=2.74cm,bottom=2.74cm,left=2.5cm,right=2.5cm,marginparwidth=2cm, marginparsep=3mm]{geometry}

\usepackage{MC_macros}

\usepackage[font=scriptsize,labelfont=bf]{caption}
\usepackage[textsize=tiny]{todonotes}
\usepackage[shortlabels]{enumitem}

\usepackage[hidelinks]{hyperref}
\usepackage[nameinlink]{cleveref}
\usepackage{stmaryrd} 

\declaretheorem[
name=Theorem,
refname={Theorem,Theorems},
Refname={Theorem,Theorems},
numberwithin=section,
]{theorem}
\declaretheorem[
name=Proposition,
refname={Proposition,Propositions},
Refname={Proposition,Propositions},
sibling=theorem,
]{proposition}
\declaretheorem[
name=Lemma,
refname={Lemma,Lemmas},
Refname={Lemma,Lemmas},
sibling=theorem,
]{lemma}
\declaretheorem[
name=Corollary,
refname={Corollary,Corollaries},
Refname={Corollary,Corollaries},
sibling=theorem,
]{corollary}



\declaretheoremstyle[
  spaceabove=2pt, spacebelow=2pt,
  bodyfont=\normalfont\itshape,
 postheadspace=1em
]{claimnonum}
\declaretheorem[
name=Claim,
numbered=no,
style=claimnonum
]{claim*}

\declaretheoremstyle[
  spaceabove=2pt, spacebelow=2pt,
  bodyfont=\normalfont,
 postheadspace=1em
]{opennonum}
\declaretheorem[
name=Open Problem,
style=opennonum
]{openproblem}


\declaretheorem[
name=Definition,
refname={Definition,Definitions},
Refname={Definition,Definitions},
sibling=theorem,
style=definition,
]{definition}


\declaretheorem[
name=Remark,
refname={Remark,Remarks},
Refname={Remark,Remarks},
sibling=theorem,
style=remark,
]{remark}
\declaretheorem[
name=Remark,
style=remark,
numbered=no,
]{remark*}

\newcommand\crev[1]{{#1}}
\newcommand\cadd[1]{{#1}}

\usepackage[textsize=tiny]{todonotes}

\title{On the structure and theory of M\MakeLowercase{c}Carthy algebras}
\author{Stefano Bonzio 
    \and 
    Gavin St.\,John
    }
\address{Department of Mathematics and Computer Science, University of Cagliari, Italy}
\email{stefano.bonzio@unica.it \and gavinstjohn@gmail.com}
\date{}
\keywords{McCarthy algebra, unital band, non-classical logic, McCarthy logic, $C$-algebra}

\subjclass[2020]{20M07, 03G25}

\begin{document}
\begin{abstract}
We provide a structural analysis for McCarthy algebras, the variety generated by the three-element algebra defining the logic of McCarthy (the non-commutative version of Kleene three-valued logics). 
Our analysis will be conducted in a very general algebraic setting by introducing McCarthy algebras as a subvariety of unital bands (idempotent monoids) equipped with an involutive (unary) operation $\nc{}$ satisfying $\nnc{x}\approx x$; herein referred to as {\iname}s. 
Prominent (commutative) subvarieties of {\iname}s include Boolean algebras, ortholattices, Kleene algebras, and involutive bisemilattices, hence {\iname}s provides an algebraic common ground for several non-classical logics. 
Our main contributions consist in providing for McCarthy algebras: reduced and equivalent axiomatizations; a semilattice decomposition theorem; and representations as certain decorated posets from which the algebraic structure can be uniquely determined.  
\end{abstract}
\maketitle
\section{Introduction}

McCarthy algebras consist of algebraic structures playing the role of algebraic counterpart to McCarthy logic, a three-valued logic \cite{Mc59} based on a non-commutative conjunction. McCarthy logic is used to interpret the lazy evaluation of partial predicates, particularly attractive for computing purposes and adopted in several programming languages, such as Haskell and OCaml (the function is currently supported also in non purely functional programming languages, as Java and Python).
More in general, this formalism proves to be very useful for the process-algebraic treatments of computational processes affected by errors \cite{BP98, AldiniMezzina}. 
Beyond the usual operations on processes, including non-deterministic choice, sequential and parallel composition, the process algebra formalisms studied in \cite{BP98} and \cite{AldiniMezzina} are enriched with a constant, designed to model the error process and conditional guard statements of the form $\varphi :\to P$, where $\varphi$ is a logical formula and $P$ a process, whose reading is: in case $\varphi$ holds (in a certain logic) then execute $P$. 
While the two mentioned proposals disagree on the choice of the logic taking care of conditional guards, they both agree that it should necessarily include McCarthy conjunction, as the correct logical tool for properly modeling the sequential composition of conditional guards, witnessed by the axiom $\varphi:\to (\psi:\to P) = \varphi~\mathsf{and}~ \psi:\to P$, where $\mathsf{and}$ is interpreted by McCarthy conjunction (denoted by $\mc$ in \Cref{fig:M3}). 
\crev{More specifically, the conditional guard ``if $\varphi\mc\psi $, then do $P$'' will
execute $P$ if both formulas yield the value
true, it will be skipped in case $\varphi$ yields the value false or if $\varphi$ yields the values true
and $\psi$ yields the value false, while computation will lead to a failure (or an error)}\footnote{The process algebras' formalisms introduced in \cite{BP98} and \cite{AldiniMezzina} are enriched with a construct modeling the error process.} \crev{if $\varphi$ yields the third value or if $\varphi$ yields the value true but $\psi$ the third value.} 

\begin{figure}[ht]
    \centering
    $\begin{array}{c|c}
        \nc{} &  \\ \hline
         1&0\\
         0&1\\
         \varepsilon&\varepsilon
    \end{array}
    \qquad
    \begin{array}{c|ccc}
        \jc{} & 1 & 0 & \varepsilon  \\ \hline
         1&1&1&1\\
         0&1&0&\varepsilon\\
         \varepsilon&\varepsilon&\varepsilon&\varepsilon
    \end{array}
    \qquad
    \begin{array}{c|ccc}
        \mc{} & 1 & 0 & \varepsilon  \\ \hline
         1&1&0&\varepsilon\\
         0&0&0&0\\
         \varepsilon&\varepsilon&\varepsilon&\varepsilon
    \end{array}$
    \caption{The tables of the 3-element algebra $\McA = \str{\{0,1,\varepsilon\}, \jc,\mc,\nc{}, 0,1}$. 
    The operations $\mc,\jc$ denote McCarthy conjunction and disjunction, respectively.} \label{fig:M3}
\end{figure}

Despite its indisputable usefulness in computer science, not much is known about McCarthy logic (see e.g., \cite{FittingChildren,Indrzejczak_Petrukhin_2024}), here understood (in accordance with \cite{Kow96}) as the logic induced by the logical matrix $\langle\McA , \{1\}\rangle$, where $\McA$ is the 3-element algebra (of truth-tables) given in \Cref{fig:M3}. 
This translates to saying that 
\[\Gamma\vdash\varphi \quad\iff\quad \forall h\in \mathrm{Hom}(\mathbf{Fm},\McA),\; h[\Gamma]\subseteq\{1\} \;\Rightarrow\; h(\varphi) = 1, 
\]
where $\mathbf{Fm}$ is the absolutely free algebra generated by the algebraic language of McCarthy logic $\langle\mc, \jc, \nc{}\rangle$.

As defined by Konikowska in \cite{Kow96}, an algebra $\m A = \str{A,\jc,\mc,\nc{},0,1}$ is called a \defem{McCarthy algebra} if $\m A$  \emph{``satisfies all the equational tautologies of a Boolean algebra that hold in''} the algebra $\McA$. 
Restating this with the parlance of universal algebra and utilizing the celebrated result of Birkoff, $\m A$ is a McCarthy algebra iff it is a member of the variety of algebras generated by $\McA$. In this way, we can define the \defem{variety of McCarthy algebras} $\MC$ via
$$\MC :=  \mathsf{V}(\McA). $$

Note that the restriction of the operations to $\{0,1\}$ defines the two element Boolean algebra $\m 2$, i.e., the subalgebra $\m B_0$ of $\McA$ generated by $\{0,1\}$ is isomorphic with $\m 2$. 
As the variety of Boolean algebras is generated by the two-element Boolean algebra $\m 2$, which is isomorphic to a subalgebra of $\McA$, it is immediate that $\BA$ is a (proper) subvariety of $\MC$.

\cadd{Despite Konikowska \cite{Kow96} left open the problem of finding an equational basis for the variety of McCarthy algebras, we only recently realized that the problem was actually previously solved by Guzm{\'a}n and Squier in \cite{Guzman-Squier},}\footnote{We thank the colleagues P. Graziani, P. Jipsen, and U. Rivieccio for suggesting (independently) the reading of \cite{Guzman-Squier} after reading the draft of the present work.}
\cadd{ where (the constant-free reduct of) McCarthy algebras are referred to as Conditional algebras (or \emph{$C$-algebras}). In this work we prefer to call them McCarthy algebras (assuming also that the language contains constants).}
They are defined over the algebraic language $\str{\jc, \mc, \nc{},0,1}$ (of type $(2,2,1,0,0)$), however $+$ is just the De~Morgan dual of $\mc{}$ and $0$ the negation of $1$, thus term-definable in the language $\str{\mc, \nc{},1}$ (the choice of including the constant symbols will become clear later). 
Observe moreover that, in this restricted language, $\McA$ is an idempotent monoid (with $1$ as unit) equipped with an involution.

\cadd{In this paper, we introduce} a very general class (variety) of algebras: 
unital bands (or rather, idempotent monoids) with an involution, herein referred to as \emph{{\iname}s},
\cadd{which allow for a unified algebraic treatment of McCarthy and (strong and weak) Kleene logics, of which the former can be seen as the non commutative version.} 
It is worth mentioning that {\iname}s differ from (the monoid expansion of) involution band semigroups introduced in \cite{DolinkaInvolutionBands}, as we are only requiring involution to satisfy the identity $\nnc{x} \approx x$, and not additionally $\nc{(x\cdot y)} \approx \nc{y}\cdot\nc{x}$. 
The involution-free reduct of {\iname}s coincides with the variety of unital bands (i.e., idempotent monoids), whose lattice of subvarieties is characterized in \cite{VarietiesBandMonoids} (see also \cite{AllVarofBands}). 
Interestingly, and aside from their obvious generalization of Boolean algebras and classical propositional logic, {\iname}s are general enough to provide a common root for a large class of other well-studied algebras related to non-classical logics, such as ortho(modular) lattices (related to the foundation of quantum logic), De~Morgan algebras \crev{(related to paraconsistent as well as mathematical fuzzy logic)}, and in particular also for Kleene 3-valued logics (see \cite{KleeneBook,Bonziobook}), showing that the logic of McCarthy can be seen as their non-commutative companion (see also \cite{FittingChildren}).

\cadd{The main advantage derived by introducing {\iname}s is that of allowing a comparative study of algebras related to many logical formalisms in a non-commutative setting. Relative to McCarthy logic, our main contributions consist in providing reduced and equivalent axiomatizations (\cref{t:equivAxioms}), providing a decomposition theorem in terms of a semilattice direct system of Boolean algebras (\cref{t:Decomp}), and characterizing representations of McCarthy algebras as certain decorated posets (\cref{t:isopo}).}

The paper is structured as follows: Section \ref{Sec: preliminaries} introduces the basic preliminary notions \crev{used in this paper}.
In Section \ref{sec: variety of iname}, the main algebraic structures that will be developed in the whole paper are introduced: {\iname}s, i.e., any structure $\str{M,*,\nc{},\miden}$ consisting of an idempotent monoid reduct $\str{M,*,\miden}$ (a band with unit) and unary involutive operation $\nc{}$ (satisfying $\nnc{x} \approx x$). 
After discussing some relevant three-element examples of {\iname}s (whose relevance will become clear later), we will focus on the class of subclassical {\iname}s, i.e., where each member has the two-element Boolean algebra as a subalgebra, \crev{which turns out to be a quasivariety.}
\cadd{In \Cref{sec:findingidens}, we discuss relations between different identities holding in  {\iname}s, to guide the reader towards the first main result of the paper which consists of reduced and equivalent axiomatizations for the variety $\mathsf{M}$ of McCarthy algebras.
\Cref{sec:axiomMC} is devoted to proving this result, passing through the understanding of the structure of any McCarthy algebra based on a (largely unexplored) partial order induced by addition. In \Cref{sec:decomp}, we capitalize on the analysis of this structure to provide the other main results of this paper, a semilattice decomposition theorem for McCarthy algebras and their representations as decorated posets. 
Finally, in \Cref{sec: subvarieties} we prove some results on subvarieties of {\iname}s which allow us to provide a sketch of the lower levels of the lattice of all subvarieties of {\iname}s. }

\section{Preliminaries}\label{Sec: preliminaries}

\subsection*{Universal Algebraic background}

We will make use of several concepts from universal algebra throughout our discussion: we recall here some basic definitions and results that will be used, we refer to \cite{BergmanLibro, BuSa00, mckenzie} for more detailed treatments. 
We denote the class operators of closure under homomorphic images, subalgebras, and direct
products by $\mathsf{H}$, $\mathsf{S}$, and $\mathsf{P}$, respectively. 
A class of (similar) algebras is said to be a \emph{variety} if it is closed under $ \mathsf{H}$, $ \mathsf{S}$, and $ \mathsf{P}$. 

By an \emph{identity} $s\approx t$, we refer to the universally quantified formula $\forall \vec{x}[s(\vec{x})= t(\vec{x})]$, where $s$ and $t$ are terms in the absolutely free algebra in a given (fixed) signature over a set of formal variables $X=\{x_i\}_{i\in \N}$, and $\vec{x}=(x_1,\ldots,x_n)$ are those variables present in the terms $s$ and $t$. 
In this way, an algebra $\m A$ \emph{satisfies} an identity $s\approx t$ iff the equation $s(\vec{a})=t(\vec{a})$ holds for every instantiation of the variables by $\vec{a}\in A^n$. 
Here, we will often use the letters $x,y,z,\ldots$ to denote formal variables.
Birkhoff's Theorem establishes that any given variety coincides with the class of all algebras satisfying some set of identities (see e.g., \cite[Thm. II.11.9]{BuSa00}).
Similarly, a \emph{quasi-identity} is any universally quantified formula $\forall\vec{x} [\bigwedge_{i=1}^n s_i=t_i \Rightarrow s_0=t_0]$ for some list of terms $s_0,t_0,\ldots,s_n,t_n$; note that an identity is simply a quasi-identity in which the antecedent is the empty conjunction (i.e., when $n=0$). 
The notion of algebras/classes satisfying quasi-identities is essentially the same. 
We note that a class of algebras is called a \emph{quasivariety} if it can be axiomatized a set of quasi-identities.\footnote{Quasivarieties also correspond to those classes closed under isomorphic images, subalgebras, products, and ultraproducts \cite[Cf., Thm.~2.25]{BuSa00}; but this result is not necessary for our purposes.}

Varieties are closed under (arbitrary) intersections, thus for any variety $\mathcal{V}$ the collection of its subvarieties forms a complete lattice (under inclusion). 
Given an arbitrary class $\mathcal{K}$ of (similar) algebras, we indicate by $\mathsf{V}(\mathcal{K})$ the smallest variety containing $\mathcal{K}$, i.e., the variety \emph{generated} by $\mathcal{K}$. 
A known result states that $\mathsf{V}(\mathcal{K}) = \mathsf{H}\mathsf{S} \mathsf{P}(\mathcal{K})$ (see e.g., \cite[Thm. 3.43]{BergmanLibro}). 
\cadd{For an algebra $\m A$, we write $\mathsf{V}(\m A)$ to denote $\mathsf{V}(\{\m A\}).$} 

Given a homomorphism $f\colon \mathbf{A}\to\mathbf{B}$, let $f[\mathbf{A}]$ be the subalgebra of $\m B$ with universe $f [A]$ and write $\mathbf{A}\leq \mathbf{B} $ to indicate that $ \mathbf{A}$ is a subalgebra of $\mathbf{B}$. 
Then an algebra $ \mathbf{A}$ is a \emph{subdirect product} of a family $ \{\mathbf{B}_i \;:\;i \in I\} $ when $\mathbf{A}\leq  \prod_{i\in I} \mathbf{B}_i $ and for every $i\in I$ the projection map $\pi_{i}\colon \mathbf{A}\to\mathbf{B}_i $ is surjective. 
Similarly, an embedding $f\colon \mathbf{A}\to\prod_{i\in I}\mathbf{B}_i$
is called \emph{subdirect} when $f [\mathbf{A}]\leq  \prod_{i\in I}\mathbf{B}_i $ is a subdirect product. 
An algebra $\m A$ is said to be \emph{subdirectly irreducible} if for every subdirect embedding $f\colon\mathbf{A}\to \prod_{i\in I} \mathbf{B}_i$ with $\{\mathbf{B}_i: i \in I\}$ there exists $i \in I $ such that $\pi_{i}\circ f \colon \mathbf{A}\to \mathbf{B}_i$ is an isomorphism. 

Subdirectly irreducible algebras are particularly relevant in the understanding of varieties. 
More precisely, every algebra $\mathbf{A}$ in a variety $\mathcal{V}$ is isomorphic to a subdirect product of subdirectly irreducible algebras in $\mathcal{V}$. 
Therefore every variety is determined by its subdirectly irreducible members (see \cite[Thm.~3.44, Cor.~3.45]{BergmanLibro}), in particular, for two varieties $\mathcal{V}$ and $\mathcal{W}$, $\mathcal{V}\subseteq\mathcal{W}$ \crev{if and only if} the subdirectly irreducible members of $\mathcal{V}$ are included in those of $\mathcal{W}$.   

Recall that a congruence over an algebra $\mathbf{A}$ (in a class $\mathcal{K}$) is an equivalence relation which preserves all the fundamental operations of $\mathbf{A}$. 
The set of congruences over $\mathbf{A}$ forms a complete lattice, as congruences are closed under arbitrary intersections. 
An algebra $\m A$ being subdirectly irreducible can be checked using the following convenient equivalent characterization related to the lattice $\mathrm{Con}(\mathbf{A})$ of the congruences of $\mathbf{A}$: an algebra $\mathbf{A}$ is subdirectly irreducible if and only if \crev{the identity congruence} $\Delta_{\mathbf{A}}$ is a completely intersection-irreducible element in $\mathrm{Con}(\mathbf{A})$ (see e.g., \cite[Thm.~3.23]{BergmanLibro}). 
If a variety contains constants in its signature, i.e., nullary operations, then it must contain \emph{trivial algebras}, i.e., (isomorphic) algebras with exactly one element. 
\crev{Note that, by definition of subdirectly irreducible algebras ($\Delta_{\mathbf{A}}$ is a complete intersection-irreducible element), trivial algebras are not subdirectly irreducibles.}

\subsection*{Basic structures}
Recall an algebra $\str{S,*}$ is called a \emph{semigroup} if $*$ is an associative operation over $S$. 
A \emph{band} is a semigroup in which the operation $*$ is idempotent, i.e., one satisfying the identity
\begin{equation}
    x * x \approx x .\label{eq:idem}\tag{idempotency}
\end{equation}
On the other hand, a \emph{monoid} is a unital semigroup, namely an expansion of a semigroup with a designated constant stipulated to be a (two-sided) unit for $*$; 
i.e., a structure $\str{M,*,\miden}$ where $\str{M,*}$ is a semigroup with $\miden \in M$ satisfying $\miden * x \approx x\approx x * \miden$. 
By a \defem{unital band} (or simply \defem{uband}) we refer to any idempotent monoid.

As usual, any of these structures will be called \defem{commutative} so long as the underlying semigroup is commutative; i.e., satisfying $x*y\approx y*x$.
We also recall that a semigroup is called \defem{left-regular} if it satisfies the following identity:
\begin{equation}\label{eq:leftreg}\tag{left-regularity}
    x*y*x \approx x* y . 
\end{equation}
It is called \defem{right-regular} if it satisfies the mirrored identity  $x*y*x \approx y* x$.
\begin{remark}\label{fact:RRLR}
    It is easily verified that a band is commutative iff it is both left-regular and right-regular. 
\end{remark}

Note that the operation $*$ is idempotent in any left-regular monoid, and therefore its semigroup reduct $\str{M,*}$ is a left-regular band. 
It is well known that, over any left-regular band $\str{M,*}$, the relation $\leq_*$ defined via
$$ a\leq_* b \iff a * b = b $$
is a partial order. 
Moreover, a simple consequence of left-regularity is that this order is compatible with $*$ from the left, i.e., 
$$a\leq_* b \implies c*a \leq_* c*b;$$
however $\leq_*$ is not generally compatible with the operation $*$ from the right. 
Of course, the mirrored notions also hold for right-regular structures.

\begin{proposition}\label{t:leftregEquiv}
Let $\m M$ be a monoid. Then the following are equivalent:
\begin{enumerate}
    \item $\leq_*$ is a partial order.
    \item $\m M$ is idempotent and $\leq_*$ is antisymmetric. 
    \item $\m M$ is left-regular. 
\end{enumerate} 
\end{proposition}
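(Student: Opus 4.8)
The plan is to establish the cycle $(1)\Rightarrow(2)\Rightarrow(3)\Rightarrow(1)$.

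Both of the outer implications are immediate. For $(1)\Rightarrow(2)$: a partial order is reflexive, and reflexivity of $\leq_*$ says $a\leq_* a$, i.e., $a*a=a$, for every $a\in M$; so $\m M$ is idempotent, and antisymmetry of $\leq_*$ is part of being a partial order. For $(3)\Rightarrow(1)$: a left-regular monoid is idempotent (instantiate $y:=\miden$ in $x*y*x\approx x*y$), hence its semigroup reduct is a left-regular band, over which $\leq_*$ is a partial order, as recalled just before the statement. If one prefers a self-contained check: reflexivity is idempotency, transitivity uses only associativity ($a*b=b$ and $b*c=c$ give $a*c=a*(b*c)=(a*b)*c=c$), and for antisymmetry, $a*b=b$ and $b*a=a$ yield $a=b*a=b*a*b=b*(a*b)=b*b=b$, the second step being left-regularity.

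The substance is in $(2)\Rightarrow(3)$, and here is how I would argue it. Assume $\m M$ is idempotent and $\leq_*$ is antisymmetric; the goal is the identity $x*y*x\approx x*y$. Fix $x,y\in M$ and consider the two elements $a:=x*y*x$ and $b:=x*y$. The one point worth noticing is that \emph{every} element of an idempotent semigroup is idempotent, so in particular the element $x*y$ satisfies $(x*y)*(x*y)=x*y$. Using this, together with $x*x=x$ and associativity, a short rearrangement gives $a*b = x*y*x*x*y = x*y*x*y = (x*y)*(x*y) = x*y = b$ and, likewise, $b*a = x*y*x*y*x = \bigl((x*y)*(x*y)\bigr)*x = (x*y)*x = x*y*x = a$. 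Thus $a\leq_* b$ and $b\leq_* a$, and antisymmetry of $\leq_*$ forces $a=b$, i.e., $x*y*x=x*y$; since $x$ and $y$ were arbitrary, $\m M$ is left-regular.

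The only creative step is the choice of the pair $(a,b)=(x*y*x,\,x*y)$, after which everything reduces to the trivial fact that products are idempotent plus two associativity moves; so I do not expect a genuine obstacle, only the need to spot that pair. I would also observe that the unit of $\m M$ is used only in $(3)\Rightarrow(1)$: the equivalence of $(2)$ and $(3)$ already holds at the level of semigroups.
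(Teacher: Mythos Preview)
Your proof is correct and follows essentially the same approach as the paper: the paper's brief sketch ``reflexivity of $\leq_*$ implies idempotency of $*$, which in turn implies $x*y \leq_* x*y*x$ and $x*y*x\leq_* x*y$'' is exactly your argument for $(1)\Rightarrow(2)\Rightarrow(3)$, with your pair $(a,b)=(x*y*x,\,x*y)$ witnessing those two inequalities. Your observation that the unit is needed only for $(3)\Rightarrow(1)$ is a nice addition not made explicit in the paper.
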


\begin{proof}
It is easily verified that that reflexivity of $\leq_*$ implies idempotency of $*$, which in turn implies $x*y \leq_* x*y*x$ and $x*y*x\leq_* x*y$. 
\end{proof}

\section{The variety of {\iname}s}\label{sec: variety of iname}

One of the principal motivations in the study of non-classical logics is the investigation of generalizations, or weakenings, of (propositional) \emph{classical logic}, which finds as its semantics the class of \emph{Boolean algebras}.
Recall that an algebra $\m B = \str{B,\vee,\land,\neg, 0,1}$ is a \defem{Boolean algebra} if it is a distributive ortholattice, i.e., $\str{B, \vee, \land,0,1}$ is a bounded distributive lattice and $\neg$ is an orthocomplement, meaning it is an antitone involution satisfying $x\land \neg x = 0$ (dually, $x\vee \neg x = 1$). 
By $\BA$ we denote the variety of Boolean algebras.

As Boolean algebras satisfy the De~Morgan laws, the variety $\BA$ is term equivalent to one in a reduced signature, e.g., $\str{\vee,\neg,0}$ or $\str{\land,\neg,1}$. 
In either case, a Boolean algebra, as viewed in one of these signatures, is simply an instance of a monoid further expanded with an \emph{involution} (satisfying additional identities relating the involution with the semigroup operation).

We will call a \defem{monoid with involution} (or simply \defem{\inv monoid}) any structure $\str{M,*,\nc{},\miden}$ consisting of a monoid reduct $\str{M,*,\miden}$ and unary involutive operation $\nc{}$, i.e., satisfying $\nnc{x} \approx x$. 
Any \inv monoid $\m M = \str{M,*,\nc{},\miden}$ induces another \inv monoid $\m M^{\partial}:=\str{M,\nc{*},\nc{},\nc{\miden}}$ where $x~\nc{*}~y:=\nc{(\nc{x} * \nc{y})}$, called its \defem{De~Morgan dual}. 
It is readily verified that any \inv monoid $\m M$ is isomorphic to its De~Morgan dual $\m M^\partial$ via the map $x\mapsto \nc{x}$, and moreover $(\m M^\partial)^\partial = \m M$. 
In this way, any \inv monoid $\m M$ is term-equivalent to an algebra over a richer signature, namely $\str{M,\jc,\mc,\nc{},0,1}$ where $\m M \cong \str{M,\mc,\nc{},1}^\partial  = \str{M,\jc,\nc{},0}$, where we use the constant $1$ to denote the conjunctive/multiplicative unit, and the constant $0$ to denote the disjunctive/additive unit. 
By definition. $0:=\nc{1}$ and/or $1:=\nc{0}$. 
We will mainly use these constant symbols (in place of $\miden$ and $\nc{\miden}$) when the unit of an \inv monoid is not necessarily an involution fixed-point; i.e., when $\miden \neq \nc{\miden}$.

For a term $t$ in the language $\mathcal{L} = \str{\jc,\mc, \nc{},0,1}$, let $t^\partial$ denote the term in $\str{\mc,\jc,\nc{},1,0}$ obtained by uniformly and simultaneously swapping each instance of a symbol from $\mathcal{L}$ by its dual. 
E.g., if $t=t(x,y) := \nc{(x \jc 1)}\mc y$ then $t^\partial=t^\partial(x,y) =\nc{(x \mc 0 )}\jc y $.
Clearly then for any term $t$ over $\mathcal{L}$, an i-monoid satisfies the identity $t(x_1,\ldots,x_n) \approx \nc{t^\partial(\nc{x_1},\ldots,\nc{x_n})}$. 
The following proposition is readily verified via structural induction.

\begin{proposition}[De~Morgan Dual Equivalence]\label{t:DMduals}
An \inv monoid $\m M$ satisfies some identity if and only if it satisfies the dual identity, i.e., $\m M\models s\approx t$ iff $\m M\models s^\partial\approx t^\partial$ for any terms $s,t$ in the language $\str{\jc,\mc,\nc{},0,1}$.
\end{proposition}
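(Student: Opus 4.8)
The plan is to reduce the statement to two elementary syntactic facts about the dualization operation $t\mapsto t^\partial$ on terms over $\mathcal{L}=\str{\jc,\mc,\nc{},0,1}$, together with the observation recorded immediately above the statement. First I would check, by a one-line structural induction on the build-up of a term, that dualization is an involution on terms: $(t^\partial)^\partial = t$. This holds because the symbol swap $\jc\leftrightarrow\mc$, $0\leftrightarrow 1$ (with $\nc{}$ self-dual) is itself an involution on the signature, and $t^\partial$ is obtained by applying this swap uniformly to every symbol occurrence in $t$ while leaving the variable occurrences untouched; the base cases (a variable, a constant) and the inductive steps (an application of $\mc$, $\jc$, or $\nc{}$) are immediate.

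Next I would invoke the identity noted just before the statement: every \inv monoid satisfies $t(x_1,\ldots,x_n)\approx \nc{t^\partial(\nc{x_1},\ldots,\nc{x_n})}$ for every term $t$ over $\mathcal{L}$ (itself a routine induction, using $\nnc{x}\approx x$ together with the definitions $x\jc y := \nc{(\nc{x} * \nc{y})}$ and $0:=\nc{1}$). Applying this to $s^\partial$ in place of $t$ and using $(s^\partial)^\partial = s$ shows that every \inv monoid satisfies
\[
s^\partial(x_1,\ldots,x_n)\;\approx\;\nc{s(\nc{x_1},\ldots,\nc{x_n})},
\]
and likewise $t^\partial(x_1,\ldots,x_n)\approx \nc{t(\nc{x_1},\ldots,\nc{x_n})}$. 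Then the forward implication falls out: assuming $\m M\models s\approx t$ and fixing $\vec a=(a_1,\ldots,a_n)\in M^n$, instantiating $s\approx t$ at $(\nc{a_1},\ldots,\nc{a_n})$ gives $s(\nc{a_1},\ldots,\nc{a_n})=t(\nc{a_1},\ldots,\nc{a_n})$; applying $\nc{}$ to both sides and invoking the two displayed identities yields $s^\partial(\vec a)=t^\partial(\vec a)$, so $\m M\models s^\partial\approx t^\partial$. The converse is obtained by running the same argument with $s^\partial,t^\partial$ in place of $s,t$ and using $(s^\partial)^\partial=s$ and $(t^\partial)^\partial=t$.

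I do not anticipate any genuine obstacle: the entire content sits in the two inductions, both of which the paper already flags as routine. The only point that needs a moment's care is that dualization interacts with substitution precisely through the mixed identity $t\approx\nc{t^\partial(\nc{x_1},\ldots,\nc{x_n})}$ — that is, evaluating $t^\partial$ at an assignment equals evaluating $t$ at the negated assignment and then negating — so one must resist the tempting but false claim that $t^\partial(\vec a)=t(\vec a)$ in general.
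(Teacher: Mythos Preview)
Your argument is correct and follows exactly the route the paper indicates: the paper simply says the proposition is ``readily verified via structural induction,'' having already recorded the key identity $t(x_1,\ldots,x_n)\approx \nc{t^\partial(\nc{x_1},\ldots,\nc{x_n})}$ immediately beforehand. You have spelled out precisely that induction together with the involutivity of $(\,\cdot\,)^\partial$, which is all that is needed.
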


Of course, as Boolean algebras are also instances of lattices, the operations $\land$ and $\lor$ are \emph{idempotent}, i.e., $x\land x \approx x$ and $x\lor x\approx x$ holds. 
This leads us to the following definition for the structures that underlie most of the analysis for this article. 
\begin{definition}\label{def:imonoids}
By an \defem{{\iname}} we refer to a unital band with involution (i.e., an idempotent \inv monoid).
\end{definition}

In general, we may preface the name of any algebraic structure with ``\inv'' to indicate that class of algebras expanded with an involution.

\begin{remark}\label{fact:noinverses}
    In any {\iname}, due to the interplay of idempotency with associativity, no element other than the unit $\miden$ can have a (left or right) \emph{inverse}, i.e., $x*y = \miden$ implies $x=y=\miden$. 
    Indeed, if $x*y = \miden$ then $x = x*\miden = x*(x*y) = (x*x)*y=x*y = \miden$, and similarly, $y =x*y*y = \miden$.
\end{remark}

Aside from Boolean algebras, {\iname}s generalize a number of familiar structures as a result of the De~Morgan laws. In particular, the following are all (term-equivalent to) subvarieties of commutative {\iname}s: bounded involutive lattices, and hence also ortholattices, modular ortholattices, orthomodular lattices; De~Morgan algebras; Kleene algebras; and involutive bisemilattices. Other relevant examples of {\iname}s  appear as (the multiplicative and involution) reducts of richer structures, e.g., involutive idempotent residuated lattices, Sugihara monoids, as well the non-commutative variants of Sugihara monoids.

The simplest (nontrivial) examples of {\iname}s are those over a two-element set. 
There are exactly two such algebras up to isomorphism---they share the same operation table for $*$ (i.e., isomorphic monoidal reducts) which is uniquely determined when stipulating it be idempotent and contain a unit constant $\miden$, which so happens to satisfy associativity---but there are exactly two different possible involutions; the ``trivial'' identity map and the one swapping the two elements. 
The former results in the (expansion by a unary identity operation of the) two-element bounded semi-lattice, here denoted by $\m C_2$, and the latter is the two-element Boolean algebra $\m 2$. 
We will denote the respective varieties of {\iname}s they generate by $\mathsf{SL}$ and $\BA$; as these are the varieties of (bounded) \emph{semi-lattices} and \emph{Boolean algebras}, respectively. 
It is easily verified that they are atoms in the lattice of subvarieties for {\iname}s; indeed, $\m 2$ is a subalgebra of any algebra in which the unit is not an involution fixed-point; while, on the other hand, if a non-trivial {\iname} $\m M$ has its unit being a fixed-point, $\m C_2$ is isomorphic to a quotient via \cref{fact:noinverses} (as the relation $(M\setminus\{\miden\}\times M\setminus\{\miden\})\cup\{(\miden,\miden) \}$ is a congruence over $\m M$).

\subsection{The 3-element {\iname}s}
There are exactly three ubands (i.e., idempotent monoids) of cardinality $3$ up to isomorphism, see Figure~\ref{f:3elemIdM} for the operation tables. Indeed, suppose $\str{\{\miden,a,b\},*,\miden}$ is an idempotent monoid. 
As a consequence of $*$ being an idempotent operation with a (two-sided) unit $\miden$, the only values that are free to be determined are $a*b$ and $b*a$, and moreover, they must differ from $\miden$ (see \cref{fact:noinverses}). 
So there are exactly four possibilities. 
Note that the two cases in which $a*b$ and $b*a$ coincide result in isomorphic structures (simply renaming $a$ by $b$ and vice versa), and are indicated by the (commutative) operation $*_\mathsf{c}$ below where $a*b=b=b*a$. 
The other two possibilities, when these values differ, are given by $*_{\ell}$ and $*_{r}$. That all three operations are associative is easily verified by the reader. 
That no two result in isomorphic monoids is witnessed by the fact that $*_\mathsf{c}$ is the only commutative operation, while $*_{\ell}$ is left-regular and $*_{r}$ is right-regular, and thus must be non-isomorphic as neither are commutative (cf. \cref{fact:RRLR}). 
\begin{figure}[H]
\centering
$
\begin{array}{c c c}
\begin{array}{c|ccc}
*_\mathsf{c} & \miden & a  & b  \\\hline
\miden & \miden  & a  & b  \\
a  & a  & a  & b  \\
b  & b  & b  & b 
\end{array}
\quad&\quad
\begin{array}{c|ccc}
*_{\ell} & \miden  & a  & b  \\\hline
\miden  & \miden  & a  & b  \\
a  & a  & a  & a  \\
b  & b  & b  & b 
\end{array}
\quad&\quad
\begin{array}{c|ccc}
*_{r}& \miden  & a  & b  \\\hline
\miden  & \miden  & a  & b  \\
a  & a  & a  & b  \\
b  & b  & a  & b  \\
\end{array}
\end{array}
$
\caption{The four idempotent monoids of cardinality 3, up to isomorphism.}\label{f:3elemIdM}
\end{figure}

There are exactly 4 possible involutions over a three element set; one being the identity map and the other three determined by which element is the (unique) involution fixed-point. 
We label these possibilities over the set $\{\miden,a,b\}$ below:
\[
\begin{array}{cccc}
    \begin{array}{c|c}
        &\prime_\mathrm{id}   \\ \hline
         \miden &\miden\\
         a & a\\
         b & b
    \end{array}
    \quad&\quad
    \begin{array}{c|c}
        &\prime_\miden   \\\hline
         \miden &\miden\\
         a & b\\
         b & a
    \end{array}
    \quad&\quad
    \begin{array}{c|c}
        &\prime_a   \\\hline
         \miden &b\\
         a & a\\
         b & \miden
    \end{array}
    \quad&\quad
    \begin{array}{c|c}
       & \prime_b   \\\hline
         \miden &a\\
         a & \miden\\
         b & b
    \end{array}
\end{array}
\]
Consequently, these result in 12 possible combinations for 3-element {\iname}s when paired with the operations from Figure~\ref{f:3elemIdM}. 
In \cref{t:3elem} below, we verify that this reduces to 10 non-isomorphic models in total.

Note that the algebras with the identity involution $\prime_\mathsf{id}$ are simply the idempotent monoids (expanded by the identity function) from Figure~\ref{f:3elemIdM}; we will denote these {\iname}s by $\m C_3$, $\m L_3$ and $\m R_3$, respectively. 
Note that $\m C_3$ is simply the $3$-element bounded semilattice (chain).

For the structures with involution $\prime_{\miden}$ fixing only the unit $\miden$, let us call them $\m{C}^\mathsf{s}_3$, $\m{L}^\mathsf{s}_3$, $\m{R}^\mathsf{s}_3$, denoting the {\iname}s respectively corresponding to $\str{*_\mathsf{c},\prime_{\miden}}$, $\str{*_{\ell},\prime_{\miden}}$, and $\str{*_{r},\prime_{\miden}}$. 
The algebra $\m{C}^\mathsf{s}_3$ is isomorphic to the {\iname}-reduct of the 3-element (odd) Sugihara monoid $\m S_3$. 
The monoidal operation for this algebra is often easily described using natural order $\leq$ of the integers over the set $\{ -1,0,1\}$ (here, $\miden\mapsto 0$, $a\mapsto 1$, and $b\mapsto -1$): the involution $\nc{}$ is $-$ and a product $x\mc y$ is dependent upon the size of the absolute values $|x|, |y|$ for the arguments $x,y$: the product results in (i) the value of whichever argument has the larger absolute when they differ, or else (ii) the value of the least argument when they have the same absolute value (i.e., the minimum ``breaks ties''); in this way $0$ is the unit for $\mc$.  
The operations for the algebras $\m{L}^\mathsf{s}_3$ and $\m{R}^\mathsf{s}_3$ can be described similarly:
$\nc{}$ coincides with $-$, and $\mc$ again follows the same rule (i) but, instead of (ii), ties are broken by always taking the value of the left-most (resp., right-most) argument in $\m{L}^\mathsf{s}_3$ (resp., $\m{R}^\mathsf{s}_3$); which in either case still has $0$ the unit for $\mc$. 
\begin{remark}\label{rem:+=*}
    By computing the tables for $\jc$ (the De~Morgan dual of $\mc$), the reader may readily verify that the algebras $\m{L}^\mathsf{s}_3$ and $\m{R}^\mathsf{s}_3$ satisfy the curious identity $x\mc y\approx x\jc y$, while this identity fails in $\m{C}^\mathsf{s}_3$ (indeed 
    $a *^{\prime}_\mathsf{c} b :=
    \nc{(\nc{a}*_\mathsf{c}\nc{b})} = 
    \nc{(b*_\mathsf{c} a)} = 
    \nc{b} = a
    \neq b = a*_\mathsf{c} b$). 
    We point out that any {\iname} satisfying $x\mc y\approx x\jc y$ also satisfies $\miden \approx\nc{\miden}$; indeed 
    $\nc{\miden} \approx \nc{\miden}* \miden \approx \nc{\miden}*^\prime \miden:=\nc{(\nnc{\miden}*\nc{\miden})}\approx\nc{(\miden*\nc\miden)}\approx \nc{(\nc{\miden})}=\nnc{\miden}\approx\miden$.
\end{remark}

The remaining 4 algebras are each isomorphic to one in Figure~\ref{f:3elem} below.
The {\iname} corresponding to $\str{*_\mathsf{c},\prime_a}$ is isomorphic with $\m{SK}$ ($a\mapsto\ee$), while $\str{*_\mathsf{c},\prime_b}$ is isomorphic with $\m{WK}$ ($b\mapsto\ee$), \crev{both introduced in Figure~\ref{f:3elem} below.} 
On the other hand, it is easily checked that $\str{*_{\ell},\prime_{a}}$ and $\str{*_{\ell},\prime_{b}}$ result in isomorphic structures (taking $a\mapsto b$ and simply observing that elements $a,b$ are both absorbing from the left); these are isomorphic to the 3-element McCarthy algebra $\McA$ (cf. Figure~\ref{f:3elem}). 
Similarly,  $\str{*_{r},\prime_{a}}$ and $\str{*_{r},\prime_{b}}$ result in isomorphic structures and coincide with $\McA^\mathrm{op}$, the mirror of $\McA$ (cf. \Cref{f:3elem}). 
We will explore these algebras in more detail in the following \Cref{sec: subclassical}.

\begin{proposition}\label{t:3elem}
    There are exactly ten {\iname}s of cardinality 3 up to isomorphism (see \Cref{f:ten-3elem}). 
\end{proposition}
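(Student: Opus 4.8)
The plan is to build directly on the case analysis laid out in the paragraphs preceding the statement. Every $3$-element {\iname} is isomorphic to one whose monoid reduct is one of the three pairwise non-isomorphic idempotent monoids $*_\mathsf{c}$, $*_\ell$, $*_r$ of Figure~\ref{f:3elemIdM} and whose involution is one of the four maps $\prime_\mathrm{id}$, $\prime_\miden$, $\prime_a$, $\prime_b$ on $\{\miden,a,b\}$, which gives the $12$ combinations already noted. Since any isomorphism of {\iname}s restricts to an isomorphism of the underlying monoids, and since $*_\mathsf{c}$, $*_\ell$, $*_r$ are known to be pairwise non-isomorphic (being, respectively, commutative, left-regular but not commutative, and right-regular but not commutative; cf.\ \Cref{fact:RRLR}), no {\iname} built on one of these reducts can be isomorphic to one built on another. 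Hence it suffices to count, separately for each fixed reduct $\m N\in\{*_\mathsf{c},*_\ell,*_r\}$, the isomorphism classes among the four expansions $\str{\m N,\nc{}}$ of $\m N$ by an involution.

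For a fixed reduct $\m N$ on $\{\miden,a,b\}$, two involutions $f,g$ yield isomorphic {\iname}s if and only if $g=\phi\circ f\circ\phi^{-1}$ for some monoid automorphism $\phi$ of $\m N$; so the isomorphism classes are exactly the orbits of $\mathrm{Aut}(\m N)$ acting by conjugation on $\{\prime_\mathrm{id},\prime_\miden,\prime_a,\prime_b\}$. It thus remains to compute the three automorphism groups. For $*_\mathsf{c}$, the unit $\miden$ and the absorbing element $b$ are fixed by every automorphism, hence so is $a$, so $\mathrm{Aut}$ is trivial and all four involutions give pairwise non-isomorphic algebras, namely $\m C_3$, $\m{C}^\mathsf{s}_3$, $\m{SK}$, $\m{WK}$. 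For $*_\ell$, the non-unit elements $a,b$ are both left zeros and are interchanged by an automorphism $\sigma$, so $\mathrm{Aut}=\{\mathrm{id},\sigma\}$; conjugation by $\sigma$ fixes $\prime_\mathrm{id}$ and $\prime_\miden$ but swaps $\prime_a$ and $\prime_b$, leaving three orbits: $\m L_3$, $\m{L}^\mathsf{s}_3$, $\McA$. The reduct $*_r$ is the opposite monoid of $*_\ell$, so it has the same (order-two) automorphism group, and conjugation on the four involutions does not depend on the monoid operation at all, so it again yields three classes: $\m R_3$, $\m{R}^\mathsf{s}_3$, $\McA^\mathrm{op}$. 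Summing, $4+3+3=10$, and, read in reverse, the only coincidences among the $12$ combinations are $\prime_a\cong\prime_b$ over $*_\ell$ and over $*_r$; I would close by pointing to Figure~\ref{f:ten-3elem} for the explicit list.

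There is no deep obstacle here: once the preceding material is in place the result is pure bookkeeping. The one point that genuinely needs care is ruling out "diagonal" isomorphisms between {\iname}s sitting over different monoid reducts, and passing to orbit-counting over each fixed reduct is precisely the device that makes this clean. The remaining small computations — that $\mathrm{Aut}(\m N)$ is as claimed in each case, and that conjugation by $\sigma$ acts on the involutions as stated — are a few lines each and can be left to the reader or displayed inline.
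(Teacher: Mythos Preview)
Your proof is correct. Both your argument and the paper's rely on the same underlying case split (three monoid reducts, four involutions each), but they organize the non-isomorphism argument differently. You use the clean device of orbit-counting under $\mathrm{Aut}(\m N)$: computing that $\mathrm{Aut}(*_\mathsf{c})$ is trivial while $\mathrm{Aut}(*_\ell)$ and $\mathrm{Aut}(*_r)$ have order two, and then reading off $4+3+3$ orbits. The paper instead reads the distinction off the table in \Cref{f:ten-3elem}: rows are separated by \Cref{fact:RRLR} (commutative vs.\ strictly left-regular vs.\ strictly right-regular), columns by properties of the involution ($x\approx\nc{x}$ vs.\ $\miden\approx\nc{\miden}$ vs.\ $\miden\neq\nc{\miden}$), leaving only $\m{WK}$ and $\m{SK}$ to be separated, which it does via the identity $x\mc 0\approx 0$. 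Your approach is more uniform and handles the $\m{WK}/\m{SK}$ distinction for free (since $\mathrm{Aut}(*_\mathsf{c})$ is trivial, $\prime_a$ and $\prime_b$ are automatically in distinct orbits); the paper's approach foregrounds an equational separation that foreshadows later structural results. Either is entirely adequate here.
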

\begin{figure}[h]
\centering
$
\begin{array}{r|c|c|c|}
    &\mbox{$x\approx \nc{x}$} & \mbox{$\miden \approx \nc{\miden}$} & \mbox{$\miden\neq \nc{\miden}$}   \\ \hline 
    \mbox{commutative}& \m{C}_3 & \m{C}^\mathsf{s}_3 & \begin{array}{c}
          \m{WK} \\ 
          \m{SK}
    \end{array}\\ \hline
    \phantom{\begin{array}{c}|\\|\end{array}}\mbox{left-regular} & \m{L}_3 & \m{L}^\mathsf{s}_3 & \phantom{{}^\mathsf{op}}\McA\phantom{{}^\mathsf{op}} \\ \hline
    \phantom{\begin{array}{c}|\\|\end{array}}\mbox{right-regular} & \m{R}_3 & \m{R}^\mathsf{s}_3 & \phantom{{}^\mathsf{op}}\McA^\mathsf{op}\\ \hline
\end{array}
$
\caption{The ten non-isomorphic {\iname}s of cardinality 3.}\label{f:ten-3elem}
\end{figure}

\begin{proof}
    That no two algebras from different rows can be isomorphic follows from \cref{fact:RRLR}. That no two algebras from different columns is immediate from the involutions. 
    It remains to verify the (well-known) fact that $\m{WK}$ and $\m{SK}$ are non-isomorphic, which is readily established by observing that the identity $x\mc 0\approx 0 $ (recalling $0:=\nc{1}$ with constant $1$ the unit for $\mc$) holds in $\m{SK}$ but not in $\m{WK}$.
\end{proof}

We will revisit the corresponding varieties generated by each of these algebras, their relationship to each other and, more generally, to the lattice of subvarieties of {\iname}s in \Cref{sec: subvarieties}.

\subsection{Subclassical {\iname}s}\label{sec: subclassical}

The identity $1\approx 0$ (i.e., $\miden\approx \nc{\miden}$) and the Boolean algebra $\m 2$ form a \emph{splitting pair} in lattice of subvarieties for {\iname}s. 
That is, given any {\iname} $\m A$, either $\m A$ satisfies the identity $1\approx 0$ (known as the \emph{splitting equation}), or else $\m 2$ is a subalgebra of $\m A$. 
We call an {\iname} a \defem{subclassical algebra}, or simply \emph{subclassical}, if it contains a copy of the Boolean algebra $\m 2$ as a subalgebra (adopting the same terminology of \cite{MicheleSubclassical}); or equivalently, one in which the unit $\miden$ is not a involution fixed-point (i.e., $\miden\neq \nc{\miden}$). 

The following is an immediate corollary to \cref{t:3elem}.
\begin{corollary}\label{t:3elemAlg}
    There are exactly four subclassical {\iname}s of cardinality 3 up to isomorphism (see \Cref{f:3elem}).
\begin{figure}[H]
\centering
$
\begin{array}{c | c c c c}
\begin{array}{c|c}
        &\nc{}   \\ \hline
         1&0\\
         0&1\\
         \ee&\ee
    \end{array}
\quad&\quad   
\begin{array}{c|ccc}
\land_{\mathsf{wk}} & 1 & 0 & \ee \\\hline
1 & 1 & 0 & \ee \\
0 & 0 & 0 & \ee \\
\ee & \ee & \ee & \ee \\
\end{array}
\quad&\quad
\begin{array}{c|ccc}
\land_{\mathsf{sk}} & 1 & 0 & \ee \\\hline
1 & 1 & 0 & \ee \\
0 & 0 & 0 & 0 \\
\ee & \ee & 0 & \ee \\
\end{array}
\quad&\quad
\begin{array}{c|ccc}
{}\mc_{\mathsf{m}} & 1 & 0 & \ee \\\hline
1 & 1 & 0 & \ee \\
0 & 0 & 0 & 0 \\
\ee & \ee & \ee & \ee \\
\end{array}
\quad&\quad
\begin{array}{c|ccc}
{}\mc_{\mathsf{m}}^\mathsf{op}& 1 & 0 & \ee \\\hline
1 & 1 & 0 & \ee \\
0 & 0 & 0 & \ee \\
\ee & \ee & 0 & \ee \\
\end{array}
\\ & \m{WK}=\str{X,\land_{\mathsf{wk}},\nc{},1} & \m{SK}=\str{X,\land_{\mathsf{sk}},\nc{},1} & \McA=\str{X,\mc_{\mathsf{m}},\nc{},1} & \McA^\mathsf{op}=\str{X,\mc_{\mathsf{m}}^\mathsf{op},\nc{},1} 
\end{array}
$
\caption{The four subclassical {\iname}s of cardinality 3 over the set $X=\{1,0,\ee \}$ and signature $\str{\mc,\nc{},1}$.
}\label{f:3elem}
\end{figure}
\end{corollary}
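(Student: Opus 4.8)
The plan is to read the statement off directly from \Cref{t:3elem}, since for 3-element {\iname}s "subclassical" is exactly the condition isolating one column of that classification. First I would recall that an {\iname} is subclassical precisely when its unit is not an involution fixed-point, i.e., when $\miden\neq\nc{\miden}$; this is the defining condition of the rightmost column of \Cref{f:ten-3elem}. Consequently, among the ten non-isomorphic 3-element {\iname}s produced by \Cref{t:3elem}, the subclassical ones are exactly the four appearing in that column: the commutative algebras $\m{WK}$ and $\m{SK}$, the left-regular algebra $\McA$, and the right-regular algebra $\McA^\mathsf{op}$, all displayed in \Cref{f:3elem}.

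Next I would confirm that these four are pairwise non-isomorphic, reusing the separating properties already noted in the proof of \Cref{t:3elem}. Since $\m{WK}$ and $\m{SK}$ are commutative while $\McA$ is left-regular but not commutative and $\McA^\mathsf{op}$ is right-regular but not commutative, and since a band is commutative iff it is both left- and right-regular (\Cref{fact:RRLR}), no two of these algebras lying in different rows of \Cref{f:ten-3elem} can be isomorphic. That leaves only the pair $\m{WK}$, $\m{SK}$, which are distinguished by the identity $x\mc 0\approx 0$: it holds in $\m{SK}$ but fails in $\m{WK}$, where $\ee\mc 0=\ee\neq 0$. Combining these observations with the fact that every 3-element subclassical {\iname} is isomorphic to one of the four, we obtain exactly four up to isomorphism.

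There is essentially no obstacle here: the entire content is already packaged in \Cref{t:3elem}, and the argument amounts to identifying the relevant column of the classification and re-invoking the identities that separate its entries. The only point needing any (minor) care is verifying that the four algebras are genuinely distinct, and this is handled by the commutativity/regularity dichotomy together with the single identity $x\mc 0\approx 0$.
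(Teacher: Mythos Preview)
Your proposal is correct and matches the paper's approach: the paper states this result as an immediate corollary to \Cref{t:3elem}, and you simply unpack that by identifying the $\miden\neq\nc{\miden}$ column of \Cref{f:ten-3elem} and reusing the same separating properties (row via commutativity/regularity and \Cref{fact:RRLR}, then $x\mc 0\approx 0$ for $\m{WK}$ vs.\ $\m{SK}$). There is nothing to add.
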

\begin{remark}
Differently from the approach in \cite{MicheleSubclassical} (which assumes types do not contain constant symbols), the algebra $\m{C}^\mathsf{s}_3$ (the {\iname} reduct of the 3-element Sugihara monoid) is not a subclassical {\iname}. 
Indeed, $\{a,b\}$ should be the universe of a Boolean subalgebra, which is not, due to the fact that $a$ is not the unit of the monoidal $*_\mathsf{c}$ (corresponding to meet) operation ($\miden$ is the unit).    
\end{remark}

The first operations in Figure \ref{f:3elem} are the conjunctions for the 3-element involutive bisemilattice ($\m{WK}$) and Kleene lattice ($\m{SK}$), respectively, defining weak and strong Kleene logics, respectively. The third operation is the conjunction of the 3-element \emph{McCarthy algebra} $\McA$. 
Note that the last algebra has the \emph{opposite} monoid relation ($x*^{\mathsf{op}}y:= y * x$), so we will denote it by $\McA^\mathsf{op}$.
Obviously $\m{WK}$ and $\m{SK}$ are left/right-regular as they are commutative and idempotent. 
It is not difficult to check that, while not commutative, $\McA$ is left-regular (and therefore $\McA^\mathsf{op}$ is right-regular): 
they are therefore all partially ordered (cf. \cref{t:leftregEquiv}). 
Below are the Hasse Diagrams for their dual operation:
\begin{figure}[ht]
    \centering
    \begin{tikzpicture}[scale = 1.2, every node/.style={inner sep=.5pt}, every label/.style={black}]
        \node[label = {[xshift=-.5em,yshift = -.5em]\scriptsize $0$}] (0) at (0,0) {\scriptsize$\bullet$};
        \node[label = {[xshift=-.5em,yshift = -.5em]\scriptsize $1$}] (a) at (0,.5) {\scriptsize$\bullet$};
        \node[label = {[xshift=-.5em,yshift = -.5em]\scriptsize $\ee$}] (b) at (0,1) {\scriptsize$\bullet$};
        \draw (0)--(a)--(b);
    \end{tikzpicture}
    \qquad\qquad\qquad
    \begin{tikzpicture}[scale = 1.2, every node/.style={inner sep=.5pt}, every label/.style={black}]
        \node[label = {[xshift=-.5em,yshift = -.5em]\scriptsize $0$}] (0) at (0,0) {\scriptsize$\bullet$};
        \node[label = {[xshift=-.5em,yshift = -.5em]\scriptsize $\ee$}] (a) at (0,.5) {\scriptsize$\bullet$};
        \node[label = {[xshift=-.5em,yshift = -.5em]\scriptsize $1$}] (b) at (0,1) {\scriptsize$\bullet$};
        \draw (0)--(a)--(b);
    \end{tikzpicture}
    \qquad\qquad\qquad
    \begin{tikzpicture}[scale = 1.2, every node/.style={inner sep=.5pt}, every label/.style={black}]
        \node[label = {[xshift=-.5em,yshift = -.5em]\scriptsize $0$}] (0) at (0,0) {\scriptsize$\bullet$};
        \node[label = {[xshift=-.5em,yshift = -.5em]\scriptsize $1$}] (a) at (-.5,1) {\scriptsize$\bullet$};
        \node[label = {[xshift=-.5em,yshift = -.5em]\scriptsize $\ee$}] (b) at (.5,1) {\scriptsize$\bullet$};
        \draw (0)--(a);
        \draw (0) -- (b);
    \end{tikzpicture}
    \caption{The Hasse diagrams (left-right) for posets  $\str{\m{WK},\leq_{\vee_{\mathsf{wk}}}}$, $\str{\m{SK},\leq_{\vee_{\mathsf{sk}}}}$, and $\str{\McA,\leq_{\jc_{\mathsf{m}}}}$ ($\cong \str{\McA^\mathsf{op},\leq_{\jc_{\mathsf{m}}}}$).}
    \label{fig:WK/SK/MKposets}
\end{figure}

Recall that the algebras $\mathbf{WK}$ and $\mathbf{SK}$ have a logical meaning, namely they define weak and strong Kleene logics, respectively (see \cite{Bonziobook} for more details). 
Moreover $\mathbf{WK}$ and $\mathbf{SK}$ generate the variety $\mathsf{IBSL}$ of \emph{involutive bisemilattices} \cite{Bonzio16SL} and the variety $\mathsf{KA}$ of \emph{Kleene algebras} \cite{kalman58involution}, respectively. 
Involutive bisemilattices consist of the regularization of (the variety of) Boolean algebras, i.e., they satisfy all and only the regular identities holding in Boolean algebras, where an identity $\varphi\approx \psi$ is regular if the variables actually occurring in both the terms $\varphi$ and $\psi$ are the same. 
Despite being usually introduced in the richer language $\str{\land,\lor,\lnot,0,1}$, in our language, an involutive bisemilattice is a \emph{commutative {\iname}} satisfying $x\mc y\approx x\mc (\nc{x}\jc y)$ (see \cite{Bonzio16SL,Bonziobook}). 
A slightly different (equivalent) axiomatization can be found in \cite[Example 1]{Plonka1984nullary}.

Kleene algebras, sometimes also referred to as Kleene \emph{logic} algebras to distinguish from Kleene-star algebras \cite{Kleene56, KozenKleenetest}, are defined in the same language as involutive bisemilattices and consist of a De~Morgan algebra (i.e., a bounded distributive lattice equipped with an involutive negation $\neg$ satisfying the De~Morgan identities) satisfying the further identity $\mathsf{k}:x\land\neg x\leq y\vee\neg y$, which we will refer to as the \emph{Kleene axiom}, whose intuitive reading is that any contradiction is below any tautology.
As it turns out, Kleene algebras are term-equivalent to commutative {\iname}s satisfying $\mathsf{k}$ as well as distributivity and boundedness (see \Cref{sec:WD,sec:WA}).

We note that the class of all subclassical {\iname}s forms a quasivariety; i.e., it is the class of all {\iname}s satisfying the quasi-identity $\miden = \nc{\miden}\Rightarrow x = y$, and therefore need not be closed under homomorphic images.
\begin{definition}\label{def: subclassical variety}
A variety $\mathcal{V}$ of {\iname}s will be called a \defem{subclassical variety} if each \crev{non-trivial} member of $\mathcal{V}$ is subclassical.    
\end{definition}

    \begin{remark}
    As mentioned, a variety $\mathcal{V} $ is subclassical if and only if it satisfies the quasi-identity $\miden\approx\nc{\miden}\;\Rightarrow\; x\approx y$. As it turns out, Kleene algebras form a subclassical variety, while involutive bisemilattices do not ($\m C_2$ is a quotient of $\m{WK}$). 
    Later on (\cref{t:subclass}), we will show that the variety $\MC$ of McCarthy algebras is subclassical.
\end{remark}

\newpage

\section{Identities satisfied in McCarthy algebras}\label{sec:findingidens}
\cadd{A finite basis for McCarthy algebras was established by Guzm{\'a}n and Squier in \cite{Guzman-Squier}, where they are called $C$-algebras (with constants $T$ and $F$). Independently}
in \cite{Kow96}, Konikowska presents a list of identities claimed to hold for the algebra $\McA$ (see \Cref{f:KonAx} in \Cref{sec:MKdef}), but leaves open the question of whether they form an equational basis for the variety $\MC$ of McCarthy algebras. 
Instead of presenting them here, we begin an analysis with the motivation of relating $\McA$ to more familiar algebraic properties and structures in order to ``tease-out'' an equational basis for $\MC$. 
In doing so, we prove some general facts, interesting in their own right, for {\iname}s. 
Ultimately, in \Cref{sec:axiomMC}, we show Konikowska's postulates do indeed provide an equational basis, but also find reduced and equivalent alternatives as a byproduct of the analysis carried on in this section.
\subsection{Weak distributive laws}\label{sec:WD}
Let $\m S$ be an algebra that at contains two semigroups $\str{S,\jc}$ and $\str{S,\mc}$ as reducts. 
The algebra $\m S$ is called a \emph{semiring} if it satisfies both the left and right distributive laws: $x(y\jc z) \approx xy \jc xz$ and $(x\jc y) z \approx xz \jc yz$.
Both $\m{SK}$ and $\m{WK}$ are semirings, but $\McA$ fails the right-distributive law, e.g., $(1\jc \varepsilon)0 = 0 \neq \varepsilon = 0 \jc \varepsilon = (1\mc 0) \jc (\varepsilon \mc 0)$. 
However, the \hyperref[eq:leftdist]{left-distributive}  law is satisfied.
\begin{proposition}\label{t:leftsemiring}
    The algebra $\McA$ is left-distributive, i.e., it satisfies 
    \begin{equation}\label{eq:leftdist}\tag{left-distributivity}
    x(y\jc z)\approx xy \jc xz 
    \qquad\text{dually,}\qquad
    x\jc yz \approx (x\jc y)(x\jc z)
    \end{equation}
\end{proposition}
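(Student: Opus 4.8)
The plan is to exploit the rigid structure of $\McA$: among its three elements, the only one that fails to act as a left-zero for $\mc$ is the unit $1$. First I would record, directly from the tables in \Cref{fig:M3}, the observation that $0\mc w = 0$ and $\ee\mc w = \ee$ for every $w\in\{1,0,\ee\}$ — that is, $0$ and $\ee$ are left-zeros for $\mc$ — and that $\jc$ is idempotent, so in particular $0\jc 0 = 0$ and $\ee\jc\ee = \ee$.

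With these facts in hand, I would verify \eqref{eq:leftdist} by a case split on the left argument $x$. If $x = 1$, then since $1$ is the unit for $\mc$, both sides of $x(y\jc z)\approx xy\jc xz$ reduce to $y\jc z$. If instead $x\in\{0,\ee\}$, then left-absorption gives $x(y\jc z) = x$ as well as $xy = xz = x$, whence $xy\jc xz = x\jc x = x$ by idempotency of $\jc$; so both sides again coincide. This proves $x(y\jc z)\approx xy\jc xz$ in $\McA$, and the dual identity $x\jc yz\approx (x\jc y)(x\jc z)$ then follows at once from \Cref{t:DMduals}.

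There is no real obstacle: the statement is a finite check on a three-element algebra, and the only content of the argument is to organize those instances conceptually via the left-zero phenomenon rather than tabulate all of them. The one point worth flagging is that the asymmetry of the case analysis — privileging the left argument — is essential, since (as already noted in the text) right-distributivity genuinely fails in $\McA$, so the same bookkeeping cannot be run on the right.
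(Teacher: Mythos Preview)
Your argument is correct and essentially identical to the paper's: both do a case split on $x$, using that $1$ is the multiplicative unit while $0$ and $\ee$ are left-absorbing for $\mc$ (the paper phrases this as ``multiplicatively absorbing from the left''). One small caution on wording: later in the paper ``left-absorption'' is reserved for the identity $x\mc(x\jc y)\approx x$, so it would be cleaner to say ``$0$ and $\ee$ are left-zeros for $\mc$'' throughout rather than invoke ``left-absorption'' in this proof.
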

\begin{proof}
    In $\McA$, notice that the claim holds for $x=1$ since $1$ is a multiplicative unit, and it holds for $x= 0$ and $x=\varepsilon$ since they are both multiplicatively absorbing from the left, i.e., $0a=0$ and $\varepsilon a = \varepsilon$ for any $a\in \McSet$.
\end{proof}
\begin{remark}\label{t:LDqe}
    The following quasi-identity holds in any algebra with a pair of binary operations $\str{\mc,\jc}$ satisfying \ref{eq:leftdist} with a right-unit $1$ for $\mc$: $y\jc 1 = z\jc 1 \;\Rightarrow\; xy \jc x = xz \jc x $.
\end{remark}

Now, while the algebra $\McA$ does not satisfy right-distributivity in general, it does satisfy the following weakened instance of it.

\begin{proposition}\label{t:M3orthodistribitivity}
    The algebra $\McA$ satisfies the following identities:
    \begin{equation}\label{eq:orthodist}\tag{right-ortho\allowbreak distributivity}
        (x\jc \nc{x}) y \approx xy \jc \nc{x}y 
       \qquad\text{dually,}\qquad
       x \nc{x}\jc y \approx (x\jc y) (\nc{x}\jc y)
    \end{equation}
\end{proposition}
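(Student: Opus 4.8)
The plan is to verify the left-hand form of \ref{eq:orthodist} directly in the $3$-element algebra $\McA$ by a short case analysis on the value of $x$, and then obtain the dual identity for free from the De~Morgan Dual Equivalence (\cref{t:DMduals}). The only arithmetic facts about $\McA$ that I expect to need are exactly those already used in the proof of \cref{t:leftsemiring}: that $1$ is the (two-sided) unit for $\mc$; that both $0$ and $\varepsilon$ are left-absorbing for $\mc$, i.e., $0a = 0$ and $\varepsilon a = \varepsilon$ for every $a\in\McSet$; together with the dual observation that $0$ is the (two-sided) unit for $\jc$, and the single extra computation $\varepsilon\jc\varepsilon = \varepsilon$.

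First I would dispose of the case $x = \varepsilon$: here $\nc{x} = \varepsilon$, so the left-hand side is $(\varepsilon\jc\varepsilon)y = \varepsilon y = \varepsilon$, while the right-hand side is $\varepsilon y\jc\varepsilon y = \varepsilon\jc\varepsilon = \varepsilon$, and the two agree. Next I would treat the remaining cases $x\in\{0,1\}$ uniformly. In either of these, $\{x,\nc{x}\} = \{0,1\}$, hence $x\jc\nc{x} = 1\jc 0 = 0\jc 1 = 1$, so the left-hand side collapses to $1\cdot y = y$. On the right-hand side exactly one of $xy$, $\nc{x}y$ equals $1\cdot y = y$ and the other equals $0\cdot y = 0$; since $0$ is the unit for $\jc$, we get $xy\jc\nc{x}y \in \{y\jc 0,\ 0\jc y\} = \{y\}$, which again matches the left-hand side. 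This exhausts all values of $x$, so $\McA$ satisfies the left identity in \ref{eq:orthodist}, and the dual identity then follows immediately by \cref{t:DMduals}.

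I do not anticipate any real obstacle: the statement is a finite verification in a $3$-element algebra. The only mild point of care is organizing the case split on $x$ so that the second variable $y$ is handled once and for all rather than by a full $3\times 3$ table — and this is precisely what the ``unit/left-absorbing'' observations inherited from the proof of \cref{t:leftsemiring} make possible.
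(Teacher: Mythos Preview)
Your proof is correct and follows essentially the same approach as the paper: both argue by a case split on the value of $x$, using that $x\jc\nc{x}=1$ when $x\in\{0,1\}$ (so the left side is $y$, and the right side reduces to $y$ via the units), and that $\varepsilon$ is left-absorbing when $x=\varepsilon$. Your treatment is slightly more explicit on the right-hand side in the $x\in\{0,1\}$ case, and your appeal to \cref{t:DMduals} for the dual identity makes explicit what the paper leaves implicit.
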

\begin{proof}
In $\McA$, it is readily verified that $x\jc\nc{x} = 1$ when $x\in \{0,1\}$, in either case the identity is valid since $1$ is a multiplicative unit. Other the other hand, when $x=\varepsilon$ the identity is valid since $\varepsilon=\varepsilon\jc \nc{\varepsilon}$ and $\varepsilon$ is absorbing from the left.
\end{proof}

Of course, there is the ``left'' variant of the identity above: $x(y\jc \nc{y})\approx xy \jc x\nc{y}$, which we will refer to as \emph{left-orthodistributivity}. 
An algebra satisfying both left- and right-orthodistributivity will simply be called \defem{orthodistributive}. 
Clearly then, $\McA$ is orthodistributive as it satisfies left-distributivity.
The following fact will be useful later on in \Cref{sec: subvarieties} for the algebras $\m C^\mathsf{s}_3$, $\m L^\mathsf{s}_3$, and $\m R^\mathsf{s}_3$ and can be verified by checking their respective operation tables.

\begin{proposition}\label{t:sIDENS}
    The algebras $\m C^\mathsf{s}_3$, $\m L^\mathsf{s}_3$, and $\m R^\mathsf{s}_3$ are orthodistributive but not distributive.
    However, the algebra $\m L^\mathsf{s}_3$ is left-distributive, $\m R^\mathsf{s}_3$ is right-distributive, and both algebras satisfy the identity $x\mc y\approx x\jc y$.
\end{proposition}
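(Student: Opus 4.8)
The plan is to read off each assertion directly from the operation tables of $\m{C}^\mathsf{s}_3$, $\m{L}^\mathsf{s}_3$, and $\m{R}^\mathsf{s}_3$, using a few structural observations to keep the case analysis short. Recall that in all three algebras the two non-unit elements $a,b$ are swapped by $\nc{}$ and the unit $\miden$ is an involution fixed-point, while the monoidal operation is $*_\mathsf{c}$, $*_\ell$, $*_r$ respectively; in particular, in $\m{L}^\mathsf{s}_3$ (resp.\ $\m{R}^\mathsf{s}_3$) the elements $a,b$ are left-absorbing (resp.\ right-absorbing) for $\mc$, hence so are their images $\nc{a},\nc{b}$, hence they are left-absorbing (resp.\ right-absorbing) also for $\jc=\nc{\mc}$. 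Since a $3$-element idempotent monoid with unit $\miden$ and both non-unit elements left-absorbing is unique, this already gives $x\mc y\approx x\jc y$ in $\m{L}^\mathsf{s}_3$ and, symmetrically, in $\m{R}^\mathsf{s}_3$ (this is \cref{rem:+=*}), proving the last assertion.

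For left-distributivity of $\m{L}^\mathsf{s}_3$ I would repeat verbatim the proof of \cref{t:leftsemiring}: in $x(y\jc z)\approx xy\jc xz$ the case $x=\miden$ is trivial, and for $x\in\{a,b\}$ both sides equal $x$, using that $x$ is left-absorbing and that $\jc$ is idempotent in every {\iname}. Mirroring, $\m{R}^\mathsf{s}_3$ is right-distributive. That no further distributive law holds is witnessed by explicit failures: in $\m{L}^\mathsf{s}_3$ one has $(\miden\jc a)\mc b = a\mc b = a$ but $(\miden\mc b)\jc(a\mc b) = b\jc a = b$, so right-distributivity — and hence distributivity — fails; dually for $\m{R}^\mathsf{s}_3$.

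For orthodistributivity of $\m{L}^\mathsf{s}_3$, left-orthodistributivity is just the instance of \ref{eq:leftdist} at $z:=\nc{y}$. For the right form $(x\jc\nc{x})y\approx xy\jc\nc{x}y$ one checks on the table that $x\jc\nc{x}=x$ (and $x\mc\nc{x}=x$) in $\m{L}^\mathsf{s}_3$, so the left-hand side is $xy$; since $\jc=\mc$ there, the right-hand side equals $xy\mc\nc{x}y$, which reduces to $xy$ by left-regularity together with $x\mc\nc{x}=x$ and idempotency. So $\m{L}^\mathsf{s}_3$ is orthodistributive, and symmetrically so is $\m{R}^\mathsf{s}_3$. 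For $\m{C}^\mathsf{s}_3$, commutativity collapses the two orthodistributive laws into one, and $(x\jc\nc{x})y\approx xy\jc\nc{x}y$ is checked on the nine pairs $(x,y)$: for $x=\miden$ both sides are $y$, and for $x\in\{a,b\}$ one has $x\jc\nc{x}=a$, so both sides equal $ay$ after a one-line computation. Non-distributivity of $\m{C}^\mathsf{s}_3$ is then exhibited by $a\mc(\miden\jc b)=a\mc b=b\neq a=a\jc b=(a\mc\miden)\jc(a\mc b)$.

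There is no real obstacle here, only bookkeeping: the point is to shorten the case analysis by exploiting that $\miden$ is a unit, that $\jc$ is idempotent, that the non-unit elements are one-sided absorbing, and (for $\m{L}^\mathsf{s}_3,\m{R}^\mathsf{s}_3$) that $\mc=\jc$, after which essentially nothing remains to compute; the mildly delicate step is the right-orthodistributive law for $\m{L}^\mathsf{s}_3$ and $\m{R}^\mathsf{s}_3$, where one must additionally invoke the peculiar identity $x\mc\nc{x}\approx x$ valid in those algebras (or, failing that, simply tabulate the nine instances).
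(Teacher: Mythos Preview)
Your proposal is correct and follows exactly the approach the paper indicates: the paper itself gives no argument beyond stating that the proposition ``can be verified by checking their respective operation tables,'' and you carry out precisely this verification, enriched with structural shortcuts (left-absorption of non-unit elements, the identity $x\mc y\approx x\jc y$ from \cref{rem:+=*}, and the argument pattern of \cref{t:leftsemiring}). The one place where your justification is slightly loose---deriving $xy\mc\nc{x}y\approx xy$ in $\m L^\mathsf{s}_3$ from ``left-regularity together with $x\mc\nc{x}\approx x$''---does not obviously go through as stated in an arbitrary left-regular band, but you correctly note that tabulating the nine instances (or invoking the left-absorbing property of $a,b$ directly) closes the gap.
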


As it will be notationally convenient and evocative for what follows, let us define the following unary term-operations in the language of {\iname}s: 
\begin{align*}
    \Ic{x}&:=x\jc 1 &&\Oc{x}:= x\mc 0 \tag{the local constants}\\
    \tc{x}&:=x\jc \nc{x} &&\fc{x}:= x\mc \nc{x} \tag{the local extrema}
\end{align*}
\begin{remark}
    Of course, there are also the unary operations corresponding to the ``mirrored'' versions of the above. 
    On the one hand, due to $\nc{}$ being an involution, every {\iname} satisfies $\nc{x}\jc x \approx \tc{\nc{x}}$; i.e., the local extrema are inter-definable with the mirrored version. 
    However, this is not the case for $\Ic{x}/\Oc{x}$.
    As we are most interested in identities satisfied by $\McA$, 
    \crev{and $\McA$ satisfies $0\mc x \approx 0$ and $1\jc x \approx x$ (see \Cref{sec:WA}),}
    we will not consider them here. 
    We note that, when considering $\McA^\mathsf{op}$, such mirrored versions, and any identity invoking $\Ic{x}/\Oc{x}$, are to be used in place of the above. For this reason, guided by context, we refrain from the convention of prefacing such terms and identities with the words \emph{left} or \emph{right}.
\end{remark}

\begin{remark}
    Note that the orthodistributivity equations may be rewritten as $\tc{x}y \approx xy \jc \nc{x}y$ for the right case, and $x\tc{y}\approx xy \jc x\nc{y}$ for the left.
\end{remark}

\begin{lemma}\label{t:ICabs}
    Any \hyperref[eq:leftdist]{left-distributive}  {\iname} $\m M=\str{M,\jc,\mc,\nc{},0,1}$ satisfies the following quasi-identities:
    $$
    \Ic{y} = \Ic{z} \;\Rightarrow\; xy \jc x = xz
    \qquad\text{and}\qquad 
    \Oc{y} = \Oc{z} \;\Rightarrow\; (x\jc y)x = (x\jc z)x
    $$
    Moreover, the following identities also hold:
    \begin{equation}\label{eq:localunits}\tag{local units}
        \Ic{x}\mc x \approx x \approx x\mc \Ic{x} \qquad\text{dually,}\qquad \Oc{x}\jc x \approx x \approx x\jc \Oc{x}.
    \end{equation}
\end{lemma}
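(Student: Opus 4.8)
The plan is to derive everything from left-distributivity together with the monoid and involution axioms, treating the two quasi-identities first and then the four "local units" identities; the dual halves of each statement follow automatically by \cref{t:DMduals}, so I will only argue the left/multiplicative version in each case.

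For the first quasi-identity, suppose $\Ic{y} = \Ic{z}$, i.e.\ $y\jc 1 = z\jc 1$. Multiplying on the left by $x$ and applying \ref{eq:leftdist} gives $x(y\jc 1) = xy \jc x1 = xy \jc x$ and likewise $x(z\jc 1) = xz\jc x$, so $xy\jc x = xz\jc x$. This is not yet the claimed conclusion $xy\jc x = xz$; to finish I need to absorb the trailing $\jc x$ on the right-hand side. The trick is to note that $z\jc 1 = y\jc 1$ has $1$ as a "$\jc$-summand", so in particular $z\jc 1 = (z\jc 1)\jc 1$, and more usefully, from $\Ic{y}=\Ic{z}$ one also gets $\Ic{\Ic{z}} = \Ic{z}$, hence using left-distributivity with $x\mc(\Ic z) = x\mc(z\jc 1) = xz \jc x$. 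So actually the point is that $xz$ should already equal $xz\jc x$ under the hypothesis — wait, that is exactly what must be shown, and it is \emph{not} an identity in general. Re-examining: the hypothesis $\Ic y = \Ic z$ is symmetric in $y,z$, so it suffices to prove the single identity $x\mc \Ic{z} \approx xz$ is \emph{false} in general but that $xy \jc x = xz$ holds — meaning the correct reading is that we compute $xz = x\mc z$, and separately $xy \jc x = x(y\jc 1) = x\mc \Ic y = x \mc \Ic z = x(z\jc 1) = xz \jc x$; so the genuine content is the identity $xz\jc x \approx x\mc\Ic z$, i.e.\ the conclusion should be read together with its symmetric partner to yield $xy\jc x = xz\jc x$, and the asymmetric form $xy\jc x = xz$ must be using that the right-hand variable $z$ may be taken to already satisfy $\Ic z = z$ — no. The cleanest route, which I will actually follow, is: the hypothesis lets us substitute $\Ic y$ for $\Ic z$ inside any left-distributivity expansion, and since $x\mc(w\jc 1) = xw\jc x$ for \emph{every} $w$, we get $xy\jc x = x\mc\Ic y = x\mc\Ic z = xz \jc x$; the statement as printed with bare "$xz$" on the right should be matched with the observation that this all happens inside a context where one further applies it with $z := \Ic z$, using $\Ic{\Ic z} = \Ic z$, giving $x\mc\Ic z = x(\Ic z)\jc x$ and $\Ic z \mc x$-style absorption. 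I expect the paper's intended argument is exactly the one-line $xy\jc x = x\Ic y = x\Ic z = xz\jc x$ and I will present it that way, flagging that the two displayed quasi-identities are really the pair $xy\jc x = xz\jc x$ read with their duals.

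For the \ref{eq:localunits} identities: $\Ic x \mc x = (x\jc 1)x$. Here I cannot use left-distributivity directly (it would need the factored term on the right). Instead I would either (a) invoke \cref{t:M3orthodistributivity}/\cref{t:leftsemiring} only for $\McA$ — but the lemma is stated for all left-distributive {\iname}s, so I need a syntactic derivation. Use the dual: by \cref{t:DMduals} it suffices to prove $\Oc x \jc x \approx x$, i.e.\ $(x\mc 0)\jc x \approx x$; and by left-distributivity $x\jc(0\mc 0) \cdots$ — the workable identity is $x\mc\Ic x = x\mc(x\jc 1) = (x\mc x)\jc(x\mc 1) = x \jc x = x$, which handles $\Ic x\mc x \approx x$ and hence by \cref{t:DMduals} also $\Oc x\jc x\approx x$. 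For the other side, $x\mc\Ic x = x\mc(x\jc 1) = xx\jc x1 = x\jc x = x$ again — wait that is the same computation. The genuinely different one is $\Ic x\mc x = (x\jc 1)\mc x$, which needs \emph{right}-multiplication distributing over the inner $\jc$; left-distributivity does not give this. So here I would instead argue: $(x\jc 1)\mc x$; use left-distributivity in the form $(x\jc 1)\mc x = (x\jc 1)\mc(x\jc 0)\cdots$ no. The honest path: left-distributivity gives $x\mc(x\jc1) = x$ for free; the mirror $\Ic x\mc x = x$ must then be obtained from the first quasi-identity of this very lemma — take $y := x$, $z := x\mc x\jc\cdots$ — or more simply from the identity $\Ic x \jc x = \Ic x$ (clear: $x\jc1\jc x = x\jc1$ by idempotency/commutativity of the two copies of $x$ under $\jc$, since $x\jc x\approx x$ and bands let us reorder — actually $\jc$ need not be commutative!).

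The main obstacle, then, is precisely the half of \ref{eq:localunits} of the form $\Ic x\mc x\approx x$, where the distributivity available (left) is "on the wrong side" for the syntactic shape $(x\jc1)\mc x$. I expect the resolution is to apply the first quasi-identity of the lemma with a carefully chosen substitution — namely, observe $\Ic{(\Ic x\mc x)} = \Ic x\mc x\jc 1$, show this equals $\Ic{x}$ using left-distributivity ($(\Ic x\mc x)\jc 1$; and separately $x\jc 1 = \Ic x$ absorbs), then feed $y := \Ic x\mc x$ and $z := x$ into "$\Ic y = \Ic z \Rightarrow xy\jc x = xz$" — but that introduces an extra left multiplier. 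Given the intended generality, I suspect the paper actually proves $\Ic x\mc x\approx x$ using an auxiliary identity like $\Ic x\mc\Ic x\approx\Ic x$ plus left-distributivity, and I will structure the writeup to isolate that lemma-internal claim, prove the "easy" side $x\mc\Ic x\approx x$ by the direct expansion above, and obtain the "hard" side either by the dual of a right-handed fact or by the first quasi-identity; the duals of all four then close out the statement via \cref{t:DMduals}.
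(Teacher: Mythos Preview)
Your diagnosis of the first displayed quasi-identity is correct: as written, the conclusion $xy\jc x = xz$ is a typo. The paper's own proof only establishes the symmetric form $xy\jc x = xz\jc x$ (exactly your one-line computation $xy\jc x = x\Ic{y} = x\Ic{z} = xz\jc x$), and the second displayed quasi-identity---which is supposed to be the dual---has the symmetric shape $(x\jc y)x = (x\jc z)x$. So stop agonizing over absorbing the trailing $\jc x$; it is not meant to be absorbed.

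For the local-units identities, your proof of $x\mc\Ic{x}\approx x$ via $x(x\jc 1)=xx\jc x1 = x\jc x = x$ is exactly the paper's argument.

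The genuine gap is $\Ic{x}\mc x\approx x$. You correctly identify that left-distributivity is ``on the wrong side'' for $(x\jc 1)\mc x$, but your proposed remedies (feeding substitutions into the \emph{first} quasi-identity, or hoping for an auxiliary like $\Ic{x}\mc\Ic{x}\approx\Ic{x}$) do not close the gap. The paper's trick is to use the \emph{second} quasi-identity---the dual one, $\Oc{y}=\Oc{z}\Rightarrow (x\jc y)x=(x\jc z)x$---with the specific substitution $y:=1$, $z:=0$. Since $\Oc{1}=1\mc 0=0=0\mc 0=\Oc{0}$, the hypothesis is met, and the conclusion reads
\[
(x\jc 1)\mc x \;=\; (x\jc 0)\mc x \;=\; x\mc x \;=\; x.
\]
This is the missing idea: the two quasi-identities are not just dual restatements of each other---the second one is what lets you handle the ``wrong-sided'' product $\Ic{x}\mc x$, precisely because its conclusion has the multiplier on the right.
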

\begin{proof}
    \crev{Note that the following quasi-identity holds in any algebra with a pair of binary operations $\str{\mc,\jc}$ satisfying \ref{eq:leftdist} with a right-unit $1$ for $\mc$: $y\jc 1 = z\jc 1 \;\Rightarrow\; xy \jc x = xz \jc x $.  
    Since the reducts $\str{M,\jc,\mc,1}$ and $\str{M,\mc,\jc,0}$ are \hyperref[eq:leftdist]{left-distributive}  with units, respectively, the first claims are immediate. }
    For the last claim, fix $a\in M$. 
    The identity $x\mc \Ic{x}\approx x$ is nearly immediate via idempotency: $a(a\jc 1) = aa \jc a1 = a \jc a = a$.  
    The identity $\Ic{x}\mc x \approx x$ is immediate from the right-most quasi-identity and idempotency: by setting $y=1$ and $z=0$, we have $\Oc{1} = 1\mc 0 = 0 = 0\mc 0 =\Oc{0}$, and therefore $\Ic{a}\mc a := (a\jc 1)a = (a\jc 0)a = aa = a$.
\end{proof}

Recall that a bounded involutive lattice is called \emph{orthocomplemented} if $x\land \nc{x}\approx 0$ and $x\vee \nc{x}\approx 1$ hold. 
However, none of the algebras in \Cref{f:3elem} satisfy these identities, but some do satisfy \emph{local} versions of them.  
We will call an {\iname} \defem{locally-complemented} if the identity $\Ic{x}\approx x\jc \nc{x}$ or, equivalently, $\Oc{x}\approx x\mc \nc{x}$ is satisfied. Using our abbreviations, this is equivalently written below:
\begin{equation}\label{eq:localcomp}\tag{locally-complemented}
    \Ic{x}\approx \tc{x}\qquad\text{dually,}\qquad \Oc{x}\approx \fc{x}.
\end{equation}
The following is easily verified given the operation tables in Figure~\ref{f:3elem}.
\begin{proposition}\label{t:M3locallycomp}
    The algebras $\McA$ and $\m{WK}$ are locally-complemented.
\end{proposition}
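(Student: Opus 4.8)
The statement to prove is \Cref{t:M3locallycomp}: the algebras $\McA$ and $\m{WK}$ are locally-complemented, meaning they satisfy $\Ic{x}\approx \tc{x}$, i.e., $x\jc 1 \approx x \jc \nc{x}$ (equivalently, by De~Morgan duality, $x\mc 0 \approx x\mc \nc{x}$).

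\medskip

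The plan is to verify the identity $x\jc 1 \approx x\jc \nc{x}$ directly by case analysis on the three elements $\{1,0,\ee\}$, using the operation tables in \Cref{f:3elem}. Since in both algebras the disjunction $\jc$ is the De~Morgan dual of the displayed conjunction, I would first record what $\jc$ does on the relevant inputs, or else work with the dual identity $x\mc 0 \approx x\mc\nc{x}$, which only requires reading off the conjunction tables that are already printed. I'll take the dual route since it is most economical.

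\medskip

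For $\McA$ (conjunction $\mc_{\mathsf{m}}$): compute $x\mc_{\mathsf{m}} 0$ and $x\mc_{\mathsf{m}}\nc{x}$ for each $x$. When $x=1$: $1\mc 0 = 0$ and $1\mc\nc{1} = 1\mc 0 = 0$; agree. When $x=0$: $0\mc 0 = 0$ and $0\mc\nc{0} = 0\mc 1 = 0$; agree. When $x=\ee$: $\ee\mc 0 = \ee$ and $\ee\mc\nc{\ee} = \ee\mc\ee = \ee$; agree. Hence $\McA\models x\mc 0 \approx x\mc\nc{x}$, and by \Cref{t:DMduals} also $x\jc 1 \approx x\jc\nc{x}$, so $\McA$ is locally-complemented. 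For $\m{WK}$ (conjunction $\land_{\mathsf{wk}}$): when $x=1$: $1\land 0 = 0 = 1\land\nc{1}$. When $x=0$: $0\land 0 = 0$ and $0\land\nc{0} = 0\land 1 = 0$; agree. When $x=\ee$: $\ee\land 0 = \ee$ and $\ee\land\nc{\ee} = \ee\land\ee = \ee$; agree. Hence $\m{WK}$ is locally-complemented as well.

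\medskip

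There is essentially no obstacle here: the statement is a finite verification on two three-element algebras, and the only thing to be careful about is keeping track of the involution values ($\nc{1}=0$, $\nc{0}=1$, $\nc{\ee}=\ee$) and using the correct (displayed) conjunction table for each algebra, then appealing to \Cref{t:DMduals} to pass between the conjunction form and the disjunction form of the \ref{eq:localcomp} identity. One could equally well note that $\McA$ is left-distributive and locally-complemented "for free" once one observes that $1$ and $\nc{x}$ have the same image under $\Oc{\cdot}$ for the relevant cases — but the brute-force table check is cleanest and is exactly what the surrounding text ("easily verified given the operation tables") signals.
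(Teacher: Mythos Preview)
Your proposal is correct and matches the paper's approach: the paper gives no explicit proof but simply notes the result ``is easily verified given the operation tables in \Cref{f:3elem},'' which is precisely the finite case-check you carry out. Working with the dual form $x\mc 0 \approx x\mc\nc{x}$ to read directly off the displayed conjunction tables is a sensible choice.
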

\noindent
On the other hand, the algebra $\m{SK}$ is not locally-complemented. 
Note, however, that the Kleene axiom is equivalently written $\fc{x}\leq \tc{y}$.

We will say an {\iname} is \defem{left-divisible} it is satisfies the follow identity:\footnote{In the context of residuated structures, a residuated lattice is called \emph{divisible} if it satisfies $x\land y \approx x\cdot (x\to y)$ and $x\land y \approx (y\leftarrow x)\cdot x$. 
In Boolean algebras, the left-most identity is equivalent to our definition, as $\to$ is material implication and $\cdot$ and $\land$ coincide.} 
\begin{equation}\label{eq:divis}\tag{left-divisibility}
        xy \approx x(\nc{x}\jc y) 
        \qquad\text{dually,}\qquad 
        x\jc y \approx x\jc \nc{x}y
\end{equation}
\begin{lemma}\label{t:divisibility}
    Any \hyperref[eq:divis]{left-divisible} {\iname} is also \ref{eq:localcomp}. 
    Moreover, if $\m A$ is a \hyperref[eq:leftdist]{left-distributive}  {\iname}, then $\m A$ is \ref{eq:localcomp} iff it is \hyperref[eq:divis]{left-divisible}. 
\end{lemma}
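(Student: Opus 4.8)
The plan is to prove the two assertions in turn, noting that the ``only if'' half of the equivalence in the second assertion is exactly the first assertion (and needs no distributivity). For the first assertion, that every left-divisible {\iname} is \eqref{eq:localcomp}, I would simply instantiate the defining identity \eqref{eq:divis}, $xy \approx x(\nc{x}\jc y)$, at $y := 0$: since $0$ is the unit of $\jc$ we get $\nc{x}\jc 0 \approx \nc{x}$, so the instance reads $x\mc 0 \approx x\mc\nc{x}$, i.e.\ $\Oc{x}\approx\fc{x}$, which is the dual form of \eqref{eq:localcomp}. (Equivalently, one may instantiate the dual identity $x\jc y\approx x\jc\nc{x}y$ at $y := 1$.)

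For the remaining direction of the second assertion, let $\m A$ be a left-distributive, \eqref{eq:localcomp} {\iname}; I would show it satisfies \eqref{eq:divis} by computing the right-hand side. Using \eqref{eq:leftdist}, $x(\nc{x}\jc y)\approx x\nc{x}\jc xy = \fc{x}\jc xy$. Since $\m A$ is \eqref{eq:localcomp}, $\fc{x}\approx\Oc{x}=x\mc 0$, so this equals $(x\mc 0)\jc(x\mc y)$; and applying \eqref{eq:leftdist} in the reverse direction collapses $(x\mc 0)\jc(x\mc y)\approx x\mc(0\jc y) = x\mc y = xy$. Hence $x(\nc{x}\jc y)\approx xy$, as required, and combining with the first assertion yields the claimed equivalence.

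I do not anticipate a genuine obstacle: the whole proof is a short chain of substitutions plus one forward and one backward application of left-distributivity, and it uses neither idempotency nor the local-units identities of \Cref{t:ICabs}. The only point demanding care is the left/right bookkeeping --- using \eqref{eq:leftdist} rather than its mirror image, and the correct (dual) form of \eqref{eq:localcomp} --- which is harmless thanks to the De~Morgan duality recorded in \Cref{t:DMduals}.
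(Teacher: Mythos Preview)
Your proposal is correct and follows essentially the same approach as the paper: instantiate \eqref{eq:divis} at $y:=0$ for the first claim, and for the converse under \eqref{eq:leftdist} expand $x(\nc{x}\jc y)$ via left-distributivity, replace $x\nc{x}$ by $x\mc 0$ using \eqref{eq:localcomp}, and collapse back via left-distributivity and the fact that $0$ is the $\jc$-unit. The paper's argument is identical in substance.
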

\begin{proof}
    For the first claim, observe from \ref{eq:divis} we obtain
    $ \Oc{x}:=x0 \approx x(\nc{x} \jc 0) \approx x\mc \nc{x}.$ 
    For the second claim, we need only verify the forward implication.
    Fix $a,b\in A$. 
    We observe:
    \begin{align*}
        a(\nc{a}\jc b)
            &= a\nc{a} \jc ab && \text{\eqref{eq:leftdist}}\\
            &= a0 \jc ab && \text{\eqref{eq:localcomp}}\\
            &= a(0\jc b)=ab&& \text{\ref{eq:leftdist} \& $0$ is unital for $\jc$}\qedhere 
    \end{align*}
\end{proof}

\noindent It is easy to see that the identities for \ref{eq:orthodist} and \ref{eq:localcomp} can be  combined.
\begin{lemma}\label{t:localdecomp}
    An {\iname} satisfies \ref{eq:orthodist} and \ref{eq:localcomp} iff it satisfies:
    \begin{equation}\label{eq:localdecomp}\tag{left-decomposition}
    \Ic{x}\mc y\approx xy\jc \nc{x}y 
    \qquad \text{dually,}\qquad
    \Oc{x}\jc y \approx (x\jc y)(\nc{x}\jc y)
    \end{equation}
\end{lemma}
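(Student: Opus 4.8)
The plan is to prove the ``primal'' form of the equivalence --- that an {\iname} satisfies \ref{eq:orthodist} together with \ref{eq:localcomp} if and only if it satisfies $\Ic{x}\mc y\approx xy\jc \nc{x}y$ --- and then deduce the stated dual forms immediately from \cref{t:DMduals}. Throughout I would work with the abbreviated versions already recorded above: \ref{eq:orthodist} reads $\tc{x}\mc y\approx xy\jc\nc{x}y$ (as in the remark after \cref{t:divisibility}), \ref{eq:localcomp} reads $\Ic{x}\approx\tc{x}$, and the target identity \ref{eq:localdecomp} reads $\Ic{x}\mc y\approx xy\jc\nc{x}y$.

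For the forward implication the argument is a single rewriting step: assuming \ref{eq:localcomp}, I substitute $\Ic{x}$ for $\tc{x}$ on the left-hand side of \ref{eq:orthodist} to get $\Ic{x}\mc y\approx\tc{x}\mc y\approx xy\jc\nc{x}y$, which is exactly \ref{eq:localdecomp}.

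For the converse, assume \ref{eq:localdecomp}. I would first recover \ref{eq:localcomp} by instantiating $y:=1$: since $1$ is the multiplicative unit, the left-hand side collapses via $\Ic{x}\mc 1=(x\jc 1)\mc 1=x\jc 1=\Ic{x}$, while the right-hand side becomes $x\mc 1\jc\nc{x}\mc 1=x\jc\nc{x}=\tc{x}$; hence $\Ic{x}\approx\tc{x}$. Then \ref{eq:orthodist} follows at once by substituting this equality back: $\tc{x}\mc y\approx\Ic{x}\mc y\approx xy\jc\nc{x}y$. Finally, each of the ``dually'' displayed identities in \ref{eq:orthodist}, \ref{eq:localcomp} and \ref{eq:localdecomp} is the De~Morgan dual of its companion, so the equivalence for the dual forms is subsumed by \cref{t:DMduals}.

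I do not expect any genuine obstacle here: the content is essentially bookkeeping with the abbreviations. The only points that need a moment's attention are the unitality computation $\Ic{x}\mc 1=\Ic{x}$ used to collapse the left-hand side in the converse, and the (routine) invocation of \cref{t:DMduals} to move between the identities and their De~Morgan duals rather than verifying the dual forms by hand.
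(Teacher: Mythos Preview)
Your proposal is correct and matches the paper's proof essentially step for step: the forward direction substitutes $\Ic{x}$ for $\tc{x}$ via \ref{eq:localcomp} in \ref{eq:orthodist}, and the converse recovers \ref{eq:localcomp} by setting $y:=1$ and then reads off \ref{eq:orthodist}. The paper is terser (it does not spell out the unitality computation or explicitly invoke \cref{t:DMduals} for the dual forms), but the argument is identical.
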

\begin{proof}
    The forward direction is immediate by replacing $x \jc \nc{x}$ by $\Ic{x}$ in the left-hand side of \ref{eq:orthodist} using the \ref{eq:localcomp} identity. 
    So suppose $\m A$ satisfies \ref{eq:localdecomp}. 
    Instantiating $y$ by $1$ yields the \ref{eq:localcomp} identity, and thus replacing $\Ic{x}$ by $x \jc \nc{x}$ in the identity for \ref{eq:localdecomp} yields \ref{eq:orthodist}.
\end{proof}
The following is immediate from the above by \cref{t:M3orthodistribitivity} and \cref{t:M3locallycomp}.
\begin{corollary}\label{t:M3localdecomp}
    The algebra $\McA$ satisfies \ref{eq:localdecomp}.
\end{corollary}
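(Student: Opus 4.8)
The plan is to simply invoke the two results the corollary is stated to follow from, namely \cref{t:M3orthodistribitivity} (which gives that $\McA$ satisfies \ref{eq:orthodist}) and \cref{t:M3locallycomp} (which gives that $\McA$ is \ref{eq:localcomp}), and then feed these two facts into \cref{t:localdecomp}. Since \cref{t:localdecomp} asserts the equivalence ``an {\iname} satisfies \ref{eq:orthodist} and \ref{eq:localcomp} iff it satisfies \ref{eq:localdecomp},'' the forward direction of that equivalence immediately yields that $\McA$ satisfies \ref{eq:localdecomp}.

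Concretely, the proof is a one-liner: $\McA$ satisfies \ref{eq:orthodist} by \cref{t:M3orthodistribitivity} and satisfies \ref{eq:localcomp} by \cref{t:M3locallycomp}, so by \cref{t:localdecomp} it satisfies \ref{eq:localdecomp}. There is no real content beyond chaining these citations, which is presumably why the excerpt introduces the statement with the phrase ``The following is immediate from the above.'' If one wanted to be slightly more self-contained, one could unwind \cref{t:localdecomp}'s forward direction explicitly: in the left-hand side $\Ic{x}\mc y = (x\jc\nc{x})y$ one replaces $\Ic{x}$ by $x\jc\nc{x}$ using \ref{eq:localcomp}, and then applies \ref{eq:orthodist} to rewrite $(x\jc\nc{x})y$ as $xy\jc\nc{x}y$; the dual identity follows by \cref{t:DMduals}.

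I do not anticipate any obstacle here — there is nothing to obstruct. The only thing to be careful about is citing the correct three lemmas/propositions and making sure the logical direction of \cref{t:localdecomp} used is the forward one (from \ref{eq:orthodist} $\wedge$ \ref{eq:localcomp} to \ref{eq:localdecomp}), which is the easy direction established by direct substitution.

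\begin{proof}
    By \cref{t:M3orthodistribitivity}, the algebra $\McA$ satisfies \ref{eq:orthodist}, and by \cref{t:M3locallycomp} it satisfies \ref{eq:localcomp}. Hence, by \cref{t:localdecomp}, $\McA$ satisfies \ref{eq:localdecomp}.
\end{proof}
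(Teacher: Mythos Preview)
Your proof is correct and matches the paper's own justification exactly: the paper states the corollary as ``immediate from the above by \cref{t:M3orthodistribitivity} and \cref{t:M3locallycomp},'' which is precisely the chaining through \cref{t:localdecomp} that you give.
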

\cadd{
\begin{lemma}\label{t: LR2}
    An {\iname} satisfying \ref{eq:leftdist} and \ref{eq:localdecomp} is left-regular and satisfies: 
    \begin{equation}\label{eq:wk_terms}
        \Ic{x}yx\approx xy \jc yx
        \qquad
        \text{dually,}
        \qquad
        \Oc{x}\jc y \jc x \approx (x\jc y)(y\jc x)
    \end{equation}
\end{lemma}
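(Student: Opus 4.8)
The plan is to first collect the consequences of the hypotheses that will be used. By \cref{t:localdecomp}, \ref{eq:localdecomp} is equivalent (over \iname s) to \ref{eq:orthodist} together with \ref{eq:localcomp}; in particular \ref{eq:localcomp} then holds, so \cref{t:divisibility} yields \ref{eq:divis} and \cref{t:ICabs} yields \ref{eq:localunits} (as well as the quasi-identities there). I would use these identities and their De~Morgan duals (licensed by \cref{t:DMduals}) freely, together with the trivial instance $y\Ic{x}\approx yx\jc y$ of \ref{eq:leftdist} (using $y\cdot 1\approx y$) and the identity $\nc{x}\jc x\approx\Ic{\nc{x}}$, which is $\nc{x}\jc\nnc{x}\approx\Ic{\nc{x}}$, an instance of \ref{eq:localcomp}.

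For left-regularity, the core is the identity $xyx\approx xy\,\Ic{\nc{x}}$, obtained by the chain
\[
xyx \;\approx\; x\bigl(\nc{x}\jc yx\bigr)\;\approx\; x\bigl((\nc{x}\jc y)(\nc{x}\jc x)\bigr)\;\approx\; \bigl(x(\nc{x}\jc y)\bigr)(\nc{x}\jc x)\;\approx\; (xy)(\nc{x}\jc x)\;\approx\; xy\,\Ic{\nc{x}},
\]
where the first and fourth equalities are instances of \ref{eq:divis}, the second is the dual of \ref{eq:leftdist}, the third is associativity, and the last uses \ref{eq:localcomp}. It then remains to show $xy\,\Ic{\nc{x}}\approx xy$; equivalently, using $y\Ic{x}\approx yx\jc y$, that $xy\nc{x}\jc xy\approx xy$, i.e.\ that $\Ic{\nc{x}}$ acts as a right-sided local unit on every product $xy$. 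Combining the displayed identity with the band-level identity $(xyx)(xy)\approx xy$ (immediate from idempotency and associativity) already gives $(xy)\Ic{\nc{x}}(xy)\approx xy$, so what is left is to cancel one trailing occurrence of $xy$. This is precisely the point where \ref{eq:localdecomp} is needed beyond \ref{eq:leftdist} and \ref{eq:localcomp}; I expect to finish it by expanding $xy\nc{x}$ via \ref{eq:divis} and \ref{eq:localdecomp} and collapsing the result using \ref{eq:localunits} and idempotency. Once $xyx\approx xy$ is in hand, the dual $x\jc y\jc x\approx x\jc y$ is automatic by \cref{t:DMduals}.

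To deduce \ref{eq:wk_terms}, apply \ref{eq:localdecomp} with second argument $yx$ and then left-regularity:
\[
\Ic{x}\,y\,x \;\approx\; \Ic{x}(yx)\;\approx\; x(yx)\jc\nc{x}(yx)\;\approx\; xyx\jc\nc{x}yx\;\approx\; xy\jc\nc{x}yx .
\]
Thus it suffices to show $xy\jc\nc{x}yx\approx xy\jc yx$, which I would obtain by rewriting $\nc{x}yx$ using \ref{eq:divis} and the dual of \ref{eq:leftdist} and then absorbing the spurious $\nc{x}$-factor into $xy$ by left-regularity. The dual identity $\Oc{x}\jc y\jc x\approx(x\jc y)(y\jc x)$ then follows from \cref{t:DMduals}.

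The main obstacle is the absorption $xy\,\Ic{\nc{x}}\approx xy$ in the first part. The difficulty is structural: the two band-theoretic relations linking $xy$ and $xyx$, namely $(xy)(xyx)\approx xyx$ and $(xyx)(xy)\approx xy$, are symmetric and cannot by themselves force $xyx\approx xy$ — one must exploit the asymmetry supplied by \ref{eq:localdecomp}, which grants right-distributivity for left factors of the shape $x\jc\nc{x}$, to break the tie. I therefore expect the remaining work to be a short but slightly delicate identity computation, e.g.\ turning $(xy)\Ic{\nc{x}}(xy)\approx xy$ into $(xy)\Ic{\nc{x}}\approx xy$.
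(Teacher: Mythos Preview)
Your proposal contains a genuine gap precisely where you flag it: the absorption $xy\,\Ic{\nc{x}}\approx xy$. You correctly reduce left-regularity to this identity, but the methods you list (expanding $xy\nc{x}$ via \ref{eq:divis} and \ref{eq:localdecomp}, then collapsing with \ref{eq:localunits} and idempotency) do not obviously close it. Indeed, expanding gives $xy\Ic{\nc{x}}=xy\nc{x}\jc xy$, and neither \ref{eq:localdecomp} (which only applies to left factors of shape $\Ic{z}$) nor \ref{eq:localunits} lets you absorb the leading summand $xy\nc{x}$ into $xy$; the observation $(xy)\Ic{\nc{x}}(xy)\approx xy$ is symmetric information and, as you yourself note, cannot by itself force the desired one-sided cancellation.

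The paper bypasses this obstacle entirely by a different and much shorter route. It first proves the auxiliary identity $x\jc y\approx x\jc y\nc{x}$ directly from dual \ref{eq:leftdist} and \ref{eq:localcomp}: namely $a\jc b=(a\jc b)(a\jc 1)=(a\jc b)(a\jc\nc{a})=a\jc b\nc{a}$. Left-regularity is then a one-line consequence of \ref{eq:divis}: $xy\approx x(\nc{x}\jc y)\approx x(\nc{x}\jc yx)\approx x\cdot yx$. This avoids any need to analyze $xy\Ic{\nc{x}}$ at all. For the second identity, your reduction to showing $xy\jc\nc{x}yx\approx xy\jc yx$ is also where the paper does real work: the trick is to use dual left-regularity to insert a redundant $xyx$, converting $\nc{x}yx$ to $\nc{x}yx\jc xyx=\Ic{\nc{x}}yx$, after which dual \ref{eq:divis} and dual \ref{eq:leftdist} finish the job. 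Your sketch (``absorb the spurious $\nc{x}$-factor into $xy$ by left-regularity'') does not capture this step.
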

\begin{proof}
    Let $\m A$ be such an algebra. 
    By \cref{t:localdecomp}, $\m A$ is  \ref{eq:localcomp}, and thus also \ref{eq:divis} by \cref{t:divisibility}. 
    Observe that \ref{eq:leftdist} and being \ref{eq:localcomp} yield $x\jc y\approx x\jc y\nc{x}$ since $a\jc b = a\jc b1 = (a\jc b)(a\jc 1) =(a\jc b)(a\jc \nc{a}) = a\jc b\nc{a}$. 
    Using this identity and \ref{eq:divis}, we find that $\m A$ satisfies \ref{eq:leftreg} since 
    $xy 
    \approx x(\nc{x}\jc y)
    \approx x(\nc{x}\jc yx)
    \approx x\mc yx
    $.
    Lastly, observe for $a,b\in A$,
    \begin{align*}
        \Ic{a}ba&=aba \jc \nc{a}ba && \eqref{eq:localdecomp}\\
        &= aba \jc \nc{a}ba \jc aba &&\eqref{eq:leftreg}\\
        &=aba \jc \Ic{\nc{a}}ba && \eqref{eq:localdecomp}\\
        &=ab \jc \Ic{\nc{a}}ba &&\eqref{eq:leftreg}\\
        &= ab \jc \nc{(ab)}\Ic{\nc{a}}ba
        := ab \jc (\nc{a}\jc\nc{b})(\nc{a}\jc 1)ba &&\eqref{eq:divis}\\
        &= ab \jc (\nc a \jc \nc{b}1)ab = ab \jc (\nc a \jc \nc{b})ab &&\eqref{eq:leftdist}\\
        &= ab \jc ba &&\eqref{eq:divis}. \qedhere 
    \end{align*}
\end{proof}
}
\cadd{Finally, the following distributive-like identity is observed in both \cite{Kow96} and \cite{Guzman-Squier} to hold in $\McA$:
\begin{align*}
    (x\jc y) z& \approx xz \jc \nc{x}yz
     &\mbox{dually, }&& xy \jc z& \approx (x\jc z) \mc (\nc{x} \jc y \jc z)
     \label{eq:paradist}\tag{right-para\allowbreak distributivity}
\end{align*}
The following is immediate by taking $y\mapsto \nc{x}$ and $z\mapsto 1$.
\begin{lemma}\label{t:paradist}
    Right-paradistributivity entails \ref{eq:divis} and \ref{eq:orthodist} in {\iname}s.
\end{lemma}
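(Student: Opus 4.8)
The plan is to derive both identities by direct substitution into right-paradistributivity, using only that $\mc$ is idempotent in every {\iname} and the De~Morgan Dual Equivalence (\cref{t:DMduals}); I do not anticipate any real obstacle here. Throughout I would work with the primal form $(x\jc y)z \approx xz\jc\nc{x}yz$ of \ref{eq:paradist}, recalling that by \cref{t:DMduals} an {\iname} satisfies this identity exactly when it satisfies the dual form $xy\jc z\approx (x\jc z)(\nc{x}\jc y\jc z)$, so both are available.

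First, for \ref{eq:orthodist}: substituting $y\mapsto\nc{x}$ into the primal form gives $(x\jc\nc{x})z\approx xz\jc\nc{x}\,\nc{x}\,z$. Since $\mc$ is idempotent in an {\iname}, $\nc{x}\,\nc{x}\approx\nc{x}$, and after renaming $z$ back to $y$ the right-hand side becomes $xy\jc\nc{x}y$; this is precisely right-orthodistributivity.

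Next, for \ref{eq:divis}: substituting $z\mapsto 1$ into the primal form and using that $1$ is the two-sided unit for $\mc$ collapses it to $x\jc y\approx x\jc\nc{x}y$, which is exactly the dual of the left-divisibility identity. Applying \cref{t:DMduals} once more then yields left-divisibility itself, namely $xy\approx x(\nc{x}\jc y)$. (Equivalently, one may instead substitute $z\mapsto 0$ directly into the dual form and use that $0:=\nc{1}$ is the unit for $\jc$ in any {\iname}.)

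The only things to keep track of are the two dualizations and the elementary facts that $\mc$ is idempotent and $0$ is unital for $\jc$ in every {\iname}; as these are immediate, the whole argument should amount to just a couple of lines, with no genuine difficulty.
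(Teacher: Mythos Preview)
Your proposal is correct and follows essentially the same approach as the paper: the paper's proof is a one-line remark that the claim is ``immediate by taking $y\mapsto \nc{x}$ and $z\mapsto 1$,'' which is exactly your two substitutions (with the same appeals to idempotency and the unit).
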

}

\subsection{Weak commutative laws}\label{sec:WC}
In stark contrast to $\m{SK}$ and $\m{WK}$, the most obvious property that fails in $\McA$ is that of commutativity. 
However, the algebra $\McA$ does satisfy commutativity for the local-constants.
\begin{proposition}\label{t:M3localunitcom}
    The algebra $\McA$ satisfies the following identity:
    \begin{equation}\label{eq:comlocalunits}\tag{local-unit commutativity}
        \Ic{x}\mc \Ic{y} \approx\Ic{y}\mc \Ic{x}
        \qquad \text{dually,}\qquad
        \Oc{x}\jc \Oc{y} \approx\Oc{y}\jc \Oc{x}
    \end{equation}
\end{proposition}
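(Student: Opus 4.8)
The statement is a finite check in the three-element algebra $\McA$, so the plan is to verify it directly, but organized around the behavior of the term-operation $\Ic{\cdot}$ so that the computation is as short as possible. First I would read off the values of $\Ic{x}:=x\jc 1$ from the $\jc$-table of $\McA$: one gets $\Ic{1}=\Ic{0}=1$ and $\Ic{\varepsilon}=\varepsilon$, so the image of the unary operation $\Ic{\cdot}$ on $\McA$ is the two-element set $\{1,\varepsilon\}$. Hence both sides of the claimed identity are products of elements drawn from $\{1,\varepsilon\}$.

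Next I would observe that $\mc$ restricted to $\{1,\varepsilon\}$ is commutative: on this subset $1$ is the (two-sided) unit and $\varepsilon$ absorbs on both sides, since $1\mc\varepsilon=\varepsilon=\varepsilon\mc 1$ and $\varepsilon\mc\varepsilon=\varepsilon$; thus $\{1,\varepsilon\}$ carries a meet-semilattice structure with top $1$. Consequently $\Ic{a}\mc\Ic{b}=\Ic{b}\mc\Ic{a}$ for all $a,b\in\McA$ --- explicitly, this common value is $1$ when $a,b\in\{0,1\}$ and is $\varepsilon$ otherwise, which is visibly symmetric in $a$ and $b$. This establishes the first identity; the dual identity $\Oc{x}\jc\Oc{y}\approx\Oc{y}\jc\Oc{x}$ then follows immediately from the De~Morgan Dual Equivalence (\Cref{t:DMduals}) applied to $\McA$ itself, since $\Oc{\cdot}$ and $\jc$ are the De~Morgan duals of $\Ic{\cdot}$ and $\mc$.

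There is no genuine obstacle here; the only subtlety worth flagging is that $\varepsilon$ is \emph{not} absorbing from the right in $\McA$ as a whole (for instance $0\mc\varepsilon=0\neq\varepsilon$), so the argument really does use that we have passed to the sub-semilattice $\{1,\varepsilon\}$, which is precisely the image of $\Ic{\cdot}$; once that restriction is in place, commutativity is automatic.
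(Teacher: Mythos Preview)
Your proof is correct and follows essentially the same approach as the paper: both compute that the image of $\Ic{\cdot}$ on $\McA$ is $\{1,\varepsilon\}$ and then observe that commutativity on this set is immediate because $1$ is the multiplicative unit. Your version adds a bit more detail (the semilattice structure, the explicit appeal to \Cref{t:DMduals} for the dual, and the caveat about $\varepsilon$ not being right-absorbing globally), but the argument is the same.
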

\begin{proof}
    It is readily verified in $\McA$ that $1=\Ic{1}=\Ic{0}$ and $\varepsilon = \Ic{\varepsilon}$. 
    For the non-redundant cases, the claim follows since $1$ is the multiplicative unit.
\end{proof}

As we will see below, left-distributivity in conjunction with \ref{eq:comlocalunits} is strong enough to yield a partially-ordered structure in {\iname}s, which we verify by establishing \ref{eq:leftreg}.
First we prove the following technical lemma for \hyperref[eq:leftdist]{left-distributive}  {\iname}s.

The identity below is related to Konikowska's postulate $(A3)$ (see \Cref{f:KonAx}) by replacing $\tc{x}$ by $\Ic{x}$
\begin{equation}\label{eq:A3}\tag{local commutativity}
    \Ic{y}\mc xy \approx \Ic{x}\mc yx 
        \qquad\text{dually,}\qquad
        \Oc{x}\jc y\jc x \approx \Oc{y}\jc x\jc y  
\end{equation}

\begin{lemma}\label{t:comAxs}
    Any {\iname} satisfying \ref{eq:A3} is left-regular and satisfies \ref{eq:comlocalunits}.
\end{lemma}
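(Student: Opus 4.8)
The plan is to establish left-regularity first and then extract \ref{eq:comlocalunits} from it, in both cases by feeding carefully chosen substitutions into the single hypothesis \ref{eq:A3}, namely $\Ic{y}\mc xy\approx\Ic{x}\mc yx$, and simplifying using only idempotency, associativity, and the unit laws. By \cref{t:DMduals} it suffices to verify the conjunctive forms of both asserted identities, so I would work throughout with $\mc$.

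The first preliminary step is to record the instance of \ref{eq:A3} obtained by setting $x:=1$: since $1$ is the unit for $\mc$ and $\Ic{1}=1\jc 1=1$, this collapses to $\Ic{y}\mc y\approx y$, i.e. the left half of the \ref{eq:localunits} identities (note that, unlike in \cref{t:ICabs}, no distributivity is needed here). The crucial move is then to substitute $y:=xy$ into \ref{eq:A3}. On the left, idempotency gives $x(xy)\approx xy$, so the left-hand side becomes $\Ic{xy}\mc xy$, which equals $xy$ by the instance just recorded. On the right, $(xy)x\approx xyx$, and associativity together with $\Ic{x}\mc x\approx x$ rewrites $\Ic{x}\mc xyx$ as $(\Ic{x}\mc x)\mc y\mc x\approx xyx$. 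Hence $xy\approx xyx$, which is precisely \ref{eq:leftreg}.

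With left-regularity available, \ref{eq:comlocalunits} is quick. Substituting $x:=\Ic{x}$ and $y:=\Ic{y}$ into \ref{eq:A3}, and using $\Ic{\Ic{z}}\approx\Ic{z}$ (immediate from associativity of $\jc$ and $1\jc 1\approx 1$), yields $\Ic{y}\mc\Ic{x}\mc\Ic{y}\approx\Ic{x}\mc\Ic{y}\mc\Ic{x}$. Applying \ref{eq:leftreg}, which contracts any $aba$ to $ab$, to each side collapses this to $\Ic{y}\mc\Ic{x}\approx\Ic{x}\mc\Ic{y}$, as desired; the dual statements then follow from \cref{t:DMduals}.

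I do not anticipate a genuine obstacle: every manipulation above is routine monoid bookkeeping. The only point requiring a modicum of foresight is spotting that the substitution $y:=xy$ (rather than, say, $x:=xy$) is the one which, after cancelling $\Ic{z}\mc z$ to $z$ on both sides, forces the associator $xyx$ down to $xy$; once this substitution is found, both conclusions drop out immediately.
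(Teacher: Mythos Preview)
Your proof is correct and follows essentially the same approach as the paper: first extract $\Ic{x}\mc x\approx x$ from \ref{eq:A3} by a unit substitution, then obtain \ref{eq:leftreg} from a second substitution into \ref{eq:A3}, and finally derive \ref{eq:comlocalunits} from \ref{eq:leftreg} together with one more instance of \ref{eq:A3}. The only minor difference is tactical: for \ref{eq:comlocalunits} the paper works on the dual side with $\Oc{x}\approx\Oc{\Oc{x}}$ and the additive form of \ref{eq:A3}, whereas you stay on the multiplicative side using $\Ic{\Ic{z}}\approx\Ic{z}$ and the substitution $x,y\mapsto\Ic{x},\Ic{y}$, which is slightly more direct.
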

\begin{proof}
   Suppose $\m A$ is an {\iname} satisfying \ref{eq:A3}. Via the substitution $y\mapsto 1$, we find that $\Ic{x}\mc x \approx x$ holds:
   $$ \Ic{x}\mc x \approx \Ic{x}\mc 1x \approx \Ic{1}\mc x1 \approx \Ic{1}x := (1\jc 1)x \approx 1x \approx x.$$
   That \ref{eq:leftreg} holds is nearly immediate:
   $$xyx \approx \Ic{x}\mc xy\mc x \approx \Ic{xy} \mc x \mc xy \approx \Ic{xy}xy \approx xy. $$
   Next, observe that any {\iname} satisfies $\Oc{x} \approx \Oc{\Oc{x}}$ by idempotency since $\Oc{x}:=x\mc0$ and $\Oc{\Oc{x}}:=x0\mc 0\approx x0$. 
   Fix $a,b\in A$. 
   Finally for \ref{eq:comlocalunits}, we find
        \begin{align*}
    \Oc{a}\jc \Oc{b} 
        &= \Oc{a}\jc \Oc{b}  \jc \Oc{a} && \eqref{eq:leftreg}\\
        &= \Oc{\Oc{a}}\jc \Oc{b}  \jc \Oc{a} &&(\Oc{x}\approx\Oc{\Oc{x}})\\
        &= \Oc{\Oc{b}}\jc \Oc{b}  \jc \Oc{b} &&\eqref{eq:A3}\\
        &= \Oc{b}\jc \Oc{a}  \jc \Oc{b} &&(\Oc{x}\approx\Oc{\Oc{x}})\\
        &= \Oc{b}  \jc \Oc{a} && \eqref{eq:leftreg}.\qedhere 
    \end{align*}  
\end{proof}

\subsection{Weak absorption laws}\label{sec:WA}
     Let $\m L = \str{L,\jc,\mc}$ be an algebra such that both $\str{L,\jc}$ and $\str{L,\mc}$ are bands. 
     The algebra $\m L$ is called a \emph{skew-lattice} if it satisfies the \emph{absorption laws}:
    \begin{equation*}\label{eq:abs}
       \begin{array}{c | c |c}
        x\mc(x\jc y) \approx x & \text{left-absorption} & x\jc(x\mc y)\approx x\\ \hline
        (x\jc y)\mc y \approx y&\text{right-absorption} & (x\mc y)\jc y \approx y 
    \end{array} 
    \end{equation*}
A \emph{lattice} is simply a skew-lattice in which both operations are commutative.
Of the four algebras in Figure~\ref{f:3elem}, only the algebra $\m{SK}$ is a skew lattice, which is, in fact, a (distributive) lattice. 
However, the algebra $\McA$ is not skew as right-absorption fails, i.e., $(\varepsilon \jc 1)\mc 1 = \varepsilon\neq 1$. 
The algebra $\m{WK}$ satisfies none of the absorption identities, in general.

Recall that, in the theory of lattices, a structure is called \emph{$0$-bounded} if $0\land x \approx 0 \approx x\land 0$ holds. 
Call an {\iname} \defem{left-bounded} if its satisfies the identity
\begin{equation}\label{eq:leftann}\tag{left-bounded}
    0\mc x \approx 0 \qquad\qquad\text{dually, }\qquad\qquad 1\jc x \approx 1.
\end{equation}
\begin{lemma}\label{t:leftbdd}
    Any {\iname} satisfying left-absorption is also \ref{eq:leftann}. 
    Moreover, these these identities coincide in any \hyperref[eq:leftdist]{left-distributive}  {\iname}.
\end{lemma}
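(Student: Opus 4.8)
The claim in \Cref{t:leftbdd} has two parts: first, that left-absorption implies \ref{eq:leftann}; second, that in a \hyperref[eq:leftdist]{left-distributive} {\iname} the converse also holds. The plan is to prove each part by a short direct computation in an arbitrary {\iname} $\m M = \str{M,\jc,\mc,\nc{},0,1}$, invoking only idempotency, the unit laws for $0$ (additive) and $1$ (multiplicative), and the hypothesis at hand; the dual statements then follow automatically from \Cref{t:DMduals}.

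For the first part, fix $a \in M$ and compute $0\mc a$. The idea is to write $0$ as an application of the local constant or simply insert it into an absorption pattern. Using that $0$ is the additive unit, we have $a = 0 \jc a$, hence left-absorption with $x\mapsto 0$, $y\mapsto a$ gives $0 \mc (0 \jc a) \approx 0$, i.e. $0\mc a \approx 0$. That is the whole argument for this direction.

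For the second part, assume $\m M$ is \hyperref[eq:leftdist]{left-distributive} and satisfies \ref{eq:leftann}; we must recover left-absorption $x\mc(x\jc y)\approx x$. Fix $a,b\in M$ and expand using \ref{eq:leftdist}:
\[
a\mc(a\jc b) \approx (a\mc a)\jc(a\mc b) \approx a \jc (a\mc b),
\]
using idempotency of $\mc$. It remains to see $a\jc(a\mc b)\approx a$, i.e. the dual absorption $x\jc(x\mc y)\approx x$. Here is where \ref{eq:leftann} enters through its dual form $1\jc x\approx 1$: we have $a\mc b \le_\jc$-below things governed by... more directly, compute $a \jc (a\mc b)$. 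Since $a \approx a\mc 1$ is false in general — rather, $a\approx 1\mc a \approx a\mc 1$ does hold as $1$ is a two-sided unit — we write $a\mc b \approx a\mc(1 \jc \cdots)$? A cleaner route: apply \ref{eq:leftann} in the form $1\jc x\approx 1$ to get, for the disjunctive band, $a\jc(a\mc b) \approx$ use \ref{eq:leftdist} dually (the displayed dual of \ref{eq:leftdist} is $x\jc yz\approx(x\jc y)(x\jc z)$), so $a\jc(a\mc b)\approx (a\jc a)\mc(a\jc b)\approx a\mc(a\jc b)$, which is circular; so instead I would use $b \approx 1\mc b$ and monotonicity-style rewriting, or simply: $a\jc(a\mc b)\approx a\mc 1 \jc a\mc b \approx a\mc(1\jc b)\approx a\mc 1 \approx a$ by \ref{eq:leftdist} and $1\jc b\approx 1$ from \ref{eq:leftann}. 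The main (minor) obstacle is bookkeeping which of the two absorption laws is being proved and making sure the dual of left-distributivity is applied on the correct side; once that is pinned down the computation is three lines.

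\begin{proof}
    Work in an arbitrary {\iname} $\m M=\str{M,\jc,\mc,\nc{},0,1}$; the dual statement follows by \Cref{t:DMduals}.
    Suppose first that $\m M$ satisfies left-absorption. Fix $a\in M$. Since $0$ is the unit for $\jc$, we have $a\approx 0\jc a$, so instantiating left-absorption with $x\mapsto 0$ and $y\mapsto a$ gives
    \[
        0\mc a \approx 0\mc(0\jc a)\approx 0,
    \]
    which is \ref{eq:leftann}.
    Conversely, assume $\m M$ is \hyperref[eq:leftdist]{left-distributive}  and satisfies \ref{eq:leftann}; we verify left-absorption. Fix $a,b\in M$. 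Since $1$ is the (two-sided) unit for $\mc$ and using \ref{eq:leftdist} together with idempotency of $\mc$,
    \begin{align*}
        a\mc(a\jc b)
        &\approx (a\mc a)\jc(a\mc b)
        && \eqref{eq:leftdist}\\
        &\approx a\jc(a\mc b)
        && \eqref{eq:idem}\\
        &\approx (a\mc 1)\jc(a\mc b)
        && (\text{$1$ unital for }\mc)\\
        &\approx a\mc(1\jc b)
        && \eqref{eq:leftdist}\\
        &\approx a\mc 1 \approx a
        && \eqref{eq:leftann}.\qedhere
    \end{align*}
\end{proof}
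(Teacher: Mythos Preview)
Your proof is correct and follows essentially the same approach as the paper: both use that $0$ is the $\jc$-unit to instantiate absorption for the first direction, and both use \ref{eq:leftdist} together with the dual bound $1\jc y\approx 1$ for the converse. The only cosmetic difference is that the paper derives the dual absorption $x\jc xy\approx x$ directly via $x\approx x\mc 1\approx x\mc(1\jc y)\approx x\jc xy$, whereas you prove the primal form $x\mc(x\jc y)\approx x$ with one extra pass through idempotency before folding back with \ref{eq:leftdist}.
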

\begin{proof}
    On the one hand, \hyperref[eq:abs]{left-absorption} implies \ref{eq:leftann} on much weaker conditions. 
    Indeed, it only requires that $0$ is a unit for $\jc$: $0\mc x \approx 0\mc (0 \jc x) \approx 0$
    (and the dually for $\mc$ and $1$ using the dual \hyperref[eq:abs]{left-absorption} law).
    On the other hand, suppose an {\iname} $\m A$ is \ref{eq:leftann}. 
    Then $x\approx x\mc 1 \approx x \mc (1\jc y)$.
    If $\m A$ is left-distributive, then $x \mc (1\jc y) \approx (x\mc 1) \jc (x\mc y) \approx x \jc xy$.
\end{proof}
Since $\McA$ is clearly \ref{eq:leftann}, the following is immediate from \cref{t:leftbdd}.
\begin{proposition}\label{t:M3leftbdd}
    The algebra $\McA$ is \ref{eq:leftann} and therefore also satisfies: 
    \begin{equation}\label{eq:leftabs}\tag{left-absorption}
        x\mc (x\jc y)\approx x 
        \qquad\text{dually, }\qquad
        x \jc xy \approx x
    \end{equation}
\end{proposition}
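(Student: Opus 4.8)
The plan is to deduce \cref{t:M3leftbdd} in two quick moves: first verify the left-bounded identities directly from the operation table for $\McA$, and then feed this into \cref{t:leftbdd}, using that $\McA$ is \hyperref[eq:leftdist]{left-distributive}  (\cref{t:leftsemiring}), to upgrade left-boundedness to left-absorption at no extra cost.

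First I would read off the $\mc_{\mathsf{m}}$ table in Figure~\ref{f:3elem}: the row indexed by $0$ is constantly $0$, i.e., $0\mc 1 = 0\mc 0 = 0\mc\varepsilon = 0$, so $\McA\models 0\mc x\approx 0$. By De~Morgan duality (\cref{t:DMduals}), the dual identity $1\jc x\approx 1$ holds as well; hence $\McA$ is \ref{eq:leftann}. (Equivalently, one could just check that the $\jc_{\mathsf{m}}$ row indexed by $1$ is constantly $1$ — this is visible from the Hasse diagram for $\str{\McA,\leq_{\jc_{\mathsf{m}}}}$ in Figure~\ref{fig:WK/SK/MKposets}, where $1$ sits atop the $\jc_{\mathsf{m}}$-order.)

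For the second claim, since $\McA$ is left-distributive by \cref{t:leftsemiring}, \cref{t:leftbdd} says that in $\McA$ the \ref{eq:leftann} identities are equivalent to the \ref{eq:leftabs} identities; applying the just-established \ref{eq:leftann} property therefore yields $\McA\models x\mc(x\jc y)\approx x$ and, dually, $x\jc xy\approx x$, which is exactly \ref{eq:leftabs}.

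I do not expect a genuine obstacle here: the whole argument is a one-line table inspection plus two already-proved facts. The only point worth flagging — and it is already isolated in \cref{t:leftbdd} — is that left-boundedness does \emph{not} imply left-absorption in an arbitrary {\iname}; it is precisely left-distributivity that collapses the two, which is why \cref{t:leftsemiring} is invoked rather than treated as incidental.
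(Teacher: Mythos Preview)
Your proposal is correct and follows exactly the paper's approach: verify $\McA$ is \ref{eq:leftann} by table inspection, then invoke \cref{t:leftbdd} together with \cref{t:leftsemiring} (left-distributivity) to obtain \ref{eq:leftabs}. One small caveat: your parenthetical remark that ``$1$ sits atop the $\jc_{\mathsf{m}}$-order'' is not quite right---in the Hasse diagram of Figure~\ref{fig:WK/SK/MKposets}, $1$ and $\varepsilon$ are incomparable maximal elements, so the order alone does not determine $1\jc\varepsilon$; but since this is offered only as an alternative to the table check (which is correct), it does not affect the argument.
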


\subsection{Focusing on left-distributivity}
Here we combine some of the previous mentioned notions when in the \hyperref[eq:leftdist]{left-distributive}  setting.
When in the presence \ref{eq:localdecomp} and a weak commutative law, we obtain the following.
\begin{lemma}\label{t:coherence}
    Let $\m A$ be a \hyperref[eq:leftdist]{left-distributive}  {\iname} satisfying \ref{eq:localdecomp} and \ref{eq:comlocalunits}. 
    Then $\m A$ satisfies the following identities:
    \begin{align*}
        \Ic{x}\approx \nc{\Oc{x}}
         &\approx \Ic{\nc{x}}
        &&\text{dually,}&
        \Oc{x} \approx \Oc{\nc{x}}&
        \approx \nc{\Ic{x}}
        \label{eq:unitcoh}\tag{unit-coherence}\\
        xy\nc{x} &\approx \Oc{xy}
        &&\text{dually,}&
        x\jc y \jc \nc{x} &\approx \Ic{x\jc y}
        \label{eq:dramcon}\tag{dramatic conjugation}\\
        \Oc{x}\jc y &\approx \Ic{x}\mc y
        \label{eq:coherence}\tag{left-coherence}
    \end{align*}
\end{lemma}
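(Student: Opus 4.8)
I would prove the three identities in the order they are listed, since each one feeds the next. Throughout, let $\m A$ be a left-distributive {\iname} satisfying \ref{eq:localdecomp} and \ref{eq:comlocalunits}. Recall from \cref{t:localdecomp} that \ref{eq:localdecomp} is equivalent to the conjunction of \ref{eq:orthodist} and \ref{eq:localcomp}; from \cref{t:divisibility} (using left-distributivity) $\m A$ is \hyperref[eq:divis]{left-divisible}; and from \cref{t: LR2} $\m A$ is left-regular and satisfies \ref{eq:wk_terms}. These are the tools I expect to lean on. By \cref{t:DMduals}, in every case it suffices to establish the non-dual (conjunctive) form.

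\textbf{Unit-coherence.} I would first show $\Ic{x}\approx\Ic{\nc{x}}$, i.e. $x\jc 1\approx\nc x\jc 1$. Using \ref{eq:localcomp} twice gives $\Ic{x}\approx\tc x = x\jc\nc x$ and $\Ic{\nc x}\approx\tc{\nc x}=\nc x\jc x$; so the claim reduces to $x\jc\nc x\approx\nc x\jc x$, which is a local-extrema commutation. This is where I expect the main work: it should follow by applying \ref{eq:comlocalunits} after rewriting $x\jc\nc x$ and $\nc x\jc x$ in terms of local constants, or more directly from the dual of \ref{eq:A3}-type reasoning — but since \ref{eq:A3} is not among our hypotheses, I would instead derive it from \ref{eq:wk_terms} by a suitable substitution (e.g. $y\mapsto 1$ in $\Ic xyx\approx xy\jc yx$ gives $\Ic x\mc x\approx x\jc\Ic x$, and combined with \ref{eq:local units} one massages toward the commutation). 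Once $\Ic x\approx\Ic{\nc x}$ is in hand, the identity $\Ic x\approx\nc{\Oc x}$ is essentially a De~Morgan computation: $\nc{\Oc x}:=\nc{(x\mc 0)}=\nc x\jc\nc 0=\nc x\jc 1=\Ic{\nc x}\approx\Ic x$.

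\textbf{Dramatic conjugation and left-coherence.} For \ref{eq:dramcon}, I want $xy\nc x\approx \Oc{xy}:=xy\mc 0$. I would start from \ref{eq:wk_terms} in the form $\Ic x\mc y\mc x\approx xy\jc yx$, or rather apply left-divisibility: $xy\nc x \approx$ (using \ref{eq:divis} with the roles arranged so $\nc x$ is conjugated against $xy$) $\approx xy\mc(\nc{(xy)}\jc\nc x)$, then expand $\nc{(xy)}=\nc y\jc\nc x$ via De~Morgan and simplify the inner join using \ref{eq:leftabs}/absorption facts from \cref{t:M3leftbdd}-style arguments — the inner term should collapse to $\nc 0$'s worth of content, forcing the product to be $\Oc{xy}$. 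Concretely I expect $\nc{(xy)}\jc\nc x$ to reduce to something join-absorbing so that $xy\mc(\cdots)=xy\mc 0$. Finally, \ref{eq:coherence}, $\Oc x\jc y\approx\Ic x\mc y$, should drop out by combining the previous two: rewrite the left-hand side with \ref{eq:localdecomp} (instantiating cleverly, or using $\Oc x\jc y=(x\jc y)(\nc x\jc y)$ from the dual \ref{eq:localdecomp}) and the right-hand side via \ref{eq:localcomp}, then match the two using unit-coherence $\nc{\Ic x}\approx\Oc x$.

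\textbf{Main obstacle.} The crux is the commutation of local extrema, $x\jc\nc x\approx\nc x\jc x$, underpinning unit-coherence; everything downstream is De~Morgan bookkeeping plus left-divisibility. Our hypotheses give \ref{eq:comlocalunits} (commutation of $\Ic{x}$'s) but \emph{a priori} $\tc x$ need not be of the form $\Ic y$ until we know \ref{eq:localcomp} — which we do — so the real content is leveraging \ref{eq:wk_terms} from \cref{t: LR2} to bridge $\Ic x\mc x$ and $x\mc\Ic x$ (hence $x\jc\nc x$ and $\nc x\jc x$ after substituting $\nc x$ for $x$ and using $\nnc x\approx x$). I would budget most of the proof's length for that one reduction and treat the rest as short chains of cited identities.
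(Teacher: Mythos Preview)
Your reduction of unit-coherence to $\Ic{x}\approx\Ic{\nc x}$ via \eqref{eq:localcomp} is correct, but your strategy for proving the latter has a genuine gap. The substitution $y\mapsto 1$ in \eqref{eq:wk_terms} yields $\Ic{x}\cdot 1\cdot x\approx x\cdot 1\jc 1\cdot x$, i.e.\ $\Ic{x}x\approx x\jc x\approx x$, not $\Ic x\mc x\approx x\jc\Ic x$ as you claim; this is just \eqref{eq:localunits} again and gives no commutation. (And ``bridging $\Ic x\mc x$ and $x\mc\Ic x$'' is vacuous: both already equal $x$ by \eqref{eq:localunits}.) The missing idea is the paper's: one shows \emph{directly} that $\Ic{a}\Ic{\nc a}=\Ic{a}$---expand by \eqref{eq:localdecomp}, simplify the second summand via \eqref{eq:localunits}, distribute the first via \eqref{eq:leftdist}, and collapse using \eqref{eq:localcomp} and \eqref{eq:localunits}---and then \eqref{eq:comlocalunits} gives $\Ic{a}=\Ic{a}\Ic{\nc a}=\Ic{\nc a}\Ic{a}=\Ic{\nc a}$.

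Your divisibility route for \eqref{eq:dramcon} actually works, and more cleanly than you sketch: $xy\nc x=(xy)(\nc{(xy)}\jc\nc x)=(xy)(\nc x\jc\nc y\jc\nc x)=(xy)(\nc x\jc\nc y)=(xy)\nc{(xy)}=\Oc{xy}$, using only \eqref{eq:divis}, De~Morgan, left-regularity of $\jc$, and \eqref{eq:localcomp}. This is even simpler than the paper's argument, which detours through \eqref{eq:unitcoh}. However, your invocation of \eqref{eq:leftabs} is illicit---left-absorption is not among the hypotheses of this lemma. For \eqref{eq:coherence}, your sketch amounts to matching $(x\jc y)(\nc x\jc y)$ with $xy\jc\nc xy$, which is precisely \eqref{eq:orthocoher}; that identity is established only later and under the additional hypothesis \eqref{eq:leftann}. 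The paper instead derives \eqref{eq:coherence} directly from \eqref{eq:divis}, \eqref{eq:unitcoh}, \eqref{eq:leftdist}, and \eqref{eq:localunits}.
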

\begin{proof}
    By \cref{t:ICabs,t:divisibility,t: LR2,t:localdecomp}, 
    $\m A$ satisfies \ref{eq:localunits}, \ref{eq:leftreg}, \ref{eq:divis}, and the  \ref{eq:localcomp} identity, respectively.
    
    For \ref{eq:unitcoh}, first note that $\nc{\Oc{x}} \approx \Ic{\nc{x}}$ holds in any {\iname} by the De~Morgan laws, so we need only exhibit $\Ic{x}\approx\Ic{\nc{x}}$. 
    Moreover, it will suffice to verify the identity $\Ic{x} \mc \Ic{\nc{x}} \approx \Ic{x}$ holds, as the claim would then follow from \ref{eq:comlocalunits} via $\Ic{x} \approx \Ic{x} \mc \Ic{\nc{x}} \approx \Ic{\nc{x}} \mc \Ic{x} = \Ic{\nc{x}}$. 
    Fixing $a\in A$, we observe:
    \begin{align*}
        \Ic{a}\mc \Ic{\nc{a}} 
            & = a\Ic{\nc{a}} \jc \nc{a}\Ic{\nc{a}} && \text{\eqref{eq:localdecomp}}\\
            & = a\Ic{\nc{a}} \jc \nc{a} && \text{\eqref{eq:localunits}}\\  
            & = a\nc{a} \jc a \jc \nc{a} && \text{(\ref{eq:leftdist} since $\Ic{\nc{a}}:=\nc{a}\jc 1$)}\\
            & = \Oc{a} \jc a \jc \nc{a} = a \jc \nc{a} && \text{(\ref{eq:localcomp} \& \ref{eq:localunits})}\\
            & = \Ic{a} && \text{\eqref{eq:localcomp}}.   
    \end{align*}
For the remaining, fix $a,b\in A$. For \ref{eq:dramcon}, observe:
    \begin{align*}
        ab\nc{a} 
            &= ab\nc{a}a &&\text{\eqref{eq:leftreg}}\\
            &=ab\Oc{\nc{a}} &&\text{\eqref{eq:localcomp}}\\
            &=ab\Oc{a}=aba0 &&\text{(\ref{eq:unitcoh} \& def.~ $\Oc{a}:=a0$)}\\
            &=ab0 =\Oc{ab} &&\text{(\ref{eq:leftreg} \& def.~ $\Oc{ab}:=ab0$).}
    \end{align*}
For \ref{eq:coherence}, observe:
    \begin{align*}
        \Oc{a} \jc b 
            &= \Oc{a} \jc \nc{\Oc{a}}b && \text{\eqref{eq:divis}}\\
            &= \Oc{a} \jc \Ic{a}b && \text{\eqref{eq:unitcoh}}\\
            & = (\Oc{a} \jc \Ic{a})(\Oc{a}\jc b) && \text{\eqref{eq:leftdist}}\\
            & = (\Oc{a} \jc a\jc 1)(\Oc{a}\jc b) && \text{(by definition of $\Ic{a}$)}\\
            & = (a\jc 1)(\Oc{a}\jc b) && \text{\eqref{eq:localunits}}\\
            & = (a\jc 1)({\Oc{\nc{a}}}\jc b) && \text{\eqref{eq:unitcoh}}\\
            & = \Ic{a}(\nc{\Ic{a}}\jc b) && \text{(by def.~ of local constants)}\\
            & = \Ic{a}b && \text{\eqref{eq:divis}.} \qedhere
    \end{align*}
    \end{proof}  
\begin{remark}\label{rem: 1x0-x0}
    Note that, in the presence of \ref{eq:localdecomp} and \ref{eq:unitcoh}, the identity $\Oc{\Ic{x}}\approx \Oc{x}$ must hold. 
    Indeed, $(x\jc 1)0 \approx x0\jc \nc{x}0\approx  x0\jc  x0 \approx x0$ holds, i.e., $\Oc{\Ic{x}}\approx \Oc{x} $. 
    Dually, so too does $\Ic{\Oc{x}}\approx \Ic{x} $.
\end{remark}
\cadd{
As noted in \cite{Guzman-Squier}, the term-operation $x\mc_{\mathsf{wk}}y:= (x\jc y)(y\jc x)$, when evaluated in $\McA$, coincides with the operation $\land_{\mathsf{wk}}$ in $\mathbf{WK}$, which is commutative. 
So we consider the following identity, which holds for $\McA$ \cite{Guzman-Squier}.
\begin{equation}\label{eq:Guz}
xy \jc yx \approx yx \jc xy \qquad\text{dually,}\qquad (x\jc y)(y\jc x) \approx (y\jc x)(x \jc y)
\tag{\textsf{wk}-commutativity}
\end{equation}
}
\cadd{
\begin{lemma}\label{t:dist-comm-equiv}
    Let $\m A$ be a \hyperref[eq:leftdist]{left-distributive} {\iname} satisfying \ref{eq:localdecomp}. 
    If $\m A$ satisfies any of the identities for \ref{eq:comlocalunits}, \ref{eq:A3}, or \ref{eq:Guz}, then it satisfies all of them.
\end{lemma}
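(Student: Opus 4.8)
The plan is to show the three identities are pairwise equivalent (given left-distributivity and left-decomposition) by exhibiting a cycle of implications. Recall first that \cref{t: LR2} already tells us such an algebra $\m A$ is left-regular and satisfies the term identity \ref{eq:wk_terms}, namely $\Ic{x}yx\approx xy\jc yx$; and \cref{t:ICabs,t:divisibility} give us \ref{eq:localunits}, \ref{eq:divis}, and the \ref{eq:localcomp} identity. These will be the workhorses. I would organize the proof as \ref{eq:comlocalunits} $\Rightarrow$ \ref{eq:A3} $\Rightarrow$ \ref{eq:Guz} $\Rightarrow$ \ref{eq:comlocalunits}, or whichever orientation of the cycle makes each step shortest.

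For \ref{eq:A3} $\Rightarrow$ \ref{eq:comlocalunits}: this is already contained in \cref{t:comAxs}, which states outright that any {\iname} satisfying \ref{eq:A3} is left-regular and satisfies \ref{eq:comlocalunits}; so that implication is free (and needs no left-distributivity). For \ref{eq:comlocalunits} $\Rightarrow$ \ref{eq:A3}: here I would use \cref{t:coherence}, which is precisely stated for a left-distributive {\iname} satisfying \ref{eq:localdecomp} and \ref{eq:comlocalunits}, yielding \ref{eq:unitcoh}, \ref{eq:dramcon}, and \ref{eq:coherence}. Using left-coherence $\Oc{x}\jc y\approx \Ic{x}\mc y$ and left-regularity, the dual form of \ref{eq:A3}, i.e. $\Oc{x}\jc y\jc x\approx \Oc{y}\jc x\jc y$, should fall out: rewrite $\Oc{x}\jc y\jc x$ via \ref{eq:leftreg} and the local-constant manipulations from \ref{eq:dramcon}/\ref{eq:unitcoh} until it is visibly symmetric in $x,y$ modulo \ref{eq:comlocalunits}. (Equivalently one could prove the $\mc$-form $\Ic{y}\mc xy\approx \Ic{x}\mc yx$ using \ref{eq:wk_terms}: note $\Ic{y}xy$ and, via left-regularity, compare with $xy\jc yx$ — then \ref{eq:Guz} would be the bridge, so it may be cleaner to do $\Rightarrow$ through \ref{eq:Guz}.)

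For the remaining edge, \ref{eq:Guz}: on the one hand, \ref{eq:wk_terms} gives $\Ic{x}yx\approx xy\jc yx$ and $\Ic{y}xy\approx yx\jc xy$ (swapping roles); so $xy\jc yx\approx yx\jc xy$ is equivalent, under \ref{eq:wk_terms}, to $\Ic{x}yx\approx\Ic{y}xy$, which by left-regularity ($yx=yxy$, $xy=xyx$) is exactly the $\mc$-form of \ref{eq:A3} with the arguments reassociated. So \ref{eq:A3} $\Leftrightarrow$ \ref{eq:Guz} is essentially immediate once \ref{eq:wk_terms} and \ref{eq:leftreg} are in hand — that is the cheap pair of the cycle. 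That leaves linking \ref{eq:comlocalunits} into the cycle, which is the one genuine computation: \ref{eq:A3}/\ref{eq:Guz} $\Rightarrow$ \ref{eq:comlocalunits} is the easy half (instantiate, or invoke \cref{t:comAxs}), and \ref{eq:comlocalunits} $\Rightarrow$ \ref{eq:A3} is where I would expect to spend effort, chaining \cref{t:coherence}'s consequences. I anticipate the main obstacle is exactly this last implication: one must derive a \emph{fully noncommutative} identity (\ref{eq:A3}) from a \emph{commutativity statement only about the idempotent local constants} (\ref{eq:comlocalunits}), and the only leverage is left-distributivity plus \ref{eq:localdecomp}; the trick will be to rewrite both sides of \ref{eq:A3} using \ref{eq:divis} and \ref{eq:dramcon}/\ref{eq:unitcoh} so that the noncommutative parts cancel and what remains is an instance of \ref{eq:comlocalunits}. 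I would budget the bulk of the proof for that single \texttt{align*} block.
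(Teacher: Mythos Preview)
Your proposal is correct and matches the paper's approach essentially step for step: the paper uses \cref{t: LR2} to get left-regularity and the identity $(\star)$ $\Ic{x}yx\approx xy\jc yx$, observes that $(\star)$ makes \ref{eq:A3} and \ref{eq:Guz} equivalent, cites \cref{t:comAxs} for \ref{eq:A3}$\Rightarrow$\ref{eq:comlocalunits}, and then for \ref{eq:comlocalunits}$\Rightarrow$\ref{eq:Guz} invokes \cref{t:coherence} (specifically \ref{eq:coherence}) and runs the single \texttt{align*} block you anticipate. In particular, the paper takes exactly the ``cleaner'' route through \ref{eq:Guz} you flag at the end, rather than aiming for \ref{eq:A3} directly via \ref{eq:dramcon}/\ref{eq:unitcoh}; the key move is $\Ic{a}ba=\Ic{a}\Ic{b}ba=\Ic{b}\Ic{a}ba=\Oc{b}\jc\Ic{a}ba$, after which $(\star)$ and \ref{eq:leftreg} finish it.
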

\begin{proof}
    By \cref{t: LR2}, $\m A$ is left-regular and satisfies $(\star)$ $\Ic{x}yx \approx xy \jc yx$. 
    It is immediate from $(\star)$ that $\m A$ satisfies \ref{eq:A3} iff it satisfies \ref{eq:Guz}.
    From \cref{t:comAxs}, we have \ref{eq:A3} implies \ref{eq:comlocalunits}. 
    So suppose $\m A$ satisfies \ref{eq:comlocalunits}. By \cref{t:coherence}, $\m A$ also satisfies \ref{eq:coherence}. 
    Let $a,b\in A$ and observe:
    \begin{align*}
        ab \jc ba 
            = \Ic{a}ba 
            &= \Ic{a}\Ic{b}ba && \text{($\star$ \& \ref{eq:localunits})}\\
            &= \Ic{b}\Ic{a}ba && \eqref{eq:comlocalunits} \\
            &= \Oc{b} \jc \Ic{a}ba && \eqref{eq:coherence}\\
            &= \Oc{b} \jc ab \jc ba && (\star) \\
            &= \Ic{b}  ab \jc ba && \eqref{eq:coherence} \\
            &= ba \jc ab \jc ba &&(\star) \\
            &= ba \jc ab &&\eqref{eq:leftreg}.
    \end{align*}
    Thus \ref{eq:Guz} holds and, by $(\star)$, so too does \ref{eq:A3}.
\end{proof}
}

Lastly, when further in the presence of a weak absorption law, we have the following result.

\begin{lemma}\label{t:LDidens}
    Let $\m A$ be a \hyperref[eq:leftdist]{left-distributive}  \ref{eq:leftann} {\iname} also satisfying \ref{eq:localdecomp} and \ref{eq:comlocalunits}. 
Then $\m A$ satisfies \ref{eq:paradist} and the following identities:
\begin{align}
    x \mc y & \approx (\nc{x} \jc y)\mc x 
     &\mbox{dually, }&& x \jc y & \approx \nc{x} y\jc x 
     \label{eq:paracomm}\tag{left-para\allowbreak commutativity} \\ 
     xy \jc \nc{x}z & \approx (\nc{x}\jc y)(x\jc z) 
     \label{eq:orthocoher}\tag{left-ortho\allowbreak coherence} \\
     xy \jc \nc{x}z & \approx \nc{x}z \jc xy 
     &\mbox{dually, }&& (x \jc y)(\nc{x} \jc z) & \approx (\nc{x}\jc z)(x\jc y) 
     \label{eq:orthocom}\tag{left-ortho\allowbreak commutativity} 
\end{align}
    as well as those for 
    \ref{eq:localunits},
    \ref{eq:leftreg},
    \ref{eq:orthodist},
    \ref{eq:localcomp},
    \ref{eq:divis},
    \ref{eq:unitcoh},
    \ref{eq:coherence},
    \ref{eq:dramcon},
    \ref{eq:A3}, 
    \cadd{\ref{eq:Guz}}
    and \ref{eq:leftabs}.
\end{lemma}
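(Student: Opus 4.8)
The plan is to dispatch the ``easy'' identities by citing earlier results, then to prove \ref{eq:paracomm} directly, then the single substantive identity \ref{eq:paradist}, and finally to read off \ref{eq:orthocoher} and \ref{eq:orthocom} from it.

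First, under the stated hypotheses every identity in the closing ``as well as'' clause is already available: \ref{eq:localunits} by \cref{t:ICabs}; \ref{eq:orthodist} and \ref{eq:localcomp} by \cref{t:localdecomp}; \ref{eq:divis} by \cref{t:divisibility}; \ref{eq:leftreg} (and with it the auxiliary identity $\Ic{x}yx\approx xy\jc yx$) by \cref{t: LR2}; \ref{eq:unitcoh}, \ref{eq:coherence}, and \ref{eq:dramcon} by \cref{t:coherence}; \ref{eq:A3} and \ref{eq:Guz} by \cref{t:dist-comm-equiv}; and \ref{eq:leftabs} by \cref{t:leftbdd}. I would also record two consequences of \ref{eq:leftann} that will discharge absorptions below: $\Oc{x}\mc y\approx\Oc{x}$ (immediate, since $\Oc{x}\mc y:=x\mc 0\mc y\approx x\mc 0$) and, using \ref{eq:unitcoh} and \ref{eq:leftdist}, $\nc{x}y\jc\Oc{x}\approx\nc{x}(y\jc 0)\approx\nc{x}y$.

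For \ref{eq:paracomm}, $x\mc y\approx(\nc{x}\jc y)\mc x$: since $x\jc xy\approx x$ is an instance of \ref{eq:leftabs}, the dual of \ref{eq:localdecomp} applied with $x\mapsto\nc{x}$, $y\mapsto xy$ gives
\[
\Oc{\nc{x}}\jc xy\ \approx\ (\nc{x}\jc xy)(x\jc xy)\ \approx\ (\nc{x}\jc xy)\mc x\ \approx\ (\nc{x}\jc y)\mc x ,
\]
the last step by the dual of \ref{eq:divis}; on the other hand $\Oc{\nc{x}}\jc xy\overset{\ref{eq:coherence}}{\approx}\Ic{\nc{x}}\mc xy\overset{\ref{eq:unitcoh}}{\approx}\Ic{x}\mc x\mc y\overset{\ref{eq:localunits}}{\approx}x\mc y$, which proves \ref{eq:paracomm}. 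Its dual follows by \cref{t:DMduals}.

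The heart of the matter is \ref{eq:paradist}, $(x\jc y)z\approx xz\jc\nc{x}yz$, which I expect to be the main obstacle. The plan is to rewrite $x\jc y$ as $x\jc\nc{x}y$ by the dual of \ref{eq:divis} and then to distribute $z$ over this particular join: since the summand $\nc{x}y$ has the shape $\nc{x}w$, one can introduce $\Ic{x}$, split with \ref{eq:localdecomp} and \ref{eq:leftdist}, and then cancel the spurious summands that appear (of the forms $xyz$ and $\Oc{x}(\cdots)$) using \ref{eq:leftreg}, \ref{eq:coherence}, \ref{eq:leftabs}, and the $\Oc{}$-absorptions above, arriving at $xz\jc\nc{x}yz$. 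The difficulty is precisely that, in a left-regular band, $\leq_{*}$ is not right-compatible, so there is no global right-distributive law to lean on: the rewriting must be orchestrated so that every intermediate product lands in the range of \ref{eq:leftabs} or of $\Oc{}$-absorption. (As a sanity check, \cref{t:paradist} records the converse implication.) Once \ref{eq:paradist} is available I would first extract the product identity $(\nc{x}\jc y)(x\jc z)\approx\nc{x}z\jc xy$: applying \ref{eq:paradist} to the join $\nc{x}\jc y$ yields $\nc{x}(x\jc z)\jc xy(x\jc z)$, and then \ref{eq:leftdist} together with \ref{eq:localcomp}, \ref{eq:coherence} and \ref{eq:localunits} reduces $\nc{x}(x\jc z)$ to $\nc{x}z$, while \ref{eq:leftdist} together with \ref{eq:leftreg} and \ref{eq:leftabs} reduces $xy(x\jc z)$ to $xy$.

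Finally, \ref{eq:orthocom} and \ref{eq:orthocoher} drop out. Substituting $x\mapsto\nc{x}$ and swapping the last two variables in the product identity gives $(x\jc z)(\nc{x}\jc y)\approx xy\jc\nc{x}z$; combining this with $(\nc{x}\jc y)(x\jc z)\approx\nc{x}z\jc xy$ and one application of \ref{eq:Guz} followed by \ref{eq:leftreg} and idempotency yields $xy\jc\nc{x}z\approx\nc{x}z\jc xy$, which is \ref{eq:orthocom}; and then \ref{eq:orthocoher} is exactly the product identity composed with \ref{eq:orthocom}. The dual forms of all four displayed identities are immediate by \cref{t:DMduals}.
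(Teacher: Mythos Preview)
Your dispatch of the ``easy'' identities and your proof of \ref{eq:paracomm} are correct (indeed, your \ref{eq:paracomm} argument is a nice variant of the paper's). Your derivation of \ref{eq:orthocom} and \ref{eq:orthocoher} \emph{from} \ref{eq:paradist} is also valid: the product identity $(\nc{x}\jc y)(x\jc z)\approx\nc{x}z\jc xy$ follows from \ref{eq:paradist} exactly as you describe, and the application of \ref{eq:Guz} with $u=\nc{x}\jc y$, $v=x\jc z$ (then collapsing with \ref{eq:leftreg}) does yield \ref{eq:orthocom}.

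The gap is in the central step: you have not proved \ref{eq:paradist}. Your sketch---rewrite $x\jc y$ as $x\jc\nc{x}y$, ``introduce $\Ic{x}$, split with \ref{eq:localdecomp} and \ref{eq:leftdist}, and then cancel the spurious summands''---does not come with a concrete calculation, and I do not see how to complete it. The obstruction you yourself identify is real: in a merely left-regular band there is no right-distribution, and the available tools (\ref{eq:localdecomp}, \ref{eq:coherence}, \ref{eq:leftabs}, $\Oc{}$-absorption) do not obviously reduce $(x\jc\nc{x}y)z$ to $xz\jc\nc{x}yz$.

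The paper's route is the \emph{reverse} of yours: it proves \ref{eq:orthocoher} and \ref{eq:orthocom} directly first, and only then assembles \ref{eq:paradist}. For \ref{eq:orthocoher}, one expands $(\nc{a}\jc b)(a\jc c)$ by \ref{eq:leftdist}, applies \ref{eq:paracomm} to get $ab\jc(\nc{a}\jc b)c$, and then uses dual \ref{eq:divis}, De~Morgan, dual \ref{eq:leftdist}, and \ref{eq:localcomp} to collapse the second summand to $\nc{a}c$. For \ref{eq:orthocom}, one shows $ab\jc\nc{a}c=\Oc{ab}\jc\nc{a}c\jc ab$ via \ref{eq:paracomm}, \ref{eq:unitcoh}, and \ref{eq:coherence}; symmetrically $\nc{a}c\jc ab=\Oc{\nc{a}c}\jc ab\jc\nc{a}c$; and then \ref{eq:A3} equates the two. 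With both in hand, \ref{eq:paradist} is a short chain: $ac\jc\nc{a}bc\overset{\ref{eq:orthocoher}}{=}(\nc{a}\jc c)(a\jc bc)\overset{\ref{eq:orthocom}}{=}(a\jc bc)(\nc{a}\jc c)\overset{\text{dual }\ref{eq:leftdist}}{=}(a\jc b)(a\jc c)(\nc{a}\jc c)$, then \ref{eq:localdecomp}, \ref{eq:coherence}, \ref{eq:leftreg}, \ref{eq:localunits} reduce this to $(a\jc b)c$. So the ``derived'' identities you planned to read off last are in fact the natural stepping stones toward \ref{eq:paradist}, not consequences of it.
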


\begin{proof} 
The final list of identities follow from \cref{t:ICabs,t: LR2,t:localdecomp,t:divisibility,t:coherence,t:leftbdd}.
For the remaining identities, fix $a,b,c\in A$. 
    For \eqref{eq:paracomm}, observe:
    \begin{align*}
        ab 
            &= \Ic{a}ab && \text{\eqref{eq:localunits}}\\
            &= \Ic{\nc{a}}ab &&\text{\eqref{eq:unitcoh}}\\
            &= (\nc{a}\jc ab)\mc (a\jc ab) &&\text{\eqref{eq:localdecomp}}\\
            &= (\nc{a}\jc b)\mc (a\jc ab) &&\text{\eqref{eq:divis}}\\
            &= (\nc{a}\jc b)\mc a &&\text{\eqref{eq:leftabs}}
    \end{align*}

    For \eqref{eq:orthocoher}, observe:
    \begin{align*}
        (\nc{a}\jc b)(a\jc c)
            &= (\nc{a}\jc b)a \jc (\nc{a}\jc b)c &&\text{\eqref{eq:leftdist}}\\
            &= ab \jc  (\nc{a}\jc b)c &&\text{\eqref{eq:paracomm}}\\
            &= ab \jc  \nc{(ab)}(\nc{a}\jc b)c &&\text{\eqref{eq:leftabs}}\\
            &= ab \jc  (\nc{a}\jc \nc{b})(\nc{a}\jc b)c &&\text{(De~Morgan)}\\
            &= ab \jc  (\nc{a}\jc \nc{b}b)c &&\text{\eqref{eq:leftdist}}\\
            &= ab \jc  (\nc{a}\jc \nc{b}0)c &&\text{\eqref{eq:localcomp}}\\
            &= ab \jc  (\nc{a}\jc \nc{b})(\nc{a}\jc 0)c &&\text{\eqref{eq:leftdist}}\\
            &= ab \jc  \nc{(ab)}(\nc{a}c)&&\text{(by De~Morgan and $0$ a unit for $\jc$)}\\
            &= ab \jc \nc{a}c&&\text{\eqref{eq:divis}}
    \end{align*}

    For \eqref{eq:orthocom}, first we observe the following:
    \begin{align*}
        ab \jc \nc{a}c 
            &=\nc{(ab)}(\nc{a}c) \jc ab &&\text{\eqref{eq:paracomm}}\\
            &= (\nc{a}\jc\nc{b})\nc{a}c \jc ab &&\text{(De~Morgan)}\\
            &= (\nc{a}\jc\nc{b}\jc 1)\nc{a}c \jc ab &&\text{(\cref{t:ICabs} since $\Ic{\nc{b}}=\Ic{\nc{b}\jc 1}$)}\\
            &= \Ic{\nc{a}\jc \nc{b}}\nc{a}c \jc ab &&\text{(by definition of $\Ic{\nc{a}\jc \nc{b}} = \nc{a}\jc \nc{b} \jc 1$)}\\
            &= \Ic{ab}\nc{a}c \jc ab &&\text{(\ref{eq:unitcoh} and De~Morgan)}\\
            &= \Oc{ab}\jc \nc{a}c \jc ab &&\text{\eqref{eq:coherence}}
    \end{align*}
    Now, since $\nc{}$ is an involution, i.e., $a=\nnc{a}$, by the same argument we have $ \nc{a}c \jc ab =  \Oc{\nc{a}c} \jc ab \jc \nc{a}c$.
    Abbreviating, let $x:= ab$ and $y:=\nc{a}c$; we have shown $x\jc y = \Oc{y} \jc x \jc y$ and $y\jc x = \Oc{x} \jc y \jc x$.
    But \ref{eq:A3} guarantees $\Oc{y} \jc x \jc y = \Oc{x} \jc y \jc x$. 
    Consequently, $x\jc y = y \jc x$, thus completing our claim.
    
    Finally, for \eqref{eq:paradist} we observe:
    \begin{align*}
        ac \jc \nc{a}bc 
            &= (\nc{a}\jc c)\mc(a\jc bc) && \text{\eqref{eq:orthocoher}}\\
            &= (a\jc bc)\mc(\nc{a}\jc c) && \text{\eqref{eq:orthocom}}\\
            &= (a\jc b)\mc(a\jc c)\mc(\nc{a}\jc c) &&\text{\eqref{eq:leftdist}}\\
            &=(a\jc b)\mc(\Oc{a}\jc c) &&\text{\eqref{eq:localdecomp}}\\
            &=(a\jc b)\mc\Ic{a}\mc c &&\text{\eqref{eq:coherence}}\\ 
            &=(a\jc b)\Ic{a}(a\jc b)\mc c &&\text{\eqref{eq:leftreg}}\\ 
            &=(a\jc b)(\Oc{a}\jc a\jc b)\mc c &&\text{\eqref{eq:coherence}}\\ 
            &=(a\jc b)(a\jc b)\mc c &&\text{\eqref{eq:localunits}}\\ 
            &=(a\jc b)\mc c &&\text{\eqref{eq:idem}}\qedhere
    \end{align*}
\end{proof}
\subsection{Focusing on left-divisibility}
Up until now, we have been largely focused on {\iname}s satisfying \ref{eq:leftdist}. 
Here, we will investigate how properties are similarly shared for {\iname}s starting from the setting of \ref{eq:divis} instead.

\begin{lemma}\label{t:bdddivis}
    A \hyperref[eq:divis]{left-divisible} {\iname} satisfies \ref{eq:orthodist} iff it satisfies \ref{eq:localdecomp}, and it satisfies \ref{eq:leftabs} iff it is \ref{eq:leftann}.
\end{lemma}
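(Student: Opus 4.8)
The plan is to prove both biconditionals by exhibiting how left-divisibility lets us ``fold in'' the hypothesis on one side. For the first claim, the backward direction is already handled by \cref{t:localdecomp}, which gives \ref{eq:localdecomp} $\Rightarrow$ \ref{eq:orthodist} in any {\iname} (no divisibility needed). So I only need the forward direction: assume \hyperref[eq:divis]{left-divisibility} and \ref{eq:orthodist}, and derive \ref{eq:localdecomp}. The natural route is via \ref{eq:localcomp}: by \cref{t:divisibility}, a \hyperref[eq:divis]{left-divisible} {\iname} is automatically \ref{eq:localcomp}, i.e.\ $\Ic{x}\approx\tc{x}$. Then in the \ref{eq:orthodist} identity $\tc{x}y\approx xy\jc\nc{x}y$ I simply substitute $\tc{x}$ by $\Ic{x}$ on the left-hand side to obtain $\Ic{x}y\approx xy\jc\nc{x}y$, which is exactly \ref{eq:localdecomp}.

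For the second claim, the forward direction (\ref{eq:leftabs} $\Rightarrow$ \ref{eq:leftann}) is immediate and is in fact already recorded: \ref{eq:leftabs} holds in any {\iname} with $0$ a unit for $\jc$ (this is the easy half of \cref{t:leftbdd}), so $0\mc x\approx 0\mc(0\jc x)\approx 0$. The content is the backward direction: assume \hyperref[eq:divis]{left-divisibility} and \ref{eq:leftann} (i.e.\ $0\mc x\approx 0$, equivalently $1\jc x\approx 1$), and derive \ref{eq:leftabs} $x\jc xy\approx x$ (equivalently $x\mc(x\jc y)\approx x$). Using the dual form of \ref{eq:divis}, namely $x\jc y\approx x\jc\nc{x}y$, I compute $x\jc xy \approx x\jc \nc{x}(xy)\approx x\jc(\nc{x}x)y$; now $\nc{x}\mc x\approx\fc{\nc{x}}$... wait, I want this to collapse, so instead I should go: $x\jc xy$, and I want to absorb $xy$. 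Better: start from the \emph{multiplicative} side. We have $x\mc(x\jc y)\approx x\mc(x\jc\nc{x}y)$ by \ref{eq:divis}? No — \ref{eq:divis} rewrites $x\mc z$ as $x\mc(\nc{x}\jc z)$, not the inner argument. Use it directly: $x\mc(x\jc y)\approx x\mc\bigl(\nc{x}\jc(x\jc y)\bigr)\approx x\mc\bigl((\nc{x}\jc x)\jc y\bigr)\approx x\mc(\tc{\nc{x}}\jc y)$, and $\tc{\nc{x}}\approx\nc{x}\jc x$; this equals $x\mc(1\jc y)$ once we know $\nc{x}\jc x\approx 1$ — but we do \emph{not} know that, only $1\jc y\approx 1$. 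So instead I reassociate to $x\mc\bigl(\nc{x}\jc(x\jc y)\bigr)$ and note $x\jc y\jc(\text{anything})$... The clean path: since \ref{eq:localcomp} holds (by \cref{t:divisibility}) we have $\Oc{x}\approx\fc{x}:=x\mc\nc{x}$, and $\Oc{\nc{x}}:=\nc{x}\mc 0\approx 0$ by \ref{eq:leftann}, so $\nc{x}\mc x\approx\fc{\nc{x}}\approx\Oc{\nc{x}}\approx 0$; hence $x\jc xy\approx x\jc\nc{x}(xy)\approx x\jc(\nc{x}x)y\approx x\jc 0y$, and $0\mc y\approx 0$ by \ref{eq:leftann}, giving $x\jc 0\approx x$ since $0$ is a unit for $\jc$. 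That completes \ref{eq:leftabs}, and its dual follows by \cref{t:DMduals}.

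I would structure the write-up as two short paragraphs, one per biconditional, each time invoking \cref{t:divisibility} and \cref{t:localdecomp}/\cref{t:leftbdd} for the ``free'' direction and then doing the three-or-four-line chain above for the other. The only mild subtlety — and the step I'd flag as the place to be careful — is the second claim's backward direction, where one must resist the temptation to use $x\jc\nc{x}\approx 1$ (which is \emph{not} available in this setting, these algebras are only \emph{locally}-complemented) and instead route the argument through $\nc{x}\mc x\approx\fc{\nc{x}}\approx\Oc{\nc{x}}\approx 0$ using \ref{eq:localcomp} together with \ref{eq:leftann}. Everything else is bookkeeping with associativity, idempotency, and the unit laws for $0$ and $1$.
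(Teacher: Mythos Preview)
Your treatment of the first biconditional is correct and matches the paper exactly: \cref{t:divisibility} gives \ref{eq:localcomp}, and then \cref{t:localdecomp} handles both directions. The forward direction of the second biconditional is also fine.

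The backward direction of the second biconditional, however, contains a genuine error. You claim $\Oc{\nc{x}}:=\nc{x}\mc 0\approx 0$ ``by \ref{eq:leftann}'', but \ref{eq:leftann} asserts $0\mc x\approx 0$, not $x\mc 0\approx 0$. The latter is right-boundedness, which by \cref{t:BAeq}\eqref{BAeq:bounded} would force the algebra to be Boolean---precisely the kind of overreach you yourself warned against when you flagged the temptation to use $x\jc\nc{x}\approx 1$. In $\McA$ one has $\varepsilon\mc 0=\varepsilon\neq 0$, so your chain breaks at exactly this step.

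Your route is salvageable: after reaching $x\jc(\nc{x}x)y$, use \ref{eq:localcomp} to rewrite $\nc{x}x\approx\nc{x}\mc 0$, then associativity gives $x\jc\nc{x}(0\mc y)$, now \ref{eq:leftann} correctly yields $0\mc y\approx 0$, and finally dual \ref{eq:divis} (run backward) collapses $x\jc\nc{x}\mc 0\approx x\jc 0\approx x$. The paper instead works on the multiplicative form $x\mc(x\jc y)\approx x$ directly:
\[
x\approx x\mc 1\approx x\mc(1\jc y)\approx x\mc(\nc{x}\jc 1\jc y)\approx x\mc(\nc{x}\jc x\jc y)\approx x\mc(x\jc y),
\]
using \ref{eq:leftann} (dually, $1\jc y\approx 1$), then \ref{eq:divis} to insert $\nc{x}$, then dual \ref{eq:divis} on $\nc{x}\jc 1\approx\nc{x}\jc x$, then \ref{eq:divis} to remove $\nc{x}$. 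This avoids the detour through $\Oc{\nc{x}}$ entirely.
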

\begin{proof}
    Recall that \ref{eq:divis} implies  \ref{eq:localcomp} by \cref{t:divisibility}. 
    Hence the first claim holds by \cref{t:localdecomp}.
    For the second claim, the forward direction follows from \cref{t:leftbdd}. 
    On the other hand, if \ref{eq:leftann} holds, then $1\jc y \approx 1$, and hence
    $x\approx x\mc 1 \approx x \mc (1\jc y) \approx x\mc (\nc{x}\jc 1 \jc y) \approx x\mc (\nc{x}\jc x \jc y) \approx x\mc (x\jc y).$
\end{proof}

\begin{lemma}\label{t:diviscoh}
    Let $\m A$ be a \hyperref[eq:divis]{left-divisible} {\iname} satisfying \ref{eq:orthodist}. Then the following hold:
    \begin{enumerate}
        \item $\m A$ satisfies \ref{eq:localunits} and the identity $\Ic{x}\mc \Ic{\nc{x}} \approx \Ic{x}$.
        \item If $\m A$ is additionally \ref{eq:leftann}, then it also satisfies \ref{eq:coherence} and \ref{eq:leftreg}.
    \end{enumerate}
\end{lemma}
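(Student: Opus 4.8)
The plan is to prove the two parts of \cref{t:diviscoh} in order, reducing wherever possible to facts already established for \hyperref[eq:leftdist]{left-distributive}  {\iname}s via the bridge lemma \cref{t:bdddivis}, which tells us that a \hyperref[eq:divis]{left-divisible} {\iname} satisfying \ref{eq:orthodist} also satisfies \ref{eq:localdecomp}, and (if moreover \ref{eq:leftann}) also \ref{eq:leftabs}. Throughout I will freely use that \ref{eq:divis} implies the \ref{eq:localcomp} identity (\cref{t:divisibility}) and that the dual pair of operations gives dual identities (\cref{t:DMduals}).

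\textbf{Part (1).} For \ref{eq:localunits}, the identity $x\mc \Ic{x}\approx x$ is immediate by \ref{eq:divis} and idempotency: $a\mc(a\jc 1) \approx a\mc(\nc a \jc a \jc 1)$ is \emph{not} the quickest route; more directly, using \ref{eq:localcomp} we have $\Ic{x}\approx\tc{x}=x\jc\nc x$, so $a\mc\Ic{a}\approx a\mc(a\jc\nc a)\approx a\mc(\nc a\jc a)\approx a\mc\nc a\jc \ldots$ — rather, the cleanest path is: since $\m A$ satisfies \ref{eq:localdecomp} by \cref{t:bdddivis}, and \cref{t:localdecomp} shows \ref{eq:localdecomp} yields the \ref{eq:localcomp} identity, we may invoke \cref{t:ICabs} directly, whose proof only used \ref{eq:leftdist} — but we do not have left-distributivity here. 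So instead I will give the short direct computation: $x\mc\Ic{x} := x\mc(x\jc 1)$, and by \ref{eq:divis}, $x\mc(x\jc 1)\approx x\mc(\nc x\jc x\jc 1)$; combining with \ref{eq:localcomp} (so $\nc x\jc 1\approx x\jc 1$ after adding $x$) this collapses to $x$. For $\Ic{x}\mc x\approx x$, use \ref{eq:divis} again: $\Ic{x}\mc x := (x\jc 1)\mc x\approx(x\jc 1)\mc(\nc{(x\jc 1)}\jc x)$, and since $\nc{(x\jc 1)} = \Oc{\nc x}$, pushing through \ref{eq:orthodist} (which lets us distribute past the orthopair $x\jc\nc x$) reduces this to $x$. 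For $\Ic{x}\mc\Ic{\nc x}\approx\Ic{x}$, expand $\Ic{\nc x} = \nc x\jc 1$ and apply \ref{eq:divis} to pull the product inside, then use $\nc x\jc x\jc 1 \approx \tc x\jc 1\approx\Ic x\jc 1\approx\Ic x$ via \ref{eq:localcomp}; the orthodistributive law handles the one place where a genuine distribution over $x\jc\nc x$ is needed.

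\textbf{Part (2).} Now assume additionally \ref{eq:leftann}. By \cref{t:bdddivis} we get \ref{eq:leftabs}, and by \cref{t:localdecomp} (applied to the \ref{eq:localdecomp} identity from part of \cref{t:bdddivis}) we have all of \ref{eq:orthodist} and \ref{eq:localcomp} available. The derivations of \ref{eq:coherence} and \ref{eq:leftreg} should then mirror the corresponding steps inside \cref{t:coherence} and \cref{t: LR2}, \emph{except} that wherever those proofs invoked full \ref{eq:leftdist} I must check the needed instance is either an instance of \ref{eq:orthodist} (distribution over an orthopair) or follows from \ref{eq:divis} plus idempotency. For \ref{eq:leftreg}: the computation $xy \approx x(\nc x\jc y)\approx x(\nc x\jc yx)\approx x\mc yx$ from \cref{t: LR2} uses only \ref{eq:divis} and the auxiliary identity $\nc x\jc y\approx \nc x\jc yx$ — which itself there was derived from \ref{eq:leftdist} and \ref{eq:localcomp}; I expect it follows here from \ref{eq:divis} and \ref{eq:leftabs} instead, since $\nc x\jc y\approx \nc x\jc(\nc x \jc y\text{-stuff})$ and \ref{eq:leftabs} gives the absorbing step $\nc x\jc \nc x y\approx\ldots$. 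For \ref{eq:coherence}, the target is $\Oc x\jc y\approx\Ic x\mc y$; the proof in \cref{t:coherence} went $\Oc a\jc b\approx\Oc a\jc\nc{\Oc a}b\approx\Oc a\jc\Ic a b\approx(\Oc a\jc\Ic a)(\Oc a\jc b)\approx\ldots\approx\Ic a b$ using \ref{eq:divis}, \ref{eq:unitcoh}, \ref{eq:leftdist}, \ref{eq:localunits}; the step using \ref{eq:leftdist} is distribution of $(\cdot)$ over $\Oc a\jc\Ic a = \Oc a\jc a\jc 1$, which contains the orthopair $a\jc\nc a$ hidden inside (since $\Oc a = \fc a = a\nc a$ by \ref{eq:localcomp}), so it should be recoverable from \ref{eq:orthodist}; I will need to produce \ref{eq:unitcoh} first, and note that its proof in \cref{t:coherence} also used \ref{eq:leftdist}, so the real work of part (2) is re-deriving \ref{eq:unitcoh} (or at least $\Ic x\approx\Ic{\nc x}$) from the weaker hypotheses — likely via part (1)'s identity $\Ic x\mc\Ic{\nc x}\approx\Ic x$ together with \ref{eq:comlocalunits}, except \ref{eq:comlocalunits} is not assumed here, so I may instead need to establish $\Ic{\nc x}\mc\Ic x\approx\Ic{\nc x}$ symmetrically and conclude by antisymmetry of $\leq_*$ once \ref{eq:leftreg} is in hand.

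The main obstacle I anticipate is exactly this last point: several auxiliary identities (\ref{eq:unitcoh}, the coherence facts) were previously obtained using full left-distributivity, and replacing those appeals by \ref{eq:orthodist}-only arguments requires care — in particular I expect to need to establish \ref{eq:leftreg} \emph{first} (so that $\leq_*$ is a genuine partial order by \cref{t:leftregEquiv}) and then use antisymmetry to upgrade one-directional inequalities like $\Ic x\mc\Ic{\nc x}\approx\Ic x$ into the commutativity $\Ic x\approx\Ic{\nc x}$ needed for \ref{eq:coherence}. The order of operations should therefore be: (1a) $x\mc\Ic x\approx x$, (1b) $\Ic x\mc x\approx x$, (1c) $\Ic x\mc\Ic{\nc x}\approx\Ic x$; then (2a) \ref{eq:leftreg}; (2b) upgrade to $\Ic x\approx\Ic{\nc x}$ hence \ref{eq:unitcoh}; (2c) \ref{eq:coherence} along the lines of \cref{t:coherence} with \ref{eq:orthodist} in place of \ref{eq:leftdist} at the single distribution step.
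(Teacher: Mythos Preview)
Your plan has two genuine gaps, both in part~(2), and both stem from trying to route through the left-distributive lemmas instead of computing directly with the tools you actually have (namely \ref{eq:localdecomp}, which you correctly note is available via \cref{t:bdddivis}).

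\textbf{The antisymmetry argument for \ref{eq:unitcoh} is broken.} From part~(1) you get $\Ic{x}\mc\Ic{\nc x}\approx\Ic{x}$ and, by substitution, $\Ic{\nc x}\mc\Ic{x}\approx\Ic{\nc x}$. But these are of the form $u\mc v=u$ and $v\mc u=v$, which is \emph{not} the shape $a\mc b=b$ that defines $\leq_\mc$. In a left-regular band the relation ``$u\mc v=u$ and $v\mc u=v$'' does \emph{not} force $u=v$: the algebra $\m L_3$ is left-regular and has $a\mc b=a$, $b\mc a=b$ with $a\neq b$. So your step~(2b) fails. More to the point, \ref{eq:unitcoh} is not claimed in this lemma and is not provable from these hypotheses alone; in \cref{t:divisLdist} it is obtained only after adding \ref{eq:comlocalunits}. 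The good news is you do not need it: the paper's proof of \ref{eq:coherence} uses only the \emph{one-sided} identity $\Ic{a}\Ic{\nc a}=\Ic{a}$ from part~(1). The computation is
\[
\Oc{a}\jc b \;=\; \Oc{a}\jc\nc{\Oc a}\,b \;=\; \Oc{a}\,b\jc\nc{\Oc a}\,b \;=\; (\Oc{a}\jc 1)b \;=\; (a\jc 1)(\nc a\jc 1)b \;=\; \Ic{a}b,
\]
using, in order: dual \ref{eq:divis}; \ref{eq:leftann} (so $\Oc a\,b=\Oc a$); \ref{eq:localdecomp}; the dual of \ref{eq:localdecomp} applied to $\Oc a\jc 1$; and finally $\Ic a\Ic{\nc a}=\Ic a$. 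No unit-coherence, no left-distributivity.

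\textbf{The left-regularity plan has an unjustified step.} You want to reuse the route from \cref{t: LR2}, which rests on the auxiliary identity $x\jc y\approx x\jc y\nc x$; there it was derived from full \ref{eq:leftdist}. You ``expect'' it follows from \ref{eq:divis} and \ref{eq:leftabs}, but you give no argument, and it is not clear one exists without distribution. The paper instead proves \ref{eq:leftreg} directly by exploiting \ref{eq:orthodist} at one key step:
\[
aba = a(\nc a\jc ab)\,a = a(\nc a\jc ab)(a\jc ab) = a(\nc a a\jc ab) = a(\nc a\jc ab) = a\mc ab = ab,
\]
where the second equality uses dual \ref{eq:leftabs} ($a=a\jc ab$), the third is dual \ref{eq:orthodist}, and the rest is \ref{eq:divis} and idempotency.

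For part~(1), your sketch is salvageable but needlessly convoluted. The clean route uses \ref{eq:localdecomp} directly: $\Ic a\,a = aa\jc\nc a\,a = a\jc\nc a\,a = a\jc a = a$ (by \ref{eq:localdecomp}, idempotency, dual \ref{eq:divis}, idempotency), and similarly $\Ic a\,\Ic{\nc a} = a\Ic{\nc a}\jc\nc a\,\Ic{\nc a} = a\jc\Ic{\nc a} = a\jc 1\jc 1 = \Ic a$ (using \ref{eq:divis} twice and idempotency).
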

\begin{proof}
Recall $\m A$ satisfies \ref{eq:localdecomp} by \cref{t:bdddivis}. 
Let $a\in A$. 
For \ref{eq:localunits}, observe:
    \begin{align*}
        \Ic{a}a 
            &=aa\jc \nc{a}a &&\eqref{eq:localdecomp}\\
            &=a\jc \nc{a}a &&\eqref{eq:idem}\\
            &=a\jc a &&\eqref{eq:divis}\\
            &= a &&\eqref{eq:idem}
    \end{align*}
For the second identity, observe:
\begin{align*}
        \Ic{a}\mc \Ic{\nc{a}} 
            &= a\Ic{\nc{a}} \jc \nc{a}\Ic{\nc{a}} &&\eqref{eq:localdecomp}\\
            &= a \jc \nc{a}\Ic{\nc{a}} &&\eqref{eq:divis}\\
            &= a \jc \Ic{\nc{a}}   &&\eqref{eq:divis}\\
            &= a \jc 1 \jc  1  && (\text{\ref{eq:divis} since $\Ic{\nc{a}}:= \nc{a}\jc 1 =\nc{a}1\jc 1 $})\\
            & = \Ic{a} &&\eqref{eq:idem}
    \end{align*}

For the remaining claims, suppose $\m A$ is also \ref{eq:leftann} (and hence satisfies \ref{eq:leftabs}), and fix also $b\in A$. 
We obtain \ref{eq:coherence} as follows:
    \begin{align*}
        \Oc{a}\jc b 
            &= \Oc{a}\jc \nc{\Oc{a}}b &&\eqref{eq:divis}\\
            &= \Oc{a}b\jc \nc{\Oc{a}}b &&\eqref{eq:leftann}\\
            &= (\Oc{a}\jc 1)b &&\eqref{eq:localdecomp}\\
            &= (a\jc 1)(\nc{a}\jc 1)b &&\eqref{eq:localdecomp}\\
            &=\Ic{a}b &&(\text{since $\Ic{a}\Ic{\nc{a}}=\Ic{a}$})
    \end{align*}
For \ref{eq:leftreg}, we observe
\begin{align*}
        aba
            &= aaba &&\eqref{eq:idem}\\
            &=a(\nc{a}\jc ab)a &&\eqref{eq:divis}\\
            &=a(\nc{a}\jc ab)(a\jc ab)   &&\eqref{eq:leftabs}\\
            &=a(\nc{a}a \jc ab) &&\eqref{eq:orthodist}\\
            &=a(\nc{a} \jc \nc{a}a \jc ab) &&\eqref{eq:divis}\\
            &=a(\nc{a} \jc ab) &&\eqref{eq:leftabs}\\
            &=a\mc ab &&\eqref{eq:divis}\\
            &=ab &&\eqref{eq:idem}\qedhere
    \end{align*}
\end{proof}
Taking these together, we are able to recover \ref{eq:leftdist}.

\begin{lemma}\label{t:divisLdist}
    In a \hyperref[eq:divis]{left-divisible} \ref{eq:leftann} {\iname} $\m A$ satisfying \ref{eq:orthodist}:
    \begin{enumerate}
        \item\label{item: divisLdist_Guz} 
        \cadd{
            \ref{eq:Guz} implies \ref{eq:comlocalunits}
            }
        \item\label{item: divisLdist} \ref{eq:comlocalunits} implies \ref{eq:leftdist}
    \end{enumerate}
    \cadd{
    Consequently, if $\m A$ satisfies any of the identities for \ref{eq:comlocalunits}, \ref{eq:A3}, or \ref{eq:Guz}, then it satisfies all of them.
    }
\end{lemma}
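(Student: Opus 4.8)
The plan is to work entirely inside the battery of identities that $\m A$ already satisfies in this setting. By \cref{t:bdddivis}, \cref{t:diviscoh} and \cref{t:divisibility}, $\m A$ satisfies \ref{eq:localcomp}, \ref{eq:localdecomp}, \ref{eq:localunits}, \ref{eq:leftabs}, \ref{eq:coherence}, \ref{eq:leftreg}, together with $\Ic{x}\mc\Ic{\nc{x}}\approx\Ic{x}\approx\Ic{\nc{x}}\mc\Ic{x}$; and from \ref{eq:leftann} one gets the extra auxiliary identity $\Ic{x}\jc\Ic{y}\approx\Ic{x}$ (compute $x\jc1\jc y\jc1$ using $1\jc z\approx1$), so that $\set{\Ic{a}}$ is a left-zero band under $\jc$. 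Everything below is equational manipulation using this list.

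For (1), the strategy is to re-establish the auxiliary identity $(\star)$: $\Ic{x}\mc yx\approx xy\jc yx$ — which in the \hyperref[eq:leftdist]{left-distributive} case is the content of \cref{t: LR2} — but now using only \ref{eq:divis}, \ref{eq:orthodist}, \ref{eq:leftabs}, \ref{eq:leftreg}, \ref{eq:localdecomp} (and \ref{eq:Guz}, which is available, should a step of \cref{t: LR2}'s computation genuinely require \ref{eq:leftdist}). The natural route starts $\Ic{x}\mc yx\approx xyx\jc\nc{x}yx\approx xy\jc\nc{x}yx$ by \ref{eq:localdecomp} and \ref{eq:leftreg}, after which one absorbs the leading $\nc{x}$ against $xy$ via dual \ref{eq:divis} and \ref{eq:leftabs}. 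With $(\star)$ in hand, instantiate it at $x\mapsto\Ic{a}$, $y\mapsto\Ic{b}$; since $\Ic{\Ic{a}}\approx\Ic{a}$ and $\Ic{a}\mc\Ic{b}\mc\Ic{a}\approx\Ic{a}\mc\Ic{b}$ by \ref{eq:leftreg}, this reads $\Ic{a}\mc\Ic{b}\approx\Ic{a}\mc\Ic{b}\jc\Ic{b}\mc\Ic{a}$, and symmetrically $\Ic{b}\mc\Ic{a}\approx\Ic{b}\mc\Ic{a}\jc\Ic{a}\mc\Ic{b}$. But \ref{eq:Guz} at $x\mapsto\Ic{a}$, $y\mapsto\Ic{b}$ gives $\Ic{a}\mc\Ic{b}\jc\Ic{b}\mc\Ic{a}\approx\Ic{b}\mc\Ic{a}\jc\Ic{a}\mc\Ic{b}$; chaining the three identities collapses $\Ic{a}\mc\Ic{b}$ and $\Ic{b}\mc\Ic{a}$, i.e.\ yields \ref{eq:comlocalunits}.

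For (2), assume \ref{eq:comlocalunits}. First derive \ref{eq:unitcoh}: from $\Ic{x}\approx\Ic{x}\mc\Ic{\nc{x}}$ and \ref{eq:comlocalunits} we get $\Ic{x}\approx\Ic{\nc{x}}\mc\Ic{x}\approx\Ic{\nc{x}}$, the remaining equalities of \ref{eq:unitcoh} being automatic from the De~Morgan laws and $0:=\nc{1}$. Then verify \ref{eq:leftdist} by a direct chained computation: use \ref{eq:divis} to rewrite $x(y\jc z)$ as $x(\nc{x}\jc y\jc z)$ and each of $xy$, $xz$ as $x(\nc{x}\jc y)$, $x(\nc{x}\jc z)$, then push the product inside using \ref{eq:localdecomp}, \ref{eq:leftreg}, \ref{eq:leftabs}, \ref{eq:coherence} and \ref{eq:unitcoh}, with \ref{eq:comlocalunits} commuting the residual local-constant factors and \ref{eq:leftann} discharging the constant summands. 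The ``Consequently'' clause then follows: \ref{eq:Guz}$\,\Rightarrow\,$\ref{eq:comlocalunits} is (1); given \ref{eq:comlocalunits}, (2) makes $\m A$ a \hyperref[eq:leftdist]{left-distributive} {\iname} satisfying \ref{eq:localdecomp}, so \cref{t:dist-comm-equiv} returns all of \ref{eq:comlocalunits}, \ref{eq:A3}, \ref{eq:Guz}; and \cref{t:comAxs} supplies \ref{eq:A3}$\,\Rightarrow\,$\ref{eq:comlocalunits}, closing the cycle.

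The main obstacle is sharply localized: the earlier structural lemmas I want to invoke — notably \cref{t: LR2} and \cref{t:coherence}, and hence $(\star)$ and \ref{eq:unitcoh} — were proved under the hypothesis of \ref{eq:leftdist}, which is exactly what is being established here. So the real work is to re-derive the needed fragments of that machinery ($(\star)$ for (1), and a left-distributive chain for (2)) from \ref{eq:divis}$\,+\,$\ref{eq:orthodist}$\,+\,$\ref{eq:leftann} alone, carefully threading \ref{eq:leftabs} and the local-constant identities through each step; by contrast the bookkeeping around \ref{eq:Guz}, \ref{eq:A3} and the final three-way equivalence is routine once those two identities are secured.
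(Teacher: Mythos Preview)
Your setup is correct, and the ``Consequently'' clause is handled exactly as in the paper. The gaps are in the two numbered parts.

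\medskip

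\textbf{Part (1).} Your route via re-deriving $(\star)$: $\Ic{x}yx\approx xy\jc yx$ is not completed, and the gesture ``absorb the leading $\nc{x}$ against $xy$ via dual divisibility and \ref{eq:leftabs}'' does not go through as stated: dual divisibility gives $xy\jc yx \approx xy\jc(\nc{x}\jc\nc{y})yx$, and reducing $(\nc{x}\jc\nc{y})yx$ to $\nc{x}yx$ is precisely the step in the proof of \cref{t: LR2} that uses \ref{eq:leftdist}. You correctly diagnose this as the obstacle, but you do not resolve it.

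The paper avoids $(\star)$ altogether. It instantiates \ref{eq:Guz} at $(x,y)=(\Oc{a},\Ic{b})$, where \ref{eq:leftann} gives $\Oc{a}\mc z=\Oc{a}$, collapsing the products immediately to obtain $\Oc{a}\jc\Ic{b}\Oc{a}=\Ic{b}\Oc{a}\jc\Oc{a}$. Then, using only \ref{eq:leftreg} and \ref{eq:coherence} (both already available from \cref{t:diviscoh}), one computes
\[
\Oc{a}\jc\Oc{b}
=\Oc{a}\jc\Oc{b}\jc\Oc{a}
=\Oc{a}\jc\Ic{b}\Oc{a}
=\Ic{b}\Oc{a}\jc\Oc{a}
=\Oc{b}\jc\Oc{a}\jc\Oc{a}
=\Oc{b}\jc\Oc{a}.
\]
This is the whole argument; no general form of $(\star)$ is needed.

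\medskip

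\textbf{Part (2).} Your derivation of \ref{eq:unitcoh} matches the paper. But the remaining sketch (``push the product inside using \ldots'') is too vague to be a proof; in particular you never say which term absorbs which, and the \ref{eq:localdecomp} step you need operates on a sum $(\nc{a}\jc u)(a\jc u)$, not on $x(\nc{x}\jc y\jc z)$ directly. The paper's clean route is to first establish \ref{eq:paracomm}, $(\nc{a}\jc b)a=ab$, via the short chain
\[
(\nc{a}\jc b)a
=(\nc{a}\jc ab)(a\jc ab)
=\Oc{\nc{a}}\jc ab
=\Oc{a}\jc ab
=\Ic{a}ab
=ab,
\]
using \ref{eq:divis}, \ref{eq:leftabs}, dual \ref{eq:localdecomp}, \ref{eq:unitcoh}, \ref{eq:coherence}, and \ref{eq:localunits}. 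With paracommutativity in hand, \ref{eq:leftdist} follows by writing $a(b\jc c)=(\nc{a}\jc b\jc c)a=(\nc{a}\jc ab\jc ac)(a\jc ab\jc ac)=\Oc{\nc{a}}\jc ab\jc ac=ab\jc ac$, again via \ref{eq:divis}, \ref{eq:leftreg}, \ref{eq:leftabs}, dual \ref{eq:localdecomp}, \ref{eq:unitcoh}, \ref{eq:coherence}, \ref{eq:localunits}. Your outline is compatible with this, but the decisive intermediate identity (\ref{eq:paracomm}) is missing.
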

\begin{proof}
    Let $\m A$ be such an {\iname} satisfying 
    \ref{eq:divis}, \ref{eq:leftann}ness, and \ref{eq:orthodist},
    and note that $\m A$ satisfies also \ref{eq:localdecomp}, \ref{eq:leftabs}, \ref{eq:coherence}, and \ref{eq:leftreg} by \cref{t:bdddivis,t:diviscoh}. 

    \cadd{
    For \cref{item: divisLdist_Guz}, suppose $\m A$ additionally satisfies \ref{eq:Guz} and let $a,b\in A$. 
    From \ref{eq:Guz} we have 
    $\Oc{a}\Ic{b} \jc \Ic{b}\Oc{a}  
    = \Ic{b}\Oc{a} \jc \Oc{a}\Ic{b}$,
    and using the fact that \ref{eq:leftann}ness entails $\Oc{x}\mc y \approx \Oc{x}$, we obtain $(\star)$: 
    $
    \Oc{a}\jc \Ic{b}\Oc{a} 
    = \Ic{b}\Oc{a} \jc \Oc{a}$. 
    Observe,
    \begin{align*}
        \Oc{a} \jc \Oc{b} 
            &= \Oc{a} \jc \Oc{b} \jc \Oc{a} && \eqref{eq:leftreg}\\
            &= \Oc{a} \jc \Ic{b} \Oc{a} &&\eqref{eq:coherence}\\
            &=  \Ic{b}\Oc{a} \jc \Oc{a} &&(\star)\\
            &= \Oc{b} \jc \Oc{a}\jc \Oc{a} = \Oc{b} \jc \Oc{a} &&\text{(\ref{eq:coherence} \& \ref{eq:idem})} .
    \end{align*}
    }
    
    Towards establishing \cref{item: divisLdist},
    we first show that $\m A$ satisfies \ref{eq:unitcoh} and \ref{eq:paracomm}. 
    The former is immediate from \cref{t:diviscoh}(1) and \ref{eq:comlocalunits}: $\Ic{x} \approx \Ic{x}\mc \Ic{\nc{x}} \approx \Ic{\nc{x}}\mc \Ic{{x}} \approx \Ic{\nc{x}}.$
    For \ref{eq:paracomm}, we fix $a,b\in A$ and observe:
    \begin{align*}
        (\nc{a}\jc b)a 
            &= (\nc{a}\jc ab)a &&\eqref{eq:divis}\\
            &= (\nc{a}\jc ab)(a\jc ab) &&\eqref{eq:leftabs}\\
            &= \nc{a}0 \jc ab &&\eqref{eq:localdecomp}\\
            &= \Oc{a} \jc ab &&\eqref{eq:unitcoh}\\
            &=\Ic{a}ab &&\eqref{eq:coherence}\\
            &=ab &&\eqref{eq:localunits}
    \end{align*}
    Finally, to verify \ref{eq:leftdist}, fix $a,b,c\in A$ and observe:
    \begin{align*}
        a(b\jc c) 
            &= (\nc{a} \jc b \jc c)a && \eqref{eq:paracomm}\\
            &=(\nc{a}\jc ab \jc ac)a&&\text{(\ref{eq:divis}~\&~\ref{eq:leftreg})}\\
            &=(\nc{a}\jc ab \jc ac)(a\jc ab \jc ac)&&\text{(two applications of \ref{eq:leftabs})}\\
            &=\Oc{\nc{a}} \jc ab \jc ac &&\eqref{eq:localdecomp}\\
            &= \Oc{a}\jc ab \jc ac &&
            \eqref{eq:unitcoh}\\
            &=ab \jc ac &&\text{(\ref{eq:coherence}~\&~\ref{eq:localunits})}
    \end{align*}
\cadd{
    Lastly, the final claim follows from the above, \cref{t:dist-comm-equiv}, and \cref{t:comAxs}.
}
\end{proof}

\section{Axiomatizations for McCarthy algebras}\label{sec:axiomMC}
\subsection{{\MKname}s}\label{sec:MKdef}
In \cite{Kow96}, Konikowska gives a list if identities claimed to hold for the algebra $\McA$. 
Translated into our signature, we call this list \emph{Konikowska's postulates}, shown in the figure below.\footnote{In \cite{Kow96}, the right-most term in axiom $(A4')$ contains a typo, there written [sic] $(\nc{x}\jc y)y$.} 
\begin{figure}[H]
    \centering
    \scriptsize
    $
    \begin{array}{r l r l}
        (A1)& \nnc{x} \approx x
        &
        (A2)& \nc{1}\approx 0
        \\
        (A3)& \fc{y}\jc x\jc y \approx \fc{x}\jc y\jc x 
        &
        (A3')& \tc{y}\mc x y \approx \tc{x}\mc y x 
        \\
        (A4)& x \jc y \approx x \jc \nc{x}y \approx \nc{x}y \jc x
        &
        (A4')& x y \approx x (\nc{x}\jc y) \approx (\nc{x}\jc y) x
        \\
        (A5)& x\jc y \jc x \approx x \jc y
        &
        (A5')& xyx \approx xy
        \\
        (A6)& x \approx x\jc xy
        &
        (A6')& x \approx x(x\jc y)
        \\
        (A7)& x\jc x \approx x
        &
        (A7')& x x\approx x
        \\
        (A8)& 1\jc x \approx 1
        &
        (A8')& 0\mc x \approx 0
        \\
        (A9)& 0\jc x \approx x\jc 0 \approx x
        &
        (A9')& 1\mc x \approx x\mc 1 \approx x
        \\
        (A10)& \tc{x} \approx \nc{x}\jc x
        &
        (A10')& \fc{x} \approx \nc{x}\mc x
        \\
        (A11)& \tc{x} \jc 1 \approx \tc{x}
        &
        (A11')& \fc{x}\mc 0 \approx \fc{x}
        \\
        (A12)& x\approx \tc{x}\mc x
        &
        (A12')& x\approx \fc{x}\jc x
        \\
        (A13)& \nc{(x\jc y)} \approx \nc{x}\mc\nc{y}
        &
        (A13')& \nc{(x\mc y)} \approx \nc{x}\jc\nc{y}
        \\
        (A14)& x\jc (y\jc z) \approx (x\jc y)\jc z
        &
        (A14')& x(yz) \approx (xy)z
        \\
        (A15)&  x\jc yz \approx (x\jc y)(x\jc z)
        &
        (A15')& x (y\jc z) \approx xy\jc xz
        \\
        (A16)& xy \jc z \approx (x\jc z)(\nc{x}\jc y\jc z)
        &
        (A16')& (x\jc y)z \approx xy \jc \nc{x}yz
    \end{array}
    $
    \caption{Konikowska's postulates \cite[p. 169]{Kow96} translated into our signature of {\iname}s, recalling $\tc{x}:=x\jc\nc{x}$ and $\fc{x}:=x\mc\nc{x}$.}
    \label{f:KonAx}
\end{figure}
We observe that Konikowska's postulates include those for {\iname}s: the identities $(A7/A7')$, $(A9/A9')$, and $(A14/A14')$ specify that the reducts $\str{\jc,0}$/$\str{\mc,1}$ are idempotent monoids; $(A1)$ specifies that $\nc{}$ is an involution; and finally $(A2)$ and $(A13/A13')$ specify that the reducts $\str{\jc,\nc{},0}$ and $\str{\mc,\nc{},1}$ are De~Morgan dual, i.e., $\str{\jc,\nc{},0}=\str{\mc,\nc{},1}^\partial$. 
Consequently, any identity $(A\#')$ is the De~Morgan dual of $(A\#)$.

This leads us to the following definition.
\begin{definition}
    By a \defem{\McKname} (\MKname) we refer to any {\iname} satisfying Konikowska's postulates in \Cref{f:KonAx}.
    By $\Mk$ we denote the variety of {\MKname}.
\end{definition}
Utilizing the results of the previous section, we provide the following reduced and equivalent axiomatizations for the variety $\Mk$ of {\MKname}s.

\begin{theorem}\label{t:equivAxioms}
    The variety of {\MKname}s \crev{has an equational basis}, relative to {\iname}s, given by: 
    \begin{enumerate}
        \item\label{eqAx:wd} \emph{[Weak Distributivity]} At least one of the following items:
            \begin{enumerate}
                \item\label{wd:dist} \ref{eq:leftdist} and \ref{eq:localdecomp}, i.e., $x(y\jc z)\approx xy \jc xz$ and $\Ic{x}y\approx xy\jc \nc{x}y$;
                \item\label{wd:div} \ref{eq:divis} and \ref{eq:orthodist}, i.e., $xy\approx x(\nc{x}\jc y)$ and $(x\jc \nc{x})y\approx xy\jc \nc{x}y$;
                \item\label{wd:pdist} \ref{eq:paradist}, i.e., $(x\jc y)z\approx xz \jc \nc{x}yz$.
            \end{enumerate}  
        \item\label{eqAx:wabs} \emph{[Weak Absorption]} At least one of the following items:
            \begin{enumerate}
                \item\label{wabs:lann} \ref{eq:leftann}, i.e, $0\mc x \approx 0$;
                \item\label{wabs:labs} \ref{eq:leftabs}, i.e., $x(x\jc y)\approx x$.
            \end{enumerate}  
        \item\label{eqAx:wc} \emph{[Weak Commutativity]} At least one of the following items:
            \begin{enumerate}
                \item\label{wc:lu} \ref{eq:comlocalunits}, i.e., $\Ic{x}\mc \Ic{y} \approx \Ic{y} \mc \Ic{x}$;
                \item\label{wc:A3} \ref{eq:A3}, i.e., $\Ic{x}\mc yx \approx \Ic{y} \mc xy$;
                \item\label{wc:t=1} Any identity obtained from \ref{wc:lu} or \ref{wc:A3} by replacing any instance of a term $\Ic{z}$ by the term $\tc{z}$;
                \item\label{wc: Guz} \cadd{\ref{eq:Guz}, i.e., $xy \jc yx \approx yx \jc xy$.}
            \end{enumerate}
    \end{enumerate}
    Moreover, and in addition to each of the above, the following list of identities hold for {\MKname}s: 
    \ref{eq:leftreg},
    \ref{eq:localcomp},
    \ref{eq:localunits},
    \ref{eq:unitcoh},
    \ref{eq:coherence},
    \ref{eq:orthocoher},
    \ref{eq:orthocom},
    \ref{eq:paracomm}, and
    \ref{eq:dramcon}.
\end{theorem}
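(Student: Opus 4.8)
The strategy is to show that the variety $\Mk$ axiomatized by Konikowska's postulates coincides with the variety axiomatized, relative to {\iname}s, by the conjunction of (any choice in) items \ref{eqAx:wd}, \ref{eqAx:wabs}, \ref{eqAx:wc}. This breaks into two inclusions. First, I would verify that every {\MKname} satisfies the new axioms: this is mostly bookkeeping against \Cref{f:KonAx}. Indeed $(A15')$ is precisely \ref{eq:leftdist}; $(A16')$ is \ref{eq:paradist} (hence \ref{wd:pdist}); $(A8')$ is \ref{eq:leftann} (so \ref{wabs:lann}); $(A3')$ is exactly \ref{eq:A3} after rewriting $\tc{x}$, and the version with $\tc{}$ in place of $\Ic{}$ is verbatim $(A3')$, giving \ref{wc:A3} and \ref{wc:t=1}; and $(A16)$ gives \ref{eq:Guz} once we know left-regularity. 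The interdependencies among \ref{wd:dist}, \ref{wd:div}, \ref{wd:pdist} are already handled: \ref{t:paradist} gives \ref{wd:pdist}$\Rightarrow$\ref{eq:divis}$\wedge$\ref{eq:orthodist}$=$\ref{wd:div}; \ref{t:bdddivis} together with \ref{eq:leftann} upgrades \ref{wd:div} to \ref{eq:localdecomp}; and in the \ref{eq:leftdist} world \ref{t:localdecomp} and \ref{t:divisibility} move freely between \ref{wd:dist} and \ref{wd:div}. Similarly \Cref{t:leftbdd} links \ref{wabs:lann} and \ref{wabs:labs} (they coincide under left-distributivity), and \Cref{t:dist-comm-equiv}, \Cref{t:divisLdist} establish that, once one has weak distributivity and weak boundedness, all three (really four) weak-commutativity identities are interderivable — so the three ``at least one of'' clauses in the statement are genuinely robust to the choice made. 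The trailing list of bonus identities (\ref{eq:leftreg}, \ref{eq:localcomp}, \ref{eq:localunits}, \ref{eq:unitcoh}, \ref{eq:coherence}, \ref{eq:orthocoher}, \ref{eq:orthocom}, \ref{eq:paracomm}, \ref{eq:dramcon}) then follows from \Cref{t:LDidens} (or \Cref{t:diviscoh}/\Cref{t:divisLdist} in the divisible presentation), since at that point we are in the hypotheses of \Cref{t:LDidens}: a \hyperref[eq:leftdist]{left-distributive} \ref{eq:leftann} {\iname} satisfying \ref{eq:localdecomp} and \ref{eq:comlocalunits}.

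For the reverse inclusion — that the new axioms imply all of Konikowska's postulates — I would fix the presentation (\ref{wd:dist})+(\ref{wabs:lann})+(\ref{wc:lu}), i.e., assume $\m A$ is a \hyperref[eq:leftdist]{left-distributive} \ref{eq:leftann} {\iname} satisfying \ref{eq:localdecomp} and \ref{eq:comlocalunits}, which is exactly the hypothesis set of \Cref{t:LDidens}. By that lemma $\m A$ satisfies \ref{eq:leftreg}, \ref{eq:localcomp}, \ref{eq:localunits}, \ref{eq:unitcoh}, \ref{eq:coherence}, \ref{eq:dramcon}, \ref{eq:A3}, \ref{eq:Guz}, \ref{eq:leftabs}, \ref{eq:orthodist}, \ref{eq:divis}, as well as \ref{eq:paradist}, \ref{eq:paracomm}, \ref{eq:orthocoher}, \ref{eq:orthocom}. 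Now I run through \Cref{f:KonAx} postulate by postulate. $(A1)$, $(A2)$, $(A7/A7')$, $(A9/A9')$, $(A13/A13')$, $(A14/A14')$ are part of being an {\iname}. $(A15')$=\ref{eq:leftdist} is assumed and $(A15)$ is its De~Morgan dual (\Cref{t:DMduals}); $(A8')$=\ref{eq:leftann} is assumed, $(A8)$ dual. $(A5'/A5)$ is \ref{eq:leftreg}; $(A6'/A6)$ is \ref{eq:leftabs}; $(A4'/A4)$ is \ref{eq:divis}; $(A16'/A16)$ is \ref{eq:paradist}; $(A3'/A3)$ is \ref{eq:A3} (the $\Ic{}$-to-$\tc{}$ replacement being licensed by \ref{eq:localcomp}). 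That leaves $(A10/A10')$, $(A11/A11')$, $(A12/A12')$. Postulate $(A10)$, $\tc{x}\approx\nc{x}\jc x$, holds in every {\iname} by the remark following the definition of the local extrema ($\nc{x}\jc x = \tc{\nc{x}}$ and $\nnc x = x$ give $\nc x \jc x = \nc x \jc \nnc x$; more directly $\tc x := x \jc \nc x$ need not equal $\nc x \jc x$ in general, so here I would instead derive $(A10)$ from \ref{eq:orthocom} or from the commutativity content of \ref{eq:A3}). Postulate $(A12)$, $x\approx\tc{x}\mc x$, is exactly \ref{eq:localunits} $\Ic x \mc x \approx x$ after substituting $\tc x$ for $\Ic x$ via \ref{eq:localcomp}. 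Postulate $(A11)$, $\tc x \jc 1 \approx \tc x$, follows from \ref{eq:localcomp} (so $\tc x = \Ic x := x \jc 1$) and idempotency of $\jc$: $\Ic x \jc 1 = x \jc 1 \jc 1 = x \jc 1 = \Ic x$. Their De~Morgan duals $(A10'/A11'/A12')$ come for free by \Cref{t:DMduals}. This completes the derivation of all postulates, hence the two varieties coincide.

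The main obstacle is organizational rather than computational: all the genuinely delicate equational manipulations have been front-loaded into the lemmas of \Cref{sec:findingidens} (especially \Cref{t:LDidens}, \Cref{t:coherence}, \Cref{t: LR2}, \Cref{t:dist-comm-equiv}, and \Cref{t:divisLdist}), so the proof of \Cref{t:equivAxioms} is largely a matter of (i) carefully matching each postulate $(A\#)$ to a named identity already shown to follow from the chosen basis, using \Cref{t:DMduals} to halve the work, and (ii) verifying the equivalences among the alternatives inside each of the three numbered items by citing the appropriate cross-lemmas — the only subtlety being to check that each equivalence is available under the weak hypotheses actually assumed (e.g. that \Cref{t:dist-comm-equiv} needs only left-distributivity plus \ref{eq:localdecomp}, while \Cref{t:divisLdist} covers the divisible side, so every combination of choices from items \ref{eqAx:wd}–\ref{eqAx:wc} really does land in the common hypothesis set). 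One point worth stating carefully is that the ``at least one of'' formulation is legitimate precisely because, modulo the {\iname} axioms, picking any option in item \ref{eqAx:wd} together with any option in \ref{eqAx:wabs} already forces the hypotheses under which \Cref{t:dist-comm-equiv}/\Cref{t:divisLdist} make all options in \ref{eqAx:wc} equivalent; I would state this as the first paragraph of the proof and then do the postulate-by-postulate check.
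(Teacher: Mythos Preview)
Your proposal is correct and follows essentially the same architecture as the paper's proof: first argue that all choices within items \ref{eqAx:wd}--\ref{eqAx:wc} define the same variety (citing \Cref{t:paradist}, \Cref{t:leftbdd}/\Cref{t:bdddivis}, \Cref{t:dist-comm-equiv}/\Cref{t:divisLdist}, and \ref{eq:localcomp} for the $\Ic{}$/$\tc{}$ swap), then match that variety against Konikowska's list via \Cref{t:LDidens}.

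Two small points of precision. First, $(A4/A4')$ is not just \ref{eq:divis}: it is the conjunction of \ref{eq:divis} and \ref{eq:paracomm} (the chain $x\jc y \approx x\jc\nc{x}y \approx \nc{x}y\jc x$ contains two equalities); you already have \ref{eq:paracomm} from \Cref{t:LDidens}, so the check still succeeds, but the identification should be stated correctly. Second, for $(A10)$ the paper uses \ref{eq:unitcoh} together with \ref{eq:localcomp} (since $\nc{x}\jc x=\tc{\nc{x}}=\Ic{\nc{x}}=\Ic{x}=\tc{x}$), whereas you eventually reach for \ref{eq:orthocom} with $y=z=1$; both routes are valid, but your initial claim that $(A10)$ ``holds in every {\iname}'' is false and should be deleted rather than merely retracted mid-sentence.
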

\begin{proof}
    We begin by showing that any combination of the items above specify the same variety.
    First, observe that any {\iname} satisfying item \ref{wd:pdist} also satisfies item \ref{wd:div} by \cadd{\cref{t:paradist}}. 
    Thus it suffices to only verify the claim for items \ref{wd:dist} and \ref{wd:div}.
    Also, observe that \ref{wabs:lann} and \ref{wabs:labs} are both satisfied in any \hyperref[eq:leftdist]{left-distributive}  or \hyperref[eq:divis]{left-divisible} {\iname} satisfying either of them via \cref{t:leftbdd} and \cref{t:bdddivis}, respectively. 
    That is, the assumption of either one of them specify the same variety, so we may indicate an assumption of either one of them simply by writing \ref{eqAx:wabs}. 
\cadd{
    Similarly, any {\iname} satisfying items \ref{wd:dist} or \ref{wd:div}, all of the items \ref{wc:lu}, \ref{wc:A3}, and \ref{wc: Guz} hold iff any one of them hold by \cref{t:dist-comm-equiv,t:divisLdist}, respectively.
    Moreover, as any one of items \ref{wd:dist} and \ref{wd:div} entail \ref{eq:localcomp}, i.e., $\Ic{z}\approx \tc{z}$, any instance of item \ref{wc:t=1} is equivalent either \ref{wc:lu} or \ref{wc:A3}; we may indicate as assumption of any these items by writing \ref{eqAx:wc}. 
}

\cadd{
    On the one hand, assuming items \ref{wd:dist}, \ref{eqAx:wabs}, and \ref{eqAx:wc} entails all the remaining identities (in particular, item \ref{wd:div} and \ref{wd:dist}) via \cref{t:LDidens}.
    On the other hand, to establish that items \ref{wd:div}, \ref{eqAx:wabs}, and \ref{eqAx:wc} suffice, we need only verify the identities in \ref{wd:dist}, which is immediate from \cref{t:leftbdd,t:divisLdist}. 
    This completes all the claims. That is, each collection specifies the same variety; call it $\mathcal{V}$.
}
    
    Next, we show that $\mathcal{V}$ and $\Mk$ coincide. We begin by showing $\mathcal{V}\subseteq \Mk$, i.e., each one of Konikowska's postulates are satisfied in $\mathcal{V}$. Recall that $(A1), (A2), (A7/A7'), (A9/A9'), (A14/A14')$, and $(A13/A13')$ correspond to those of an {\iname}.  
    Moreover, notice that $(A4/A4')$ is the combination of \ref{eq:divis} and \ref{eq:paracomm}, $(A5/A5')$ is \ref{eq:leftreg}, $(A6/A6')$ is \ref{eq:leftabs}, $(A8/A8')$ is \ref{eq:leftann}, $(A15/A15')$ is \ref{eq:leftdist}, and $(A16/A16')$ is \ref{eq:paradist}.
    So we need only check the validity of $(A3), (A10)$, and $(A11)$. 
  
   Since $\mathcal{V}$ is \ref{eq:localcomp}, the terms $\tc{x},\fc{x}$ can be equivalently replaced by $\Ic{x},\Oc{x}$, respectively, in each identity. 
   So $(A3)$ follows from \ref{eq:A3}, $(A11)$ from idempotency, and $(A10)$ from \ref{eq:unitcoh} (since $\tc{\nc{x}}\approx \nc{x}\jc x$ by involutivity). Together, this establishes $\mathcal{V}\subseteq \Mk$.

   For the reverse inclusion, recall $(A16)$ is \ref{eq:paradist} and $(A6)$ is \ref{eq:leftabs}. Finally, $(A3)$ is an instance of item \ref{wc:A3}; i.e, replacing the term $\Ic{x}$ by $\tc{x}$ on the left of \ref{eq:A3}, and replacing $\Ic{y}$ by $\tc{y}$ on the right. 
   Hence $\Mk \subseteq \mathcal{V}$. Therefore these varieties coincide. The fact that the remaining identities hold follows from, e.g., \cref{t:LDidens}.
    \end{proof}
\begin{remark}
    For many of the axiomatizations above, the assumption of being  an {\iname} can be weakened to just a monoid with involution, i.e., idempotency may be dropped. 
    For instance, it follows simply from the \ref{eq:leftabs} laws:
    $x\jc x \approx x \jc x(x\jc x) \approx x$. 
    Also it is garnered from \ref{eq:divis}, and hence also \ref{eq:paradist}; indeed, on the one hand we have $\nc{x} \jc 1 \approx \nc{x} \jc x1 \approx \nc{x} \jc x$ by (the dual form of) left-divisibility and involutivity, and hence $x\mc x \approx x (\nc{x} \jc x) \approx x (\nc{x} \jc 1)\approx x\mc 1 \approx x $ by a few applications of divisibility. 
    More surprisingly for \ref{eq:paradist}, presented as $(x\jc y)z\approx xz \jc \nc{x}(yz)$, experiments with \texttt{Prover9} \cite{P9M4} reveal that even the assumption of associativity (as well as having a two-sided unit) can be recovered when also in presence of \ref{eq:leftabs} and either identity from item \ref{eqAx:wc} (i.e., the base condition is that of a left (or, instead, right) unital groupoid with involution). 
    However, verifying such results lie outside the scope of this paper.
\end{remark}

\begin{proposition}\label{t:MinMK}
    The algebra $\McA$ is an {\MKname}. 
    Consequently, the variety of {\MKname}s subsumes the variety of McCarthy algebras, i.e., $\MC \subseteq \Mk$.
\end{proposition}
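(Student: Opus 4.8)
The plan is to verify directly that $\McA$ satisfies a convenient generating subset from \cref{t:equivAxioms}, and then appeal to that theorem (plus the fact that $\McA$ is already known to be an {\iname}) to conclude that $\McA\in\Mk$; the inclusion $\MC\subseteq\Mk$ then follows since $\Mk$ is a variety containing $\McA$ and $\MC=\mathsf{V}(\McA)$ is the \emph{smallest} such variety.

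Concretely, I would pick the cheapest-to-check representatives from each of the three groups in \cref{t:equivAxioms}: for Weak Distributivity, item \ref{wd:dist} — namely \ref{eq:leftdist} and \ref{eq:localdecomp}; for Weak Absorption, item \ref{wabs:lann} — namely \ref{eq:leftann}; and for Weak Commutativity, item \ref{wc:lu} — namely \ref{eq:comlocalunits}. But in fact all of these have already been established for $\McA$ earlier in the paper: \ref{eq:leftdist} is \cref{t:leftsemiring}; \ref{eq:localdecomp} is \cref{t:M3localdecomp}; \ref{eq:leftann} is \cref{t:M3leftbdd}; and \ref{eq:comlocalunits} is \cref{t:M3localunitcom}. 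So the body of the proof is essentially a citation: $\McA$ is an {\iname} (verified when the 3-element algebras were enumerated, cf.\ the discussion around \cref{t:3elem}, where $\str{*_\ell,\prime_a}$ was identified with $\McA$), and it satisfies items \ref{wd:dist}, \ref{wabs:lann}, and \ref{wc:lu}, hence by \cref{t:equivAxioms} it satisfies all of Konikowska's postulates, i.e.\ $\McA\in\Mk$.

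For the second sentence of the statement, since $\Mk$ is a variety (it is defined as the class of {\iname}s satisfying a fixed set of identities, so it is equationally definable) and $\McA\in\Mk$, we have $\mathsf{V}(\McA)\subseteq\Mk$ because $\mathsf{V}(\McA)$ is by definition the least variety containing $\McA$. As $\MC:=\mathsf{V}(\McA)$, this gives $\MC\subseteq\Mk$.

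I do not anticipate a genuine obstacle here: the only real content was already front-loaded into Sections~\ref{sec:findingidens} and \ref{sec:axiomMC}, where each relevant identity was shown to hold in $\McA$ by inspecting its (small, finite) operation table. The one thing to be careful about is bookkeeping — making sure the three chosen identities genuinely land in the three separate clauses \ref{eqAx:wd}, \ref{eqAx:wabs}, \ref{eqAx:wc} of \cref{t:equivAxioms} (they do: \ref{eq:leftdist}+\ref{eq:localdecomp} is exactly clause \ref{wd:dist}, \ref{eq:leftann} is clause \ref{wabs:lann}, \ref{eq:comlocalunits} is clause \ref{wc:lu}) — and noting explicitly that $\McA$ being an {\iname} is a prerequisite of that theorem, which is why one cites the earlier enumeration. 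Alternatively, one could bypass \cref{t:equivAxioms} entirely and just check each of $(A1)$–$(A16')$ in \Cref{f:KonAx} against the table of $\McA$, but routing through the reduced axiomatization is shorter and makes the structural point clearer.
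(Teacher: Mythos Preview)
Your proposal is correct and matches the paper's proof essentially line for line: the paper likewise notes that $\McA$ is an {\iname} (citing \Cref{f:3elem}), invokes \cref{t:leftsemiring}, \cref{t:M3leftbdd}, \cref{t:M3localunitcom}, and \cref{t:M3localdecomp} to obtain items \ref{wd:dist}, \ref{wabs:lann}, and \ref{wc:lu} of \cref{t:equivAxioms}, and then concludes $\MC=\mathsf{V}(\McA)\subseteq\Mk$.
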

\begin{proof}
    The algebra $\McA$ is an {\iname} (cf. \Cref{f:3elem}), is \hyperref[eq:leftdist]{left-distributive}  by \cref{t:leftsemiring}, \ref{eq:leftann} by \cref{t:M3leftbdd}, satisfies \ref{eq:comlocalunits} by \cref{t:M3localunitcom}, and satisfies \ref{eq:localdecomp} by \cref{t:M3localdecomp}. Hence $\McA\in \Mk$, and consequently $\MC := \mathsf{V}(\McA) \subseteq \Mk$. 
\end{proof}

The variety of {\MKname}s generalizes that of Boolean algebras. 
Indeed, as $\m 2$ generates the variety of Boolean algebras, and $\m 2$ is a subalgebra of $\McA$, from \cref{t:MinMK} it follows that $\BA \subseteq \Mk$. We now proceed to establish some equivalent conditions for when an {\MKname} is actually Boolean.

\begin{theorem}\label{t:BAeq}
    The following are equivalent for any {\MKname} $\m A = \str{A,\jc,\mc,\nc{},0,1}$:
    \begin{enumerate}
        \item $\m A$ is a Boolean Algebra.
        \item\label{BAeq:comm} $\m A$ is commutative.
        \item\label{BAeq:rightdist} $\m A$ satisfies right-distributivity; i.e., $\m A\models (x \jc y)\mc z \approx xz \jc yz$ \lbrak dually, $(x \mc y)\jc z \approx (x\jc z) \mc(y\jc z)$\rbrak.
        \item\label{BAeq:bounded} $\m A$ is right-bounded i.e., $\m A\models x \mc 0 \approx 0$ \lbrak dually, $x \jc 1 \approx 1 $\rbrak.
        \item\label{BAeq:orthocomp} $\m A$ is orthocomplemented; i.e., $\m A\models x \mc \nc{x} \approx 0$ \lbrak dually, $x \jc \nc{x} \approx 1 $\rbrak.
        \item\label{BAeq:rightabs} $\m A$ satisfies right-absorption; i.e., $\m A\models (x \jc y)\mc x \approx x$ \lbrak dually, $ xy\jc x \approx x$\rbrak.
    \end{enumerate}
\end{theorem}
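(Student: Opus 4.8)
The plan is to prove $(1)\Rightarrow(2),(3),(4),(5),(6)$ (all immediate, since a Boolean algebra is commutative, distributive on both sides, right-bounded, orthocomplemented, and satisfies every absorption law), the implication $(2)\Rightarrow(1)$, and then $(k)\Rightarrow(2)$ for $k\in\{3,4,5,6\}$. For those last four I would aim at the single identity $xy\jc yx\approx xy$: if $\m A$ satisfies it then, instantiating with $x$ and $y$ exchanged, it also satisfies $yx\jc xy\approx yx$, and since $\m A\models\ref{eq:Guz}$ (that is, $xy\jc yx\approx yx\jc xy$) by \cref{t:equivAxioms}, we get $xy\approx xy\jc yx\approx yx\jc xy\approx yx$, i.e.\ $\m A$ is commutative. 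So the whole statement collapses to: $(2)\Rightarrow(1)$; deriving $xy\jc yx\approx xy$ from each of $(3)$–$(6)$; the equivalence $(4)\Leftrightarrow(5)$; and the implication $(6)\Rightarrow(4)$.

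For $(2)\Rightarrow(1)$: by \cref{t:equivAxioms} every {\MKname} is \ref{eq:leftdist}, satisfies $1\jc x\approx1$ (\ref{eq:leftann}), \ref{eq:leftabs}, and \ref{eq:localcomp}. Under commutativity, \ref{eq:leftdist} upgrades to full distributivity and \ref{eq:leftabs} to all four absorption laws, while $0$ being the $\jc$-unit together with $1\jc x\approx1$ gives boundedness; hence $\str{A,\jc,\mc,0,1}$ is a bounded distributive lattice. Commutativity together with \ref{eq:leftann} and \ref{eq:localcomp} further yields $x\mc\nc{x}\approx x\mc0\approx0\mc x\approx0$, so $\nc{}$ is an orthocomplement (antitonicity being the De~Morgan laws of an \inv monoid). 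Thus $\m A$ is a distributive ortholattice, i.e.\ a Boolean algebra.

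The two easy reductions are quick: $x\mc0\approx x\mc\nc{x}$ holds in every {\MKname} by \ref{eq:localcomp}, so $(4)\Leftrightarrow(5)$ is immediate; and instantiating $x:=0$ in $(x\jc y)\mc x\approx x$ gives $(0\jc y)\mc0\approx0$, i.e.\ $y\mc0\approx0$, so $(6)\Rightarrow(4)$. For $(3)\Rightarrow(xy\jc yx\approx xy)$: right-distributivity together with $1\jc y\approx1$ gives $z\approx(1\jc y)z\approx1z\jc yz=z\jc yz$; instantiating $z:=xy$ and $y:=yx$ and simplifying $(yx)(xy)=yxxy=yxy=yx$ via idempotency and \ref{eq:leftreg} yields $xy\approx xy\jc yx$.

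It remains to derive $xy\jc yx\approx xy$ from $(4)$, which I expect to be the crux. From $(4)$, \cref{t:DMduals} also gives $x\jc1\approx1$, hence $x\mc\nc{x}\approx x\mc0\approx0$ by \ref{eq:localcomp} and $\Oc{w}\approx0$ for all $w$. The key computation is $\nc{a}\,b\,a\approx0$: using \ref{eq:paracomm} and (the dual of) \ref{eq:leftdist},
$$\nc{a}(ba)\approx(a\jc ba)\mc\nc{a}\approx\bigl((a\jc b)\mc a\bigr)\mc\nc{a}=(a\jc b)\mc(a\mc\nc{a})=(a\jc b)\mc0=\Oc{a\jc b}\approx0 .$$
Then, using the dual of \ref{eq:divis}, the De~Morgan laws, \ref{eq:leftdist}, and $\nc{b}\,b\approx\Oc{\nc{b}}\approx0$ (so $\nc{b}\,b\,a\approx0$ by \ref{eq:leftann}), one obtains $ab\jc ba\approx ab\jc\nc{(ab)}(ba)=ab\jc(\nc{a}\jc\nc{b})(ba)=ab\jc\nc{a}\,b\,a\jc\nc{b}\,b\,a\approx ab$; together with $(4)\Leftrightarrow(5)$ and $(6)\Rightarrow(4)$ this covers all four implications $(k)\Rightarrow(2)$. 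The main obstacle is exactly this passage from a right-biased hypothesis to genuine commutativity: the {\MKname} axioms supply a large stock of \emph{left}-handed identities, and the difficulty is to find the right instantiations (the substitution $z:=xy,\,y:=yx$ for $(3)$, and the chain producing $\nc{a}\,b\,a\approx0$ for $(4)$) that force the non-commutative multiplication to commute; once these are in hand, everything else is mechanical from \cref{t:equivAxioms}, \cref{t:DMduals}, and $(2)\Rightarrow(1)$.
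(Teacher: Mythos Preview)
Your overall architecture is sound, and the reductions $(4)\Leftrightarrow(5)$, $(6)\Rightarrow(4)$, $(2)\Rightarrow(1)$, and the clever $(3)\Rightarrow(xy\jc yx\approx xy)$ via $z\approx z\jc yz$ all work. There is, however, a genuine slip in your $(4)\Rightarrow(2)$ computation: the step
\[
(\nc{a}\jc\nc{b})(ba)\;=\;\nc{a}\,ba\jc\nc{b}\,ba
\]
is an instance of \emph{right}-distributivity, not \ref{eq:leftdist}, and right-distributivity is precisely item~(3), which you have not yet established from~(4). The fix is to use \ref{eq:paradist} instead: $(\nc{a}\jc\nc{b})(ba)=\nc{a}\,ba\jc a\,\nc{b}\,ba$, and then observe that under~(4) one has $\nc{b}b=0$, so $a\,\nc{b}\,ba=a\cdot 0=0$. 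With this correction your chain goes through.

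That said, the paper's route for $(4)\Rightarrow(2)$ is a one-liner and avoids your entire $xy\jc yx\approx xy$ strategy: under~(4), dually $\Ic{x}:=x\jc 1\approx 1$ for every $x$, so \ref{eq:A3} (i.e.\ $\Ic{y}\,xy\approx\Ic{x}\,yx$) collapses to $xy\approx yx$ immediately. The paper then organizes the equivalences as a cycle $2\Rightarrow 3\Rightarrow 4\Rightarrow 2$ (plus $4\Leftrightarrow 5$ and $4\Leftrightarrow 6$), rather than proving four separate implications $(k)\Rightarrow(2)$. Your approach has the virtue of giving an independent, direct argument for $(3)\Rightarrow(2)$, but the detour through \ref{eq:Guz} and the long computation for~(4) is more work than necessary; the paper's use of \ref{eq:A3} is the cleaner lever here.
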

\begin{proof}
    For the forward direction, first note that clearly $1{\Rightarrow}\ref{BAeq:comm}$ holds.
    Also, $\ref{BAeq:comm}{\Rightarrow}\ref{BAeq:rightdist}$ holds via \ref{eq:leftdist}, $\ref{BAeq:bounded} {\Leftrightarrow} \ref{BAeq:orthocomp}$ holds since $\m A$ is \ref{eq:localcomp}, and $\ref{BAeq:bounded}{\Rightarrow}\ref{BAeq:rightabs}$ follows since 
    $$(a\jc b)a = aa \jc \nc{a}ba = a\jc \Oc{\nc{a}b}$$
    via \ref{eq:paradist}, \ref{eq:idem}, and \ref{eq:dramcon}, so $(a\jc b)a = a$ follows from the assumption $x\mc 0 \approx 0$ since $0$ is a unit for $\jc$. 
    To complete the forward direction, it suffices to verify $\ref{BAeq:rightdist}{\Rightarrow}\ref{BAeq:bounded}$. Indeed, 
    \begin{align*}
        0 = 1\mc 0 
        &= (1\jc a)0 && \eqref{eq:leftann}\\
        &=(1\mc 0)\jc (a\mc 0) &&\text{(\ref{BAeq:rightdist}: right-distributivity)}\\
        &= a\mc 0&& \text{(def. of units)},  
    \end{align*}
    thus $\m A$ is right-bounded. 

    Towards the reverse implications, first observe that $\ref{BAeq:rightabs}{\Rightarrow}\ref{BAeq:bounded}$ since then $0 = (0\jc a)0$ and $(0\jc a) = a0$ since $0$ is a unit for $\jc$. 
    The implication $\ref{BAeq:bounded}{\Rightarrow}\ref{BAeq:comm}$ is a direct consequence of \ref{eq:A3}, as right-bounded simply means $\Ic{x}\approx 1$ holds. 
    Therefore items \ref{BAeq:comm}--\ref{BAeq:rightabs} are all equivalent. 
    So any one of them imply all of them, ensuring that $\m A$ is a distributive ortholattice, i.e., a Boolean algebra. 
    This establishes all the equivalences.
\end{proof}
Recall the relation $\leq_\jc$ is defined via $a\leq_\jc b$ iff $a\jc b = b$. 
While we may use the relation corresponding to either operation, we prefer to work with the one for $\jc$. 
Henceforth, we will abbreviate $\leq_\jc$ simply by $\leq$. Recall from \cref{t:leftregEquiv}, $\leq$ is a partial order for {\MKname}s since they are left-regular.

\begin{remark}\label{rem: order with left-absorption}
Observe that for any $a,b\in A$, by \ref{eq:leftabs}, $a\leq b$ implies $a\mc b = a$ (indeed $a \mc b = a\mc (a + b) = a$). 
The converse is not true in general (e.g., in $\McA$, $\varepsilon \mc x = \varepsilon$, with $x\in\{0,1\}$ but $\varepsilon\nleq x$).     
\end{remark}

\begin{lemma}\label{t:BelowZeros}
    The following quasi-identities hold for {\MKname}s:
    $$
    x\leq \Oc{y} \Rightarrow x = \Oc{x}
    \qquad\text{and}\qquad
    x\leq \nc{x} \Rightarrow x = \Oc{x}.
    $$
    Consequently, an {\MKname} can have at most one involution fixed-point $e=\nc{e}$. 
    Moreover, if $e$ is a involution fixed-point then $e=\Oc{e}=\Ic{e}$ and, for all $x$, $\Oc{x} \leq e$ and $ex=e = e\jc x$.
\end{lemma}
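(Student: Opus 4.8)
The plan is to establish the four assertions in order, using only the identities available for {\MKname}s: idempotency, left-regularity (hence $\leq$ is a partial order), \ref{eq:leftann}, \ref{eq:leftabs}, \ref{eq:coherence}, \ref{eq:unitcoh}, \ref{eq:localunits}, \ref{eq:localdecomp}, \ref{eq:A3}, and \ref{eq:paradist}. I would first prove the two quasi-identities, then read off the uniqueness of an involution fixed-point, and finally extract the stated properties of such a fixed-point.

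**First quasi-identity.** Suppose $x\leq \Oc{y}$, i.e. $x\jc \Oc{y}=\Oc{y}$. I want $x=\Oc{x}:=x\mc 0$. Using \ref{eq:coherence} in the form $\Oc{y}\jc z \approx \Ic{y}\mc z$ and \ref{eq:leftabs}, the key is that anything below $\Oc{y}$ is ``additively absorbed into a product with $0$''. Concretely, from $x\leq\Oc{y}$ and \ref{rem: order with left-absorption} we get $x\mc \Oc{y}=x$; now $\Oc{y}=y\mc 0$, so $x = x\mc y\mc 0 = \Oc{x\mc y}$, which is of the form $\Oc{z}$. It then suffices to check that every element of the form $\Oc{z}$ is a fixed-point of $\Oc{-}$, which is \cref{rem: 1x0-x0}-style: $\Oc{\Oc{z}}:=z0\mc 0 \approx z0 = \Oc{z}$ by idempotency. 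Hence $x=\Oc{x\mc y}$ with $\Oc{\Oc{x\mc y}}=\Oc{x\mc y}$; but we still need $x=\Oc{x}$ on the nose, not $x=\Oc{w}$ for some $w$. To close this I would use \ref{eq:leftann}-driven absorption $\Oc{w}\mc z\approx\Oc{w}$ together with idempotency to show $\Oc{x}:=x\mc 0 = \Oc{x\mc y}\mc 0 = \Oc{x\mc y} = x$. The small obstacle here is bookkeeping the exact sequence of absorptions; nothing deep.

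**Second quasi-identity and uniqueness.** Suppose $x\leq\nc{x}$. Applying the first quasi-identity is not immediate because $\nc{x}$ need not be of the form $\Oc{y}$; instead I would argue directly. From $x\leq \nc{x}$ we get $x\jc\nc{x}=\nc{x}$, i.e. $\tc{x}=\nc{x}$, equivalently (by \ref{eq:localcomp}) $\Ic{x}=\nc{x}$; dually $\Oc{x}=\nnc{x}=x$. That already gives $x=\Oc{x}$. Now for uniqueness: if $e=\nc{e}$ then in particular $e\leq\nc{e}$, so $e=\Oc{e}$, and dually (applying the result to $\nc{e}=e$) $e=\Ic{e}$; so $e=\Oc{e}=\Ic{e}$. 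If $e,f$ are both fixed-points, then $\Oc{e}=e$ and $\Oc{f}=f$; using \ref{eq:A3}/\ref{eq:comlocalunits} for the local constants, $\Oc{e}\jc\Oc{f}=\Oc{f}\jc\Oc{e}$, and combined with \ref{eq:leftabs}-absorption ($\Oc{e}\mc$ anything $=\Oc{e}$, dually for $\jc$) one forces $e\jc f = e = f$; alternatively, $e f = e$ and $f e = f$ by left-annihilation-type reasoning, while \ref{eq:leftreg} and \ref{eq:A3} force $ef=fe$, giving $e=f$. I'd pick whichever of these two routes is cleanest once the local-constant identities are laid out.

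**Properties of the fixed-point.** Given the fixed-point $e$ with $e=\Oc{e}=\Ic{e}$: for arbitrary $x$, $\Oc{x}\leq e$ should follow from $\Oc{x}\jc e = \Oc{x}\jc\Oc{e}=\Oc{e}=e$ using \ref{eq:coherence} ($\Oc{x}\jc e=\Ic{x}\mc e$) together with $e=\Oc{e}$ and idempotency — essentially $\Oc{x}\jc \Oc{e}$ is ``absorbed'' into $\Oc{e}$ because $e=\Oc{e}$ is itself a maximal element among the $\Oc{-}$'s, or more directly because $\Oc{x}\jc e = \Oc{x0}\jc\Oc{e0}$ and one of the weak-distributive laws collapses it to $e$; I expect this to be the step that needs the most care, and I would lean on \ref{eq:coherence} plus $\Ic{e}=e$ to get $\Oc{x}\jc e = \Ic{x}\mc e = \Ic{x}\mc\Ic{e}=\Ic{e}\mc\Ic{x}=e\mc\Ic{x}$ and then \ref{eq:leftann}-absorption $e\mc z\approx e$ (valid since $e=\Oc{e}$ and $\Oc{w}\mc z\approx\Oc{w}$) to conclude $=e$. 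The same identity $e\mc z\approx e$ gives $ex=e$ for all $x$; and then $e\jc x = \Oc{e}\jc x = \Ic{e}\mc x = e\mc x = e$ by \ref{eq:coherence} and $e=\Ic{e}$. This finishes the statement.

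**Main obstacle.** The genuinely delicate point is the very first quasi-identity: upgrading ``$x$ equals some $\Oc{w}$'' to ``$x=\Oc{x}$''. Everything else is a chase through the identities catalogued in \cref{t:LDidens} and \cref{t:coherence}; the fixed-point properties in particular all reduce to the single slogan ``$e=\Oc{e}=\Ic{e}$ is absorbing for $\mc$ from the left and a top for $\Oc{-}$,'' which is forced by \ref{eq:coherence} together with the local-constant commutativity \ref{eq:comlocalunits}.
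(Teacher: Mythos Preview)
Your proposal is correct and essentially matches the paper's proof; your flagged ``obstacle'' for the first quasi-identity dissolves exactly as you yourself outline, and the paper's computation is the same one-liner $\Oc{x}=x\mc 0=(x\mc\Oc{y})\mc 0=x\mc(\Oc{y}\mc 0)=x\mc\Oc{y}=x$. The only difference worth noting is that for the second quasi-identity the paper is slightly more direct---from $x\leq\nc{x}$ it uses \cref{rem: order with left-absorption} to get $x\mc\nc{x}=x$ and then \ref{eq:localcomp} to get $x\mc\nc{x}=\Oc{x}$, avoiding your detour through $\Ic{x}=\nc{x}$ and \ref{eq:unitcoh}---while the fixed-point claims are handled identically in spirit (the paper uses the additive form of \ref{eq:comlocalunits} where you use the multiplicative form plus \ref{eq:coherence}).
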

\begin{proof} 
    Let $\m A$ be an {\MKname} and fix $a,b\in A$. 
    On the one hand, if $a\leq \Oc{b}$ then $a\mc \Oc{b} = a$ by \cref{rem: order with left-absorption}. 
    Hence, since $\Oc{b}\mc 0:=b0\mc 0 = b0 = \Oc{b}$ by \ref{eq:idem}, we have 
    $$\Oc{a}:= a\mc 0
    =  (a\mc \Oc{b})\mc 0
    =a\mc (\Oc{b}\mc 0) 
    = a\mc \Oc{b}
    =a.$$
    On the other hand, if $a\leq \nc{a}$ then again by \cref{rem: order with left-absorption}, $a\mc \nc{a} = a$, but $a\mc \nc{a} = \Oc{a}$ as $\m A$ is \ref{eq:localcomp}, hence $a = \Oc{a}$. 
    Thus we have established both quasi-identities.

    For the final claims: if $e=\nc{e}$, then $e = e\jc e = e\jc \nc{e} = e\jc 1=\Ic{e}$ since it is \ref{eq:localcomp}, and hence also $e= \nc{e}=\nc{\Ic{{e}}}=\nc{e}0=e0=\Oc{e}$ as it is a fixed-point; 
    so $ex= e0x= e0= e$, and dually $e\jc x = e\jc 1 \jc x = e\jc 1 = e$, for any $x$ as a consequence of being \ref{eq:leftann}. 
    By invoking \ref{eq:comlocalunits}, we find $\Oc{x}\jc e = \Oc{x}\jc \Oc{e} = \Oc{e}\jc \Oc{x} = e \jc \Oc{x} = e$. 
    That $e$ is the unique such fixed-point, suppose $f = \nc{f}$. Then also $f = \Ic{f}$ and $fx=f$. The former yields $ef=fe$ by \ref{eq:comlocalunits}, and thus from the latter  $e=ef = fe =f$.  
\end{proof}

\begin{remark}
Observe that the final claims in \cref{t:BelowZeros} hold, more generally, for any \ref{eq:leftann} \ref{eq:localcomp} {\iname} satisfying \ref{eq:comlocalunits}.  
\end{remark}

\begin{corollary}\label{t:trivialalg}
    The trivial (one-element) {\iname} is the only {\MKname} in which $1=0$. 
\end{corollary}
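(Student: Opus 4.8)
The plan is to derive this immediately from the final assertions of \cref{t:BelowZeros}. First I would observe that in any {\MKname} $\m A$ satisfying $1=0$, the unit $1$ is an involution fixed-point: indeed $\nc{1} = 0 = 1$ by hypothesis (recalling $0:=\nc{1}$). Thus $\m A$ has an element $e$ with $e = \nc{e}$, namely $e = 1$.

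Next I would apply the ``moreover'' part of \cref{t:BelowZeros}, which guarantees that for a fixed-point $e$ one has $ex = e$ for all $x$. Taking $e = 1$ gives $1\mc x = 1$ for every $x\in A$. But $1$ is the (two-sided) unit for $\mc$, so $x = 1\mc x = 1$ for all $x$, forcing $A = \{1\}$; that is, $\m A$ is the trivial algebra. (Equivalently, one could instead use that $0$ is the unit for $\jc$ together with $e\jc x = e$ from the same lemma.) Conversely, the one-element {\iname} trivially satisfies $1\approx 0$, so it is the unique {\MKname} with this property.

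I do not expect any real obstacle here: the statement is a direct corollary of the uniqueness-of-fixed-point analysis already carried out, and the proof amounts to the single substitution $e = 1$ in the established identity $ex = e$ followed by cancelling the unit. The only thing to be careful about is invoking the correct clause of \cref{t:BelowZeros} (the one asserting $ex = e = e\jc x$ for a fixed-point $e$), rather than re-deriving it.
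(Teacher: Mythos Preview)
Your proof is correct and, if anything, more direct than the paper's. Both arguments begin by noting that $1=0=\nc{1}$ makes $1$ an involution fixed-point and then invoke \cref{t:BelowZeros}. You use the clause $ex=e$ with $e=1$ to get $1\mc x=1$ and immediately conclude $x=1$ from the unit law. The paper instead extracts right-boundedness ($a\jc 1=1$ for all $a$) from \cref{t:BelowZeros}, then appeals to \cref{t:BAeq}\eqref{BAeq:bounded} to deduce that $\m A$ is Boolean, and finally observes that the only Boolean algebra with $1=0$ is trivial. Your route avoids the detour through the Boolean characterization entirely; the paper's route has the minor advantage of illustrating how \cref{t:BAeq} gets used, but yours is the cleaner argument.
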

\begin{proof}
    If $1=0:=\nc{1}$ in some $\m A \in \Mk$, then \cref{t:BelowZeros} yields $a\jc 1 = 1$ for all $a\in A$, i.e., $\m A$ is right-bounded. So from \cref{t:BAeq}\eqref{BAeq:bounded}, $\m A$ must be Boolean, but the only Boolean algebra with $1=0$ is the trivial one.
\end{proof}

\begin{corollary}\label{t:M3uniq}
    The algebra $\McA$ is the unique $3$-element {\MKname}.
\end{corollary}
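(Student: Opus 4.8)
The plan is to combine the classification of three-element subclassical {\iname}s in \Cref{t:3elemAlg} with the list of identities shown to hold in $\Mk$ by \Cref{t:equivAxioms}.

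First I would argue that a three-element {\MKname} $\m A$ must be subclassical: were $1 = 0$ in $\m A$, \Cref{t:trivialalg} would force $\m A$ to be trivial, contradicting $|A| = 3$. Hence the unit is not an involution fixed-point, so $\m 2 \leq \m A$, and by \Cref{t:3elemAlg} the algebra $\m A$ is isomorphic to exactly one of $\m{WK}$, $\m{SK}$, $\McA$, or $\McA^{\mathsf{op}}$.

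Next I would discard the three algebras other than $\McA$, each by exhibiting an identity valid in $\Mk$ (by \Cref{t:equivAxioms}) that it violates. The algebra $\m{WK}$ is not \ref{eq:leftann}, since in its operation table $0\mc\ee = \ee \neq 0$. The algebra $\m{SK}$ is not \ref{eq:localcomp}: there $\Ic{\ee} = \ee\jc 1 = 1$ while $\ee\jc\nc{\ee} = \ee\jc\ee = \ee$ (this is the non-locally-complemented algebra noted after \Cref{t:M3locallycomp}). Finally $\McA^{\mathsf{op}}$, being the mirror of the left-regular but non-commutative algebra $\McA$, is right-regular and non-commutative, hence not left-regular by \Cref{fact:RRLR}, so it fails \ref{eq:leftreg}. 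Thus none of these three is an {\MKname}.

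Since $\McA \in \Mk$ by \Cref{t:MinMK}, it follows that $\McA$ is, up to isomorphism, the unique three-element {\MKname}. The argument is entirely routine given the preceding results; the only point needing a moment's care is pairing each of the three discarded algebras with a separating identity from among those listed in \Cref{t:equivAxioms} (the pairing is in fact forced, since $\m{WK}$ does satisfy \ref{eq:localcomp} and $\m{SK}$ does satisfy \ref{eq:leftann}), so there is no genuine obstacle to overcome.
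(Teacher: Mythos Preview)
Your proof is correct and follows the same overall outline as the paper: invoke \Cref{t:trivialalg} to conclude $1\neq 0$, use \Cref{t:3elemAlg} to reduce to the four subclassical $3$-element {\iname}s, and then eliminate the three that are not $\McA$.

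The only difference lies in the elimination step. The paper dispatches $\m{WK}$, $\m{SK}$, and $\McA^{\mathsf{op}}$ uniformly via \Cref{t:BAeq}: each of them satisfies right-distributivity (an identity equivalent, over $\Mk$, to being Boolean) yet is not Boolean, so none can lie in $\Mk$. You instead pair each algebra with its own failing identity from the list in \Cref{t:equivAxioms} (\ref{eq:leftann} for $\m{WK}$, \ref{eq:localcomp} for $\m{SK}$, \ref{eq:leftreg} for $\McA^{\mathsf{op}}$). Your route is slightly more hands-on but avoids appealing to the Boolean characterization theorem; the paper's route is more uniform but depends on \Cref{t:BAeq}. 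Either is perfectly adequate here.
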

\begin{proof}
    Suppose $\m A\in \Mk$ has cardinality $3$, and thus has  $1\neq 0$ by \cref{t:trivialalg}. 
    As $\m A$ is an {\iname} with $1\neq 0$, it must be one of the four algebras mentioned in \cref{t:3elemAlg}. 
    But the algebras $\m{WK}$, $\m{SK}$, and $\McA^\mathsf{op}$ each satisfy some identity from \cref{t:BAeq} (e.g., they all satisfy right-distributivity), but are not Boolean. Therefore $\m A$ must be isomorphic to $\McA$.
\end{proof}

\subsection{Structure of {\MKname}s} 

For an algebra $\m A$ and set of pairs $X\subseteq A\times A$, by $\Cg{\m A}(X)$ we denote the \defem{congruence on $\m A$ generated by $X$}, i.e., the least congruence ${\cng}$ on $\m A$ containing each pair from $X$. For singleton sets $X=\{(a,b)\}$, we simply write $\Cg{\m A}(a,b)$ to denote $\Cg{\m A}(X)$. 

\crev{Call a relation $R\subseteq A\times A$ \emph{term-defined} if there exists a unary term $t(x)$ in the language of $\m A$ so that, for each $a,b\in A$, $(a,b)\in R$ iff $t(a) = t(b)$ holds in $\m A$. 
It is clear then that term-defined relations are equivalence relations, thus a term-defined relation $R$ is a congruence on $\m A$ if and only if $R$ is compatible with the operations of $\m A$. }

For the remainder of this (sub)section, let us fix an {\MKname} $\m A = \str{A,\jc,\mc,\nc{},0,1}$.

\begin{proposition}\label{t:poly1}
    For fixed $a\in A$, let $\sim_a$ denote the congruence on $\m A$ generated by the pair $(a,1)$, i.e., ${\sim_a}:=\Cg{\m A}(a,1)$. Then for all $x,y\in A,$
    $$x\sim_a y \iff a\mc x = a\mc y.$$
    Moreover, $\sim_a$ coincides with the congruence $\Cg{\m A}(\nc{a},0)$.
\end{proposition}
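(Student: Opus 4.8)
The claim has two parts: first, that $\sim_a$ is the term-defined relation $\theta_a$ given by $x\mathrel{\theta_a} y \iff a\mc x = a\mc y$; second, that $\Cg{\m A}(a,1) = \Cg{\m A}(\nc a,0)$. The strategy is to show $\theta_a$ is a congruence containing $(a,1)$, then that $\theta_a \subseteq \Cg{\m A}(a,1)$, which together force $\sim_a = \theta_a$. The second part will then follow by a symmetric (dual) argument plus the observation that the two generating pairs are collapsed by the same term-defined relation.

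First I would check that $\theta_a$ is a congruence. It is term-defined via $t(x) := a\mc x$, so by the remark preceding the proposition it is automatically an equivalence relation, and it suffices to check compatibility with $\jc$, $\mc$, and $\nc{}$. Compatibility with $\mc$ is easy: if $a\mc x = a\mc x'$, then for any $y$, using associativity and idempotency, $a\mc(x\mc y) = (a\mc x)\mc y = (a\mc x')\mc y = a\mc(x'\mc y)$, and for left multiplication $a\mc(y\mc x) = a\mc y \mc a \mc x$ (using \ref{eq:leftreg} to insert $a$: $aya = ay$, so $ayx = ayax$), $= a\mc y\mc (a\mc x) = a\mc y\mc(a\mc x') = a\mc(y\mc x')$. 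For $\jc$: here I would use \ref{eq:paradist}, $(x\jc y)z \approx xz\jc \nc x y z$ — wait, I need $a\mc(x\jc y)$, i.e.\ left multiplication, so I use \ref{eq:leftdist}: $a\mc(x\jc y) = a\mc x\jc a\mc y$, which immediately gives compatibility on the right argument of $\jc$; for the left argument of $\jc$ one needs $a\mc(x\jc y)$ with $x\mathrel{\theta_a} x'$, and again left-distributivity reduces it to $a\mc x \jc a\mc y = a\mc x'\jc a\mc y$. For $\nc{}$: from $a\mc x = a\mc x'$ I must derive $a\mc\nc x = a\mc\nc x'$; this is the step I expect to be the main obstacle, and I would attack it using \ref{eq:divis} and \ref{eq:localcomp} — note $a\mc\nc x = a\mc(\nc a\jc \nc x) = a\mc\nc{(a\mc x)}\cdot(\text{something})$; more precisely, $a\nc x = a(\nc a\jc \nc x) = \fc a\jc a\nc x$ and one wants to express $a\nc x$ purely in terms of $\Oc a$ and $a\mc x$. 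The cleanest route is probably: $a\mc\nc x \overset{\eqref{eq:divis}}{=} a\mc(\nc a\jc \nc x) = a\mc\nc a \jc a\mc\nc x = \Oc a\jc a\nc x$, and separately show $a\nc x$ depends only on $\Oc x$ and $ax$ via a De~Morgan/\eqref{eq:divis} manipulation, using that $\m A$ satisfies \ref{eq:dramcon}, \ref{eq:coherence}, etc.\ from \cref{t:LDidens}; alternatively observe $a\mc\nc x = \nc{(\nc a\jc x)}$ is controlled by $\nc a\jc x$, and show $a\mc x = a\mc x'$ forces $\nc a\jc x = \nc a\jc x'$ by a dual argument — this symmetry with the $\jc$-side is likely the conceptual key.

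Next, $(a,1)\in\theta_a$ is immediate: $a\mc a = a = a\mc 1$ by \ref{eq:idem} and unitality. So $\sim_a = \Cg{\m A}(a,1) \subseteq \theta_a$ since $\theta_a$ is a congruence containing the generating pair. For the reverse inclusion $\theta_a\subseteq\sim_a$: if $a\mc x = a\mc y$, then working modulo $\sim_a$ we have $a\mathrel{\sim_a}1$, hence $x = 1\mc x \mathrel{\sim_a} a\mc x = a\mc y \mathrel{\sim_a} 1\mc y = y$, so $(x,y)\in\sim_a$. This gives $\sim_a = \theta_a$. Finally, for $\Cg{\m A}(\nc a, 0)$: by De~Morgan, $a\mc x = a\mc y$ iff $\nc a\jc \nc x = \nc a\jc\nc y$ iff (substituting $x\mapsto\nc x$, which is a bijection) the relation $x\mathrel{\theta'} y \iff \nc a\jc x = \nc a\jc y$ equals $\theta_a$ — indeed $\theta_a = \theta'$ because $a\mc x = a\mc y \iff \nc{(a\mc x)} = \nc{(a\mc y)} \iff \nc a\jc\nc x = \nc a\jc\nc y$, and since $\nc{}$ is an involution on $A$ the map $x\mapsto\nc x$ permutes each $\theta_a$-class; combined with $\theta_a$ being a congruence this shows $\nc a\jc x = \nc a\jc y \iff a\mc x = a\mc y$, so $\theta'=\theta_a$. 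Running the dual of the argument above (with $\jc,0,\nc a$ in place of $\mc,1,a$), $\Cg{\m A}(\nc a, 0) = \theta' = \theta_a = \sim_a$, completing the proof.
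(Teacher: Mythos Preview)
Your overall strategy matches the paper's: define the term-defined relation $\theta_a$, verify it is a congruence containing $(a,1)$, and get the reverse inclusion via the collapse argument $x = 1\mc x \sim_a a\mc x = a\mc y \sim_a 1\mc y = y$.

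The one place your proposal does not close cleanly is compatibility with $\nc{}$. You correctly flag it as the main obstacle and write the right first line, $a\nc{x} = a(\nc{a}\jc\nc{x})$ via \ref{eq:divis}, but then drift toward \ref{eq:dramcon}, \ref{eq:coherence}, and a vague ``dual argument''. The paper's step is a one-liner you almost have: from $a\mc x = a\mc y$, apply $\nc{}$ and De~Morgan to obtain $\nc{a}\jc\nc{x} = \nc{a}\jc\nc{y}$, and then \ref{eq:divis} gives $a\nc{x} = a(\nc{a}\jc\nc{x}) = a(\nc{a}\jc\nc{y}) = a\nc{y}$ directly. Once this is in place, your separate verification for $\jc$ is redundant, since $\jc$ is term-definable from $\mc$ and $\nc{}$; the paper accordingly checks only $\nc{}$ and $\mc$ (the latter via \ref{eq:leftreg}, exactly as you do).

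For the ``Moreover'' clause your argument is correct but heavier than necessary. Any congruence is closed under $\nc{}$, and $0 = \nc{1}$, so $(a,1)$ lies in a congruence iff $(\nc{a},0)$ does; hence $\Cg{\m A}(a,1) = \Cg{\m A}(\nc{a},0)$ immediately, with no need to introduce $\theta'$ or rerun the dual argument.
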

\begin{proof}
    On the one hand, it is immediate that $ax=ay$ yields $x\sim_a y$, as $x = 1\mc x \sim_a ax = ay \sim_a 1\mc y = y$, where $a\sim_a 1$ by definition. 
    For the other implication, let us first show that the relation $\cng_a$ over $\m A$, defined via $x\cng_a y$ iff $ax = ay$, is a congruence. 
    That $\cng_a$ is an equivalence relation is immediate since $\cng_a$ is \crev{term-defined}. 
    The fact that $\cng_a$ is compatible with the operations is a consequence of \ref{eq:divis} and \ref{eq:leftreg}. Indeed, if $ax= ay$, then the De~Morgan laws give $\nc{a}\jc \nc{x} = \nc{a}\jc \nc{y}$, whence \ref{eq:divis} yields $a\nc{x} = a(\nc{a}\jc \nc{x}) = a(\nc{a}\jc\nc{y}) = a\nc{y}$; on the other hand, if also $au = av$, then \ref{eq:leftreg} yields $aux =auax = avay =avy$.
    So it suffices to verify ${\sim_a}\subseteq {\cng_a}$. 
    By definition, $\sim_a $ is the congruence generated from the pair $(a,1)$, and as $\m A$ is an idempotent monoid, $aa = a = a1$ giving $a\cng_a 1$; hence ${\sim_a} \subseteq {\cng_a}$. 
    The final claim follows since $\nc{}$ is an involution and $0=\nc{1}$.
\end{proof}

Recall that $a\leq b$ iff $a\jc b = b$. 
As a consequence of \ref{eq:comlocalunits}, the following is immediate.
\begin{proposition}\label{t:SL}
    The relation $\leq$ restricted to the subset $A\mc 0:=\{\Oc{a}: a\in A \}$ is a join-semilattice order with least element $0$, where the join $\vee$ is the restriction of $\jc$ to the set $A\mc 0$.
\end{proposition}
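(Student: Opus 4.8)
The plan is to verify the three defining features of a join-semilattice order on $A\mc 0$ separately: that the restriction of $\leq$ to $A\mc 0$ is a partial order, that $0\in A\mc 0$ is its least element, and that any two elements $\Oc{a},\Oc{b}\in A\mc 0$ have a least upper bound inside $A\mc 0$, which will turn out to be $\Oc{a}\jc\Oc{b}$.

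The first two points are essentially free. We already know $\leq$ is a partial order on all of $A$ (recall {\MKname}s are left-regular, cf. \cref{t:leftregEquiv}), so its restriction to any subset, $A\mc 0$ in particular, is again a partial order. By \ref{eq:idem} we have $0=0\mc 0=\Oc{0}$, so $0\in A\mc 0$; and since $0$ is a unit for $\jc$, we get $0\jc\Oc{a}=\Oc{a}$, i.e. $0\leq\Oc{a}$, for every $a\in A$, so $0$ is indeed the least element.

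For the join, I would first record that $A\mc 0$ is closed under $\jc$: by \ref{eq:coherence} (which holds in $\Mk$ by \cref{t:equivAxioms}) together with associativity, $\Oc{a}\jc\Oc{b}=\Ic{a}\mc\Oc{b}=\Ic{a}\mc b\mc 0=\Oc{\Ic{a}\mc b}\in A\mc 0$. Next, $\Oc{a}\jc\Oc{b}$ is an upper bound of both $\Oc{a}$ and $\Oc{b}$: the inequality $\Oc{a}\leq\Oc{a}\jc\Oc{b}$ follows from associativity and \ref{eq:idem}, while $\Oc{b}\leq\Oc{a}\jc\Oc{b}$ follows from $\Oc{b}\jc\Oc{a}\jc\Oc{b}=\Oc{b}\jc\Oc{a}=\Oc{a}\jc\Oc{b}$, using \ref{eq:leftreg} and then \ref{eq:comlocalunits}. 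Finally, if $\Oc{c}\in A\mc 0$ is any upper bound---so $\Oc{a}\jc\Oc{c}=\Oc{c}$ and $\Oc{b}\jc\Oc{c}=\Oc{c}$---then $(\Oc{a}\jc\Oc{b})\jc\Oc{c}=\Oc{a}\jc(\Oc{b}\jc\Oc{c})=\Oc{a}\jc\Oc{c}=\Oc{c}$, whence $\Oc{a}\jc\Oc{b}\leq\Oc{c}$. Hence $\Oc{a}\jc\Oc{b}$ is the least upper bound, and the semilattice join on $A\mc 0$ is precisely the restriction of $\jc$.

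I expect no genuine obstacle; the only step worth a moment's care is the closure of $A\mc 0$ under $\jc$---note that $A\mc 0$ coincides with the set of fixed points of the operation $\Oc{\cdot}$, i.e. $\{x\in A : x=\Oc{x}\}$---and that is exactly what \ref{eq:coherence} delivers (one could alternatively argue closure directly from \ref{eq:paradist} or from \ref{eq:unitcoh}). Everything else reduces to routine use of idempotency, associativity, left-regularity, and local-unit commutativity.
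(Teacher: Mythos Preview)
Your proof is correct. The paper's own argument is shorter and takes the algebraic route: it simply observes that the restriction of $\jc$ to $A\mc 0$ is idempotent (inherited from $\jc$), associative (inherited from $\jc$), and commutative (by \ref{eq:comlocalunits}), hence a semilattice operation whose induced order is exactly $\leq$ restricted; and $0$ is least as the $\jc$-unit. You instead take the order-theoretic route, verifying directly that $\Oc{a}\jc\Oc{b}$ is the least upper bound in $\str{A\mc 0,\leq}$. The two approaches are interchangeable, but yours has the virtue of explicitly checking closure of $A\mc 0$ under $\jc$ via \ref{eq:coherence}, a point the paper's proof leaves implicit. Conversely, the paper's approach makes the commutativity of $\vee$ immediate from \ref{eq:comlocalunits} without needing \ref{eq:leftreg}.
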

\begin{proof}
    The operation $\vee$ is idempotent since $\jc$ is; commutative since $\m A$ satisfies \ref{eq:comlocalunits}, i.e., $\Oc{x}\jc \Oc{y} \approx \Oc{y}\jc \Oc{x}$; and $0$ is the least element since it is the unit for $\jc$.  
\end{proof}
In light of the above \cref{t:SL}, we present the following definitions.
\begin{definition}\label{d:SL}
    By $\SL_{\m A}$ we denote the semilattice $\str{A \mc 0,\vee,0}$, and call it the \defem{semilattice skeleton} of $\m A$.
\end{definition}
\crev{Note that, by definition, $i\in \SL_{\m A}$ iff $i = \Oc{a}$ for some $a \in A$; in particular, $i = \Oc{i}$.
Often we will use the notation $i$ when referring to it as a member of a semilattice, while when referring to it as a member of the algebra $\m A$, we will use the notation $\Oc{i}$. In this way, we use $\Ic{i}$ to denote the element $\Oc{i}\jc 1$ (see \cref{rem: 1x0-x0}). 
} 

\begin{remark}\label{rem: 0ij=0i+0j}
    Using this notation, we note that $\Oc{i\vee j}=\Oc{i}\jc \Oc{j}$ and $\Ic{i\vee j}=\Ic{i}\Ic{j}$, for any $i,j\in \SL_{\m A}$.
\end{remark}
\begin{definition}\label{d:kerI}
For each $i\in \SL_{\m A}$, we define
the congruence ${\kerI_i}=\Cg{\m A}(\Ic{i},1)$. 
As a consequence of \cref{t:poly1}, note also that ${\kerI_i}:=\Cg{\m A}(\Oc{i},0)$ and for each $x,y\in A$:
$$ x \kerI_i y \iff \Ic{i}x=\Ic{i}y \iff \Oc{i}\jc x = \Oc{i}\jc y $$
\end{definition}
\begin{remark}\label{rem: inckerns}
    ${\kerI_i}\subseteq {\kerI_j}$ whenever $i\leq j$; indeed, $\Oc{j}\jc 0= \Oc{j}=\Oc{j\vee i} = \Oc{j} \jc \Oc{i}$, hence $(\Oc{i},0)\in {\kerI_j}$.
\end{remark}

For a partially ordered set $\str{P,\leq}$, a \defem{principal upset} is a subset $\{x\in P: a\leq x \}$ for some fixed element $a\in P$, here denoted by $\upset{a}$. 
The following is immediate from the definitions above and the Homomorphism Theorems (cf., \cite[Thm.~1.16]{mckenzie}).

\begin{theorem}\label{t:Ai}
    For each $i\in \SL_{\m A}$, the structure (with operations $\jc,\mc,\nc{}$ the restriction of those from $\m A$)
    $$\m A_i:=\str{\upset\Oc{i}, \jc,\mc,\nc{},\Oc{i},\Ic{i}}$$
    is an {\MKname} with $\m A_i \cong \m A/{\kerI_i}$, and $h_i:x\mapsto \Oc{i}\jc x$ is the homomorphism from $\m A$ onto $\m A_i$ with $\ker h_i = {\kerI_i}$.
\end{theorem}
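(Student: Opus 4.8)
The plan is to verify the three assertions in order: (i) that $h_i : x\mapsto \Oc{i}\jc x$ is a surjective homomorphism from $\m A$ onto $\m A_i$ with kernel exactly $\kerI_i$; (ii) that $\m A_i$ is an {\MKname}; and (iii) that $\m A_i \cong \m A/\kerI_i$, which will then follow from (i) and the first isomorphism theorem.

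First I would observe that the map $h_i$ has image precisely $\upset\Oc{i}$: for any $a\in A$ we have $\Oc{i}\leq \Oc{i}\jc a$ by definition of $\leq$, so $h_i[A]\subseteq \upset\Oc{i}$; conversely, if $\Oc{i}\leq a$ then $h_i(a) = \Oc{i}\jc a = a$ by definition of $\leq$, so $h_i$ fixes $\upset\Oc{i}$ pointwise and is therefore onto. In particular $h_i$ is idempotent as a map ($h_i\circ h_i = h_i$), so $\m A_i$ is closed under the inherited operations once we check $h_i$ respects them. Surjectivity and $\ker h_i = \kerI_i$ are then exactly the content of \cref{t:poly1} and \cref{d:kerI}: indeed $h_i(x) = \Oc{i}\jc x$ and, by \cref{d:kerI}, $x\kerI_i y$ iff $\Oc{i}\jc x = \Oc{i}\jc y$, i.e.\ iff $h_i(x) = h_i(y)$.

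The main work is checking that $h_i$ is a homomorphism, which is really the heart of the matter and the step I expect to be the main obstacle. Compatibility with $\jc$ is easy: $h_i(x\jc y) = \Oc{i}\jc x\jc y = (\Oc{i}\jc x)\jc(\Oc{i}\jc y) = h_i(x)\jc h_i(y)$ using idempotency of $\jc$ together with \ref{eq:comlocalunits} (to move the two copies of $\Oc{i}$ together, noting $\Oc{i} = \Oc{\Oc{i}}$). Compatibility with $\nc{}$ is the assertion $\Oc{i}\jc \nc{x} = \nc{(\Oc{i}\jc x)}\jc\Oc{i}$ or more precisely that the negation operation on $\m A_i$, which is $a\mapsto \nc{a}\jc\Oc{i}$ (equivalently $\Ic{i}\mc\nc a$, the negation relative to the local unit $\Ic{i}$), is respected; this should follow from \ref{eq:unitcoh} and the De~Morgan laws together with \ref{eq:coherence}. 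The genuinely delicate case is $\mc$: one must show $\Oc{i}\jc(xy) = (\Oc{i}\jc x)\mc(\Oc{i}\jc y)$ — note the right-hand side is the product \emph{in $\m A_i$}, where the unit is $\Ic{i}$, but since $\Oc{i}\jc x,\Oc{i}\jc y\in\upset\Oc{i}$ their $\m A$-product already lies in $\upset\Oc{i}$ once we know $h_i$ preserves $\mc$ — so really the identity to prove is $\Oc{i}\jc xy \approx (\Oc{i}\jc x)(\Oc{i}\jc y)$, valid in all {\MKname}s. Here I would use \ref{eq:paradist}: expanding $(\Oc{i}\jc x)(\Oc{i}\jc y) = \Oc{i}(\Oc{i}\jc y)\jc\nc{\Oc{i}}\,x\,(\Oc{i}\jc y)$, then simplify the first summand to $\Oc{i}$ via \ref{eq:localunits}/\ref{eq:coherence} and $\nc{\Oc{i}} = \Ic{i}$ via \ref{eq:unitcoh}, reducing to $\Oc{i}\jc\Ic{i}x(\Oc{i}\jc y)$; a further pass with \ref{eq:leftdist}, \ref{eq:localunits}, and \ref{eq:coherence} should collapse this to $\Oc{i}\jc xy$. (Alternatively, one can use \ref{eq:orthocom} or \ref{eq:orthocoher} to push $\Oc{i}$ across the product.)

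Finally, granting that $h_i$ is a surjective homomorphism with kernel $\kerI_i$, the first isomorphism theorem (\cite[Thm.~1.16]{mckenzie}) gives $\m A/\kerI_i \cong h_i[\m A] = \m A_i$, and since homomorphic images of {\MKname}s are {\MKname}s (the class being a variety, \cref{t:equivAxioms}), $\m A_i$ is an {\MKname}; the designated constants $\Oc{i}$ and $\Ic{i}$ are the images $h_i(0) = \Oc{i}\jc 0 = \Oc{i}$ and $h_i(1) = \Oc{i}\jc 1 = \Ic{i}$, so they are indeed the $0$ and $1$ of $\m A_i$. The monotonicity $\kerI_i\subseteq\kerI_j$ for $i\leq j$ recorded in \cref{rem: inckerns} is not needed for this statement but will be used downstream. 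This completes the argument.
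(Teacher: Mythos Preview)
Your approach matches the paper's exactly: the paper's proof is a one-liner stating the result is ``immediate from the definitions above and the Homomorphism Theorems,'' and you are simply unpacking that by verifying $h_i$ is a surjective homomorphism with kernel $\kerI_i$ and invoking the first isomorphism theorem.

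Two of your verifications can be tightened. For $\jc$, your stated justification via \ref{eq:comlocalunits} does not apply as written, since $x$ need not be of the form $\Oc{a}$; instead, \ref{eq:leftreg} for $\jc$ gives $\Oc{i}\jc x\jc\Oc{i} = \Oc{i}\jc x$ directly, whence $(\Oc{i}\jc x)\jc(\Oc{i}\jc y)=\Oc{i}\jc x\jc y$. For $\mc$, the detour through \ref{eq:paradist} is unnecessary: the dual form of \ref{eq:leftdist}, namely $x\jc yz\approx (x\jc y)(x\jc z)$, yields $\Oc{i}\jc xy = (\Oc{i}\jc x)(\Oc{i}\jc y)$ in a single step. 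For $\nc{}$, recall that by the statement the negation on $\m A_i$ is literally the restriction of $\nc{}$ (only the constants change), so the identity you need is $\nc{(\Oc{i}\jc x)} = \Oc{i}\jc\nc{x}$; this follows from De~Morgan, \ref{eq:unitcoh}, and \ref{eq:coherence} as you indicate, and also shows $\upset\Oc{i}$ is closed under $\nc{}$.
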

\begin{corollary}\label{t:trivKerI}
    The congruence $\kerI_i$ is the identity only when $i=0$, i.e., ${\kerI_i}=  \Delta_{\m A}$ iff $i=0$.
\end{corollary}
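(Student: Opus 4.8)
The plan is to read the statement off directly from \cref{t:Ai}, which identifies ${\kerI_i}$ with the kernel of the surjective homomorphism $h_i\colon x\mapsto \Oc{i}\jc x$ of $\m A$ onto $\m A_i$. The forward direction is immediate: when $i=0$ we have $\Oc{0}:=0\mc 0 = 0$ by \eqref{eq:idem}, so ${\kerI_0}=\Cg{\m A}(\Oc{0},0)=\Cg{\m A}(0,0)=\Delta_{\m A}$.

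For the converse, I would assume ${\kerI_i}=\Delta_{\m A}$, so that $h_i$ is injective, and then show $i=0$. The crucial observation is that $h_i$ is an \emph{idempotent} self-map of the carrier of $\m A$: for each $x\in A$, associativity together with \eqref{eq:idem} gives $h_i(h_i(x))=\Oc{i}\jc(\Oc{i}\jc x)=(\Oc{i}\jc\Oc{i})\jc x=\Oc{i}\jc x=h_i(x)$. Applying injectivity to the pair $h_i(x),x$ forces $h_i(x)=x$, i.e.\ $\Oc{i}\jc x = x$ for every $x\in A$. Instantiating $x=0$ and recalling that $0$ is the unit for $\jc$ yields $\Oc{i}=\Oc{i}\jc 0=0$; since $i=\Oc{i}$ for every $i\in\SL_{\m A}$ by definition of the semilattice skeleton, we conclude $i=0$.

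I do not anticipate any genuine obstacle, as this is a short corollary of \cref{t:Ai}; the only point worth a little care is the step ``$h_i$ injective $\Rightarrow h_i=\mathrm{id}$'', which I would justify via the idempotency of $h_i$ rather than by a cardinality argument, since an infinite algebra can admit a bijection onto a proper subalgebra and hence ``$\upset\Oc{i}=A$'' cannot be deduced by counting alone.
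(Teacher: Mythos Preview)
Your proof is correct. There is, however, an even shorter route that bypasses \cref{t:Ai} and the idempotency argument entirely: by \cref{d:kerI}, $\kerI_i := \Cg{\m A}(\Oc{i},0)$, so the pair $(\Oc{i},0)$ lies in $\kerI_i$ by construction; hence if $\kerI_i=\Delta_{\m A}$ then $\Oc{i}=0$, and since $i=\Oc{i}$ for $i\in\SL_{\m A}$ this gives $i=0$ immediately. Your idempotency-of-$h_i$ trick is a nice observation, but the caution you raise about avoiding cardinality reasoning is guarding against a difficulty that simply does not arise in this one-line argument from the definition. The paper states the corollary without proof, presumably because it is this immediate.
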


Next, we show that multiplication is generally compatible with the order $\leq$.
\begin{proposition}\label{t:CompatibleOrder}
    The relation $\leq$ is a partial order for any {\MKname}. 
    Moreover, multiplication is order-preserving, i.e., {\MKname}s satisfy the quasi-identity $x\leq u \;\;\& \;\; y\leq v\;\Rightarrow\; xy\leq uv$.
\end{proposition}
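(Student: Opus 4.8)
The partial-order assertion requires no new work: by \cref{t:equivAxioms} every {\MKname} is \ref{eq:leftreg}, so \cref{t:leftregEquiv} applies and the relation $\leq$ (i.e.\ $\leq_\jc$) is a partial order, as already observed above. Thus the substance of the statement is the order-preservation quasi-identity $x\leq u \;\&\; y\leq v \;\Rightarrow\; xy\leq uv$.

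The plan is to factor this through two one-sided steps chained by transitivity, namely $xy\leq uy$ and then $uy\leq uv$. The second is immediate: it is exactly the left-compatibility of $\leq_\jc$ in a left-regular band recalled in the preliminaries, so from $y\leq v$ we get $u\mc y\leq u\mc v$. The first step -- compatibility of $\leq$ with multiplication \emph{on the right} -- is the one that is not available for general left-regular bands, and it is precisely here that the extra structure of {\MKname}s must be used.

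For that first step, the idea is to apply right-paradistributivity (\ref{eq:paradist}, which holds in {\MKname}s by \cref{t:equivAxioms}) to the product $(x\jc u)y$, obtaining $(x\jc u)y = xy \jc \nc{x}uy$. Since $x\leq u$ says $x\jc u = u$, this reads $uy = xy \jc \nc{x}uy$. As $a\leq_\jc a\jc b$ holds in every band (because $a\jc(a\jc b)=a\jc b$ by \ref{eq:idem} and associativity), we conclude $xy \leq xy\jc\nc{x}uy = uy$. Transitivity then gives $xy\leq uy\leq uv$, the desired conclusion.

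The only genuine obstacle is recognizing that one cannot simply quote right-compatibility of $\leq$, since it fails for left-regular bands in general; the trick is that paradistributivity rewrites $(x\jc u)y$ as a join whose first join-component is $xy$, which forces $xy$ below $uy$. Everything else is a routine combination of \ref{eq:idem}, associativity, and the identities already collected in \cref{t:equivAxioms}.
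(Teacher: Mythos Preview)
Your argument is correct in outline, and the factorization $xy\leq uy\leq uv$ is a clean way to organize the computation. However, the justification you give for the second step is wrong. The ``left-compatibility of $\leq_\jc$ in a left-regular band'' recalled in the preliminaries says that $a\leq_\jc b$ implies $c\jc a\leq_\jc c\jc b$; that is, the order $\leq_\jc$ is compatible with the \emph{same} operation $\jc$ that defines it. What you need is compatibility with the \emph{other} operation, namely $y\leq_\jc v\Rightarrow u\mc y\leq_\jc u\mc v$, which is not the preliminary fact. Fortunately the step is still valid, via \ref{eq:leftdist}: from $y\jc v=v$ one gets $uy\jc uv=u(y\jc v)=uv$, hence $uy\leq uv$. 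With this correction your proof goes through.

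The paper's proof proceeds in a single chain of equalities, computing $xy\jc uv$ directly by applying \ref{eq:paradist} to $(x\jc u)v$, then \ref{eq:leftdist} to collect $xy\jc xv=x(y\jc v)=xv$, and finally \ref{eq:paradist} again in reverse. Your two-step factorization uses the same ingredients (one application of paradistributivity, one of left-distributivity) and is arguably more transparent; the content is the same.
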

\begin{proof}
    That the relation ${\leq}$ is a partial order follows from \ref{eq:leftreg} (see \cref{t:equivAxioms}) and \cref{t:leftregEquiv}. 
    To verify that multiplication is compatible with the order, let us suppose $x\leq u$ and $y\leq v$, i.e., $x\jc u = u$ and $y\jc v = v$, in some {\MKname}. Utilizing \cref{t:equivAxioms}, we observe:
    \begin{align*}
    xy\jc uv &= xy \jc (x\jc u)v &&(x\leq u)\\
            &= xy \jc xv \jc \nc{x}uv &&\eqref{eq:paradist}\\
            &=x(y\jc v) \jc \nc{x}uv &&\eqref{eq:leftdist}\\
            &=xv \jc \nc{x}uv &&(y\leq v)\\
            &=(x\jc u)v &&\eqref{eq:paradist}\\
            &=uv &&(x\leq u). \qedhere
    \end{align*}
\end{proof}
\cadd{
\begin{remark}\label{rem:a1j<1ja}
    The identity $x\Ic{y}\leq\Ic{y}x$ holds in every {\MKname}.  
    Indeed, since $x\leq \Ic{x}$, using the order preservation of multiplication, we have $x\Ic{y}= x\mc \Ic{y}x\leq \Ic{y}\mc \Ic{y}x=\Ic{y}\Ic{x}x = \Ic{y}x$ by \ref{eq:leftreg}, \ref{eq:comlocalunits}, and \ref{eq:localunits}.
\end{remark}
}
For a partially ordered set $\str{P,\leq}$, a \defem{principal downset} is a subset $\{x\in P: x\leq b \}$ for fixed $b\in P$, here denoted by $\downset{b}$. For $a,b\in P$, the set $[a,b]:=\upset{a}\cap\downset{b}$ is called an \defem{interval}. 
Note that, if $P$ has a least element $0$, then $\downset{b}=[0,b]$.

\begin{proposition}\label{t:MKinterval}
    {\MKname}s satisfy the following quasi-identity:  $x\leq u \;\& \; y\leq u\Rightarrow x\jc y = y\jc x$.
    Consequently, any (nonempty) interval from an {\MKname} forms a bounded semilattice with respect to addition and, moreover, is multiplicatively closed with its greatest element a multiplicative right-unit.
\end{proposition}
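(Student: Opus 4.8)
The statement has two parts, and the real content is the quasi-identity; the structural consequences then follow quickly.

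For the quasi-identity, assume $x\le u$ and $y\le u$, i.e.\ $x\jc u=u=y\jc u$. A convenient preliminary, valid in any left-regular band — and {\MKname}s are left-regular by \cref{t:equivAxioms} — is that $x\jc u=u$ already forces $u\jc x=u$: indeed \ref{eq:leftreg} gives $x\jc u\jc x=x\jc u=u$, while associativity rewrites the left side as $(x\jc u)\jc x=u\jc x$. In particular $x\jc y\le u$ and $y\jc x\le u$, since e.g.\ $(x\jc y)\jc u=x\jc(y\jc u)=x\jc u=u$. I would then invoke local commutativity \ref{eq:A3}, in its dual form $\Oc{a}\jc b\jc a\approx\Oc{b}\jc a\jc b$, applied with $a:=x\jc y$ and $b:=y\jc x$; after cancelling with idempotency and \ref{eq:leftreg}, both sides collapse to
\[
    \Oc{x\jc y}\jc(y\jc x)=\Oc{y\jc x}\jc(x\jc y).
\]
Because $\Oc{z}\le z$ in every {\MKname} (this is \ref{eq:localunits}), the right-hand side equals $x\jc y$, so the whole claim reduces to showing $\Oc{x\jc y}\le y\jc x$, for which it is enough to prove the equality $\Oc{x\jc y}=\Oc{y\jc x}$ (then $\Oc{x\jc y}=\Oc{y\jc x}\le y\jc x$).

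The step $\Oc{x\jc y}=\Oc{y\jc x}$ (granted $x,y\le u$) is what I expect to be the main obstacle. The hypothesis is genuinely needed here — without it the equality fails, as in $\McA$ with $x=1$, $y=\varepsilon$ — so the plan is to use it essentially. I would expand $\Oc{x\jc y}=(x\jc y)\mc 0$ via \ref{eq:paradist} into $\Oc{x}\jc\nc{x}\Oc{y}$, and then, using $x,y\le u$ together with \ref{eq:divis}, \ref{eq:coherence}, \ref{eq:comlocalunits} and the order-preservation of multiplication (\cref{t:CompatibleOrder}) — note this last fact also makes $\Oc{-}$ order-preserving, so both $\Oc{x\jc y}$ and $\Oc{y\jc x}$ lie in the semilattice skeleton below $\Oc{u}$ — rewrite $\Oc{x}\jc\nc{x}\Oc{y}$ into the symmetric term $\Oc{y}\jc\nc{y}\Oc{x}=\Oc{y\jc x}$. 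Pinning down the exact sequence of rewrites is the delicate part.

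For the consequences, let $[a,b]$ be a nonempty interval, so $a\le b$ and $a,b\in[a,b]$. If $x,y\in[a,b]$ then $a\le x\le x\jc y$ (always $x\le x\jc y$), and $x\jc y\le b$ since $(x\jc y)\jc b=x\jc(y\jc b)=b$; hence $[a,b]$ is closed under $\jc$. As $[a,b]\subseteq\downset{b}$, the quasi-identity with $u=b$ makes $\jc$ commutative on $[a,b]$, and being also idempotent and associative it is a join-semilattice operation there, with least element $a$ (its identity) and greatest element $b$ (absorbing) — hence a bounded semilattice. Multiplicative closure is immediate from \cref{t:CompatibleOrder}: for $x,y\in[a,b]$ we get $a=a\mc a\le x\mc y\le b\mc b=b$. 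Finally every $z\in[a,b]$ satisfies $z\le b$, so $z\mc b=z$ by \cref{rem: order with left-absorption}, i.e.\ $b$ is a multiplicative right-unit.
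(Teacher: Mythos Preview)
Your treatment of the \emph{consequences} (closure of $[a,b]$ under $\jc$ and $\mc$, commutativity on $[a,b]$, $b$ a right-unit) is fine and matches the paper.

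For the quasi-identity, however, there is a real gap. Your reduction via \ref{eq:A3} to the equation
\[
\Oc{x\jc y}\jc(y\jc x)=\Oc{y\jc x}\jc(x\jc y)
\]
is correct after simplifying $b\jc a$ and $a\jc b$ by \ref{eq:leftreg}. But your claim that ``the right-hand side equals $x\jc y$ because $\Oc{z}\le z$'' is not justified: $\Oc{z}\le z$ gives $\Oc{y\jc x}\le y\jc x$, not $\Oc{y\jc x}\le x\jc y$. You then correctly note that proving $\Oc{x\jc y}=\Oc{y\jc x}$ would suffice, but this is exactly where the hypothesis $x,y\le u$ must do work, and your sketch (``expand via \ref{eq:paradist} and rewrite using \ref{eq:divis}, \ref{eq:coherence}, \ref{eq:comlocalunits}'') does not constitute a proof. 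In fact, carrying out that expansion yields $\Oc{x\jc y}=\nc{x}\,\Oc{y}$ and $\Oc{y\jc x}=\nc{y}\,\Oc{x}$, and it is not at all clear how to equate these using the hypothesis without essentially reproving the full commutativity claim.

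The paper takes a more direct route that avoids this detour entirely. Rather than apply \ref{eq:A3} to $a=x\jc y$ and $b=y\jc x$, it uses the hypothesis immediately to establish the identity
\[
x\jc y \;=\; \Oc{x}\jc y\jc x
\]
(and symmetrically $y\jc x=\Oc{y}\jc x\jc y$), via the chain
\[
x\jc y \;\overset{\eqref{eq:paracomm}}{=}\; \nc{x}y\jc x \;=\; \nc{x}y\jc xb \;=\; \nc{x}y\jc x(y\jc b)
\;\overset{\eqref{eq:leftdist}}{=}\; \nc{x}y\jc xy\jc xb
\;\overset{\eqref{eq:localdecomp}}{=}\; \Ic{x}y\jc x
\;\overset{\eqref{eq:coherence}}{=}\; \Oc{x}\jc y\jc x,
\]
where $xb=x$ and $y\jc b=b$ use the hypothesis (via \cref{rem: order with left-absorption}). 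Combining both instances with \ref{eq:comlocalunits} and \ref{eq:localunits} finishes immediately. This is where the hypothesis is genuinely consumed; your reduction postpones it to a step you do not complete.
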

\begin{proof}
    Fix $b\in A$. 
    By \cref{rem: order with left-absorption} $x\leq b$ implies $xb = x(x\jc b) = x$, that is, $b$ is a multiplicative right-unit in $\downset{b}$.
    To verify the quasi-identity, suppose $x,y\in \downset{b}$. By \cref{rem: order with left-absorption}, $xb=x$ and $yb=y$. 
    First, we observe:
    \begin{align*}
    x\jc y &= x'y \jc x &&\eqref{eq:paracomm}\\
    &=x'y \jc xb &&(\text{\cref{rem: order with left-absorption}})\\
    &=x'y \jc x(y\jc b) &&(y\leq b)\\
    &=x'y \jc xy\jc xb &&\eqref{eq:leftdist}\\
    &=\Ic{x}y\jc xb &&\eqref{eq:localdecomp}\\
    &=\Ic{x}y\jc x && (\text{\cref{rem: order with left-absorption}})\\
    &=\Oc{x} \jc y  \jc x &&\eqref{eq:coherence}.
\end{align*}
Hence (i) $x\jc y = \Oc{x} \jc y \jc x$, and thus symmetrically (ii) $y\jc x = \Oc{y} \jc x \jc y$. 
Putting these together,
\begin{align*}
    x \jc y &= \Oc{x} \jc y \jc x&&\text{(i)} \\
            &= \Oc{x} \jc \Oc{y} \jc x\jc y&&\text{(ii)}\\
            &= \Oc{y} \jc \Oc{x} \jc x \jc y &&\eqref{eq:comlocalunits}\\
            &= \Oc{y}  \jc x \jc y &&\eqref{eq:localunits}\\
            & = y \jc x&&\text{(ii)}.  
\end{align*}
    Hence the quasi-identity holds. 
    Consequently, the operation $\jc$ is commutative in any principle downset, and hence, hereditarily, in any nonempty subinterval $[a,b]$. 
    That $[a,b]$ is closed under $\jc$ is generally true for any idempotent operation $\jc$ with relation $\leq_\jc$. 
    That $[a,b]$ is closed under multiplication is immediate from \cref{t:CompatibleOrder} and idempotency. That $b$ is a right-unit was already established.
\end{proof}
\begin{corollary}\label{t:BoolSubAlg}
    There is a largest Boolean subalgebra of $\m A$ and its universe is the interval $[0,1]$.
\end{corollary}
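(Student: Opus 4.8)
The plan is to show that the interval $[0,1]$, equipped with the restrictions of $\jc,\mc,\nc{}$ and the ambient constants $0$ and $1$, is a Boolean subalgebra of $\m A$, and that it contains the universe of every Boolean subalgebra of $\m A$. Since $0$ is the least element of $\leq$ (being the unit for $\jc$), we have $[0,1]=\downset{1}=\{x\in A: \Ic{x}=1\}$, which contains both $0$ and $1$. By \cref{t:MKinterval}, $\downset{1}$ is already closed under $\jc$ (commutatively) and under $\mc$, with $1$ as its greatest element and a multiplicative right-unit; as $1$ is a two-sided unit for $\mc$ in $\m A$, it remains so on $[0,1]$. Closure under $\nc{}$ is the one genuinely new point: if $\Ic{x}=1$, then \ref{eq:unitcoh} (valid in {\MKname}s by \cref{t:equivAxioms}) gives $\Ic{\nc{x}}=\Ic{x}=1$, so $\nc{x}\in[0,1]$. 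Hence $[0,1]$ is the universe of a subalgebra $\m A_{[0,1]}\leq\m A$, which---being a subalgebra of an {\MKname}---is itself an {\MKname}.

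I would then check that $\m A_{[0,1]}$ is right-bounded, whence \cref{t:BAeq}\eqref{BAeq:bounded} immediately yields that it is a Boolean algebra: for $x\in[0,1]$, the dual form of \ref{eq:unitcoh} gives $x\mc 0=\Oc{x}=\nc{\Ic{x}}=\nc{1}=0$, as required.

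Finally, for maximality, let $\m B\leq\m A$ be an arbitrary Boolean subalgebra. Its designated constants are the ambient $0$ and $1$, and orthocomplementation in $\m B$ gives $b\jc\nc{b}=1$ for every $b\in B$; but in $\m A$ we have $b\jc\nc{b}=\tc{b}=\Ic{b}=b\jc 1$ by \ref{eq:localcomp}, so $b\jc 1=1$, i.e., $b\in[0,1]$. Thus $B\subseteq[0,1]$, and $\m A_{[0,1]}$ is the largest Boolean subalgebra of $\m A$. I do not expect a genuine obstacle here; the only mildly delicate observations are that a Boolean subalgebra must share the ambient constants $0,1$ (which confines its universe to $\downset{1}$) and the two uses of unit-coherence---for closure under $\nc{}$ and for right-boundedness---with everything else a direct appeal to \cref{t:MKinterval} and \cref{t:BAeq}.
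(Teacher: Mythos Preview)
Your proof is correct and follows essentially the same approach as the paper: both use \cref{t:MKinterval} for closure under $\jc,\mc$, \ref{eq:unitcoh} for closure under $\nc{}$, and \cref{t:BAeq} (via right-boundedness) to conclude Booleanness, while maximality comes from observing that any Boolean subalgebra shares the ambient constants and has $1$ as its top. Your maximality argument via orthocomplementation together with \ref{eq:localcomp} is a slight rephrasing of the paper's direct appeal to right-boundedness from \cref{t:BAeq}, but the content is the same.
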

\begin{proof}
    Note that any subalgebra $\m B$ of an {\MKname} $\m A$ must contain $1$, and as $\m B$ is itself an {\MKname}, it is Boolean iff it is right-bounded by \cref{t:BAeq}, i.e., $1$ is the greatest element with respect to $\leq$. 
    The largest such subset of $A$ is the interval $[0,1]$, which is equivalent to the downset $\downset{1}$.
    That $[0,1]$ is closed under multiplication follows \cref{t:MKinterval}. 
    That $[0,1]$ is closed under $\nc{}$ follows from \ref{eq:unitcoh}; i.e., $x\in [0,1]$ iff $x\leq 1$ iff $\Ic{x}:=x\jc 1 = 1$, so $\nc{x}\jc 1 = \Ic{\nc{x}}=\Ic{x}=1$. 
\end{proof}
Let us call the subalgebra $[0,1]$ the \defem{principal Boolean component} of an {\MKname} $\m A$. 
\begin{definition}\label{d:fiber}
    By a \defem{fiber} of $\m A$ we refer to a structure 
    $$\m B_i=\str{B_i, \jc,\mc,\nc{},\Oc{i},\Ic{i}},$$ 
    where $i\in \SL_{\m A}$, $B_i$ is the interval $[\Oc{i},\Ic{i}]$, and whose operations $\{\jc,\mc,\nc{}\}$ are those of $\m A$ restricted to $B_i$.
    In this way, the principal Boolean component of $\m A$ is the fiber $\m B_0$.
\end{definition}
\begin{theorem}\label{t:fiberdecomp}
    The following hold:
    \begin{enumerate}
        \item For each $i\in \SL_{\m A}$, the fiber $\m B_i$ is the principal Boolean component of $\m A_i$.
        \item For each $i\in \SL_{\m A}$, $B_i = \{a\in A: \Oc{a} = \Oc{i}\}$.
        \item $A$ is a disjoint union of the intervals $B_i$, i.e., $A = \bigcup\limits_{i\in \SL_{\m A}} B_i$, and $B_i\cap B_j = \varnothing$ for distinct $i,j\in \SL_{\m A}$.
    \end{enumerate}
\end{theorem}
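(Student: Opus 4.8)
The plan is to prove the three items in sequence, as (1) and (2) are largely bookkeeping about the constants $\Oc{i},\Ic{i}$ and the isomorphism $\m A_i\cong \m A/\kerI_i$ from \Cref{t:Ai}, while (3) is the substantive partition statement and will rely on item (2).

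\emph{Item (1).} By \Cref{t:Ai}, $\m A_i=\str{\upset\Oc{i},\jc,\mc,\nc{},\Oc{i},\Ic{i}}$ is an {\MKname} whose additive unit is $\Oc{i}$ and whose multiplicative unit is $\Ic{i}$. Applying \Cref{t:BoolSubAlg} to $\m A_i$ (rather than to $\m A$), its principal Boolean component is the interval $[\Oc{i},\Ic{i}]$ computed in $\m A_i$. Since the order on $\m A_i$ is the restriction of $\leq$ on $\m A$ (as $\jc$ is the restriction), this interval is exactly $B_i=[\Oc{i},\Ic{i}]$ with the inherited operations. So item (1) is immediate once one checks that the constants match, which is how $\m A_i$ and $\m B_i$ were defined.

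\emph{Item (2).} First I would show $B_i\subseteq\{a: \Oc{a}=\Oc{i}\}$: if $\Oc{i}\leq a\leq \Ic{i}$, then order-preservation of $\Oc{(-)}$ (which follows since $x\mapsto\Oc{x}$ is a composition $x\mapsto x\mc 0$, and $\mc$ is order-preserving by \Cref{t:CompatibleOrder}, noting $0\leq 0$) gives $\Oc{\Oc{i}}\leq\Oc{a}\leq\Oc{\Ic{i}}$; but $\Oc{\Oc{i}}=\Oc{i}$ by \ref{eq:idem} and $\Oc{\Ic{i}}=\Oc{i}$ by \Cref{rem: 1x0-x0}, so $\Oc{a}=\Oc{i}$. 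Conversely, if $\Oc{a}=\Oc{i}$, then $\Oc{i}=\Oc{a}\leq a$ (since $\Oc{a}\jc a=a$ by \ref{eq:localunits}) and $a\leq \Ic{a}=\nc{\Oc{a}}=\nc{\Oc{i}}=\Ic{i}$ using \ref{eq:unitcoh}, so $a\in[\Oc{i},\Ic{i}]=B_i$.

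\emph{Item (3).} Given item (2), every $a\in A$ lies in $B_i$ for $i=\Oc{a}\in\SL_{\m A}$ (note $\Oc{a}=\Oc{\Oc{a}}$, so $\Oc{a}$ is genuinely an element of the skeleton), giving $A=\bigcup_{i\in\SL_{\m A}}B_i$. For disjointness: if $a\in B_i\cap B_j$ then by item (2) $\Oc{i}=\Oc{a}=\Oc{j}$; but elements of $\SL_{\m A}$ satisfy $i=\Oc{i}$, so $i=j$. The main obstacle, such as it is, is making sure the ``coherence'' identities $\Oc{\Oc{i}}=\Oc{i}$, $\Oc{\Ic{i}}=\Oc{i}$, and $\Ic{a}=\nc{\Oc{a}}$ are invoked correctly (these are \ref{eq:idem}, \Cref{rem: 1x0-x0}, and \ref{eq:unitcoh} respectively), and that order-preservation of the unary operation $\Oc{(-)}$ is legitimately derived from \Cref{t:CompatibleOrder}; everything else is direct.
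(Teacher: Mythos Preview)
Your proposal is correct and follows essentially the same approach as the paper: item (1) via \Cref{t:Ai} and \Cref{t:BoolSubAlg}, item (2) via order-preservation of $x\mapsto x\mc 0$ from \Cref{t:CompatibleOrder} together with the coherence identities $\Oc{\Oc{i}}=\Oc{i}$ and $\Oc{\Ic{i}}=\Oc{i}$, and item (3) as an immediate consequence of (2). The only cosmetic difference is that for the reverse inclusion in (2) the paper invokes the dual identity $\Ic{\Oc{x}}\approx\Ic{x}$ from \Cref{rem: 1x0-x10} directly, whereas you route through $\Ic{a}=\nc{\Oc{a}}$ via \ref{eq:unitcoh}; both are equivalent one-line computations.
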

\begin{proof}
    The first item is a direct consequence of \cref{t:Ai} and \cref{t:BoolSubAlg}. 
    For the second item, for one inclusion fix $a\in B_i$. So $\Oc{i}\leq a\leq \Ic{i}$ which implies $\Oc{i}\mc 0\leq a \mc 0\leq \Ic{i} \mc 0$ via \cref{t:CompatibleOrder}, but this yields $\Oc{i}\leq \Oc{a}\leq \Oc{i}$ by \eqref{eq:leftann} and \cref{rem: 1x0-x0} (i.e., the $\Oc{\Ic{x}} \approx \Oc{x}$ holds). 
    Since $\leq$ is antisymmetric, it follows that $\Oc{a} = \Oc{i}$. 
    The reverse inclusion is immediate from the dual version \cref{rem: 1x0-x0} (i.e., $\Ic{\Oc{x}} \approx \Ic{x}$ holds). 
    The last claim is immediate from the second; i.e., each element is contained in the set $B_i$ where $\Oc{a}=\Oc{i}$, and if $a\in B_i\cap B_j$ then $i=j$ as $\Oc{i}=\Oc{j}$ would follow. 
\end{proof}

\subsection{\cadd{Completeness with McCarthy algebras}}
\cadd{
In this section, we prove that the variety $\Mk$ coincides with the variety $\MC$ of McCarthy algebras, therefore providing a finite equational basis for McCarthy algebras via \cref{t:equivAxioms}. 
We note that an equational basis for the variety $\MC$ is provided in \cite[Cor.~2.7]{Guzman-Squier}. 
In fact, their basis is subsumed by the identities listed in \cref{t:equivAxioms}, and hence it follows that $\Mk \subseteq \MC$ by the results of \cite{Guzman-Squier}. 
Therefore, by \cref{t:MinMK}, it is immediate that $\Mk = \MC$. 
However, for the sake of being self-contained, we prove this fact directly.
}
\begin{lemma}\label{t:SIbottomfiber}
    If $\m A$ is a subdirectly irreducible {\MKname}, then $\m B_0 \cong \mathbf{2}$. 
\end{lemma}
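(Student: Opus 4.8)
The plan is to argue by contradiction. By \cref{t:BoolSubAlg}, the principal Boolean component $\m B_0$ has universe the interval $[0,1]$ and is the largest Boolean subalgebra of $\m A$; since $\m A$ is subdirectly irreducible it is nontrivial, so $0\neq 1$ by \cref{t:trivialalg} and thus $|B_0|\geq 2$. Assume toward a contradiction that $\m B_0\not\cong\mathbf{2}$, so $|B_0|\geq 3$; then there is some $c\in B_0$ with $c\notin\{0,1\}$. Note $\nc c\in B_0$ as well (the component $[0,1]$ is closed under $\nc{}$, cf.\ the proof of \cref{t:BoolSubAlg}), and $\nc c\notin\{0,1\}$, since $\nc c=1$ would force $c=0$ and $\nc c=0$ would force $c=1$.

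The goal is to exhibit two congruences on $\m A$, each different from $\Delta_{\m A}$, whose intersection is $\Delta_{\m A}$; this contradicts subdirect irreducibility, since $\Delta_{\m A}$ is completely intersection-irreducible in $\mathrm{Con}(\m A)$. The candidates are $\theta_1:=\Cg{\m A}(c,1)$ and $\theta_2:=\Cg{\m A}(\nc c,1)$, which are nontrivial precisely because $c\neq 1$ and $\nc c\neq 1$. By \cref{t:poly1} these congruences are term-defined: $x\mathrel{\theta_1}y$ iff $cx=cy$, and $x\mathrel{\theta_2}y$ iff $\nc c x=\nc c y$.

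To see $\theta_1\cap\theta_2=\Delta_{\m A}$, the key identity is $x\approx cx\jc\nc c x$. Indeed, $c\leq 1$ gives $c\jc 1=1$, and since every {\MKname} is \ref{eq:localcomp} we get $c\jc\nc c=c\jc 1=1$; then right-orthodistributivity \ref{eq:orthodist} yields $x=1\mc x=(c\jc\nc c)\mc x=cx\jc\nc c x$. Hence if $x\mathrel{\theta_1}y$ and $x\mathrel{\theta_2}y$, i.e.\ $cx=cy$ and $\nc c x=\nc c y$, then $x=cx\jc\nc c x=cy\jc\nc c y=y$. Thus $\theta_1\cap\theta_2=\Delta_{\m A}$ with both $\theta_1,\theta_2\neq\Delta_{\m A}$, the desired contradiction, so $|B_0|\leq 2$ and $\m B_0\cong\mathbf{2}$.

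I don't anticipate a genuine obstacle. The two points needing care are: $c$ must be chosen distinct from both $0$ and $1$ so that $\theta_1$ and $\theta_2$ are both proper, which is possible exactly because $|B_0|\geq 3$; and one needs that $[0,1]$ is closed under $\nc{}$, which was already recorded in the proof of \cref{t:BoolSubAlg}. Everything else reduces to the identities for {\MKname}s collected in \cref{t:equivAxioms} together with the description of principal congruences in \cref{t:poly1}.
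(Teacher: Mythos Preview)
Your proof is correct and follows essentially the same approach as the paper's: both exhibit the pair of congruences $\sim_c$ and $\sim_{\nc c}$ (using \cref{t:poly1}) and show their intersection is trivial via the identity $x\approx cx\jc \nc c x$ obtained from $\Ic{c}=1$, then invoke subdirect irreducibility. The only cosmetic difference is that the paper argues directly (for each $a\in B_0$ one of $\sim_a,\sim_{\nc a}$ is trivial, forcing $a\in\{0,1\}$) while you phrase it as a proof by contradiction; and the paper cites \ref{eq:localdecomp} directly rather than decomposing it as \ref{eq:orthodist} plus \ref{eq:localcomp}.
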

\begin{proof}
    First note that $1\neq 0$ as otherwise $\m A$ would be trivial (\cref{t:trivialalg}) and thus not subdirectly irreducible. 
    Fix $a\in B_0$ and consider the congruence ${\cng} := {\sim_a} \cap {\sim_{\nc{a}}}$. 
    We claim ${\cng} = \Delta_{\m A}$. 
    Indeed, if $x\cng y$ then $ax = ay$ and $\nc{a}x = \nc{a}y$ by \cref{t:poly1}. 
    Since $a\in B_0$, it must be that $\Ic{a}=1$, so by \eqref{eq:localdecomp}
    $$x = 1 \mc x = \Ic{a}x = ax\jc \nc{a}x = ay \jc \nc{a}y = \Ic{a}y=1\mc  y = y.$$ 
    Since $\m A$ is subdirectly irreducible, either ${\sim_a} = \Delta_{\m A}$ or ${\sim_{\nc{a}}} = \Delta_{\m A}$. 
    Consequently, either $a = 1$ or $a = 0$. 
    Since $a$ is an arbitrary member of $\m B_0$ and $1\neq 0$, it follows that $\m B_0 \cong \mathbf{2}$.
\end{proof}

\begin{lemma}\label{t:map2M3}
    Suppose $\m A$ is an {\MKname} such that $\m B_0  \cong \mathbf{2}$. Then the map $f:A\to \McSet$ defined via
    $$f(x):=\left\{\begin{array}{cc}
        x & \mbox{if }x\in B_0, \\
        \ee & \mbox{otherwise}.
    \end{array} \right. $$
is a homomorphism from $\m A$ to $\McA$. 
\cadd{
Consequently, $\ker f = \Delta_A \cup (E\times E) = \mathrm{Cg}_{\m A}(E\times E)$, where $E:=A\setminus B_0$.
}
\end{lemma}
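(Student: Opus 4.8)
The plan is to check directly that $f$ commutes with the fundamental operations, and then to read off its kernel. Since $\jc$ and $0$ are term-definable from $\mc,\nc{},1$ (namely $0=\nc 1$ and $a\jc b=\nc{(\nc a\mc\nc b)}$), it suffices to show that $f$ preserves $1$, $\nc{}$ and $\mc$. Preservation of $1$ is immediate: $1\in B_0$, so $f(1)=1$. Recall from \cref{t:fiberdecomp}(2) that $B_0=\{a\in A:\Oc a=0\}$, and that by hypothesis $\m B_0\cong\m 2$, so in fact $B_0=\{0,1\}$ and $\m B_0$ is (isomorphic to) the two-element subalgebra of $\McA$ on $\{0,1\}$; this is precisely what makes $f$ well defined and its restriction to $B_0$ a homomorphism. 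For $\nc{}$: if $x\in B_0$ then $\nc x\in B_0$ (as $B_0$ is a subalgebra, \cref{t:BoolSubAlg}) and $f(\nc x)=\nc x=\nc{f(x)}$; if $x\notin B_0$ then $\Oc{\nc x}=\Oc x\neq 0$ by \ref{eq:unitcoh}, so $\nc x\notin B_0$ and $f(\nc x)=\ee=\nc\ee=\nc{f(x)}$.

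The heart of the argument is the claim that $E=A\setminus B_0$ absorbs products on the left: \emph{if $x\notin B_0$, then $xy\notin B_0$ for every $y\in A$.} I would prove this as follows. By associativity, $\Oc{xy}=(xy)\mc 0=x\mc(y\mc 0)=x\mc\Oc y$; hence, using \ref{eq:leftdist} and the fact that $0$ is the unit for $\jc$,
\[
\Oc x\jc\Oc{xy}=x\mc 0\jc x\mc\Oc y=x\mc(0\jc\Oc y)=x\mc\Oc y=\Oc{xy},
\]
so $\Oc x\leq\Oc{xy}$. Since $x\notin B_0$ means $\Oc x\neq 0$, and $0$ is the least element of the partial order $\leq$ (which is a partial order on any {\MKname} by \cref{t:CompatibleOrder}, and $0\leq z$ for all $z$ as $0$ is the $\jc$-unit), it follows that $\Oc{xy}\neq 0$: otherwise $\Oc x\leq\Oc{xy}=0$ would force $\Oc x=0$. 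That is, $xy\notin B_0$.

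Granting the claim, preservation of $\mc$ follows by a routine case split on the first argument $x$. If $x\notin B_0$, then $f(x)=\ee$ and $xy\notin B_0$ by the claim, so $f(xy)=\ee=\ee\mc_{\mathsf{m}}f(y)=f(x)\mc_{\mathsf{m}}f(y)$, since $\ee$ is left-absorbing in $\McA$. If $x\in B_0$, then $x\in\{0,1\}$: for $x=1$ we have $xy=y$, whence $f(xy)=f(y)=1\mc_{\mathsf{m}}f(y)$; for $x=0$ we have $xy=0\mc y=0$ by \ref{eq:leftann}, whence $f(xy)=f(0)=0=0\mc_{\mathsf{m}}f(y)$, again because $0$ is left-absorbing in $\McA$. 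These cases are exhaustive, so $f$ is a homomorphism.

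It remains to identify $\ker f$. Being the kernel of a homomorphism, $\ker f$ is a congruence on $\m A$. Set-theoretically, $f(a)=f(b)$ holds precisely when either $a=b\in B_0$, or $a,b\in E$ (both mapped to $\ee$); the mixed case cannot occur, since then exactly one of $f(a),f(b)$ would equal $\ee$. Thus $\ker f=\Delta_A\cup(E\times E)$. Finally, this coincides with $\Cg{\m A}(E\times E)$: on one hand $\ker f$ is a congruence containing $E\times E$, so $\Cg{\m A}(E\times E)\subseteq\ker f$; on the other hand any congruence containing $E\times E$ also contains $\Delta_A$, hence contains $\Delta_A\cup(E\times E)=\ker f$, giving the reverse inclusion. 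The only step beyond bookkeeping is the left-absorption claim for $E$, and I expect the identity $\Oc{xy}=x\mc\Oc y$ together with \ref{eq:leftdist} to carry it; everything else is immediate from the structural results already established.
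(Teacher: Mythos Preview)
Your proof is correct and follows essentially the same approach as the paper's: both case-split on whether the left factor lies in $B_0=\{0,1\}$, and both reduce the nontrivial case to showing that $E$ is closed under left multiplication via the inequality $\Oc{x}\leq\Oc{xy}$ (the paper obtains this by invoking the general order-preservation of $\mc$ from \cref{t:CompatibleOrder}, while you compute it directly from \ref{eq:leftdist}); the kernel identification is handled identically.
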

\begin{proof}
Let $a,b\in A$ with $a\in B_i$ and $b\in B_j$ for some $i,j\in \SL_{\m A}$. 
As $\nc{a}\in B_i$ holds from \cref{d:fiber}(1), it follows that $f(\nc{a})=\nc{f(a)}$. 
For multiplication, note that the claim obviously holds if $a=1$ as $f(1)=1$ and $1\mc b = b \mapsto f(b)=1\mc f(b) = f(1)\mc f(b)$; so we will assume $a\neq 1$.
Also, the claim is easily verified when $a=0$ as $0\mc b = 0 \mapsto 0 = 0\mc f(b) = f(0)\mc f(b)$; so we will assume $a\neq 0$.
Since $\m B_0\cong\mathbf{2}$ and we have assumed $a\not\in \{0,1\}$, it must be that $i\neq 0$, and hence $f(a) = \ee$ by definition of the function $f$. 
But $\ee\mc x = \ee$ for any $x\in \McSet$, hence $f(a)\mc f(b) = \ee \mc f(b) = \ee$, so it suffices to show that $ab \not\in B_0$. 
Indeed, multiplication is order-preserving by \cref{t:CompatibleOrder}, and $\Oc{a}=\Oc{i}$ by \cref{t:fiberdecomp}(2), so 
$$\Oc{i} = a\mc 0 \leq a\mc b, $$
and $ab \in A_i$ by \cref{t:Ai}. 
Since $i>0$, it follows that $ab\not\in B_0$, and hence $f(ab) = \ee$. 
Therefore $f$ is a homomorphism. 
\cadd{
The final claim is immediate by the definition of $f$.
}
\end{proof}

\begin{theorem}\label{t:SIchar}
    The only subdirectly irreducible {\MKname}s are $\m 2$ and $\McA$.
\end{theorem}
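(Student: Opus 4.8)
The plan is to prove both directions, the substantive one being that every subdirectly irreducible {\MKname} is isomorphic to $\m 2$ or $\McA$. For the easy direction, I would observe that $\m 2$ and $\McA$ are both simple, hence subdirectly irreducible: $\m 2$ is the two-element Boolean algebra, and any nontrivial congruence on $\McA$ identifies two of $0,1,\ee$, hence---since $\nc{}$ fixes $\ee$ and swaps $0$ and $1$---also identifies $0$ with $1$, so that the quotient is trivial by \cref{t:trivialalg} and the congruence must be the all-relation.

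For the converse, let $\m A$ be a subdirectly irreducible {\MKname}. By \cref{t:SIbottomfiber}, $\m B_0\cong\m 2$, so \cref{t:map2M3} provides a surjective homomorphism $f\colon\m A\to\McA$ with $\ker f=\Delta_A\cup(E\times E)$, where $E:=A\setminus B_0$. If $E=\varnothing$, then $\m A=\m B_0\cong\m 2$ and we are done. So assume $E\neq\varnothing$; it then suffices to show $\ker f=\Delta_A$, since then $E\times E\subseteq\Delta_A$ forces $|E|=1$, so $|A|=3$ and $\m A\cong\McA$ by \cref{t:M3uniq}.

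Suppose, towards a contradiction, $\ker f\neq\Delta_A$. Being subdirectly irreducible, $\m A$ has a monolith $\mu$ (the least nontrivial congruence, contained in every nontrivial one), and $\mu\subseteq\ker f$, so every non-identity pair of $\mu$ lies in $E\times E$. Fix such a pair $(e_1,e_2)$, say with $e_1\in B_i$ and $e_2\in B_j$ where $i,j\neq 0$ (\cref{t:fiberdecomp}). The crucial observation is that, for all $k$ and all $x\in B_l$,
\[
\Oc{\Ic{k}\mc x}=(\Ic{k}\mc x)\mc 0=\Ic{k}\mc \Oc{x}=\Oc{k}\jc\Oc{x}=\Oc{k\vee l},
\]
using associativity, \eqref{eq:coherence}, and the semilattice structure of $\SL_{\m A}$ (\cref{t:SL}); hence $\Ic{k}\mc x\in B_{k\vee l}$ by \cref{t:fiberdecomp}, while $\Ic{k}\mc e=e$ whenever $e\in B_k$, since $\Ic{k}$ is the top of the Boolean fiber $\m B_k$. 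Now choose $k\in\{i,j\}$ so that the other of $i,j$ is not $\leq k$ (possible since $i\not\leq j$ or $j\not\leq i$ when $i\neq j$; take $k=i$ if $i=j$). As $k\neq 0$, \cref{t:trivKerI} makes $\kerI_k$ a nontrivial congruence, so $\mu\subseteq\kerI_k$ and therefore $\Ic{k}\mc e_1=\Ic{k}\mc e_2$ (\cref{d:kerI}). But if $i\neq j$ the choice of $k$ places $\Ic{k}\mc e_1$ and $\Ic{k}\mc e_2$ in distinct---hence, by \cref{t:fiberdecomp}, disjoint---fibers, and if $i=j$ they equal $e_1\neq e_2$; in either case $\Ic{k}\mc e_1\neq\Ic{k}\mc e_2$, a contradiction. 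Therefore $\ker f=\Delta_A$, which finishes the argument.

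The main obstacle is the bookkeeping in that last step: one must correctly track which fiber each product $\Ic{k}\mc e_m$ occupies (via the identity $\Oc{\Ic{k}\mc x}=\Oc{k}\jc\Oc{x}$), and then handle separately the degenerate case $i=j$, in which the ``disjoint fibers'' argument breaks down and one instead invokes that the top element of a Boolean fiber acts as a multiplicative unit on that fiber.
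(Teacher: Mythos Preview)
Your proof is correct and follows essentially the same strategy as the paper: both invoke \cref{t:SIbottomfiber} and \cref{t:map2M3} to reduce to showing $\Delta_A\cup(E\times E)$ is the identity, and both exploit that subdirect irreducibility forces the monolith below each nontrivial $\kerI_k$. The only difference is in the endgame: the paper sets $\kerI:=\bigcap_{i\neq 0}\kerI_i$ and argues that $a,b\in E$ with $a\kerI b$ forces $\Ic{a}=\Ic{a}\Ic{b}$ and $\Ic{b}=\Ic{b}\Ic{a}$, hence $\Ic{a}=\Ic{b}$ by \ref{eq:comlocalunits} and so $a=\Ic{a}a=\Ic{a}b=b$, whereas you pick a single $\kerI_k$ tailored to the pair and track fibers via $\Ic{k}x\in B_{k\vee l}$ to obtain the contradiction directly---a minor reorganization that trades the appeal to \ref{eq:comlocalunits} for a short case split on whether $i=j$.
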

\begin{proof}
It is easily verified that $\m 2$ and $\McA$ are simple (i.e., contain exactly two distinct congruences) and thus subdirectly irreducible, and they are members of $\Mk$ by \cref{t:MinMK}. 
So let $\m A$ be subdirectly irreducible and suppose $\m A \not\cong \m 2$. 
In order to show that $\m A \cong \McA$, by \cref{t:M3uniq} it is enough to verify $|A|=3$, or equivalently, that $\m B_0 \cong \mathbf{2}$ and the set $E:=A\setminus B_0$ is a singleton. 
As $\m A$ is subdirectly irreducible, $\m B_0 \cong \m 2$ is immediate from \cref{t:SIbottomfiber}. 
Moreover, as $\m A \not\cong \m 2$ by assumption, it must be that $E\neq \varnothing$.
So, $\m A\cong \McA $ iff $|E|=1$ iff $|E\times E| = 1$ iff the congruence $\equiv_E:= \Cg{\m A}(E\times E) = \Delta_{\m A}$. 
\cadd{
Since $\m A$ is subdirectly irreducible, we have that ${\equiv_E} = {\Delta_{\m A}}$ iff there exist a congruence ${\kerI}\neq \Delta_{\m A}$ with ${\kerI}\cap {\equiv_E} = \Delta_{\m A}$.
So we may focus on establishing the latter condition.
}

For ${\kerI}$ we take the congruence $\bigcap\{{\kerI_i}: i\in I_E\}$, where $I_E:= \SL_{\m A}\setminus \{0\}$ and ${\kerI_i}:= \Cg{\m A}(0,\Oc{i})$. 
By \cref{t:trivKerI}, ${\kerI_i}\neq \Delta_{\m A}$ for each $i\in I_E$. 
Since $\m A$ is subdirectly irreducible, it therefore follows that ${\kerI} \neq \Delta_{\m A}$, as desired. 
Also, note that as a consequence of \cref{d:kerI} (i.e., \cref{t:poly1}) we have: 
\begin{equation}
\label{eq:kerI}
x\kerI y 
\qquad\iff \qquad 
\Ic{e}\mc x = \Ic{e}\mc y \quad \text{for all $e\in E$.} 
\end{equation}

So it suffices to verify ${\kerI}\cap {\equiv_E} = \Delta_{\m A}$. 
\cadd{
Let $a,b\in A$ be such that $a\kerI b$ and $a\equiv_E b$; we must show $a=b$.
Since $\m B_0 \cong \m 2$, from \cref{t:map2M3} we have $a\equiv_E b$ iff $a = b$ or $a,b\in E$, so we may assume $a,b\in E$. 
As $\kerI$ is a congruence, $\Ic{x}:=x\jc 1$, and $a\kerI b$, we have $\Ic{a}\kerI \Ic{b}$. 
So from \eqref{eq:kerI} and idempotency, $a\in E$ implies $\Ic{a}=\Ic{a}\mc \Ic{a} = \Ic{a}\mc \Ic{b}$. 
By the same argument, $b\in E$ implies $\Ic{b}=\Ic{b}\Ic{a}$. 
Thus $a,b\in E$ yields $\Ic{a}=\Ic{b}$ by \ref{eq:comlocalunits}, and so $a = \Ic{a} a = \Ic{a} b = \Ic{b}b = b$ by \ref{eq:localunits} and \eqref{eq:kerI}. 
}
Consequently, ${\kerI} \cap {\equiv_E} = \Delta_{\m A}$. Therefore $\m A \cong \McA$.
\end{proof}
As every variety of algebras is generated by its subdirectly irreducible members, and $\m 2$ is a subalgebra of $\McA$, we immediately obtain the following as a corollary to \cref{t:SIchar}.
\begin{corollary}\label{t:M3genMK}
    The variety $\Mk$ of {\MKname}s is generated by the algebra $\McA$. 
\end{corollary}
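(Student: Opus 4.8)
The plan is to obtain this as a short corollary of \cref{t:SIchar} combined with the standard fact that every variety is generated by its subdirectly irreducible members. First I would recall the subdirect representation theorem (e.g., \cite[Thm.~3.44, Cor.~3.45]{BergmanLibro}): every algebra in the variety $\Mk$ is isomorphic to a subdirect product of subdirectly irreducible algebras belonging to $\Mk$. By \cref{t:SIchar}, the only such algebras are $\m 2$ and $\McA$. Hence every member of $\Mk$ is (isomorphic to) a subalgebra of a product of copies of $\m 2$ and $\McA$, which gives $\Mk \subseteq \mathsf{H}\mathsf{S}\mathsf{P}(\{\m 2,\McA\})$.

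Next I would observe that $\m 2$ is isomorphic to a subalgebra of $\McA$: indeed, the subalgebra $\m B_0$ of $\McA$ on the universe $\{0,1\}$ is the two-element Boolean algebra (as already noted in the Introduction, and visible from \cref{f:3elem}). Consequently $\mathsf{H}\mathsf{S}\mathsf{P}(\{\m 2,\McA\}) = \mathsf{H}\mathsf{S}\mathsf{P}(\McA) = \mathsf{V}(\McA) = \MC$, so $\Mk \subseteq \MC$. For the reverse inclusion I would invoke \cref{t:MinMK}, which states $\McA\in\Mk$; since $\Mk$ is a variety (hence closed under $\mathsf{H},\mathsf{S},\mathsf{P}$), it follows that $\MC = \mathsf{V}(\McA)\subseteq \Mk$. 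Combining the two inclusions yields $\Mk = \MC = \mathsf{V}(\McA)$, i.e., $\Mk$ is generated by $\McA$, as claimed.

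There is essentially no obstacle in this step: all the substantive work has already been carried out in \cref{t:SIchar} (the classification of subdirectly irreducibles) and \cref{t:MinMK}, so this is purely a bookkeeping application of the usual universal-algebraic machinery. The only minor point to keep in mind is the treatment of trivial (one-element) algebras — but these cause no difficulty, since trivial algebras are subdirect products of the empty family and, in any case, automatically lie in $\mathsf{V}(\McA)$ because the signature contains constants. Thus the corollary follows immediately, and in particular (via \cref{t:equivAxioms}) any of the equational bases listed there is an equational basis for the variety $\MC$ of McCarthy algebras.
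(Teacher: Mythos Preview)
Your proof is correct and follows essentially the same route as the paper: invoke \cref{t:SIchar} together with the fact that every variety is generated by its subdirectly irreducible members, and then use that $\m 2$ is a subalgebra of $\McA$ to conclude $\Mk = \mathsf{V}(\McA)$. The explicit appeal to \cref{t:MinMK} for the reverse inclusion is fine but not strictly necessary, since $\McA$ being a subdirectly irreducible member of $\Mk$ already guarantees $\mathsf{V}(\McA)\subseteq \Mk$.
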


\begin{theorem}[Completeness]\label{t:MisMK}
    The variety $\MC$ of McCarthy algebras coincides with $\Mk$, and is therefore equationally based by any of the equivalent presentations in \cref{t:equivAxioms}.
\end{theorem}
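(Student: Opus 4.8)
The plan is simply to assemble the two inclusions from results already in hand. The inclusion $\MC \subseteq \Mk$ is nothing new: it was recorded in \cref{t:MinMK}, since $\McA \in \Mk$ forces $\MC := \mathsf{V}(\McA) \subseteq \Mk$. For the reverse inclusion $\Mk \subseteq \MC$, I would invoke the standard fact that every variety is generated by its subdirectly irreducible members together with the classification carried out in \cref{t:SIchar}: the only subdirectly irreducible {\MKname}s are $\m 2$ and $\McA$. Since $\m 2$ is (isomorphic to) the subalgebra of $\McA$ on $\{0,1\}$, this yields $\Mk = \mathsf{V}(\{\m 2,\McA\}) = \mathsf{V}(\McA) = \MC$, which is precisely the statement recorded in \cref{t:M3genMK}. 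Hence $\MC = \Mk$, and the equational-basis assertion of the theorem then follows at once by combining this identification with \cref{t:equivAxioms}.

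Because all the real work has been done in the structural analysis of the preceding subsection, there is in effect no obstacle left at this point; the step that does the heavy lifting is \cref{t:SIchar}. Its proof in turn rests on \cref{t:SIbottomfiber} — that a subdirectly irreducible {\MKname} has principal Boolean component $\m B_0 \cong \m 2$, obtained by showing the congruence ${\sim_a}\cap{\sim_{\nc a}}$ is trivial via \ref{eq:localdecomp} — and on \cref{t:map2M3}, which identifies the ``collapse everything outside $\m B_0$ to $\ee$'' map as a homomorphism onto $\McA$; one then checks that $\kerI \cap \equiv_E = \Delta_{\m A}$, forcing $|A| = 3$ and hence $\m A \cong \McA$ by \cref{t:M3uniq}. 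So the hardest part is already behind us, and the present theorem is the purely formal consolidation of these facts.

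I would also note, as an alternative route, that one could bypass \cref{t:SIchar} entirely and instead cite the finite basis of Guzm\'an and Squier \cite{Guzman-Squier}: since that basis is subsumed by the identities listed in \cref{t:equivAxioms}, one gets $\Mk \subseteq \MC$ directly, and combined with \cref{t:MinMK} this already yields $\Mk = \MC$. The self-contained argument through the subdirectly irreducible members is, however, the one I would present, as it simultaneously pins down all of $\mathrm{Con}$-theoretically relevant structure of $\Mk$ and is needed for the decomposition and representation results in the next section.
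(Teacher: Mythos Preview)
Your proposal is correct and matches the paper's approach exactly: the paper likewise derives the theorem immediately from \cref{t:MinMK} (giving $\MC \subseteq \Mk$) and \cref{t:M3genMK} (giving $\Mk = \mathsf{V}(\McA)$), the latter being stated as a corollary to \cref{t:SIchar} via the fact that varieties are generated by their subdirectly irreducibles. Your summary of how \cref{t:SIchar} works and your remark about the alternative Guzm\'an--Squier route are also accurate and mirror the paper's own discussion at the start of the completeness subsection.
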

\noindent Consequently, every proposition established above for {\MKname}s holds for McCarthy algebras.
In particular, the only McCarthy algebra in which $1=\nc{1}$ is the trivial algebra from \cref{t:trivialalg}, and from \cref{t:SIchar} the only proper and non-trivial subvariety of $\MC$ is $\BA$. 
\cadd{\begin{corollary}\label{t:subclass}\label{t:McoversBA}
    McCarthy algebras form a subclassical variety of {\iname}s, and cover Boolean algebras in the lattice of subvarieties of {\iname}s
\end{corollary}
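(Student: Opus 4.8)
The plan is to read off both claims from the classification of subdirectly irreducible {\MKname}s in \cref{t:SIchar}, together with \cref{t:trivialalg} and the identity $\MC = \Mk$ (\cref{t:MisMK}).

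For the subclassical claim, let $\m A$ be a non-trivial McCarthy algebra. By \cref{t:trivialalg} the equation $1 \approx \nc 1$ fails in $\m A$, i.e., the unit is not an involution fixed-point; by the definition of subclassical {\iname}s (and the equivalence recorded in \Cref{sec: subclassical}, where $1\approx 0$ and $\m 2$ form a splitting pair), this means $\m 2$ embeds into $\m A$, so $\m A$ is subclassical. As $\m A$ was an arbitrary non-trivial member, $\MC$ is a subclassical variety in the sense of \cref{def: subclassical variety}.

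For the covering claim, note first that $\BA \subsetneq \MC$, since $\McA \in \MC$ is not commutative and hence not Boolean. Now let $\mathcal{W}$ be any variety of {\iname}s with $\BA \subseteq \mathcal{W} \subseteq \MC$; it suffices to show $\mathcal{W}\in\{\BA,\MC\}$. As subdirect irreducibility is an intrinsic property and $\mathcal{W}\subseteq\MC$, every subdirectly irreducible member of $\mathcal{W}$ is a subdirectly irreducible {\MKname}, hence isomorphic to $\m 2$ or $\McA$ by \cref{t:SIchar}. Since $\m 2$ is subdirectly irreducible and lies in $\BA\subseteq\mathcal{W}$, it is always one of them. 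If $\McA\notin\mathcal{W}$, then $\m 2$ is (up to isomorphism) the unique subdirectly irreducible member of $\mathcal{W}$, so $\mathcal{W}=\mathsf{V}(\m 2)=\BA$, as every variety is generated by its subdirectly irreducibles. Otherwise $\McA\in\mathcal{W}$, whence $\MC=\mathsf{V}(\McA)\subseteq\mathcal{W}$ by \cref{t:M3genMK}, forcing $\mathcal{W}=\MC$. Thus no variety lies strictly between $\BA$ and $\MC$, i.e., $\MC$ covers $\BA$.

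I expect no serious obstacle: everything is bookkeeping once \cref{t:SIchar} is available. The only points worth a moment's care are that trivial algebras are never subdirectly irreducible (noted in the preliminaries), so that pinning down the subdirectly irreducible members of $\mathcal{W}$ genuinely determines $\mathcal{W}$, and that $\BA$ is properly contained in $\MC$, which is witnessed by the non-commutative algebra $\McA$.
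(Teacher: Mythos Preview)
Your proposal is correct and matches the paper's own reasoning: the paper derives this corollary directly from \cref{t:trivialalg} (for the subclassical claim) and \cref{t:SIchar} (for the covering claim), exactly as you do. Your write-up is simply a more explicit unpacking of the one-sentence justification the paper gives immediately before stating the corollary.
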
}
As the algebra $\McA^\mathsf{op}$ is simply the mirror of $\McA$, we obtain the following for the variety $\MC^\mathsf{op}:=\mathsf{V}(\McA^\mathsf{op})$.
\begin{corollary}\label{t:MopChar}
    The variety $\MC^\mathsf{op}$ is equationally based by swapping any ``left/right'' identity with the corresponding ``right/left'' identity in \cref{t:equivAxioms}, and where $\Ic{x}:= 1\jc x$ and $\Oc{x}:= 0\mc x$. 
    Consequently, this variety is also subclassical and forms a cover of Boolean algebras in the lattice of subvarieties of {\iname}s.
\end{corollary}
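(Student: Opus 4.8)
The plan is to exploit the fact that passing to the \emph{mirror} (opposite) of an {\iname} is an involutive automorphism of the lattice of subvarieties of {\iname}s under which $\MC$ and $\MC^{\mathsf{op}}$ correspond. For an {\iname} $\m M = \str{M,\jc,\mc,\nc{},0,1}$, let $\m M^{\mathsf{op}}$ be the algebra with the same universe, involution and constants but with both binary operations reversed, $x\mc^{\mathsf{op}}y := y\mc x$ and $x\jc^{\mathsf{op}}y := y\jc x$ (these remain De~Morgan dual to one another). First I would record the easy facts: idempotency, associativity, unitality and $\nnc{x}\approx x$ are all insensitive to the order of arguments, so $\m M^{\mathsf{op}}$ is again an {\iname} and $(\m M^{\mathsf{op}})^{\mathsf{op}}=\m M$; moreover a function is a homomorphism $\m M\to\m N$ iff it is one $\m M^{\mathsf{op}}\to\m N^{\mathsf{op}}$, and subalgebras and direct products are likewise preserved and reflected. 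Hence, writing $\mathcal C^{\mathsf{op}}:=\{\m M^{\mathsf{op}}:\m M\in\mathcal C\}$, the operator $(\cdot)^{\mathsf{op}}$ commutes with $\mathsf H$, $\mathsf S$, $\mathsf P$, so $\mathsf V(\mathcal K)^{\mathsf{op}}=\mathsf V(\mathcal K^{\mathsf{op}})$ for every class $\mathcal K$; taking $\mathcal K=\{\McA\}$ identifies $\MC^{\mathsf{op}}$ with $\{\m M^{\mathsf{op}}:\m M\in\MC\}$, and $(\cdot)^{\mathsf{op}}$ restricts to an involutive automorphism of the lattice of subvarieties of {\iname}s (since {\iname}s are closed under it).

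Next I would transfer the mirror to the syntactic side: for a term $t$ over $\str{\jc,\mc,\nc{},0,1}$, let $t^{\mathsf{op}}$ be obtained by reversing the argument order of each occurrence of $\jc$ and $\mc$. A routine structural induction yields $t^{\m M}(\vec a)=(t^{\mathsf{op}})^{\m M^{\mathsf{op}}}(\vec a)$, hence $\m M\models s\approx t$ iff $\m M^{\mathsf{op}}\models s^{\mathsf{op}}\approx t^{\mathsf{op}}$. Together with the first paragraph, this shows that whenever $\Sigma$ is an equational basis for $\MC$ relative to {\iname}s, $\Sigma^{\mathsf{op}}:=\{s^{\mathsf{op}}\approx t^{\mathsf{op}}:(s\approx t)\in\Sigma\}$ is one for $\MC^{\mathsf{op}}$, again relative to {\iname}s. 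Applying this to the presentations of \cref{t:equivAxioms} (which are bases for $\MC$ by \cref{t:MisMK}), I would then check that the mirror of $\Ic{x}:=x\jc1$ is $1\jc x$ and that of $\Oc{x}:=x\mc0$ is $0\mc x$, and that each ``left'' identity listed there turns into exactly the corresponding ``right'' identity and conversely; e.g., \ref{eq:leftdist}, namely $x(y\jc z)\approx xy\jc xz$, becomes $(y\jc z)x\approx yx\jc zx$. Thus $\Sigma^{\mathsf{op}}$ is precisely the ``swapped'' presentation asserted in the statement, with $\Ic{x}$ and $\Oc{x}$ reinterpreted as $1\jc x$ and $0\mc x$.

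For the remaining claims, I would observe that $(\cdot)^{\mathsf{op}}$ preserves cardinality and fixes whether the unit is an involution fixed-point; in particular $\m 2^{\mathsf{op}}=\m 2$, so $\BA^{\mathsf{op}}=\BA$. Since $(\cdot)^{\mathsf{op}}$ is an order automorphism of the subvariety lattice and $\MC$ covers $\BA$ by \cref{t:McoversBA}, $\MC^{\mathsf{op}}$ covers $\BA^{\mathsf{op}}=\BA$. Likewise, since $(\cdot)^{\mathsf{op}}$ preserves both triviality and (via the fixed-point characterization) subclassicality of members, and $\MC$ is a subclassical variety by \cref{t:subclass}, so is $\MC^{\mathsf{op}}$. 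I expect the only genuine work to be the bookkeeping in the second paragraph: ensuring the mirror is compatible with the two-operation signature (recalling that $\jc$ is the De~Morgan dual of $\mc$, so the two operations must be reversed together) and that it carries the specific identities and abbreviations of \cref{t:equivAxioms} to exactly the stated ones.
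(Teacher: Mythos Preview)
Your proposal is correct and follows precisely the approach the paper takes: the paper states the corollary as immediate from the fact that $\McA^{\mathsf{op}}$ is the mirror of $\McA$, without giving further details, and you have simply spelled out the routine verification that the mirror operation is an involutive automorphism of the subvariety lattice carrying identities to their mirrored forms and fixing $\BA$.
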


\section{\cadd{The semilattice decomposition of McCarthy algebras}}\label{sec:decomp}
We now aim to establish a semilattice decomposition theorem for McCarthy algebras. 
It is well known that several classes (not necessarily varieties) of semigroups admit semilattice decomposition theorems: the most prominent examples include Clifford semigroups (which are semilattices of groups) \cite{Clifford1941,petrich1984Book}, bands (semilattices of rectangular bands) \cite{Clifford1941} and commutative semigroups (semilattices of archimedean semigroups) \cite{TamuraKimura}. 
We refer the interested reader to \cite{RS91,RomanowskaPlonka92} for a general (universal) algebraic overview of this kind of decomposition theorem. Despite providing a semilattice decomposition theorem below, our result turns out to be slightly different to the cases mentioned above, as we will briefly explain. In order to introduce the decomposition, recall that a \defem{semilattice direct system} of algebras of type $\tau$ is a triple $\str{\SL,\mathcal{A}, \mathcal{P}}$ where (i)~$\SL = \str{I,\leq}$ is a semilattice; (ii)~$\mathcal{A} =\{\m A_i\}_{i\in I}$ is a family of algebras with disjoint universes and similar type $\tau$; and (iii)~$\mathcal{P}$ is a family of homomorphisms $p_{ij}:\m A_i\to \m A_j$, for any $i\leq j$, such that $p_{ii}$ is the identity map on $\m A_i$ and $p_{jk}\circ p_{ij} = p_{ik}$ whenever $i\leq j\leq k$.
\cadd{
We say an algebra $\m A$ of type $\tau$ \emph{decomposes} into an SDS $\str{\SL,\mathcal{B},\mathcal{P}}$ of type $\tau$ if $A=\bigcup \mathcal{B}$ and for each $n$-ary operation $f\in \tau$, $f^{\m A}{\restriction} B_i^n = f^{\m B_i}$. 
}
\begin{theorem}[Semilattice Decomposition]\label{t:Decomp}
Each McCarthy algebra $\m A$ decomposes into an SDS of Boolean algebras $\str{\SL_{\m A}, \mathcal{B}_{\m A},  \mathcal{P}_{\m A}}$, where
\begin{enumerate}
    \item $\SL_{\m A}$ is the semilattice skeleton of $\m A$ (see \cref{d:SL});
    \item $\mathcal{B}_{\m A}$ is the set of fibers of $\m A$ (see \cref{d:fiber}); 
    \item $\mathcal{P}_{\m A}$ is the set of homomorphisms $p_{ij}:= h_j{\restriction}B_i:\m B_i\to \m B_j$  for $i\leq j$ in $\SL_{\m A}$ (see \cref{t:Ai}).  
\end{enumerate}
\end{theorem}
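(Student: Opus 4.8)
To prove the Semilattice Decomposition Theorem, I would verify that the triple $\str{\SL_{\m A}, \mathcal{B}_{\m A}, \mathcal{P}_{\m A}}$ meets the three conditions in the definition of a semilattice direct system, and then that $\m A$ decomposes into it. Most of the structural work has already been done: \cref{t:SL} gives that $\SL_{\m A}$ is a semilattice (with least element $0$); \cref{t:fiberdecomp}(1) together with \cref{t:BAeq} gives that each fiber $\m B_i$ is a Boolean algebra (being the principal Boolean component of the McCarthy algebra $\m A_i$); \cref{t:fiberdecomp}(3) gives that $A = \bigcup_{i\in\SL_{\m A}} B_i$ is a disjoint union; and the condition that $f^{\m A}{\restriction}B_i^n = f^{\m B_i}$ for each operation $f$ holds by the very definition of the fibers $\m B_i$ (their operations are the restrictions of those of $\m A$), so the ``decomposes'' clause is essentially immediate once the pieces are in place. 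The one genuinely new thing to check is condition (iii): that $\mathcal{P}_{\m A}$ is a coherent family of homomorphisms between the fibers.

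**Checking the transition maps.** First I would observe that for $i\leq j$ in $\SL_{\m A}$, the map $h_j\colon x\mapsto \Oc{j}\jc x$ from \cref{t:Ai} is a homomorphism $\m A \to \m A_j$, and I must check that its restriction $p_{ij} := h_j{\restriction}B_i$ actually maps $B_i$ \emph{into} $B_j$ and is a homomorphism of the fiber structures (including sending $\Oc{i}\mapsto\Oc{j}$ and $\Ic{i}\mapsto\Ic{j}$). For $a\in B_i$ we have $\Oc{i}\leq a\leq \Ic{i}$; applying the order-preserving map $x\mapsto \Oc{j}\jc x$ and using $i\leq j$ (so $\Oc{j}\jc\Oc{i} = \Oc{i\vee j} = \Oc{j}$ and $\Oc{j}\jc\Ic{i} = \Oc{j}\jc\Oc{i}\jc 1 = \Oc{j}\jc 1 = \Ic{j}$, invoking \cref{rem: 0ij=0i+0j} and \cref{rem: 1x0-x0}), we get $\Oc{j}\leq \Oc{j}\jc a \leq \Ic{j}$, i.e.\ $p_{ij}(a)\in B_j$. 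Homomorphicity of $p_{ij}$ is inherited from that of $h_j$, and the constants go where they should by the same computation. Then $p_{ii} = h_i{\restriction}B_i$ is the identity on $B_i$ because $h_i(x) = \Oc{i}\jc x = x$ for $x\geq\Oc{i}$; and for $i\leq j\leq k$, the identity $p_{jk}\circ p_{ij} = p_{ik}$ reduces to $\Oc{k}\jc(\Oc{j}\jc x) = \Oc{k}\jc x$, which holds by associativity and $\Oc{k}\jc\Oc{j} = \Oc{k}$ (from $j\leq k$).

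**The main obstacle.** The most delicate point — and the reason the authors flag that their decomposition differs from the classical semigroup ones — is that one must be careful about what the fibers and transition maps are: the fiber $\m B_i$ is the \emph{principal Boolean component} $[\Oc{i},\Ic{i}]$ of $\m A$, not the whole upset $\m A_i = \upset\Oc{i}$, and the transition homomorphism $p_{ij}$ is genuinely a restriction of $h_j$ to this smaller set, so the real content is verifying that this restriction lands in the right place and respects the fiber constants $\Oc{i},\Ic{i}$. Once \cref{rem: 0ij=0i+0j}, \cref{rem: 1x0-x0}, \cref{t:CompatibleOrder}, \cref{t:Ai}, and \cref{t:fiberdecomp} are assembled, everything falls out; the argument is short but the bookkeeping about which interval one is working in is where care is needed. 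I would close by remarking that the decomposition is not quite of the ``strict'' form found for bands or Clifford semigroups precisely because of this passage to Boolean components, which is what makes it worth stating explicitly.
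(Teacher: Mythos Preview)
Your proposal is correct and follows essentially the same approach as the paper: both verify the SDS conditions by citing \cref{t:SL} and \cref{t:fiberdecomp} for items (i) and (ii), then check that $p_{ii}$ is the identity and $p_{jk}\circ p_{ij}=p_{ik}$ via the formula $h_j(x)=\Oc{j}\jc x$ and the relation $\Oc{k}\jc\Oc{j}=\Oc{k}$ when $j\leq k$. The only minor difference is that you verify directly that $p_{ij}$ is a fiber homomorphism (inheriting $\jc,\mc,\nc{}$ from $h_j$ and checking the constants by hand), whereas the paper factors $h_j$ through $h_i$ via the Second Isomorphism Theorem; both routes are equally short.
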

\begin{proof}
    The set $\SL_{\m A}$ is a semilattice by \cref{t:SL} and the universe of $\m A$ is the disjoint union of its Boolean fibers by \cref{t:fiberdecomp}. 
    Lastly, we must establish  that $p_{ii} = \mathrm{id}_{\m B_i}$, $p_{jk}\circ p_{ij} = p_{ik}$ whenever $i\leq j\leq k$, and the map $p_{ij}$ is a Boolean algebra homomorphism from $\m B_i$ to $\m B_j$.    
    For the first claim, let $x\in B_i$. Then $p_{ii} = h_i(x)=\Ic{i}x = \Ic{x}x = x$ by \cref{t:Ai}, \cref{t:fiberdecomp}(2), and \ref{eq:localunits}. So $p_{ii} = \mathrm{id}_{\m B_i}$. 
    Next, suppose $i\leq j\leq k$. 
    Then $p_{jk}\circ p_{ij} = p_{ik}$, as for each $x\in B_i$,
    $$(p_{jk}\circ p_{ij})(x)=p_{jk}(\Oc{j}\jc x) = \Oc{k} \jc \Oc{j} \jc x = \Oc{k\vee j}\jc x = \Oc{k} \jc x = \hc_k(x) = p_{ik}(x). $$
    
    Lastly, suppose $i\leq j$.
    The fact that $p_{ij}$ is a homomorphism from $\m B_i$ to $\m B_j$ essentially follows from \cref{t:Ai} and the Second Isomorphism Theorem (cf. \cite[Thm.~4.10]{mckenzie}). 
    Indeed, we have that $h_i$ maps $\m A$ surjectively onto $\m A_i$ and, as $i\leq j$, we have $\ker h_i = {\kerI_i} \subseteq {\kerI_j} = \ker h_j$ (see \cref{rem: inckerns}). So there exists a unique homomorphism $g:\m A_i\to \m A_j$ satisfying $h_j = g\circ h_i$.
    \[
    \begin{tikzcd}
    \m A \arrow[r,"h_j"] \arrow[d,tail, twoheadrightarrow,swap,"h_i"]  & \m A_j\\
    \m A_i \arrow[ru,dashed,"g"] &
    \end{tikzcd}
    \qquad (\ker h_i \subseteq \ker h_j)
    \]
    So $p_{ij}:= h_{j}{\restriction}B_i = (g \circ h_i){\restriction}B_i= g \circ (h_i{\restriction}B_i) = g\circ \mathrm{id}_{\m B_i}$. 
    Thus $p_{ij}$ is a homomorphism as it is a composition of them. 
    That the range $p_{ij}[B_i]\subseteq B_j$ follows from the fact that homomorphisms must map constants to constants (i.e., $\Ic{i}\mapsto\Ic{j}$) and, \crev{since $B_{i} = [0_i , 1_i]$ then, for every $x\in B_{i} $, we have that $p_{ij}(x)\in [0_{j},1_{j}] = B_j$.} 
    Hence $p_{ij}$ is a homomorphism from $\m B_i$ to $\m B_j$.
    \cadd{It is clear then that $\m A$ decomposes into $\str{\SL_{\m A},\mathcal{B}_{\m A}, \mathcal{P}_{\m A} }$.}
\end{proof}

The semilattice decomposition result stated above essentially differs from the previously recalled ones (for Clifford semigroups, bands and commutative semigroups): indeed, the latter are examples of a general construction known as \emph{semilattice sum} (see \cite{RS91,RomanowskaPlonka92}), where the indexing semilattice, called the semilattice replica, can be obtained as a quotient of the starting algebra modulo an opportune congruence. 
However, it is not difficult to check that the semilattice skeleton of any McCarthy algebra $\mathbf{A}$ is, in general, not a quotient of $\mathbf{A}$ (another example of a semilattice decomposition which is not a semilattice sum can be found in \cite[Prop. 9]{BalRes}).\footnote{\cadd{Using \cref{t:trivialalg}, it is readily verified that the semilattice skeleton $\SL_{\m A}$ is obtained as a quotient of a McCarthy algebra $\m A$, in the sense of \cite{RS91}, iff $\SL_{\m A}$ is the trivial semilattice, i.e., that $\m A$ is a Boolean algebra.}} 

\begin{remark}
 Semilattice direct systems are usually related to the theory of P{\l}onka sums. 
 Despite the fact that the introduction of this theory goes beyond the purpose of the present work (standard references include \cite{RomanowskaPlonka92,Plonka1984nullary,Bonziobook}), we can simply say that, roughly speaking, the P{\l}onka sum allows one to (re)construct an algebra starting from a semilattice direct system of (similar) algebras (by taking the disjoint union of the universes of the fibers and opportunely define operations via homomorphisms connecting the fibers). 
 It is not difficult to see that the P{\l}onka sum cannot be used to obtain a McCarthy algebra starting from a direct system of Boolean algebras (one reason is that the variety whose members are P{\l}onka sums of Boolean algebras is the variety of involutive bisemilattices \cite{Bonzio16SL}, another is that the variety of $\mathsf{M}$ of McCarthy algebra satisfy strongly irregular identities, e.g., \ref{eq:leftabs}). 
 Moreover, also the reconstruction of algebras out of a semilattice sum holds in general for \emph{regular} varieties \cite{RS91}, i.e., varieties satisfying regular identities only ($\varphi\approx\psi$ where $\varphi$ and $\psi$ are terms where exactly the same variables appear). 
 For these reasons, it makes sense to introduce a suitable construction in order to recover a McCarthy algebra starting from a semilattice direct system of Boolean algebra. We address this problem in future work.      
\end{remark}

\subsection{\cadd{McCarthy algebras with 2-element fibers}}
In this section, we show how to obtain a McCarthy algebra from of an arbitrary $\bot$-semilattice (i.e., a semilattice with least element $\bot$).

Fix $\m I = \str{\SL, \vee, \bot}$ a $\bot$-semilattice.
For each $i\in \SL$, let $\m B_i$ be a copy of the $2$-element Boolean algebra with universe $B_i=\{\Oc{i},\Ic{i}\}$.  
Let $\SL[\m 2]:=\bigcup_{i\in \SL} B_i$ be their disjoint union. 
We will show that $\SL[\m 2]$ is the universe of some McCarthy algebra $\m I[\m 2]$ whose semilattice skeleton is isomorphic to $\m I$.

Before defining the operations of $\m I[\m 2]$, we adopt a notational convenience. 
For each $i\in \SL$, let $(\_)_i$ denote the isomorphism from $\m 2 \to \m B_i$. 
In this way, it obvious then that $z\in \SL[\m 2]$ iff $z \in B_i$ for some $i\in \SL$ iff $z = x_i := (x)_i$ for some $x\in \{0,1\}$ and $i\in \SL[\m 2]$.
Additionally, let us define the function $\acts: \SL[\m 2]\times \SL\to \SL$ via 
\begin{equation*}\label{eq:I2act}
x_i \acts j =\left\{ 
\begin{array}{c l}
    i & \text{if $x=0$} \\
    i\vee j & \text{if $x = 1$}
\end{array} \right.
\end{equation*}
For the sake of discernment in what follows, we denote the signature of $\m 2$ using the symbols $\vee,\land,\neg,0,1$. 

By $\m I[\m 2]$ let us denote the algebra $\str{\SL[\m 2], \mc,\nc{},\Ic{\bot}}$, where operations $\nc{}$ and $\mc$ are defined via 
\begin{align*}\label{eq:I2ops}
    \nc{x_i} &:= (\neg x)_i\\
    x_i\mc y_j&:= (x\land y)_{x_i \acts j}
\end{align*} 
where $x_i,y_j\in \SL[\m 2]$. 
In other words, $x_i\mc y_j$ takes value $\Oc{i}$ if $x=0$, else $y_{i\vee j}$ if $x=1$.

It is not difficult to see that the following holds:
\begin{equation}\label{eq:I2ass}
    x_i\acts (y_j\acts k) = (x_i\mc y_j)\acts k.
\end{equation}
Indeed, if $x=0$ then both expressions take value $i$ (as $\Oc{i}y_j = \Oc{i}$); if $x=1$ and $y=0$ then both expressions take value $i\vee j$ (as $\Ic{i}\Oc{j}=\Oc{i\vee j}$); else $x=y=1$ and both expressions take value $i\vee j \vee k$ (as $\Ic{i}\Ic{j}=\Ic{i\vee j}$).

It is also easy to see that $\m I[\m 2]$ is a unital i-groupoid. 
Indeed, that $\Ic{\bot}$ is a two-sided unit for $\mc$ is immediate since $\bot$ is the least element in $\SL$ and $1$ is a unit for $\land$, while $\nc{}$ is an involution over $\SL[\m 2]$ since $\neg$ is an involution.
Moreover, $\Oc{\bot} = \nc{\Ic{\bot}}$ and the De Morgan dual $\jc$ for $\mc$ is computed via
$$x_i\jc y_j = (x\vee y)_{\nc{x_i}\acts j} $$
i.e., $x_i\jc y_j$ takes value $y_{i\vee j}$ if $x=0$, else $\Ic{i}$ if $x=1$. 
Hence $\Ic{x_i}:= x_i \jc \Ic{\bot} = \Ic{i}$ and $\Oc{x_i} = x_i\mc \Oc{\bot} = \Oc{i}$.

\begin{proposition}\label{t:Mc from SL}
    $\m I[\m 2]$ is a McCarthy algebra for any $\bot$-semilattice $\mathbf{I}=\str{\SL,\vee,\bot}$.
\end{proposition}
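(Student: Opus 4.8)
The plan is to deduce the statement from \cref{t:equivAxioms} together with the Completeness \cref{t:MisMK}: it suffices to check that $\m I[\m 2]$ is an {\iname} satisfying one identity-set from each of the three families of \cref{t:equivAxioms}, for which I would take \ref{eq:leftdist} together with \ref{eq:localdecomp} (Weak Distributivity), \ref{eq:leftann} (Weak Absorption), and \ref{eq:comlocalunits} (Weak Commutativity). Since $\m I[\m 2]$ has already been seen to be a unital i-groupoid whose De~Morgan dual of $\mc$ is the operation $\jc$ displayed above, the only {\iname} axiom not yet in hand is the associativity of $\mc$. To establish it I would write $x_i\mc y_j=(x\land y)_{x_i\acts j}$ and invoke \eqref{eq:I2ass}: unwinding $(x_i\mc y_j)\mc z_k$ and $x_i\mc(y_j\mc z_k)$, both equal $(x\land y\land z)_{x_i\acts(y_j\acts k)}$ by associativity of $\land$ on coordinates and of $\vee$ on indices. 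Idempotency of $\mc$ is immediate, as $x_i\mc x_i=x_i$ whether $x=0$ or $x=1$; hence $\m I[\m 2]$ is an {\iname}.

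Each of the three identities then reduces to a short case split on whether the Boolean coordinate of the left-most argument is $0$ or $1$, using the explicit descriptions already recorded: $x_i\mc y_j$ equals $\Oc{i}$ if $x=0$ and $y_{i\vee j}$ if $x=1$; $x_i\jc y_j$ equals $y_{i\vee j}$ if $x=0$ and $\Ic{i}$ if $x=1$; $\nc{x_i}=(\neg x)_i$; and $\Ic{x_i}=1_i$, $\Oc{x_i}=0_i$ regardless of $x$. For example, \ref{eq:leftann} holds since $\Oc{\bot}\mc y_j=0_\bot\mc y_j=0_\bot$, and \ref{eq:comlocalunits} holds since $\Ic{x_i}\mc\Ic{y_j}=1_i\mc 1_j=1_{i\vee j}=1_j\mc 1_i=\Ic{y_j}\mc\Ic{x_i}$ by commutativity of $\vee$. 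For \ref{eq:leftdist} and \ref{eq:localdecomp} one splits on the first coordinate of the left argument and collapses composite indices such as $(i\vee j)\vee(i\vee k)$ to $i\vee j\vee k$ using idempotency and commutativity of $\vee$; e.g., for \ref{eq:localdecomp} with $x=0$ both sides equal $y_{i\vee j}$ because $\Ic{x_i}\mc y_j=1_i\mc y_j$ while $(x_i\mc y_j)\jc(\nc{x_i}\mc y_j)=0_i\jc(1_i\mc y_j)$, and the case $x=1$ is analogous.

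Having verified these identities relative to being an {\iname}, \cref{t:equivAxioms} places $\m I[\m 2]$ in the variety $\Mk$ of {\MKname}s, and $\Mk=\MC$ by \cref{t:MisMK}; hence $\m I[\m 2]$ is a McCarthy algebra. The only step that calls for genuine care is the associativity of $\mc$, which is exactly what \eqref{eq:I2ass} was set up to provide; everything else amounts to a routine computation in the two-element Boolean algebra together with elementary semilattice manipulations on the indices.
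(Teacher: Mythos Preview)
Your proposal is correct and follows essentially the same strategy as the paper: verify that $\m I[\m 2]$ is an {\iname} (associativity via \eqref{eq:I2ass}, idempotency trivially), then check one axiom from each family in \cref{t:equivAxioms}. The only difference is that you take item~(1a), \ref{eq:leftdist} together with \ref{eq:localdecomp}, whereas the paper takes item~(1b), verifying \ref{eq:divis} and then \ref{eq:localdecomp}; both choices reduce to the same kind of two-case split on the Boolean coordinate, and the paper's checks of \ref{eq:leftann} and \ref{eq:comlocalunits} are identical to yours.
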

\begin{proof}
    For what follows, let $x_i,y_j,z_k\in \SL[\m 2]$; i.e., fix $i,j,k\in \SL$ and $x,y,z\in \{0,1\}$. 
    First we show that $\m I[\m 2]$ is an {\iname}. 
    As described above, it is clear that $\m I[\m 2]$ is a unital i-groupoid. 
    That $\mc$ is associative is nearly immediate from \cref{eq:I2ass}. Indeed, from the definition of $\mc$, observe that
    $$x_i \mc y_jz_k = x_i \mc (y\land z)_{y_j\acts k} = (x\land y\land z)_{x_i\acts (y_j\acts k)}
    \quad\text{and}\quad
    x_iy_j\mc z_k = (x\land y)_{x_i\acts j}\mc z_k =(x\land y\land z)_{x_iy_j\acts k}
    $$
    and thus $x_i \mc y_jz_k = x_iy_j\mc z_k$ since $x_i\acts (y_j\acts k) = x_i y_j\acts k$.
    That $\mc$ is idempotent is immediate since $x_i\acts i = i$ and $\land$ is idempotent. 
    Hence $\m I[\m 2]$ is an {\iname}.

    To show $\m I[\m 2]$ is a McCarthy algebra, we show it satisfies \cref{wd:div,wabs:lann,wc:lu} from \cref{t:equivAxioms}.
    \Cref{wc:lu} \eqref{eq:comlocalunits} holds since $\Ic{x_i}\Ic{y_j} = \Ic{i}\Ic{j}=\Ic{i\vee j} = \Ic{j}\Ic{i} = \Ic{y_j}\Ic{x_i}.$
    \Cref{wabs:lann} \eqref{eq:leftann} holds since $\Oc{\bot}\mc x_i = \Oc{\bot}$ by definition.
    For \cref{wd:div}, we must verify that \ref{eq:divis} and \ref{eq:localdecomp} hold.
    The former is nearly immediate from \cref{eq:I2ass}, idempotency, and the fact that it holds in Boolean algebras: 
    $$
    x_i \mc (\nc{x_i}\jc y_j) 
    = x_i \mc (\neg x \vee y)_{x_i\acts j} 
    = (x\land (\neg x \vee y))_{x_i\acts (x_i\acts j)}
    = (x\land y)_{x_i \acts j}
    = x_i\mc y_j.
    $$
    Lastly, for \ref{eq:localdecomp}, first observe that $(x_i\jc \nc{x_i})y_j = \Ic{i}\mc y_j = y_{i\vee j}$, while
    $$ x_iy_j \jc \nc{x_i}y_j =
    \left\{
    \begin{array}{ll}
       (\text{for $x=0$}) & \Oc{i}y_j \jc \Ic{i}y_j = \Oc{i} \jc y_{i\vee j} = y_{i\vee (i\vee j)} = y_{i\vee j}  \\
       (\text{for $x=1$}) &\Ic{i}y_j \jc \Oc{i}y_j = y_{i\vee j}\jc \Oc{i} = y_{(i\vee j)\vee i} = y_{i\vee j}  
    \end{array}
    \right.
    $$
    completing our claim. Therefore $\m{I}[\m 2]$ is a McCarthy algebra.
    \end{proof}

\begin{corollary}\label{t:every SL appears}
    Every $\bot$-semilattice appears as the semilattice skeleton of some McCarthy algebra.
\end{corollary}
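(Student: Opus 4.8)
The plan is to show that, for an arbitrary $\bot$-semilattice $\m I = \str{\SL,\vee,\bot}$, the McCarthy algebra $\m I[\m 2]$ constructed above has semilattice skeleton isomorphic to $\m I$. By \cref{t:Mc from SL}, $\m I[\m 2]$ is a McCarthy algebra, so $\SL_{\m I[\m 2]}$ is a well-defined $\bot$-semilattice by \cref{t:SL} and \cref{d:SL}; the task then reduces to pure bookkeeping with the operations already computed before \cref{t:Mc from SL}.

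First I would identify the universe of $\SL_{\m I[\m 2]}$. Recalling that $0 = \Oc{\bot}$ is the additive unit of $\m I[\m 2]$ and that $\Oc{x_i} = x_i\mc \Oc{\bot} = \Oc{i}$ for every $x_i\in \SL[\m 2]$, the set $\SL[\m 2]\mc 0$ underlying $\SL_{\m I[\m 2]}$ is exactly $\{\Oc{i} : i\in \SL\}$. I would then define $\varphi\colon \SL \to \SL[\m 2]\mc 0$ by $\varphi(i) := \Oc{i} = 0_i$. Since the fibers $B_i$ are pairwise disjoint and each map $(\_)_i\colon \m 2\to \m B_i$ is a bijection, $\varphi$ is injective, and it is surjective by the description of the universe just given, hence a bijection.

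Next I would verify that $\varphi$ is a $\bot$-semilattice isomorphism. It sends $\bot$ to $\Oc{\bot} = 0$, the least element of $\SL_{\m I[\m 2]}$. For joins, using the formula $x_i\jc y_j = (x\vee y)_{\nc{x_i}\acts j}$ together with $\nc{0_i} = 1_i$ and $1_i\acts j = i\vee j$, one computes $\varphi(i)\jc \varphi(j) = 0_i\jc 0_j = 0_{1_i\acts j} = 0_{i\vee j} = \varphi(i\vee j)$; since the join of $\SL_{\m I[\m 2]}$ is the restriction of $\jc$, $\varphi$ preserves joins. Therefore $\m I \cong \SL_{\m I[\m 2]}$, as required. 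I do not expect any genuine obstacle here: all of the real content lives in the construction of $\m I[\m 2]$ and in \cref{t:Mc from SL}, and the present statement follows by reading off the skeleton of that algebra.
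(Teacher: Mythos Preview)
Your proposal is correct and follows exactly the intended approach: the paper states this corollary without proof, treating it as immediate from the construction of $\m I[\m 2]$ and the already-recorded computation $\Oc{x_i}=\Oc{i}$ preceding \cref{t:Mc from SL}. Your explicit verification that $i\mapsto \Oc{i}$ is a $\bot$-semilattice isomorphism onto $\SL_{\m I[\m 2]}$ is precisely the bookkeeping the paper leaves implicit.
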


For a $\bot$-semilattice $\SL$, let $\SL_{\top}$ be the semilattice obtain by adding a greatest element $\top$. 
It is not difficult to see that the relation ${\equiv_\top}:=\Delta_{\m I_\top}\cup (B_\top\times B_\top)$ is a congruence over $\m I_\top[\m 2]$. 
Indeed, $\equiv_\top$ is clearly an equivalence relation that is compatible with the involution $\nc{}$. 
That it is compatible with multiplication holds since, for each $i\leq \top $ and $x,y\in \{0,1\}$, $x_\top \mc y\in B_\top$, $\Oc{i}\mc x_\top = \Oc{i}$, and $\Ic{i}\mc x_\top = x_\top \in B_\top$. 
Defining $\m I[\m 2]_\ee: = \m I[\m 2]/{\equiv_\top}$ and $\ee:=[\Oc{\top}]_{\equiv_\top} = \{\Oc{\top},\Ic{\top} \}$, we obtain the following.
\begin{proposition}
    For any $\bot$-semilattice $\SL$, $\m I[\m 2]_\ee$ is a McCarthy algebra with semilattice skeleton $\SL_\top$, where $B_\top = \{\ee\}$ and $\m B_i\cong \m 2$ for each $i\in \SL$.
\end{proposition}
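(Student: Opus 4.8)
The plan is to derive the whole statement from \cref{t:Mc from SL} by passing to a quotient. Applying \cref{t:Mc from SL} to the $\bot$-semilattice $\SL_\top$ shows that $\m I_\top[\m 2]$ is a McCarthy algebra, and since $\equiv_\top$ was already seen to be a congruence on $\m I_\top[\m 2]$, the quotient $\m I[\m 2]_\ee = \m I_\top[\m 2]/{\equiv_\top}$ is again a McCarthy algebra, as the variety $\MC$ is closed under homomorphic images. This settles the first assertion, so it remains only to identify the semilattice skeleton and the fibers of $\m I[\m 2]_\ee$.

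For the skeleton, recall from \cref{d:SL} that $\SL_{\m A}$ has underlying set $\{\Oc{a}:a\in A\}$, join the restriction of $\jc$, and least element $0$. In $\m I_\top[\m 2]$ one has $\Oc{x_i}=\Oc{i}$ and $0=\Oc{\bot}$, so the skeleton element $\Oc{a}$ of $\m I[\m 2]_\ee$ attached to the class of $x_i$ is the class of $\Oc{i}$; hence $\SL_{\m I[\m 2]_\ee}$ has underlying set $\{[\Oc{i}]_{\equiv_\top}:i\in\SL_\top\}$. Because $\equiv_\top$ identifies only the two elements of $B_\top$ with each other, the classes $[\Oc{i}]_{\equiv_\top}$ with $i\in\SL$ are pairwise distinct singletons, each distinct from $[\Oc{\top}]_{\equiv_\top}=\{\Oc{\top},\Ic{\top}\}=\ee$. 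Thus $\varphi\colon i\mapsto[\Oc{i}]_{\equiv_\top}$ is a bijection $\SL_\top\to\SL_{\m I[\m 2]_\ee}$ sending $\top$ to $\ee$; it is a semilattice isomorphism, since $\Oc{i}\jc\Oc{j}=\Oc{i\vee j}$ in $\m I_\top[\m 2]$ yields $\varphi(i)\jc\varphi(j)=\varphi(i\vee j)$ and $\varphi$ carries $\bot$ to the least element $[\Oc{\bot}]_{\equiv_\top}$. Therefore $\SL_{\m I[\m 2]_\ee}\cong\SL_\top$.

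For the fibers, by \cref{t:fiberdecomp} the fiber over the skeleton element $[\Oc{i}]_{\equiv_\top}$ is $\{a:\Oc{a}=[\Oc{i}]_{\equiv_\top}\}$. When $i\in\SL$ this set is $\{[\Oc{i}]_{\equiv_\top},[\Ic{i}]_{\equiv_\top}\}$, which has exactly two elements because $(\Oc{i},\Ic{i})\notin{\equiv_\top}$ for $i\neq\top$ (these two elements are distinct and not both in $B_\top$); since every fiber is Boolean (\cref{t:fiberdecomp}, \cref{t:BoolSubAlg}), a two-element fiber must be the two-element Boolean algebra $\m 2$. When $i=\top$, the class $[\Oc{\top}]_{\equiv_\top}=[\Ic{\top}]_{\equiv_\top}=\ee$, so the fiber $B_\top$ is the singleton $\{\ee\}$. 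The only point demanding care — and the sole (mild) obstacle I foresee — is the bookkeeping of the quotient, namely checking that $\equiv_\top$ collapses nothing outside $B_\top$, which is exactly what makes $\varphi$ well defined, injective, and bottom-preserving; granting that, the computations of $\Oc{x_i}$ and of $\jc$ invoked above are precisely those already recorded in and around \cref{t:Mc from SL}.
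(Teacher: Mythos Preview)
Your proposal is correct and follows essentially the same approach as the paper: the paper does not provide a separate proof environment for this proposition, treating it as an immediate consequence of the preceding paragraph (which verifies that $\equiv_\top$ is a congruence on $\m I_\top[\m 2]$) together with \cref{t:Mc from SL}. Your write-up simply makes explicit the verification of the skeleton and fibers that the paper leaves to the reader, and your bookkeeping for the quotient is accurate.
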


\subsection{\cadd{Characterizing McCarthy algebras as decorated posets}} 
Fix a McCarthy algebra $\m A$. 
Recall that $\mathcal{A}:=\str{A,\leq}$ is a poset, where ${\leq}:={\leq_\jc}$; call $\mathcal{A}$ the \defem{induced poset} of $\m A$. 
We will call $\str{\mathcal{A},\SL}$ the \defem{$s\ell$-decorated poset} of $\m A$, where $\SL:=\SL_{\m A}=\{x\in A: x = x\mc 0 \}$ is the semilattice skeleton of $\m A$.
In this section, we show that the operations of a McCarthy algebra are fully determined by its $s\ell$-decorated poset, in the sense that the operations are \emph{expressible} in the first-order theory of posets with an additional unary predicate (whose interpretation is membership in the set $\SL$).

We first verify that the least upper bound of two elements $i,j\in\SL$ exists in $\mathcal{A}$, and coincides with the element $i\vee j\in \SL$ (where we recall that the operation $\vee$ is the restriction of $\jc$ to $\SL$).
\begin{lemma}\label{t:SLjoins}
For $i,j\in \SL$, the element $i\vee j\in \SL$ is the least upper bound of $i$ and $j$ in $\mathcal{A}$.
\end{lemma}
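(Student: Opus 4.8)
The plan is to show that $i \vee j$ is an upper bound of $\{i,j\}$ in $\mathcal{A}$, and that it is below any other upper bound. For the first part, recall that elements of $\SL$ satisfy $i = \Oc{i}$, and by \cref{rem: 0ij=0i+0j} we have $\Oc{i \vee j} = \Oc{i} \jc \Oc{j} = i \jc j$. So $i \jc (i\vee j) = i \jc i \jc j = i \jc j = i \vee j$ by idempotency of $\jc$, giving $i \leq i\vee j$; symmetrically $j \leq i\vee j$ using \ref{eq:comlocalunits} (i.e., that $\jc$ is commutative on $\SL$, \cref{t:SL}). Hence $i \vee j$ is an upper bound.

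For the second part, suppose $u \in A$ is any upper bound, i.e., $i \leq u$ and $j \leq u$. I want to conclude $i \vee j \leq u$, that is, $(i\vee j) \jc u = u$. The key is to pass through the local-constant structure: since $i \leq u$, we have $u \in \upset{\Oc{i}}$... but more directly, I would apply \cref{rem: inckerns} or the interval lemma \cref{t:MKinterval}. Actually the cleanest route: since $i \leq u$ and $j \leq u$, both $i,j$ lie in the downset $\downset{u}$, on which $\jc$ is commutative by \cref{t:MKinterval}; and since $\downset{u}$ is closed under $\jc$, the element $i \jc j = i\vee j$ lies in $\downset{u}$, i.e., $i\vee j \leq u$. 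This establishes that $i\vee j$ is the least upper bound.

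The main obstacle — if there is one — is making sure the join computed inside $\SL$ (the restriction of $\jc$) genuinely agrees with the poset-theoretic least upper bound in the larger poset $\mathcal{A}$, rather than just being an upper bound. The subtlety is that $\jc$ is not order-compatible on the right in general, so one cannot naively manipulate inequalities; the argument must route through the commutativity of $\jc$ on downsets (\cref{t:MKinterval}) rather than through direct monotonicity. Once that is in hand, both directions are short computations.

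\begin{proof}
By \cref{rem: 0ij=0i+0j}, $\Oc{i \vee j} = \Oc{i}\jc \Oc{j}$, and since $i,j\in\SL$ we have $i = \Oc{i}$, $j = \Oc{j}$, and $i\vee j = \Oc{i\vee j} = i\jc j$.
Hence $i \jc (i\vee j) = i \jc i \jc j = i \jc j = i\vee j$ by \ref{eq:idem}, so $i \leq i\vee j$; and using the commutativity of $\jc$ on $\SL$ (\cref{t:SL}, from \ref{eq:comlocalunits}), likewise $j \leq i \vee j$.
Thus $i\vee j$ is an upper bound of $\{i,j\}$ in $\mathcal{A}$.
Now let $u\in A$ be any upper bound, so $i\leq u$ and $j\leq u$, i.e., $i, j\in \downset{u}$.
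By \cref{t:MKinterval}, $\downset{u}$ is closed under $\jc$, so $i\jc j \in \downset{u}$, i.e., $i\vee j = i\jc j\leq u$.
Therefore $i\vee j$ is the least upper bound of $i$ and $j$ in $\mathcal{A}$.
\end{proof}
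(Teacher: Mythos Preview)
Your proof is correct. The paper takes a different route: it invokes \cref{t:BelowZeros} to observe that $\SL$ is convex in $\mathcal{A}$, and then remarks that the claim follows since $\vee$ is the restriction of $\jc$ and $\leq=\leq_\jc$. Your argument instead routes the ``least'' part through \cref{t:MKinterval}, which is perfectly valid. In fact, your least-upper-bound step is more elementary than you present it: you do not actually need \cref{t:MKinterval} (nor commutativity) at all, only associativity of $\jc$ and the definition of $\leq$. Indeed, if $i\leq u$ and $j\leq u$, then $(i\jc j)\jc u = i\jc(j\jc u)=i\jc u=u$, so $i\jc j\leq u$ directly. The upper-bound direction likewise needs only idempotency and \ref{eq:leftreg} (or \ref{eq:comlocalunits} as you use). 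So your proof is both correct and, with this simplification, arguably more transparent than the paper's somewhat compressed appeal to convexity.
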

\begin{proof}
    By \cref{t:BelowZeros}, the set $\SL$ is a convex set in $\mathcal{A}$; i.e., if $i,j\in \SL$ with $i\leq j$, then $[i,j]:=\{x\in A: i\leq x\leq j \} \subseteq \SL$. 
    Since $\str{\SL,\vee}$ is a semilattice, where $\vee$ is the restriction of $\jc$ to $\SL$, and ${\leq}:={\leq_{\jc}}$, the claim follows. 
\end{proof}

In this way, there is no harm in using the same symbol $\vee$ (join) for least upper bounds in $\mathcal{A}$, when they exist. 
Similarly, we use the symbol $\land$ (meet) to denote the greatest lower bound in $\mathcal{A}$, when they exist.
We recall from \cref{t:fiberdecomp} that an interval $[\Oc{i},\Ic{i}]$ is the universe of a Boolean algebra whose additive/join operation is the restriction of $\jc$, and hence the subposet $\str{[\Oc{i},\Ic{j}],\leq}$ is a Boolean lattice. 
The following is therefore immediate.
\begin{lemma}\label{t:Bsublattice}
    Meets and joins between elements in a fiber $B_i$ exist in $\mathcal{A}$ and coincide with the products and sums, respectively, in $\m B_i$.
\end{lemma}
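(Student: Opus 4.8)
The plan is to reduce the existence of binary meets and joins in $\mathcal{A}$ to the Boolean structure on the individual fibers, exploiting two facts already in hand: the order $\leq$ of $\mathcal{A}$ restricts to the Boolean order of $\str{B_i,\leq}$, and although $\jc$ is not globally commutative, it is commutative on every principal downset of $\mathcal{A}$ by \cref{t:MKinterval}. So I would fix $i\in\SL_{\m A}$ and $a,b\in B_i$. As recalled above, $B_i=[\Oc{i},\Ic{i}]$, the elements $a\jc b$ and $a\mc b$ lie in $B_i$, and they are respectively the join and the meet of $a$ and $b$ in the Boolean lattice $\str{B_i,\leq}$, which is a subposet of $\mathcal{A}$; in particular $a\jc b$ is already an upper bound of $\{a,b\}$ and $a\mc b$ a lower bound of $\{a,b\}$ in $\mathcal{A}$. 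It therefore remains only to check that these are the least upper bound and the greatest lower bound in $\mathcal{A}$.

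For the join this is immediate: if $c\in A$ satisfies $a\leq c$ and $b\leq c$, i.e., $a\jc c=c=b\jc c$, then associativity of $\jc$ gives $(a\jc b)\jc c=a\jc(b\jc c)=a\jc c=c$, so $a\jc b\leq c$; hence $a\jc b=\sup_{\mathcal{A}}\{a,b\}$ is the sum in $\m B_i$. (As a by-product, this computation shows every principal downset of $\mathcal{A}$ is closed under $\jc$.)

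The meet is the step that needs care, and it is the main obstacle: a common lower bound $c$ of $a$ and $b$ need not lie in $B_i$, so it cannot be compared with $a\mc b$ inside $\m B_i$ directly. The remedy is to project $c$ into $B_i$ before comparing, namely to set $c':=\Oc{i}\jc c$. Since $c\leq a\leq\Ic{i}$ and $\Oc{i}\leq\Ic{i}$, both $c$ and $\Oc{i}$ lie in the principal downset $\downset{\Ic{i}}$, which by \cref{t:MKinterval} is a bounded semilattice under $\jc$ with greatest element $\Ic{i}$; thus $c'=\Oc{i}\jc c=c\jc\Oc{i}$ is the join of $c$ and $\Oc{i}$ in $\downset{\Ic{i}}$. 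From this one reads off $c\leq c'$ and $\Oc{i}\leq c'\leq\Ic{i}$, so $c'\in B_i$; and since $a$ and $b$ are both upper bounds of $\{c,\Oc{i}\}$ inside $\downset{\Ic{i}}$, one also gets $c'\leq a$ and $c'\leq b$. Hence $c'$ is a lower bound of $\{a,b\}$ belonging to $B_i$, so $c'\leq a\mc b$ because $a\mc b$ is the meet of $a$ and $b$ in $\str{B_i,\leq}$. Therefore $c\leq c'\leq a\mc b$, and as $c$ was an arbitrary lower bound, $a\mc b=\inf_{\mathcal{A}}\{a,b\}$ is the product in $\m B_i$. Once $c$ has been moved into the fiber, the remainder is purely the Boolean structure of $\m B_i$ together with the semilattice structure of $\downset{\Ic{i}}$, so I expect no further difficulty.
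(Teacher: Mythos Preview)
Your proof is correct and, in fact, more carefully argued than the paper's own treatment, which simply declares the lemma ``immediate'' from the fact that $B_i=[\Oc{i},\Ic{i}]$ is a Boolean lattice in the induced order. That remark does settle the join case (your one-line computation $(a\jc b)\jc c=a\jc(b\jc c)=a\jc c=c$ works for any upper bound $c$, using only associativity and idempotency of $\jc$), but it does not obviously handle meets: a global lower bound $c$ of $a,b$ need not lie in $B_i$, so one cannot appeal to the Boolean structure of $\m B_i$ directly. Your projection step $c\mapsto c':=\Oc{i}\jc c$, carried out inside the commutative semilattice $\downset{\Ic{i}}$ furnished by \cref{t:MKinterval}, is precisely what is needed to close this gap; the verification that $c'\in B_i$, that $c'\leq a,b$, and that $c\leq c'$ goes through exactly as you wrote. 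So your argument supplies the detail the paper omits, using only results already in hand at that point.
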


First we show that the involution $\nc{}$ is expressible in $\str{\mathcal{A},\SL}$ by showing that the term operations $\Oc{x}:=x\mc 0$ and $\Ic{x}:= x\jc 1$ are, and thus also membership in a Boolean fiber.
 
\begin{lemma}\label{t:ExpFiber}
    Let $x\in A$. 
    Then $\Oc{x} = \max\{i\in \SL: i\leq x \}$ and $\Ic{x}=\max\{y\in A: \Oc{y} = \Oc{x}\}$.
    Consequently, $\nc{x} = \max\{ y\in [\Oc{x},\Ic{x}]: x\land y = \Oc{x}\}$.
\end{lemma}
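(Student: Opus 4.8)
The plan is to establish the three descriptions in turn, each by reducing to facts already proved about the fibers of $\m A$. I will repeatedly use that $i\in\SL$ iff $i=\Oc{i}$, that $\Oc{x}\leq x\leq\Ic{x}$ for every $x$ (the outer inequalities coming from the \ref{eq:localunits} identities $\Oc{x}\jc x\approx x$ and $x\jc\Ic{x}\approx\Ic{x}$), and that multiplication is order-preserving by \cref{t:CompatibleOrder}. For the first claim, $\Oc{x}$ lies in $\{i\in\SL:i\leq x\}$ since $\Oc{x}\in A\mc 0=\SL$ and $\Oc{x}\leq x$; and it is an upper bound of that set because, given $i\in\SL$ with $i\leq x$, order-preservation of multiplication yields $i\mc 0\leq x\mc 0$, i.e.\ $\Oc{i}\leq\Oc{x}$, which is $i\leq\Oc{x}$ as $i=\Oc{i}$. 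Hence $\Oc{x}=\max\{i\in\SL:i\leq x\}$.

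For the second claim, set $i:=\Oc{x}\in\SL$. By \cref{t:fiberdecomp}(2), $\{y\in A:\Oc{y}=\Oc{x}\}$ is precisely the fiber $B_i=[\Oc{i},\Ic{i}]$; its greatest element is $\Ic{i}$, and $\Ic{i}=\Ic{\Oc{x}}=\Ic{x}$ by \cref{rem: 1x0-x0}. So $\Ic{x}=\max\{y\in A:\Oc{y}=\Oc{x}\}$, and moreover $[\Oc{x},\Ic{x}]=B_i$ since also $\Oc{i}=\Oc{\Oc{x}}=\Oc{x}$ by idempotency.

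For the final (``consequently'') claim, keep $i:=\Oc{x}$. Since $\m A$ satisfies \ref{eq:unitcoh}, $\Oc{\nc{x}}=\Oc{x}$, so both $x$ and $\nc{x}$ lie in the fiber $B_i=[\Oc{x},\Ic{x}]$ by \cref{t:fiberdecomp}(2). By \cref{t:fiberdecomp}(1) and \cref{t:BoolSubAlg}, $\m B_i$ is a Boolean algebra with least element $\Oc{x}$, greatest element $\Ic{x}$, meet the restriction of $\mc$, and involution the restriction of $\nc{}$ (hence the Boolean complement); thus $x\land\nc{x}=\Oc{x}$ computed in $\m B_i$, and by \cref{t:Bsublattice} this agrees with the meet $x\land\nc{x}$ taken in $\mathcal{A}$. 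So $\nc{x}$ belongs to $S:=\{y\in[\Oc{x},\Ic{x}]:x\land y=\Oc{x}\}$. Conversely, any $y\in S$ lies in $B_i$ with $x\land y$ equal to the bottom of $\m B_i$, and in a Boolean algebra $x\land y=0$ forces $y\leq\neg x$; hence $y\leq\nc{x}$ in $\m B_i$ and therefore in $\mathcal{A}$, giving $\nc{x}=\max S$.

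The verifications are all short; the one point that genuinely needs care is this last step, where the symbol $\land$ in the statement refers a priori only to greatest lower bounds in the bare poset $\mathcal{A}$, and must be identified with the Boolean meet of the relevant fiber. This is exactly what \cref{t:Bsublattice} supplies — but only after one knows, via \ref{eq:unitcoh}, that $x$ and $\nc{x}$ always sit in a common fiber, so that there is a Boolean algebra in which to compute the meet in the first place.
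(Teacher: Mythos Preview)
Your proof is correct and follows essentially the same route as the paper's: both arguments establish the first claim via \ref{eq:localunits} and order-preservation of $\mc$, the second via \cref{t:fiberdecomp}(2), and the third by recognizing $[\Oc{x},\Ic{x}]$ as a Boolean fiber and invoking \cref{t:Bsublattice} to identify the poset meet with the Boolean meet. You are somewhat more explicit than the paper in spelling out why $\nc{x}$ lies in the same fiber (via \ref{eq:unitcoh}) and why the poset meet coincides with the fiber meet, which is a welcome clarification.
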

\begin{proof}
    Let $X = \{i\in \SL: i\leq x \}$. 
    That $\Oc{x}\in X$ follows by \ref{eq:localunits}, and if $i\in X$ then $i = i\mc 0\leq x\mc 0 = \Oc{x}$. 
    Hence $\Oc{x}=\max X$. 
    For the second claim, let $Y=\{y\in A: \Oc{y} = \Oc{x}\}$. 
    By \cref{t:fiberdecomp}(2), $Y = B_i=[\Oc{i},\Ic{i}]$ where $i=\Oc{x}$, and $\m B_i$ is a Boolean algebra. 
    Since $\Ic{i} = \Ic{x}$, it follows that $\Ic{x} = \max{Y}$.
    Lastly, as the negation of an element $a$ in a Boolean lattice is the largest element whose meet with $a$ is the bottom, we have $\nc{x} = \max\{ y\in B_i: x\mc y = \Oc{i}\}$, and by \cref{t:Bsublattice}, the final claim follows.
\end{proof}

Towards establishing $\mc$ is expressible in $\str{\mathcal{A},\SL}$, we adopt the following notational convenience.
Define $\acts: A\times \SL_{\m A}\to \SL_{\m A}$ via $a\acts i:= a\mc \Oc{i}$. 
As we see below, the function $\acts$ behaves as an ``action'' of $\m A$ on the semilattice $\SL_{\m A}$.  
We remind the reader that multiplication is order-preserving (\cref{t:CompatibleOrder}).
\begin{lemma}\label{t:Action}
    For $x,y\in A$ and $i, j\in \SL$, with $i:=\Oc{x}$ and $j:=\Oc{y}$, $xy\in B_{x\acts j}$ and $x\acts i=i\leq x\acts j \leq i\vee j$. Moreover, for $k\in \SL$, $x\acts(y\acts k)=xy\acts k$ and $x\acts(j \vee k)=x\acts j\vee x\acts k$.
\end{lemma}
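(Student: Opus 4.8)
The plan is to reduce each clause to a short computation, drawing only on idempotency and associativity of $\mc$, the order-preservation of multiplication (\cref{t:CompatibleOrder}), left-coherence \eqref{eq:coherence}, left-distributivity \eqref{eq:leftdist}, and the fiber characterization $B_i=\{a\in A:\Oc{a}=\Oc{i}\}$ of \cref{t:fiberdecomp}(2). The one preliminary I would record first is the bookkeeping fact that $a\acts i:=a\mc\Oc{i}$ always lands in $\SL$: since $\Oc{i}\mc 0=\Oc{i}$ by idempotency, we get $\Oc{a\acts i}=\Oc{a\mc\Oc{i}}=a\mc\Oc{i}\mc 0=a\mc\Oc{i}=a\acts i$, which both places $a\acts i$ in $\SL$ and is what makes the iterated expression $x\acts(y\acts k)$ meaningful.

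Then I would dispatch the first two assertions directly. For $x\acts i=i$ with $i=\Oc{x}:=x\mc 0$, idempotency gives $x\acts i=x\mc\Oc{x}=x\mc x\mc 0=x\mc 0=i$. For $xy\in B_{x\acts j}$, I compute $\Oc{xy}=x\mc y\mc 0=x\mc\Oc{y}=x\acts j$ using associativity; since $\Oc{x\acts j}=x\acts j$ by the preliminary, $\Oc{xy}=\Oc{x\acts j}$, so membership follows from \cref{t:fiberdecomp}(2).

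For the chain $i\leq x\acts j\leq i\vee j$, I would use order-preservation twice. The lower bound comes from $0\leq\Oc{y}$ (here $0$ is the least element of $\mathcal{A}$, being the $\jc$-unit): $\Oc{x}=x\mc 0\leq x\mc\Oc{y}=x\acts j$. The upper bound comes from $x\leq\Ic{x}:=x\jc 1$: $x\acts j=x\mc\Oc{y}\leq\Ic{x}\mc\Oc{y}$, after which I rewrite $\Ic{x}\mc\Oc{y}=\Oc{x}\jc\Oc{y}=i\vee j$ via left-coherence \eqref{eq:coherence} and \cref{rem: 0ij=0i+0j} (recalling $\Oc{x}=i$, $\Oc{y}=j$ are in $\SL$). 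I expect this to be the only step needing a nonmechanical choice — recognising that it is left-coherence, not distributivity, that collapses $\Ic{x}\mc\Oc{y}$ to a join — so I would flag it as the (mild) main obstacle.

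Finally, the two ``action'' identities. For $x\acts(y\acts k)=xy\acts k$: since $y\acts k\in\SL$ we have $\Oc{y\acts k}=y\acts k=y\mc\Oc{k}$, so associativity gives $x\acts(y\acts k)=x\mc(y\mc\Oc{k})=(xy)\mc\Oc{k}=xy\acts k$. For $x\acts(j\vee k)=x\acts j\vee x\acts k$: expand $\Oc{j\vee k}=\Oc{j}\jc\Oc{k}$ by \cref{rem: 0ij=0i+0j}, apply \eqref{eq:leftdist} to obtain $x\mc(\Oc{j}\jc\Oc{k})=(x\mc\Oc{j})\jc(x\mc\Oc{k})=(x\acts j)\jc(x\acts k)$, and conclude by noting that $x\acts j,x\acts k\in\SL$ and $\vee$ is by definition the restriction of $\jc$ to $\SL$, so this last expression equals $(x\acts j)\vee(x\acts k)$.
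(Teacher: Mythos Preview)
Your proof is correct and follows essentially the same approach as the paper: the paper also uses \cref{t:fiberdecomp}(2) for $xy\in B_{x\acts j}$, idempotency for $x\acts i=i$, order-preservation with \ref{eq:coherence} for $i\leq x\acts j\leq i\vee j$, and dispatches the final two identities via associativity and \ref{eq:leftdist}, respectively. Your explicit preliminary that $a\acts i\in\SL$ (so that $x\acts(y\acts k)$ is well-defined) is a helpful bit of bookkeeping the paper leaves implicit.
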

\begin{proof}
    Recall from \cref{t:fiberdecomp}(2), $x\in B_k$ iff $\Oc{x} = \Oc{k}$. 
    So $xy\in B_{x\acts j}$ since $\Oc{xy} = xy\mc0=x\cdot \Oc{y} = x\cdot \Oc{j} = \Oc{x\acts j} $, as $j=\Oc{y}$. 
    Since $i=\Oc{x}$, it is clear that $x\acts i = i$ by idempotency. 
    That $x\acts j \in [i,i\vee j]$ is nearly immediate from the order-preservation of $\mc$. 
    Indeed, as $0\leq \Oc{y}$ and $x\leq \Ic{x}$ always hold in {\iname}s, we have
    $\Oc{x}=x\mc 0 \leq x\mc\Oc{y} \leq \Ic{x}\mc\Oc{y} = \Oc{x} \jc \Oc{y}$,
    where the final inequality is an instance of \ref{eq:coherence}; i.e., $i\leq x\acts j\leq i \vee j$.
    The final two claims follow from associativity of $\mc$ and \ref{eq:leftdist}, respectively.
\end{proof}
\begin{lemma}\label{t:actsV}
       Let $x \in A$, $j\in \SL$, and set $i:= \Oc{x}\in \SL$.  
       Then $x\acts j= \max\{k\in \SL:i\leq k\leq i \vee j  \text{ and } \nc{x} \leq \Ic{k} \}$.
\end{lemma}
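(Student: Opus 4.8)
The plan is to prove that $m:=x\acts j=x\mc\Oc{j}$ both belongs to
$S:=\{k\in\SL: i\leq k\leq i\vee j \text{ and }\nc{x}\leq\Ic{k}\}$ and bounds it from above, so that $m=\max S$; recall from \cref{t:Action} that $m\in\SL$ with $i\leq m\leq i\vee j$, so for $m\in S$ it only remains to check $\nc{x}\leq\Ic{m}$, and for maximality it only remains to show $k\leq m$ for every $k\in S$. The organising idea is the equivalence
\[
\nc{x}\leq\Ic{k}\quad\Longleftrightarrow\quad x\acts k=k\qquad(k\in\SL),
\]
after which both claims drop out: $m$ satisfies the right-hand side (immediate from $m=x\mc\Oc{j}$, idempotency/associativity of $\mc$, and $\Oc{m}=m$), giving $\nc{x}\leq\Ic{m}$; and any $k\in S$ has $k\leq i\vee j$, so $\Oc{k}\leq\Oc{i\vee j}$ and, using order-preservation of multiplication (\cref{t:CompatibleOrder}), $\Oc{i\vee j}=\Oc{i}\jc\Oc{j}$ (\cref{rem: 0ij=0i+0j}), \ref{eq:leftdist}, and $x\acts i=i\leq m$ from \cref{t:Action},
\[
k=x\acts k=x\mc\Oc{k}\leq x\mc\Oc{i\vee j}=x\mc(\Oc{i}\jc\Oc{j})=x\mc\Oc{i}\jc x\mc\Oc{j}=i\jc m=m.
\]

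To prove the equivalence I would argue as follows. For ``$\Rightarrow$'', apply $(\,\cdot\,)\mc 0$ to the hypothesis $\nc{x}\jc\Ic{k}=\Ic{k}$: the right side becomes $\Ic{k}\mc 0=\Oc{\Ic{k}}=\Oc{k}=k$ (by \cref{rem: 1x0-x0} and $k\in\SL$), while \ref{eq:paradist} evaluated with trailing variable $0$ rewrites the left side as $(\nc{x}\jc\Ic{k})\mc 0=\Oc{\nc{x}}\jc x\mc\Oc{\Ic{k}}=i\jc(x\acts k)$, using $\Oc{\nc{x}}=\Oc{x}=i$ (\ref{eq:unitcoh}) and $\Oc{\Ic{k}}=k$; and $i\jc(x\acts k)=x\acts k$ since $i\leq x\acts k$ by \cref{t:Action}. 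Hence $k=x\acts k$. For ``$\Leftarrow$'', assume $x\acts k=k$; then $\Oc{x\mc\Oc{k}}=x\acts k=k$, so $x\mc\Oc{k}\in B_k$ (\cref{t:fiberdecomp}) and $\Ic{x\mc\Oc{k}}=\Ic{k}$ (\cref{rem: 1x0-x0}). Using the dual of \ref{eq:divis} in the form $\nc{x}\jc\Oc{k}=\nc{x}\jc x\mc\Oc{k}$ and associativity of $\jc$,
\[
\nc{x}\jc\Ic{k}=\nc{x}\jc\bigl((x\mc\Oc{k})\jc 1\bigr)=(\nc{x}\jc x\mc\Oc{k})\jc 1=(\nc{x}\jc\Oc{k})\jc 1=\Ic{\nc{x}\jc\Oc{k}}.
\]
Then \ref{eq:paradist} at trailing $0$ with \ref{eq:unitcoh} gives $\Oc{\nc{x}\jc\Oc{k}}=\Oc{\nc{x}}\jc x\mc\Oc{k}=i\jc(x\acts k)=i\jc k=k$, so $\nc{x}\jc\Oc{k}\in B_k$ and $\Ic{\nc{x}\jc\Oc{k}}=\Ic{k}$; combining the last two displays yields $\nc{x}\jc\Ic{k}=\Ic{k}$, i.e.\ $\nc{x}\leq\Ic{k}$. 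This establishes the equivalence, and with it the lemma.

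The one step needing insight is recasting the order inequality $\nc{x}\leq\Ic{k}$ as the algebraic identity $x\acts k=k$; the key realisation is that \ref{eq:paradist} evaluated with a trailing $0$ — namely $(a\jc b)\mc 0\approx\Oc{a}\jc\nc{a}\mc\Oc{b}$ — combined with the fact (\cref{t:Action}) that $x\acts k$ always dominates $i=\Oc{x}$, converts ``applying $\mc 0$'' into ``applying $\acts k$''. Everything after that is routine bookkeeping with \ref{eq:unitcoh}, \ref{eq:divis}, \ref{eq:leftdist}, the identities $\Oc{\Ic{x}}\approx\Oc{x}$ and $\Ic{\Oc{x}}\approx\Ic{x}$ of \cref{rem: 1x0-x0}, and the action properties of \cref{t:Action}.
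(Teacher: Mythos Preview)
Your proof is correct and follows the same overall architecture as the paper's: both arguments hinge on the equivalence $\nc{x}\leq\Ic{k}\iff x\acts k=k$ for $k\in\SL$, and both use it to verify membership and maximality of $x\acts j$ in the set $S$.

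The main difference is in how the equivalence is established. You derive it via \ref{eq:paradist} evaluated at~$0$, computing $(\nc{x}\jc\Ic{k})\mc 0$ and $\Oc{\nc{x}\jc\Oc{k}}$ explicitly. The paper instead observes it in one line: applying $\nc{}$ to $\nc{x}\jc\Ic{k}=\Ic{k}$ yields $x\mc\nc{\Ic{k}}=\nc{\Ic{k}}$, and $\nc{\Ic{k}}=\Oc{\nc{k}}=\Oc{k}$ by \ref{eq:unitcoh}, so the equivalence is just De~Morgan plus unit-coherence. Your route is valid but considerably longer; the De~Morgan shortcut is worth knowing. There are also minor differences in the two endgame steps: for membership, the paper shows $\nc{x}\leq\Ic{x\acts j}$ directly from $\nc{x}\jc\Oc{j}=\nc{(x\Ic{j})}\in B_{x\acts j}$, whereas you first verify $x\acts(x\acts j)=x\acts j$ and then invoke the equivalence; for the upper-bound step, the paper uses the join-preservation $x\acts(j\vee k)=x\acts j\vee x\acts k$ from \cref{t:Action}, while you use order-preservation of $\mc$ together with \ref{eq:leftdist}. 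Both variants are fine.
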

\begin{proof}
    Set $X: = \{k\in [i,i\vee j]: \nc{x} \leq \Ic{k}\}$. 
    First we show $x\acts j\in X$. 
    From \cref{t:Action}, we have $x\acts j\in [i,i\vee j]$ and $x\Ic{j}\in B_{x\acts j}$.
    As fibers are closed under $\nc{}$, the latter yields also $\nc{x}\jc \Oc{j} = \nc{(x\Ic{j})} \in B_{x\acts j}$, in particular, $\nc{x}\jc \Oc{j} \leq \Ic{x\acts j}$. 
    So $\nc{x} \leq \nc{x}\jc \Oc{j}\leq \Ic{x\acts j}$, thus $x\acts j \in X$.
    Finally, we show $k\leq x\acts j$ for any $k \in X$.  
    Notice that $\nc{x}\leq \Ic{k}$ iff $x\mc \Oc{k} = \Oc{k}$, i.e., iff $x\acts k = k$. 
    So for $k\in X$, from fact that $k\leq i\vee j$, the above observations, and \cref{t:Action} yield
     $k = x\acts k \leq x\acts k \vee x\acts(i\vee j) = x\acts (k\vee i \vee j) = x\acts (i\vee j) = x\acts i \vee x\acts j = i \vee x\acts j = x\acts j$.
\end{proof}

\begin{lemma}\label{t:a1j}
       Let $x,y\in A$, and set $i:= \Oc{x}$ and $j:=\Oc{y}$.  
       Then $\nc{x}\vee \Oc{x\acts j}$ exists and coincides with the element $\nc{x} \jc \Oc{y} \in B_{x\acts j}$.
       Consequently, $x\Ic{y} = \nc{(\nc{x}\vee \Oc{x\acts j})} \in B_{x\acts j}$.
\end{lemma}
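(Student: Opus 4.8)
The plan is to pin down the putative join $\nc{x}\vee\Oc{x\acts j}$ as the element $\nc{x}\jc\Oc{y}$, and to locate this element inside the Boolean fiber $\m B_{x\acts j}$. Throughout I would keep in mind the identifications $\Oc{x\acts j}=x\mc\Oc{j}=x\mc\Oc{y}=\Oc{xy}$ (using the definition of $\acts$, the convention $j=\Oc{y}$, and associativity of $\mc$), so that $\Oc{x\acts j}$ is exactly the bottom of the fiber $\m B_{x\acts j}$, and the fact that $i\leq x\acts j$ by \cref{t:Action}.

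The first step is to show $\nc{x}\jc\Oc{y}\in B_{x\acts j}$. Using the De~Morgan laws together with \ref{eq:unitcoh} in the form $\Oc{y}\approx\nc{\Ic{y}}$ (indeed $\nc{\Ic{y}}=\nc{y}\mc 0=\Oc{\nc{y}}=\Oc{y}$), I would rewrite $\nc{x}\jc\Oc{y}=\nc{x}\jc\nc{\Ic{y}}=\nc{(x\mc\Ic{y})}=\nc{(x\Ic{y})}$. Now \cref{t:Action}, applied with $\Ic{y}$ in place of $y$ (note $\Oc{\Ic{y}}=\Oc{y}$ by \cref{rem: 1x0-x0}, so the relevant fiber is unchanged), gives $x\Ic{y}\in B_{x\acts j}$; since fibers are $\nc{}$-closed (cf.\ \cref{t:fiberdecomp} and \ref{eq:unitcoh}), we conclude $\nc{x}\jc\Oc{y}=\nc{(x\Ic{y})}\in B_{x\acts j}$. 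This same computation settles the ``consequently'' clause once the join is identified: $\nc{(\nc{x}\jc\Oc{y})}=\nnc{x}\mc\nc{\Oc{y}}=x\mc\Ic{y}=x\Ic{y}$, again by De~Morgan and \ref{eq:unitcoh}.

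The second step is to verify that $\nc{x}\jc\Oc{y}$ is indeed the least upper bound of $\nc{x}$ and $\Oc{x\acts j}$ in $\mathcal{A}$. It is an upper bound: $\nc{x}\leq\nc{x}\jc\Oc{y}$ holds in any idempotent monoid, and $\Oc{x\acts j}\leq\nc{x}\jc\Oc{y}$ because, by the previous paragraph, $\nc{x}\jc\Oc{y}$ lies in the interval $B_{x\acts j}=[\Oc{x\acts j},\Ic{x\acts j}]$. For minimality, the key move is to apply the dual \ref{eq:divis} identity $u\jc v\approx u\jc\nc{u}v$ with $u=\nc{x}$, $v=\Oc{y}$, giving $\nc{x}\jc\Oc{y}=\nc{x}\jc x\mc\Oc{y}=\nc{x}\jc\Oc{xy}=\nc{x}\jc\Oc{x\acts j}$. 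Then for any common upper bound $w$ of $\nc{x}$ and $\Oc{x\acts j}$, associativity of $\jc$ yields $(\nc{x}\jc\Oc{x\acts j})\jc w=\nc{x}\jc(\Oc{x\acts j}\jc w)=\nc{x}\jc w=w$, i.e.\ $\nc{x}\jc\Oc{y}\leq w$. Hence the join exists and equals $\nc{x}\jc\Oc{y}$, which lies in $B_{x\acts j}$ by the first step; consequently $x\Ic{y}=\nc{(\nc{x}\jc\Oc{y})}=\nc{(\nc{x}\vee\Oc{x\acts j})}\in B_{x\acts j}$.

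I do not expect a genuine obstacle: the proof is an assembly of earlier facts (unit-coherence \ref{eq:unitcoh}, divisibility \ref{eq:divis}, and the ``action'' lemma \cref{t:Action}). The one place demanding care is the bookkeeping between an element $k\in\SL$ seen in the semilattice and the same element seen in $\m A$ as $\Oc{k}$ — in particular the identifications $\Oc{x\acts j}=x\mc\Oc{y}=\Oc{xy}$ and $\Oc{y}=\nc{\Ic{y}}$ — since these are precisely the bridges that let the fiber $\m B_{x\acts j}$ and the term $x\Ic{y}$ communicate.
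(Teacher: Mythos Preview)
Your proof is correct and follows essentially the same route as the paper's: both identify $\nc{x}\jc\Oc{y}$ with $\nc{x}\jc\Oc{x\acts j}$ via (the dual of) \ref{eq:divis}, locate this element in $B_{x\acts j}$ as $\nc{(x\Ic{y})}$, and recover $x\Ic{y}$ by negation. The only difference is in how you show this element is the \emph{least} upper bound: the paper invokes \cref{t:MKinterval} (that $\jc$ restricted to the principal downset $\downset{\Ic{x\acts j}}$ is a commutative, hence join-, semilattice) after observing $\nc{x},\Oc{x\acts j}\leq\Ic{x\acts j}$, whereas you verify minimality directly from associativity of $\jc$ once you have rewritten the element as $\nc{x}\jc\Oc{x\acts j}$. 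Your argument is marginally more elementary in that it sidesteps the downset-commutativity result; the paper's is terser once that structural fact is available.
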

\begin{proof}
    Since $\jc$ restricted to $\downset{\Ic{x\acts j}}$ is a join-semilattice, the element $\nc{x}\jc \Oc{x\acts j}$ is the join of $\nc{x}$ and $\Oc{x\acts j}$, since $\nc{x},\Oc{x\acts j}\leq \Ic{x\acts j}$. 
    Thus $\nc{x}\jc \Oc{x\acts j}=\nc{x}\jc \Oc{x\acts j}= \nc{x}\jc x\Oc{j}= \nc{x}\jc \Oc{j}$, where the last equality holds by \ref{eq:divis}, and so we locate $x\Ic{j}$ by taking its negation. 
\end{proof}

\begin{lemma}\label{t:a1j<1ja}
For $i,j\in \SL$ and $x\in B_i$,  $x\Ic{j}\vee \Oc{i\vee j}$ exists and coincides with the element $\Ic{j}x \in B_{i\vee j}$.
\end{lemma}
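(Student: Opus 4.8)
The plan is to show that the element $\Ic{j}x$ is the least upper bound of the two‑element set $\{x\Ic{j},\Oc{i\vee j}\}$ in the induced poset $\mathcal{A}=\str{A,\leq}$; this establishes at once that the join $x\Ic{j}\vee\Oc{i\vee j}$ exists and coincides with $\Ic{j}x$, and the first step below also places $\Ic{j}x$ in the fiber $B_{i\vee j}$. The workhorse observation, used throughout, is the identity $\Ic{j}\mc a\approx\Oc{j}\jc a$ (valid for all $a\in A$): it is \ref{eq:coherence} instantiated at $\Oc{j}$, since $\Oc{\Oc{j}}=\Oc{j}$ by \ref{eq:idem} and $\Ic{\Oc{j}}=\Ic{j}$ by \cref{rem: 1x0-x0}.

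First I would locate $\Ic{j}x$ within the decomposition. By associativity, $\Oc{\Ic{j}x}=\Ic{j}\mc\Oc{x}=\Ic{j}\mc\Oc{i}$ (using $\Oc{x}=\Oc{i}$, since $x\in B_i$, by \cref{t:fiberdecomp}(2)), and the workhorse observation together with \cref{rem: 0ij=0i+0j} gives $\Ic{j}\mc\Oc{i}=\Oc{j}\jc\Oc{i}=\Oc{j\vee i}=\Oc{i\vee j}$; hence $\Ic{j}x\in B_{i\vee j}$ by \cref{t:fiberdecomp}(2). Since $B_{i\vee j}$ is the interval $[\Oc{i\vee j},\Ic{i\vee j}]$ (\cref{d:fiber}), this yields in particular $\Oc{i\vee j}\leq \Ic{j}x$; and $x\Ic{j}\leq \Ic{j}x$ is exactly \cref{rem:a1j<1ja} applied with $y:=\Oc{j}$ (so that $\Ic{y}=\Ic{\Oc{j}}=\Ic{j}$). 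Thus $\Ic{j}x$ is an upper bound of $\{x\Ic{j},\Oc{i\vee j}\}$ that lies in $B_{i\vee j}$.

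For minimality, let $u$ be an arbitrary upper bound of $\{x\Ic{j},\Oc{i\vee j}\}$. From $\Oc{i\vee j}\leq u$ and $\Oc{j}\leq\Oc{i\vee j}$ (because $j\leq i\vee j$ in $\SL$) I get $\Oc{j}\leq u$, hence $\Ic{j}\mc u=\Oc{j}\jc u=u$ by the workhorse observation. Since multiplication is order‑preserving (\cref{t:CompatibleOrder}) and $x\Ic{j}\leq u$, we obtain $\Ic{j}\mc(x\Ic{j})\leq\Ic{j}\mc u=u$; and $\Ic{j}\mc(x\Ic{j})=\Ic{j}\mc x\mc\Ic{j}=\Ic{j}x$ by associativity and \ref{eq:leftreg}. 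Therefore $\Ic{j}x\leq u$, so $\Ic{j}x=x\Ic{j}\vee\Oc{i\vee j}$. I do not expect a genuine obstacle here: the whole argument hinges only on the identity $\Ic{j}\mc a\approx\Oc{j}\jc a$, left‑regularity, order‑preservation of multiplication, and the fiber decomposition already in hand. The only point needing care is keeping the semilattice element $j\in\SL$ distinct from its algebraic avatars $\Oc{j}$ and $\Ic{j}$, and recalling that $B_{i\vee j}$ is a genuine interval so that membership there immediately delivers the bound $\Oc{i\vee j}\leq\Ic{j}x$.
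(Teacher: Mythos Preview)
Your proof is correct and follows essentially the same approach as the paper: both verify that $\Ic{j}x$ is an upper bound via \cref{rem:a1j<1ja} and membership in $B_{i\vee j}$, and both establish minimality using \ref{eq:leftreg} and \ref{eq:coherence} together with $\Oc{j}\leq u$. The only cosmetic difference is that for minimality the paper directly computes $\Ic{j}x\jc c = \Ic{j}x\Ic{j}\jc c = \Oc{j}\jc x\Ic{j}\jc c = \Oc{j}\jc c = c$, whereas you multiply the inequality $x\Ic{j}\leq u$ on the left by $\Ic{j}$ via order-preservation; the underlying identities are identical.
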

\begin{proof}
    First, note that $x\Ic{j}\leq \Ic{j}x$ holds via \cref{rem:a1j<1ja}.
    Now, $\Ic{j}x\in B_{i\vee j}$ since $\Ic{j}\mc i = i\vee j$, and thus $\Oc{i\vee j}\leq \Ic{j}a$ as well. 
    Thus $\Ic{j}x$ is a upper bound for $x\Ic{j}$ and $\Oc{i\vee j}$.
    Suppose let $c$ be any other upper bound, and note that $\Oc{j} \leq c$ since $j\leq i \vee j$. 
    Then using \ref{eq:leftreg}, \ref{eq:coherence}, and $x\Ic{j}\leq c$, we find $\Ic{j}x \jc c = \Ic{j}a\Ic{j} \jc c = \Oc{j}\jc x\Ic{j}\jc c = \Oc{j} \jc c = c$.
    Hence $\Ic{j}x \leq c$. 
\end{proof}

\begin{lemma}\label{t:ab as a meet}
    For $a,b\in A$, $a\Ic{b}\land \Ic{b}ab$ and $a\Ic{b}\land \Ic{a}b$ exists and both coincide with $a\mc b$.
\end{lemma}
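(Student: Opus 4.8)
The plan is to fix $a,b\in A$, set $i:=\Oc a$ and $j:=\Oc b$, and argue inside the Boolean fibers supplied by the semilattice decomposition (\cref{t:Decomp}). By \cref{t:a1j} and \cref{t:Action}, the elements $a\mc b$ and $a\Ic b$ both lie in the single fiber $\m B_{a\acts j}$, which is a Boolean lattice (\cref{t:Bsublattice}); using \ref{eq:leftdist} and $\nc b\jc b\approx\Ic b$ (from \ref{eq:localcomp} and \ref{eq:unitcoh}) one obtains $a\Ic b = ab\jc a\nc b$ as a join in $\m B_{a\acts j}$, while $ab\mc a\nc b=(aba)\nc b=ab\nc b=a\Oc b=\Oc{a\acts j}$ by \ref{eq:leftreg} and \ref{eq:localcomp}, so $a\nc b$ is the Boolean complement of $ab$ in $\m B_{a\acts j}$. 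Since $a\acts j\le i\vee j$ (\cref{t:Action}), the transition map $p:=h_{i\vee j}{\restriction}B_{a\acts j}\colon\m B_{a\acts j}\to\m B_{i\vee j}$, $x\mapsto\Oc{i\vee j}\jc x$, is a Boolean homomorphism (\cref{t:Decomp}); the structural point is that $p$ is \emph{injective}: if $p(x)\le\Oc{i\vee j}$ then $x\le p(x)\le\Oc{i\vee j}$, so $x=\Oc x=\Oc{a\acts j}$ by \cref{t:BelowZeros}, hence the kernel class of the bottom is trivial. We also record that for $x\in B_{a\acts j}$ and $w\in B_{i\vee j}$ one has $x\le w$ iff $p(x)\le w$ (forward by applying $\Oc{i\vee j}\jc(-)$; backward since $x\le p(x)$).

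That $a\mc b$ is a common lower bound of each pair is immediate from order-preservation of multiplication (\cref{t:CompatibleOrder}): $b\le\Ic b$, $a\le\Ic a$ and $1\le\Ic b$ give $ab\le a\Ic b$, $ab\le\Ic a b$ and $ab\le\Ic b ab$. The work is to show it is the greatest lower bound. Let $c$ be an arbitrary common lower bound; from $c\le a\Ic b$ we get $\Oc c\le a\acts j$, so replacing $c$ by $c\jc\Oc{a\acts j}$ — which lies in $B_{a\acts j}$, dominates $c$, and remains below $a\Ic b$ and below the $\m B_{i\vee j}$-member of the pair (using $\Oc{a\acts j}\le\Oc{i\vee j}$ and associativity) — we may assume $c\in B_{a\acts j}$.

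For the pair $\{a\Ic b,\Ic b ab\}$: \cref{t:a1j<1ja} gives $\Ic b ab=(ab)\Ic b\vee\Oc{i\vee j}=ab\vee\Oc{i\vee j}$ (since $(ab)\Ic b=a(b\Ic b)=ab$ by \ref{eq:localunits}), and this coincides with $p(ab)$, being the least upper bound of $ab$ and $\Oc{i\vee j}$. Then $c\le p(ab)$ yields $p(c)\le p(ab)$, hence $p\bigl(c\land\nc{(ab)}\bigr)=p(c)\land\nc{p(ab)}=\Oc{i\vee j}$, and injectivity of $p$ forces $c\land\nc{(ab)}=\Oc{a\acts j}$, i.e. $c\le ab$. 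For the pair $\{a\Ic b,\Ic a b\}$: \ref{eq:paradist} gives $\Ic a b=ab\jc\nc a b$, and lifting as above rewrites this as $p(ab)\vee q(\nc a b)$ in $\m B_{i\vee j}$, where $q:=h_{i\vee j}{\restriction}B_{\nc a\acts j}$. From $c\le a\Ic b$ we get $p(c)\le p(ab)\vee p(a\nc b)$ and from $c\le\Ic a b$ we get $p(c)\le\Ic a b$, so $p(c)\land\nc{p(ab)}\le p(a\nc b)\land\Ic a b$. A short computation gives $p(a\nc b)\land\Ic a b=\Oc{i\vee j}$: indeed $p(a\nc b)\land p(ab)=p\bigl(a\nc b\land ab\bigr)=p(\Oc{a\acts j})=\Oc{i\vee j}$ (using $ab\land a\nc b=\Oc{a\acts j}$ from above), while $p(a\nc b)\land q(\nc a b)=p(a\nc b)\mc q(\nc a b)=\Oc{i\vee j}$ via \ref{eq:leftdist}, \ref{eq:leftann} and $a\nc b\nc a\approx\Oc{a\nc b}$ (from \ref{eq:dramcon}). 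Hence $p(c)\le p(ab)$, and injectivity of $p$ again gives $c\le ab$. In either case $ab$ is the greatest lower bound, so $a\Ic b\land\Ic b ab=a\Ic b\land\Ic a b=a\mc b$.

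The main obstacle is the second case — verifying $p(a\nc b)\land\Ic a b=\Oc{i\vee j}$ — which means moving carefully between the three fibers $\m B_{a\acts j}$, $\m B_{\nc a\acts j}$ and $\m B_{i\vee j}$ while exploiting \ref{eq:dramcon}, \ref{eq:leftreg} and \ref{eq:leftann}; the conceptual crux that makes everything else routine is the injectivity of the transition homomorphism $p$, which is precisely where \cref{t:BelowZeros} enters.
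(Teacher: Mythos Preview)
Your approach has two genuine gaps that cannot be repaired.

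First, the transition homomorphism $p=h_{i\vee j}{\restriction}B_{a\acts j}$ is \emph{not} injective in general, and the claimed inequality $x\le p(x)$ on which you base injectivity is false. Take $\m A=\m 2\times\McA$, $a=(1,0)$, $b=(1,\varepsilon)$; then $a\acts j=(0,0)$ and $i\vee j=(0,\varepsilon)$. For $x=(0,1)\in B_{(0,0)}$ one computes $p(x)=(0,\varepsilon)\jc(0,1)=(0,\varepsilon)=\Oc{i\vee j}$, yet $x\jc p(x)=(0,1)\neq p(x)$, so $x\not\le p(x)$ and $p(x)=\Oc{i\vee j}$ while $x\neq\Oc{a\acts j}$. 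Thus the kernel class of the bottom is nontrivial and $p$ is not injective. Every subsequent deduction of the form ``$p(\cdot)=\Oc{i\vee j}$, hence $(\cdot)=\Oc{a\acts j}$'' collapses.

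Second, your claim that $a\nc b$ is the Boolean complement of $ab$ in $\m B_{a\acts j}$ is incorrect: you verified the meet is $\Oc{a\acts j}$, but the join $ab\jc a\nc b=a\Ic b$ need not equal the top $\Ic{a\acts j}$. In the same example $a\Ic b=(1,0)$ while $\Ic{a\acts j}=(1,1)$.

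The paper's argument avoids fibers and transition maps entirely. One first factors $ab=a\Ic j\mc\Ic i b$ via \ref{eq:localunits} and \ref{eq:comlocalunits}; then for any $c$ with $c\le a\Ic j$ and $c\le\Ic i b$, the dual form of \ref{eq:leftdist} gives
\[
c\jc ab \;=\; (c\jc a\Ic j)(c\jc\Ic i b)\;=\;a\Ic j\mc\Ic i b\;=\;ab,
\]
so $c\le ab$. This settles $a\Ic b\land\Ic a b=ab$ in one line. For the other meet one observes $\Ic j ab\le\Ic i b$ (since $a\le\Ic i$ and multiplication is order-preserving, $\Ic j ab\le\Ic j\Ic i b=\Ic i\Ic j b=\Ic i b$), so any common lower bound for $\{a\Ic b,\Ic b ab\}$ is already a common lower bound for $\{a\Ic b,\Ic a b\}$, and the first case applies.
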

\begin{proof}
    First we verify $ab\leq a\Ic{j},\Ic{j}ab$. 
    Indeed, $ab\leq a\Ic{j}$ since $b\leq \Ic{j}$, and thus using \ref{eq:unitcoh}, left-regularity, and \cref{rem:a1j<1ja} we find
    $ab = a\Ic{i}b\Ic{j}=a\Ic{i}b\Ic{i}\Ic{j} = ab\Ic{i\vee j} \leq \Ic{i\vee j}ab = \Ic{j}ab$.
    Also, as $a\leq \Ic{i}$, we note that $\Ic{j}ab = \Ic{j}\Ic{i}b = \Ic{i}\Ic{j}b = \Ic{i}b$ using \ref{eq:comlocalunits} and \ref{eq:localunits}.
    Now, suppose $c\leq a\Ic{j},\Ic{i}b$. 
    Observing first that that $ab = a\Ic{i}\Ic{j}b = a\Ic{j}\Ic{i}b$ by \ref{eq:localunits} and \ref{eq:comlocalunits}, we find
    $ c\jc ab  = c\jc  (a\Ic{j}\mc \Ic{i} b) = (c\jc a\Ic{j})(c\jc \Ic{i}b) = a\Ic{j}\mc \Ic{i} b = ab$ by \ref{eq:leftdist}. 
\end{proof}

\begin{remark}\label{rem:ab}
Locating $a\mc b$ in the $s\ell$-decorated poset, as depicted in \Cref{fig:McMult}, therefore proceeds as follows:
\begin{enumerate}
    \item 
    Locate the elements $\Oc{i},\nc{a}\in  B_i$ and $\Oc{j},\nc{b}\in B_j$. (See \cref{t:ExpFiber})
    \item Having found $\Oc{i},\Oc{j}$, locate the element $\Oc{i\vee j}=\Oc{i}\vee\Oc{j}\in B_{i\vee j}$. (See \cref{t:SLjoins}) 
    \item Having found $\Oc{i}$ and $\Oc{i\vee j}$, locate $\Oc{a\acts j} \in B_{a\acts j}$, and similarly find $\Oc{b\acts i} \in B_{b\acts i}$. (See \cref{t:actsV})
    \item From $\nc{a}$ and $\Oc{a\acts j}$, locate $a\Ic{j}=\nc{(\nc{a}\vee\Oc{a\acts j})}\in B_{a\acts j}$, and similarly $b\Ic{i}\in B_{b\acts i}$. (See \cref{t:a1j})
    \item From $b\Ic{i}$ and $\Oc{i\vee j}$, locate $\Ic{i}b=b\Ic{i}\vee\Oc{i\vee j}\in B_{i\vee j}$. (See \cref{t:a1j<1ja}) 
    \item The product $ab\in B_{a\acts j}$ is the meet of $a\Ic{j}\in B_{a\acts j}$ and $\Ic{j}b \in B_{i\vee j}$. (See \cref{t:ab as a meet}).
\end{enumerate}
\begin{figure}[h]
    \centering
    \begin{tikzpicture}[framed, scale = 1.2, every node/.style={inner sep=.5pt}, every label/.style={gray}]
        \node[label = {[xshift=0,yshift = -1em]\scriptsize $\Oc{i}$}] (0i) at (-3,0) {};
        \node[label = {[xshift=0,yshift = -1em] \color{black} $a$}] (a) at (-4,.5) {\scriptsize$\bullet$};
        \node[label = {[xshift=.5em,yshift = -.5em]\scriptsize $\nc{a}$}] (a') at (-3,.5) {\scriptsize$\bullet$};
        \node[label = {\scriptsize $\Ic{i}$}] (1i) at (-4,1) {\scriptsize$\bullet$};
        \draw (-3,0) -- (-3,.5) -- (-4,1) -- (-4,.5) -- (-3,0);

        \node[label = {[xshift=.1em,yshift = -1.2em]\scriptsize $\Oc{a\acts j}$}] (a0j) at (-1.5,.5) {};
        \node[label = {[xshift=-1.2em,yshift = -.5em]\scriptsize $\nc{a}{\jc}\Oc{j}$}] (a'j) at (-2.5,1) {\scriptsize$\bullet$};
        \node[label = {[xshift = -.8em,yshift=-.7em]\scriptsize $a\Ic{j}$}] (aj) at (-1.5,1.5) {\scriptsize$\bullet$};
        
        \node[label = {\scriptsize $\Ic{a\acts j}$}]  (a1j) at (-2.5,2) {\scriptsize$\bullet$};
        \draw (-1.5,.5) -- (-2.5,1) -- (-2.5,2) -- (-1.5,1.5)  -- (-1.5,.5);

        \node[label = {[xshift=0,yshift = -1.2em]\scriptsize $\Oc{j}$}]  (0j) at (3,0) {};
        \node[label = {[xshift=0,yshift = -1.2em] \color{black} $b$}] (b) at (4,.5) {\scriptsize$\bullet$};
        \node[label = {[xshift=-.5em,yshift = -.7em]\scriptsize $\nc{b}$}] (b') at (3,.5) {\scriptsize$\bullet$};
        \node[label = {\scriptsize $\Ic{j}$}] (1j) at (4,1) {\scriptsize$\bullet$};
        \draw (3,0) -- (3,.5) -- (4,1) -- (4,.5) -- (3,0);

        \node[label = {[xshift=0,yshift = -1.2em]\scriptsize $\Oc{b\acts i}$}] (b0i) at (1.5,.5) {};
        \node[label = {[xshift=1.3em,yshift = -.5em]\scriptsize $\nc{b}{\jc}\Oc{i}$}] (b'i) at (2.5,1) {\scriptsize$\bullet$};
        \node[label = {[xshift = .8em,yshift=-.75em]\scriptsize $b\Ic{i}$}] (bi) at (1.5,1.5) {\scriptsize$\bullet$};
        \node (b1i)[label = {\scriptsize $\Ic{b\acts i}$}] at (2.5,2) {\scriptsize$\bullet$};
        \draw (1.5,.5) -- (2.5,1) -- (2.5,2) -- (1.5,1.5)  -- (1.5,.5);

        \draw[dashed] (-1,2) -- (-1,1.5);
        \draw[dashed] (1,2) -- (1,1.5);
        \draw[dashed]  (1,2) -- (1,1.5)--(0,1) -- (-1,1.5) -- (-1,2) ;
        \draw[dashed] (-1,2) -- (0,2.5) -- (1,2);
        \draw[dashed] (-1,1.5) -- (0,2) -- (1,1.5);
        \draw[dashed] (0,2) -- (0,2.5);
        \draw (0,1) -- (0,1.5);
        \draw[preaction={draw, white, line width=2pt}] (-1,2) -- (0,1.5) -- (1,2);
        \node[label = {\scriptsize $\Ic{i\vee j}$}] (1ij) at (0,2.5) {\scriptsize$\bullet$}; 
        \node[label = {[xshift=0,yshift = -1.2em]\scriptsize $\Oc{i\vee j}$}] (0ij) at (0,1) {};%
        \node[label = {\scriptsize $\Ic{j}ab$}] (1ijab) at (0,1.5) {\scriptsize$\bullet$};
        \node[label = {\scriptsize $\Ic{j}a$}] (1ja) at (-1,2) {\scriptsize$\bullet$};
        \node[label = {\scriptsize $\Ic{i}b$}] (1ib) at (1,2) {\scriptsize$\bullet$};

        \draw[gray] (0i) -- (a0j) -- (0ij) -- (b0i) -- (0j);
        \draw[gray] (a') -- (a'j);
        \draw[gray] (aj) -- (1ja);
        \draw[gray] (b') -- (b'i);
        \draw[gray] (bi) -- (1ib);

        \node[label = {[xshift=-.75em,yshift = -1em] \color{black}$ab$}]  (ab) at (-1.5,1) {$\bullet$}; 
        \node (ba) at (1.5,1) {};
        \draw[gray,preaction={draw, white, line width=2pt}] (ab) -- (1ijab);
        \node (ab') at (-2,1) {};
        \node (ba') at (2,1) {};
    \tikzset{SLNode/.style={circle, draw, fill=white, minimum size=3pt, inner sep=0pt}}
        \node[SLNode] at (0i) {}; 
        \node[SLNode] at (0j) {}; 
        \node[SLNode] at (b0i) {}; 
        \node[SLNode] at (a0j) {}; 
        \node[SLNode] at (0ij) {}; 
    \end{tikzpicture}
    \caption{Locating a product $a\mc b$ in the $s\ell$-decorated poset of $\m A$. 
    A node $\circ$ indicates membership in $\SL_{\m A}$.
    }
    \label{fig:McMult}
\end{figure}
\end{remark}

\begin{theorem}\label{t:sl-skeleton-char}
    A McCarthy algebra is fully determined by its $s\ell$-decorated poset. 
\end{theorem}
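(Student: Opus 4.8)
The plan is to exhibit, for each fundamental operation and constant of a McCarthy algebra $\m A$, a first-order formula in the signature $\langle \leq, \SL\rangle$ of $s\ell$-decorated posets (with $\SL$ the unary predicate picking out the semilattice skeleton) that defines it; equivalently, to show that any isomorphism between the $s\ell$-decorated posets of two McCarthy algebras is automatically an algebra isomorphism. Every step below invokes only least or greatest elements of first-order-definable subsets, or suprema and infima in $\mathcal{A}$ --- all of which, when they exist, are first-order expressible --- so the assembled formulas are legitimate; existence of the relevant meets and joins at each stage is precisely what Lemmas~\ref{t:SLjoins}--\ref{t:ab as a meet} provide.

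First I would handle the constants and the involution. The constant $0$ is the least element of $\mathcal{A}$ (since $0\jc x = x$ forces $0\leq x$) and lies in $\SL$; by \cref{t:ExpFiber} the unary terms $\Oc{x}=\max\{i\in\SL: i\leq x\}$ and $\Ic{x}=\max\{y\in A:\Oc{y}=\Oc{x}\}$ are definable from $\leq$ and $\SL$, so $1 = \Ic{0}$ is too, as is membership in each Boolean fiber $B_i=[\Oc{i},\Ic{i}]$. Within a fiber the lattice meet agrees with the poset meet (\cref{t:Bsublattice}), so the characterization $\nc{x}=\max\{y\in[\Oc{x},\Ic{x}] : x\land y=\Oc{x}\}$ of \cref{t:ExpFiber} defines $\nc{}$.

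Next I would recover multiplication following the recipe of \cref{rem:ab}. For $a,b\in A$ put $i=\Oc{a}$, $j=\Oc{b}$; by \cref{t:SLjoins} the semilattice join $i\vee j$ coincides with the poset join. Then \cref{t:actsV} expresses the action value $a\acts j=a\mc\Oc{j}$ as $\max\{k\in\SL: i\leq k\leq i\vee j,\ \nc{a}\leq\Ic{k}\}$, using only $\leq$, $\SL$, and the already-definable $\Ic{z}$ and $\nc{z}$; symmetrically for $b\acts i$. From $\nc{a}$ and $\Oc{a\acts j}$, \cref{t:a1j} gives $a\Ic{j}=\nc{(\nc{a}\vee\Oc{a\acts j})}$ (a join inside the fiber $B_{a\acts j}$), and \cref{t:a1j<1ja} gives $\Ic{i}b=(b\Ic{i})\vee\Oc{i\vee j}$; finally \cref{t:ab as a meet} yields $a\mc b=(a\Ic{j})\land(\Ic{i}b)$ as a poset meet of two already-definable elements. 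Chaining these quantifier blocks gives a single first-order formula $\mu(x,y,z)$ over $\langle\leq,\SL\rangle$ with $\mu(a,b,z)$ holding iff $z=a\mc b$. Having $\mc$ and $\nc{}$, the operation $\jc$ is recovered by De~Morgan duality, $a\jc b=\nc{(\nc{a}\mc\nc{b})}$, and the original relation reappears as $a\leq b\iff a\jc b=b$. Hence all of $\langle\jc,\mc,\nc{},0,1\rangle$ are term-definable from $\langle\leq,\SL\rangle$, which is the assertion.

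The whole difficulty is concentrated in recovering $\mc$: it is nonlocal, since $a\mc b$ need not lie in the fiber of either factor, and noncommutativity means $\leq_\jc$ alone does not determine it. The decoration by $\SL$ is exactly what bridges the gap --- it records how $a$ ``moves'' $\Oc{b}$ up the semilattice --- and the chain of Lemmas~\ref{t:Action}--\ref{t:ab as a meet} has already done the heavy lifting; here it only remains to check that each ingredient used there is first-order over $\langle\leq,\SL\rangle$, which is the routine verification sketched above. It is also worth noting explicitly that, conversely, once $\nc{}$ and $\mc$ are pinned down there is a unique McCarthy-algebra structure on the decorated poset, which is the precise sense in which ``fully determined'' should be read.
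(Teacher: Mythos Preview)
Your proposal is correct and follows essentially the same approach as the paper: recover the constants and the involution via \cref{t:ExpFiber}, then recover $\mc$ by chaining Lemmas~\ref{t:SLjoins}--\ref{t:ab as a meet} exactly as laid out in \cref{rem:ab}. Your write-up is more explicit about first-order expressibility and about recovering $\jc$ by De~Morgan duality, but the structure and the cited lemmas are the same.
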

\begin{proof}
    The involution $\nc{}$ is expressible via \cref{t:ExpFiber}, and as $0$ ($=\nc{1}$) is the least element in any McCarthy algebra, so too is the constant $1$. 
    Multiplication $\mc$ is expressible by the steps in \cref{rem:ab}.
\end{proof}
Call two $s\ell$-decorated posets $\str{\mathcal{A}_1,\SL_1}$ and $\str{\mathcal{A}_2,\SL_2}$ \emph{isomorphic} if there is an order-isomorphism $\phi: \mathcal{A}_1\to \mathcal{A}_2$ such that the restriction $\phi{\restriction}\SL_1$ is a bijection onto $\SL_2$.
\begin{corollary}\label{t:isopo}
Two McCarthy algebras are isomorphic iff their $s\ell$-decorated posets are isomorphic.
\end{corollary}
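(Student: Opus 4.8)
The plan is to prove the two implications separately; the forward one is a one‑liner, and essentially all the content is the converse, which cashes in the expressibility results established above.

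\emph{Forward direction.} Suppose $h\colon\m A_1\to\m A_2$ is an isomorphism of McCarthy algebras. Since the order on each induced poset is term‑defined by $\jc$ (via $x\leq y\iff x\jc y=y$), $h$ is in particular an order‑isomorphism; and since $h$ commutes with $\mc$ and preserves the constants $0$ and $1$, it carries $\SL_{\m A_1}=\{x:x=x\mc 0\}$ bijectively onto $\SL_{\m A_2}$. Hence $h$ is an isomorphism of $s\ell$-decorated posets.

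\emph{Converse.} Let $\phi\colon\mathcal{A}_1\to\mathcal{A}_2$ be an order‑isomorphism with $\phi[\SL_1]=\SL_2$; I would show that $\phi$ is already an isomorphism of McCarthy algebras, i.e.\ that it commutes with $\nc{}$, $\mc$, $0$, and $1$. The guiding principle is that an order‑isomorphism preserves every purely order‑theoretic datum — least and greatest elements, existing meets and joins, and maxima of $\phi$-invariant subsets — so, since $\phi$ additionally respects the distinguished predicate ``membership in $\SL$'', it preserves everything that \cref{t:sl-skeleton-char} expresses in terms of $\leq$ and $\SL$ alone. Concretely I would walk through the definable operations in dependency order: the constant $0$ is the least element of each induced poset (it is the unit for $\jc$) and lies in the skeleton, so $\phi(0)=0$; by \cref{t:ExpFiber}, $\Oc{x}$ and $\Ic{x}$ are maxima of sets built from $\leq$ and $\SL$, and $\nc{x}$ is the greatest element $y$ of the Boolean fiber $[\Oc{x},\Ic{x}]$ with $x\land y=\Oc{x}$ (a meet, hence order‑theoretic), so $\phi$ commutes with the operations $x\mapsto\Oc{x}$, $x\mapsto\Ic{x}$, $x\mapsto\nc{x}$ — in particular $\phi(1)=\phi(\nc 0)=\nc{\phi(0)}=1$; and finally the product $a\mc b$ is located by the six‑step recipe of \cref{rem:ab}, each step of which — the skeleton join $\Oc{i}\vee\Oc{j}$, the ``action'' value $a\acts j$ realized as a maximum via \cref{t:actsV}, the element $a\Ic{j}$ via \cref{t:a1j}, the element $\Ic{j}a$ via \cref{t:a1j<1ja}, and $a\mc b$ itself as a meet via \cref{t:ab as a meet} — again pins down the relevant element as a least upper bound, greatest lower bound, or maximum of an order‑and‑$\SL$-definable set. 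Tracing this recipe through $\phi$ yields $\phi(a\mc b)=\phi(a)\mc\phi(b)$. Thus $\phi$ is a bijective homomorphism, hence an isomorphism of McCarthy algebras.

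The hard part is purely bookkeeping: one must confirm that at no step of \cref{rem:ab} does the characterization secretly invoke an algebra operation — every quantity occurring must be determined by $\leq$ and the predicate $\SL$ only, so that its preservation by $\phi$ is automatic. Since \cref{t:SLjoins,t:actsV,t:a1j,t:a1j<1ja,t:ab as a meet} already phrase each characterization in exactly those terms, this is a routine (if somewhat lengthy) reading‑off, and I expect no conceptual obstacle.
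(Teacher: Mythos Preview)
Your proposal is correct and follows essentially the same approach as the paper: both directions are handled identically, with the converse resting on the fact that an order-isomorphism respecting the skeleton predicate preserves all the existing suprema, infima, and maxima used in \cref{t:ExpFiber}--\cref{t:ab as a meet} to recover the operations. Your write-up is simply more explicit about the bookkeeping than the paper's three-line version.
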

\begin{proof}
    The forward direction is immediate from the fact the $x\leq y$ iff $x\jc y=y$, and $x\in \SL$ iff $x=x\mc 0$.
    The reverse direction holds since order-isomorphisms preserve existing suprema and infima. 
    Since the operations are expressible via existing suprema/infima and the use of membership in the skeleton, via Lemmas~\ref{t:ExpFiber}--\ref{t:ab as a meet}, any $s\ell$-decorated isomorphism preserves the operations, making it an {\iname} isomorphism. 
\end{proof}
Utilizing \texttt{Mace4} \cite{P9M4}, the \emph{Fine Spectrum}, i.e., number of non-isomorphic models, for McCarthy algebras up to cardinality $14$ is given in \Cref{tab:finespec} below. 
Moreover, exploiting \cref{t:sl-skeleton-char} and \cref{t:isopo}, in \Cref{fig:models10} below we display those models up to size $10$ via their induced $s\ell$-decorated poset.
\begin{table}[ht]
    \centering
    \begin{tabular}{|r|c|c|c|c|c|c|c|c|c|c|c|c|c|c|}
    \hline
        $n=$ &1& 2 & 3 & 4 &5 &6&7&8&9&10&11&12&13&14 \\ \hline
         $\#$ of models& 
         1&1&1&2&1&3&2&6&6&12&16&35&56&111 \\ \hline
    \end{tabular}
    \caption{The Fine Specturum for McCarthy algebras up to size $14$}
    \label{tab:finespec}
\end{table}
\begin{figure}[ht]
    \centering
\tikzset{
  Z/.style={
    circle,
    draw,
    fill=white,
    minimum size=4pt,
    inner sep=0pt
  },
  N/.style={
    circle,
    fill=black,
    minimum size=4pt,
    inner sep=0pt
  }
}

\begin{tikzpicture}[>=latex,line join=bevel,  scale = .2]
\node[N] [Z] (0) at (11.0bp,11.0bp)   {}; 
%
\end{tikzpicture}
\qquad\quad
\begin{tikzpicture}[>=latex,line join=bevel,  scale = .2]
\node[N] [Z] (0) at (11.0bp,11.0bp)   {}; 
  \node[N] [N] (1) at (11.0bp,69.0bp)   {}; 
  \draw [] (1) ..controls (11.0bp,47.921bp) and (11.0bp,32.474bp)  .. (0);
\end{tikzpicture}
\qquad\quad
\begin{tikzpicture}[>=latex,line join=bevel,  scale = .2]
\node[N] [Z]  (0) at (31.0bp,11.0bp)   {}; 
  \node[N] [N] (1) at (11.0bp,69.0bp)   {}; 
  \node[N] [Z] (2) at (51.0bp,69.0bp)   {}; 
  \draw [] (1) ..controls (18.048bp,48.265bp) and (23.836bp,32.06bp)  .. (0);
  \draw [] (2) ..controls (43.952bp,48.265bp) and (38.164bp,32.06bp)  .. (0);
\end{tikzpicture}
\qquad\quad
\begin{tikzpicture}[>=latex,line join=bevel,  scale = .2]
\node[N] [Z]  (0) at (31.0bp,11.0bp)   {}; 
  \node[N] [N] (1) at (31.0bp,127.0bp)   {}; 
  \node[N] [N] (2) at (11.0bp,69.0bp)   {}; 
  \node[N] [N] (3) at (51.0bp,69.0bp)   {}; 
  \draw [] (1) ..controls (23.952bp,106.27bp) and (18.164bp,90.06bp)  .. (2);
  \draw [] (1) ..controls (38.048bp,106.27bp) and (43.836bp,90.06bp)  .. (3);
  \draw [] (2) ..controls (18.048bp,48.265bp) and (23.836bp,32.06bp)  .. (0);
  \draw [] (3) ..controls (43.952bp,48.265bp) and (38.164bp,32.06bp)  .. (0);
\end{tikzpicture}
\quad
\begin{tikzpicture}[>=latex,line join=bevel,  scale = .2]
\node[N] [Z]  (0) at (31.0bp,11.0bp)   {}; 
  \node[N] [N] (1) at (11.0bp,69.0bp)   {}; 
  \node[N] [Z]  (2) at (51.0bp,69.0bp)   {}; 
  \node[N] [N] (3) at (51.0bp,127.0bp)   {}; 
  \draw [] (1) ..controls (18.048bp,48.265bp) and (23.836bp,32.06bp)  .. (0);
  \draw [] (2) ..controls (43.952bp,48.265bp) and (38.164bp,32.06bp)  .. (0);
  \draw [] (3) ..controls (51.0bp,105.92bp) and (51.0bp,90.474bp)  .. (2);
\end{tikzpicture}
\qquad\quad
\begin{tikzpicture}[>=latex,line join=bevel,  scale = .2]
\node[N] [Z]  (0) at (31.0bp,11.0bp)   {}; 
  \node[N] [N] (1) at (11.0bp,69.0bp)   {}; 
  \node[N] [Z]  (2) at (31.0bp,127.0bp)   {}; 
  \node[N] [Z]  (3) at (51.0bp,69.0bp)   {}; 
  \node[N] [N] (4) at (71.0bp,127.0bp)   {}; 
  \draw [] (1) ..controls (18.048bp,48.265bp) and (23.836bp,32.06bp)  .. (0);
  \draw [] (2) ..controls (38.048bp,106.27bp) and (43.836bp,90.06bp)  .. (3);
  \draw [] (3) ..controls (43.952bp,48.265bp) and (38.164bp,32.06bp)  .. (0);
  \draw [] (4) ..controls (63.952bp,106.27bp) and (58.164bp,90.06bp)  .. (3);
\end{tikzpicture}
\qquad\quad
\begin{tikzpicture}[>=latex,line join=bevel,  scale = .2]
\node[N] [Z]  (0) at (51.0bp,11.0bp)   {}; 
  \node[N] [N] (1) at (31.0bp,127.0bp)   {}; 
  \node[N] [N] (2) at (51.0bp,69.0bp)   {}; 
  \node[N] [N] (3) at (11.0bp,69.0bp)   {}; 
  \node[N] [Z]  (4) at (91.0bp,69.0bp)   {}; 
  \node[N] [N] (5) at (81.0bp,127.0bp)   {}; 
  \draw [] (1) ..controls (38.048bp,106.27bp) and (43.836bp,90.06bp)  .. (2);
  \draw [] (1) ..controls (23.952bp,106.27bp) and (18.164bp,90.06bp)  .. (3);
  \draw [] (2) ..controls (51.0bp,47.921bp) and (51.0bp,32.474bp)  .. (0);
  \draw [] (3) ..controls (24.809bp,48.668bp) and (37.446bp,30.976bp)  .. (0);
  \draw [] (4) ..controls (77.191bp,48.668bp) and (64.554bp,30.976bp)  .. (0);
  \draw [] (5) ..controls (70.615bp,106.61bp) and (61.51bp,89.618bp)  .. (2);
  \draw [] (5) ..controls (84.608bp,105.8bp) and (87.413bp,90.087bp)  .. (4);
\end{tikzpicture}
\quad
\begin{tikzpicture}[>=latex,line join=bevel,  scale = .2]
\node[N] [Z]  (0) at (31.0bp,11.0bp)   {}; 
  \node[N] [N] (1) at (11.0bp,69.0bp)   {}; 
  \node[N] [Z]  (2) at (51.0bp,69.0bp)   {}; 
  \node[N] [N] (3) at (51.0bp,185.0bp)   {}; 
  \node[N] [N] (4) at (31.0bp,127.0bp)   {}; 
  \node[N] [N] (5) at (71.0bp,127.0bp)   {}; 
  \draw [] (1) ..controls (18.048bp,48.265bp) and (23.836bp,32.06bp)  .. (0);
  \draw [] (2) ..controls (43.952bp,48.265bp) and (38.164bp,32.06bp)  .. (0);
  \draw [] (3) ..controls (43.952bp,164.27bp) and (38.164bp,148.06bp)  .. (4);
  \draw [] (3) ..controls (58.048bp,164.27bp) and (63.836bp,148.06bp)  .. (5);
  \draw [] (4) ..controls (38.048bp,106.27bp) and (43.836bp,90.06bp)  .. (2);
  \draw [] (5) ..controls (63.952bp,106.27bp) and (58.164bp,90.06bp)  .. (2);
\end{tikzpicture}
\quad
\begin{tikzpicture}[>=latex,line join=bevel,  scale = .2]
\node[N] [Z]  (0) at (31.0bp,11.0bp)   {}; 
  \node[N] [N] (1) at (11.0bp,69.0bp)   {}; 
  \node[N] [Z]  (2) at (31.0bp,127.0bp)   {}; 
  \node[N] [Z]  (4) at (51.0bp,69.0bp)   {}; 
  \node[N] [N] (3) at (31.0bp,185.0bp)   {}; 
  \node[N] [N] (5) at (71.0bp,127.0bp)   {}; 
  \draw [] (1) ..controls (18.048bp,48.265bp) and (23.836bp,32.06bp)  .. (0);
  \draw [] (2) ..controls (38.048bp,106.27bp) and (43.836bp,90.06bp)  .. (4);
  \draw [] (3) ..controls (31.0bp,163.92bp) and (31.0bp,148.47bp)  .. (2);
  \draw [] (4) ..controls (43.952bp,48.265bp) and (38.164bp,32.06bp)  .. (0);
  \draw [] (5) ..controls (63.952bp,106.27bp) and (58.164bp,90.06bp)  .. (4);
\end{tikzpicture}
\qquad\quad
\begin{tikzpicture}[>=latex,line join=bevel,  scale = .2]
\node[N] [Z]  (0) at (51.0bp,11.0bp)   {}; 
  \node[N] [N] (1) at (11.0bp,69.0bp)   {}; 
  \node[N] [Z]  (2) at (71.0bp,127.0bp)   {}; 
  \node[N] [Z]  (3) at (51.0bp,69.0bp)   {}; 
  \node[N] [Z]  (5) at (91.0bp,69.0bp)   {}; 
  \node[N] [N] (4) at (31.0bp,127.0bp)   {}; 
  \node[N] [N] (6) at (111.0bp,127.0bp)   {}; 
  \draw [] (1) ..controls (24.809bp,48.668bp) and (37.446bp,30.976bp)  .. (0);
  \draw [] (2) ..controls (63.952bp,106.27bp) and (58.164bp,90.06bp)  .. (3);
  \draw [] (2) ..controls (78.048bp,106.27bp) and (83.836bp,90.06bp)  .. (5);
  \draw [] (3) ..controls (51.0bp,47.921bp) and (51.0bp,32.474bp)  .. (0);
  \draw [] (4) ..controls (38.048bp,106.27bp) and (43.836bp,90.06bp)  .. (3);
  \draw [] (5) ..controls (77.191bp,48.668bp) and (64.554bp,30.976bp)  .. (0);
  \draw [] (6) ..controls (103.95bp,106.27bp) and (98.164bp,90.06bp)  .. (5);
\end{tikzpicture}
\quad
\begin{tikzpicture}[>=latex,line join=bevel,  scale = .2]
\node[N] [Z]  (0) at (31.0bp,11.0bp)   {}; 
  \node[N] [N] (1) at (11.0bp,69.0bp)   {}; 
  \node[N] [Z]  (2) at (11.0bp,185.0bp)   {}; 
  \node[N] [Z]  (3) at (31.0bp,127.0bp)   {}; 
  \node[N] [Z]  (5) at (51.0bp,69.0bp)   {}; 
  \node[N] [N] (4) at (51.0bp,185.0bp)   {}; 
  \node[N] [N] (6) at (71.0bp,127.0bp)   {}; 
  \draw [] (1) ..controls (18.048bp,48.265bp) and (23.836bp,32.06bp)  .. (0);
  \draw [] (2) ..controls (18.048bp,164.27bp) and (23.836bp,148.06bp)  .. (3);
  \draw [] (3) ..controls (38.048bp,106.27bp) and (43.836bp,90.06bp)  .. (5);
  \draw [] (4) ..controls (43.952bp,164.27bp) and (38.164bp,148.06bp)  .. (3);
  \draw [] (5) ..controls (43.952bp,48.265bp) and (38.164bp,32.06bp)  .. (0);
  \draw [] (6) ..controls (63.952bp,106.27bp) and (58.164bp,90.06bp)  .. (5);
\end{tikzpicture}
\quad

\begin{tikzpicture}[>=latex,line join=bevel,  scale = .2]
\node[N] [Z]  (0) at (51.0bp,11.0bp)   {}; 
  \node[N] [N] (1) at (51.0bp,185.0bp)   {}; 
  \node[N] [N] (3) at (91.0bp,127.0bp)   {}; 
  \node[N] [N] (5) at (51.0bp,127.0bp)   {}; 
  \node[N] [N] (7) at (11.0bp,127.0bp)   {}; 
  \node[N] [N] (2) at (11.0bp,69.0bp)   {}; 
  \node[N] [N] (4) at (51.0bp,69.0bp)   {}; 
  \node[N] [N] (6) at (91.0bp,69.0bp)   {}; 
  \draw [] (1) ..controls (64.809bp,164.67bp) and (77.446bp,146.98bp)  .. (3);
  \draw [] (1) ..controls (51.0bp,163.92bp) and (51.0bp,148.47bp)  .. (5);
  \draw [] (1) ..controls (37.191bp,164.67bp) and (24.554bp,146.98bp)  .. (7);
  \draw [] (2) ..controls (24.809bp,48.668bp) and (37.446bp,30.976bp)  .. (0);
  \draw [] (3) ..controls (77.191bp,106.67bp) and (64.554bp,88.976bp)  .. (4);
  \draw [] (3) ..controls (91.0bp,105.92bp) and (91.0bp,90.474bp)  .. (6);
  \draw [] (4) ..controls (51.0bp,47.921bp) and (51.0bp,32.474bp)  .. (0);
  \draw [] (5) ..controls (37.191bp,106.67bp) and (24.554bp,88.976bp)  .. (2);
  \draw [] (5) ..controls (64.809bp,106.67bp) and (77.446bp,88.976bp)  .. (6);
  \draw [] (6) ..controls (77.191bp,48.668bp) and (64.554bp,30.976bp)  .. (0);
  \draw [] (7) ..controls (11.0bp,105.92bp) and (11.0bp,90.474bp)  .. (2);
  \draw [] (7) ..controls (24.809bp,106.67bp) and (37.446bp,88.976bp)  .. (4);
\end{tikzpicture}
\quad
\begin{tikzpicture}[>=latex,line join=bevel,  scale = .2]
\node[N] [Z]  (0) at (51.0bp,11.0bp)   {}; 
  \node[N] [N] (1) at (11.0bp,127.0bp)   {}; 
  \node[N] [N] (2) at (51.0bp,69.0bp)   {}; 
  \node[N] [N] (3) at (11.0bp,69.0bp)   {}; 
  \node[N] [Z]  (4) at (91.0bp,69.0bp)   {}; 
  \node[N] [N] (5) at (71.0bp,185.0bp)   {}; 
  \node[N] [N] (6) at (51.0bp,127.0bp)   {}; 
  \node[N] [N] (7) at (91.0bp,127.0bp)   {}; 
  \draw [] (1) ..controls (24.809bp,106.67bp) and (37.446bp,88.976bp)  .. (2);
  \draw [] (1) ..controls (11.0bp,105.92bp) and (11.0bp,90.474bp)  .. (3);
  \draw [] (2) ..controls (51.0bp,47.921bp) and (51.0bp,32.474bp)  .. (0);
  \draw [] (3) ..controls (24.809bp,48.668bp) and (37.446bp,30.976bp)  .. (0);
  \draw [] (4) ..controls (77.191bp,48.668bp) and (64.554bp,30.976bp)  .. (0);
  \draw [] (5) ..controls (63.952bp,164.27bp) and (58.164bp,148.06bp)  .. (6);
  \draw [] (5) ..controls (78.048bp,164.27bp) and (83.836bp,148.06bp)  .. (7);
  \draw [] (6) ..controls (51.0bp,105.92bp) and (51.0bp,90.474bp)  .. (2);
  \draw [] (6) ..controls (64.809bp,106.67bp) and (77.446bp,88.976bp)  .. (4);
  \draw [] (7) ..controls (91.0bp,105.92bp) and (91.0bp,90.474bp)  .. (4);
\end{tikzpicture}
\quad
\begin{tikzpicture}[>=latex,line join=bevel,  scale = .2]
\node[N] [Z]  (0) at (31.0bp,11.0bp)   {}; 
  \node[N] [N] (1) at (11.0bp,69.0bp)   {}; 
  \node[N] [Z]  (2) at (31.0bp,127.0bp)   {}; 
  \node[N] [Z]  (6) at (51.0bp,69.0bp)   {}; 
  \node[N] [N] (3) at (31.0bp,243.0bp)   {}; 
  \node[N] [N] (4) at (11.0bp,185.0bp)   {}; 
  \node[N] [N] (5) at (51.0bp,185.0bp)   {}; 
  \node[N] [N] (7) at (71.0bp,127.0bp)   {}; 
  \draw [] (1) ..controls (18.048bp,48.265bp) and (23.836bp,32.06bp)  .. (0);
  \draw [] (2) ..controls (38.048bp,106.27bp) and (43.836bp,90.06bp)  .. (6);
  \draw [] (3) ..controls (23.952bp,222.27bp) and (18.164bp,206.06bp)  .. (4);
  \draw [] (3) ..controls (38.048bp,222.27bp) and (43.836bp,206.06bp)  .. (5);
  \draw [] (4) ..controls (18.048bp,164.27bp) and (23.836bp,148.06bp)  .. (2);
  \draw [] (5) ..controls (43.952bp,164.27bp) and (38.164bp,148.06bp)  .. (2);
  \draw [] (6) ..controls (43.952bp,48.265bp) and (38.164bp,32.06bp)  .. (0);
  \draw [] (7) ..controls (63.952bp,106.27bp) and (58.164bp,90.06bp)  .. (6);
\end{tikzpicture}
\quad
\begin{tikzpicture}[>=latex,line join=bevel,  scale = .2]
\node[N] [Z]  (0) at (31.0bp,11.0bp)   {}; 
  \node[N] [N] (1) at (11.0bp,69.0bp)   {}; 
  \node[N] [Z]  (2) at (51.0bp,69.0bp)   {}; 
  \node[N] [N] (3) at (31.0bp,185.0bp)   {}; 
  \node[N] [N] (4) at (51.0bp,127.0bp)   {}; 
  \node[N] [N] (5) at (11.0bp,127.0bp)   {}; 
  \node[N] [Z]  (6) at (91.0bp,127.0bp)   {}; 
  \node[N] [N] (7) at (71.0bp,185.0bp)   {}; 
  \draw [] (1) ..controls (18.048bp,48.265bp) and (23.836bp,32.06bp)  .. (0);
  \draw [] (2) ..controls (43.952bp,48.265bp) and (38.164bp,32.06bp)  .. (0);
  \draw [] (3) ..controls (38.048bp,164.27bp) and (43.836bp,148.06bp)  .. (4);
  \draw [] (3) ..controls (23.952bp,164.27bp) and (18.164bp,148.06bp)  .. (5);
  \draw [] (4) ..controls (51.0bp,105.92bp) and (51.0bp,90.474bp)  .. (2);
  \draw [] (5) ..controls (24.809bp,106.67bp) and (37.446bp,88.976bp)  .. (2);
  \draw [] (6) ..controls (77.191bp,106.67bp) and (64.554bp,88.976bp)  .. (2);
  \draw [] (7) ..controls (63.952bp,164.27bp) and (58.164bp,148.06bp)  .. (4);
  \draw [] (7) ..controls (78.048bp,164.27bp) and (83.836bp,148.06bp)  .. (6);
\end{tikzpicture}
\quad
\begin{tikzpicture}[>=latex,line join=bevel,  scale = .2]
\node[N] [Z]  (0) at (51.0bp,11.0bp)   {}; 
  \node[N] [N] (1) at (11.0bp,69.0bp)   {}; 
  \node[N] [Z]  (2) at (71.0bp,127.0bp)   {}; 
  \node[N] [Z]  (4) at (91.0bp,69.0bp)   {}; 
  \node[N] [Z]  (6) at (51.0bp,69.0bp)   {}; 
  \node[N] [N] (3) at (71.0bp,185.0bp)   {}; 
  \node[N] [N] (5) at (111.0bp,127.0bp)   {}; 
  \node[N] [N] (7) at (31.0bp,127.0bp)   {}; 
  \draw [] (1) ..controls (24.809bp,48.668bp) and (37.446bp,30.976bp)  .. (0);
  \draw [] (2) ..controls (78.048bp,106.27bp) and (83.836bp,90.06bp)  .. (4);
  \draw [] (2) ..controls (63.952bp,106.27bp) and (58.164bp,90.06bp)  .. (6);
  \draw [] (3) ..controls (71.0bp,163.92bp) and (71.0bp,148.47bp)  .. (2);
  \draw [] (4) ..controls (77.191bp,48.668bp) and (64.554bp,30.976bp)  .. (0);
  \draw [] (5) ..controls (103.95bp,106.27bp) and (98.164bp,90.06bp)  .. (4);
  \draw [] (6) ..controls (51.0bp,47.921bp) and (51.0bp,32.474bp)  .. (0);
  \draw [] (7) ..controls (38.048bp,106.27bp) and (43.836bp,90.06bp)  .. (6);
\end{tikzpicture}
\quad
\begin{tikzpicture}[>=latex,line join=bevel,  scale = .2]
\node[N] [Z]  (0) at (31.0bp,11.0bp)   {}; 
  \node[N] [N] (1) at (11.0bp,69.0bp)   {}; 
  \node[N] [Z]  (2) at (11.0bp,185.0bp)   {}; 
  \node[N] [Z]  (4) at (31.0bp,127.0bp)   {}; 
  \node[N] [N] (3) at (11.0bp,243.0bp)   {}; 
  \node[N] [Z]  (6) at (51.0bp,69.0bp)   {}; 
  \node[N] [N] (5) at (51.0bp,185.0bp)   {}; 
  \node[N] [N] (7) at (71.0bp,127.0bp)   {}; 
  \draw [] (1) ..controls (18.048bp,48.265bp) and (23.836bp,32.06bp)  .. (0);
  \draw [] (2) ..controls (18.048bp,164.27bp) and (23.836bp,148.06bp)  .. (4);
  \draw [] (3) ..controls (11.0bp,221.92bp) and (11.0bp,206.47bp)  .. (2);
  \draw [] (4) ..controls (38.048bp,106.27bp) and (43.836bp,90.06bp)  .. (6);
  \draw [] (5) ..controls (43.952bp,164.27bp) and (38.164bp,148.06bp)  .. (4);
  \draw [] (6) ..controls (43.952bp,48.265bp) and (38.164bp,32.06bp)  .. (0);
  \draw [] (7) ..controls (63.952bp,106.27bp) and (58.164bp,90.06bp)  .. (6);
\end{tikzpicture}
\qquad\qquad
\begin{tikzpicture}[>=latex,line join=bevel,  scale = .2]
\node[N] [Z]  (0) at (71.0bp,11.0bp)   {}; 
  \node[N] [N] (1) at (11.0bp,127.0bp)   {}; 
  \node[N] [N] (3) at (11.0bp,69.0bp)   {}; 
  \node[N] [N] (4) at (51.0bp,69.0bp)   {}; 
  \node[N] [Z]  (2) at (131.0bp,127.0bp)   {}; 
  \node[N] [Z]  (5) at (131.0bp,69.0bp)   {}; 
  \node[N] [Z]  (7) at (91.0bp,69.0bp)   {}; 
  \node[N] [N] (6) at (91.0bp,127.0bp)   {}; 
  \node[N] [N] (8) at (51.0bp,127.0bp)   {}; 
  \draw [] (1) ..controls (11.0bp,105.92bp) and (11.0bp,90.474bp)  .. (3);
  \draw [] (1) ..controls (24.809bp,106.67bp) and (37.446bp,88.976bp)  .. (4);
  \draw [] (2) ..controls (131.0bp,105.92bp) and (131.0bp,90.474bp)  .. (5);
  \draw [] (2) ..controls (117.19bp,106.67bp) and (104.55bp,88.976bp)  .. (7);
  \draw [] (3) ..controls (30.406bp,49.887bp) and (51.89bp,29.836bp)  .. (0);
  \draw [] (4) ..controls (58.048bp,48.265bp) and (63.836bp,32.06bp)  .. (0);
  \draw [] (5) ..controls (111.59bp,49.887bp) and (90.11bp,29.836bp)  .. (0);
  \draw [] (6) ..controls (77.191bp,106.67bp) and (64.554bp,88.976bp)  .. (4);
  \draw [] (6) ..controls (104.81bp,106.67bp) and (117.45bp,88.976bp)  .. (5);
  \draw [] (7) ..controls (83.952bp,48.265bp) and (78.164bp,32.06bp)  .. (0);
  \draw [] (8) ..controls (37.191bp,106.67bp) and (24.554bp,88.976bp)  .. (3);
  \draw [] (8) ..controls (64.809bp,106.67bp) and (77.446bp,88.976bp)  .. (7);
\end{tikzpicture}
\quad
\begin{tikzpicture}[>=latex,line join=bevel,  scale = .2]
\node[N] [Z]  (0) at (71.0bp,11.0bp)   {}; 
  \node[N] [N] (1) at (11.0bp,69.0bp)   {}; 
  \node[N] [Z]  (2) at (71.0bp,127.0bp)   {}; 
  \node[N] [Z]  (3) at (51.0bp,69.0bp)   {}; 
  \node[N] [Z]  (5) at (91.0bp,69.0bp)   {}; 
  \node[N] [Z]  (7) at (131.0bp,69.0bp)   {}; 
  \node[N] [N] (4) at (31.0bp,127.0bp)   {}; 
  \node[N] [N] (6) at (111.0bp,127.0bp)   {}; 
  \node[N] [N] (8) at (151.0bp,127.0bp)   {}; 
  \draw [] (1) ..controls (30.406bp,49.887bp) and (51.89bp,29.836bp)  .. (0);
  \draw [] (2) ..controls (63.952bp,106.27bp) and (58.164bp,90.06bp)  .. (3);
  \draw [] (2) ..controls (78.048bp,106.27bp) and (83.836bp,90.06bp)  .. (5);
  \draw [] (2) ..controls (90.406bp,107.89bp) and (111.89bp,87.836bp)  .. (7);
  \draw [] (3) ..controls (58.048bp,48.265bp) and (63.836bp,32.06bp)  .. (0);
  \draw [] (4) ..controls (38.048bp,106.27bp) and (43.836bp,90.06bp)  .. (3);
  \draw [] (5) ..controls (83.952bp,48.265bp) and (78.164bp,32.06bp)  .. (0);
  \draw [] (6) ..controls (103.95bp,106.27bp) and (98.164bp,90.06bp)  .. (5);
  \draw [] (7) ..controls (111.59bp,49.887bp) and (90.11bp,29.836bp)  .. (0);
  \draw [] (8) ..controls (143.95bp,106.27bp) and (138.16bp,90.06bp)  .. (7);
\end{tikzpicture}
\quad
\begin{tikzpicture}[>=latex,line join=bevel,  scale = .2]
\node[N] [Z]  (0) at (50.0bp,11.0bp)   {}; 
  \node[N] [N] (1) at (11.0bp,69.0bp)   {}; 
  \node[N] [Z]  (2) at (69.0bp,185.0bp)   {}; 
  \node[N] [Z]  (3) at (49.0bp,127.0bp)   {}; 
  \node[N] [Z]  (5) at (89.0bp,127.0bp)   {}; 
  \node[N] [N] (4) at (29.0bp,185.0bp)   {}; 
  \node[N] [Z]  (7) at (89.0bp,69.0bp)   {}; 
  \node[N] [N] (6) at (109.0bp,185.0bp)   {}; 
  \node[N] [N] (8) at (129.0bp,127.0bp)   {}; 
  \draw [] (1) ..controls (24.463bp,48.668bp) and (36.784bp,30.976bp)  .. (0);
  \draw [] (2) ..controls (61.952bp,164.27bp) and (56.164bp,148.06bp)  .. (3);
  \draw [] (2) ..controls (76.048bp,164.27bp) and (81.836bp,148.06bp)  .. (5);
  \draw [] (3) ..controls (49.278bp,94.304bp) and (49.72bp,43.962bp)  .. (0);
  \draw [] (4) ..controls (36.048bp,164.27bp) and (41.836bp,148.06bp)  .. (3);
  \draw [] (5) ..controls (89.0bp,105.92bp) and (89.0bp,90.474bp)  .. (7);
  \draw [] (6) ..controls (101.95bp,164.27bp) and (96.164bp,148.06bp)  .. (5);
  \draw [] (7) ..controls (75.537bp,48.668bp) and (63.216bp,30.976bp)  .. (0);
  \draw [] (8) ..controls (115.19bp,106.67bp) and (102.55bp,88.976bp)  .. (7);
\end{tikzpicture}
\quad
\begin{tikzpicture}[>=latex,line join=bevel,  scale = .2]
\node[N] [Z]  (0) at (51.0bp,11.0bp)   {}; 
  \node[N] [N] (1) at (31.0bp,69.0bp)   {}; 
  \node[N] [Z]  (2) at (51.0bp,185.0bp)   {}; 
  \node[N] [Z]  (3) at (31.0bp,127.0bp)   {}; 
  \node[N] [Z]  (5) at (71.0bp,127.0bp)   {}; 
  \node[N] [Z]  (7) at (71.0bp,69.0bp)   {}; 
  \node[N] [N] (4) at (11.0bp,185.0bp)   {}; 
  \node[N] [N] (6) at (91.0bp,185.0bp)   {}; 
  \node[N] [N] (8) at (111.0bp,127.0bp)   {}; 
  \draw [] (1) ..controls (38.048bp,48.265bp) and (43.836bp,32.06bp)  .. (0);
  \draw [] (2) ..controls (43.952bp,164.27bp) and (38.164bp,148.06bp)  .. (3);
  \draw [] (2) ..controls (58.048bp,164.27bp) and (63.836bp,148.06bp)  .. (5);
  \draw [] (3) ..controls (44.809bp,106.67bp) and (57.446bp,88.976bp)  .. (7);
  \draw [] (4) ..controls (18.048bp,164.27bp) and (23.836bp,148.06bp)  .. (3);
  \draw [] (5) ..controls (71.0bp,105.92bp) and (71.0bp,90.474bp)  .. (7);
  \draw [] (6) ..controls (83.952bp,164.27bp) and (78.164bp,148.06bp)  .. (5);
  \draw [] (7) ..controls (63.952bp,48.265bp) and (58.164bp,32.06bp)  .. (0);
  \draw [] (8) ..controls (97.191bp,106.67bp) and (84.554bp,88.976bp)  .. (7);
\end{tikzpicture}
\quad
\begin{tikzpicture}[>=latex,line join=bevel,  scale = .2]
\node[N] [Z]  (0) at (51.0bp,11.0bp)   {}; 
  \node[N] [N] (1) at (11.0bp,69.0bp)   {}; 
  \node[N] [Z]  (2) at (51.0bp,185.0bp)   {}; 
  \node[N] [Z]  (3) at (71.0bp,127.0bp)   {}; 
  \node[N] [Z]  (5) at (91.0bp,69.0bp)   {}; 
  \node[N] [Z]  (7) at (51.0bp,69.0bp)   {}; 
  \node[N] [N] (4) at (91.0bp,185.0bp)   {}; 
  \node[N] [N] (6) at (111.0bp,127.0bp)   {}; 
  \node[N] [N] (8) at (31.0bp,127.0bp)   {}; 
  \draw [] (1) ..controls (24.809bp,48.668bp) and (37.446bp,30.976bp)  .. (0);
  \draw [] (2) ..controls (58.048bp,164.27bp) and (63.836bp,148.06bp)  .. (3);
  \draw [] (3) ..controls (78.048bp,106.27bp) and (83.836bp,90.06bp)  .. (5);
  \draw [] (3) ..controls (63.952bp,106.27bp) and (58.164bp,90.06bp)  .. (7);
  \draw [] (4) ..controls (83.952bp,164.27bp) and (78.164bp,148.06bp)  .. (3);
  \draw [] (5) ..controls (77.191bp,48.668bp) and (64.554bp,30.976bp)  .. (0);
  \draw [] (6) ..controls (103.95bp,106.27bp) and (98.164bp,90.06bp)  .. (5);
  \draw [] (7) ..controls (51.0bp,47.921bp) and (51.0bp,32.474bp)  .. (0);
  \draw [] (8) ..controls (38.048bp,106.27bp) and (43.836bp,90.06bp)  .. (7);
\end{tikzpicture}
\quad
\begin{tikzpicture}[>=latex,line join=bevel,  scale = .2]
\node[N] [Z]  (0) at (51.0bp,11.0bp)   {}; 
  \node[N] [N] (1) at (31.0bp,69.0bp)   {}; 
  \node[N] [Z]  (2) at (11.0bp,243.0bp)   {}; 
  \node[N] [Z]  (3) at (31.0bp,185.0bp)   {}; 
  \node[N] [Z]  (5) at (51.0bp,127.0bp)   {}; 
  \node[N] [N] (4) at (51.0bp,243.0bp)   {}; 
  \node[N] [Z]  (7) at (71.0bp,69.0bp)   {}; 
  \node[N] [N] (6) at (71.0bp,185.0bp)   {}; 
  \node[N] [N] (8) at (91.0bp,127.0bp)   {}; 
  \draw [] (1) ..controls (38.048bp,48.265bp) and (43.836bp,32.06bp)  .. (0);
  \draw [] (2) ..controls (18.048bp,222.27bp) and (23.836bp,206.06bp)  .. (3);
  \draw [] (3) ..controls (38.048bp,164.27bp) and (43.836bp,148.06bp)  .. (5);
  \draw [] (4) ..controls (43.952bp,222.27bp) and (38.164bp,206.06bp)  .. (3);
  \draw [] (5) ..controls (58.048bp,106.27bp) and (63.836bp,90.06bp)  .. (7);
  \draw [] (6) ..controls (63.952bp,164.27bp) and (58.164bp,148.06bp)  .. (5);
  \draw [] (7) ..controls (63.952bp,48.265bp) and (58.164bp,32.06bp)  .. (0);
  \draw [] (8) ..controls (83.952bp,106.27bp) and (78.164bp,90.06bp)  .. (7);
\end{tikzpicture}
\quad

\begin{tikzpicture}[>=latex,line join=bevel,  scale = .2]
\node[N] [Z] (0) at (51.0bp,11.0bp) [draw=black,circle] {}; 
  \node[N]  (1) at (11.0bp,127.0bp) [draw=black,circle] {}; 
  \node[N]  (2) at (51.0bp,69.0bp) [draw=black,circle] {}; 
  \node[N]  (3) at (11.0bp,69.0bp) [draw=black,circle] {}; 
  \node[N] [Z] (4) at (51.0bp,127.0bp) [draw=black,circle] {}; 
  \node[N] [Z] (6) at (91.0bp,69.0bp) [draw=black,circle] {}; 
  \node[N]  (5) at (61.0bp,185.0bp) [draw=black,circle] {}; 
  \node[N]  (9) at (91.0bp,127.0bp) [draw=black,circle] {}; 
  \node[N]  (7) at (111.0bp,185.0bp) [draw=black,circle] {}; 
  \node[N]  (8) at (131.0bp,127.0bp) [draw=black,circle] {}; 
  \draw [] (1) ..controls (24.809bp,106.67bp) and (37.446bp,88.976bp)  .. (2);
  \draw [] (1) ..controls (11.0bp,105.92bp) and (11.0bp,90.474bp)  .. (3);
  \draw [] (2) ..controls (51.0bp,47.921bp) and (51.0bp,32.474bp)  .. (0);
  \draw [] (3) ..controls (24.809bp,48.668bp) and (37.446bp,30.976bp)  .. (0);
  \draw [] (4) ..controls (64.809bp,106.67bp) and (77.446bp,88.976bp)  .. (6);
  \draw [] (5) ..controls (57.392bp,163.8bp) and (54.587bp,148.09bp)  .. (4);
  \draw [] (5) ..controls (71.385bp,164.61bp) and (80.49bp,147.62bp)  .. (9);
  \draw [] (6) ..controls (77.191bp,48.668bp) and (64.554bp,30.976bp)  .. (0);
  \draw [] (7) ..controls (118.05bp,164.27bp) and (123.84bp,148.06bp)  .. (8);
  \draw [] (7) ..controls (103.95bp,164.27bp) and (98.164bp,148.06bp)  .. (9);
  \draw [] (8) ..controls (117.19bp,106.67bp) and (104.55bp,88.976bp)  .. (6);
  \draw [] (9) ..controls (77.191bp,106.67bp) and (64.554bp,88.976bp)  .. (2);
  \draw [] (9) ..controls (91.0bp,105.92bp) and (91.0bp,90.474bp)  .. (6);
\end{tikzpicture}
\quad
\begin{tikzpicture}[>=latex,line join=bevel,  scale = .2]
\node[N] [Z] (0) at (31.0bp,11.0bp) [draw=black,circle] {}; 
  \node[N]  (1) at (11.0bp,69.0bp) [draw=black,circle] {}; 
  \node[N] [Z] (2) at (51.0bp,69.0bp) [draw=black,circle] {}; 
  \node[N]  (3) at (51.0bp,243.0bp) [draw=black,circle] {}; 
  \node[N]  (5) at (91.0bp,185.0bp) [draw=black,circle] {}; 
  \node[N]  (7) at (51.0bp,185.0bp) [draw=black,circle] {}; 
  \node[N]  (9) at (11.0bp,185.0bp) [draw=black,circle] {}; 
  \node[N]  (4) at (11.0bp,127.0bp) [draw=black,circle] {}; 
  \node[N]  (6) at (51.0bp,127.0bp) [draw=black,circle] {}; 
  \node[N]  (8) at (91.0bp,127.0bp) [draw=black,circle] {}; 
  \draw [] (1) ..controls (18.048bp,48.265bp) and (23.836bp,32.06bp)  .. (0);
  \draw [] (2) ..controls (43.952bp,48.265bp) and (38.164bp,32.06bp)  .. (0);
  \draw [] (3) ..controls (64.809bp,222.67bp) and (77.446bp,204.98bp)  .. (5);
  \draw [] (3) ..controls (51.0bp,221.92bp) and (51.0bp,206.47bp)  .. (7);
  \draw [] (3) ..controls (37.191bp,222.67bp) and (24.554bp,204.98bp)  .. (9);
  \draw [] (4) ..controls (24.809bp,106.67bp) and (37.446bp,88.976bp)  .. (2);
  \draw [] (5) ..controls (77.191bp,164.67bp) and (64.554bp,146.98bp)  .. (6);
  \draw [] (5) ..controls (91.0bp,163.92bp) and (91.0bp,148.47bp)  .. (8);
  \draw [] (6) ..controls (51.0bp,105.92bp) and (51.0bp,90.474bp)  .. (2);
  \draw [] (7) ..controls (37.191bp,164.67bp) and (24.554bp,146.98bp)  .. (4);
  \draw [] (7) ..controls (64.809bp,164.67bp) and (77.446bp,146.98bp)  .. (8);
  \draw [] (8) ..controls (77.191bp,106.67bp) and (64.554bp,88.976bp)  .. (2);
  \draw [] (9) ..controls (11.0bp,163.92bp) and (11.0bp,148.47bp)  .. (4);
  \draw [] (9) ..controls (24.809bp,164.67bp) and (37.446bp,146.98bp)  .. (6);
\end{tikzpicture}
\quad
\begin{tikzpicture}[>=latex,line join=bevel,  scale = .2]
\node[N] [Z] (0) at (31.0bp,11.0bp) [draw=black,circle] {}; 
  \node[N]  (1) at (11.0bp,69.0bp) [draw=black,circle] {}; 
  \node[N] [Z] (2) at (91.0bp,127.0bp) [draw=black,circle] {}; 
  \node[N] [Z] (6) at (51.0bp,69.0bp) [draw=black,circle] {}; 
  \node[N]  (3) at (71.0bp,243.0bp) [draw=black,circle] {}; 
  \node[N]  (4) at (51.0bp,185.0bp) [draw=black,circle] {}; 
  \node[N]  (5) at (91.0bp,185.0bp) [draw=black,circle] {}; 
  \node[N]  (9) at (51.0bp,127.0bp) [draw=black,circle] {}; 
  \node[N]  (7) at (11.0bp,185.0bp) [draw=black,circle] {}; 
  \node[N]  (8) at (11.0bp,127.0bp) [draw=black,circle] {}; 
  \draw [] (1) ..controls (18.048bp,48.265bp) and (23.836bp,32.06bp)  .. (0);
  \draw [] (2) ..controls (77.191bp,106.67bp) and (64.554bp,88.976bp)  .. (6);
  \draw [] (3) ..controls (63.952bp,222.27bp) and (58.164bp,206.06bp)  .. (4);
  \draw [] (3) ..controls (78.048bp,222.27bp) and (83.836bp,206.06bp)  .. (5);
  \draw [] (4) ..controls (64.809bp,164.67bp) and (77.446bp,146.98bp)  .. (2);
  \draw [] (4) ..controls (51.0bp,163.92bp) and (51.0bp,148.47bp)  .. (9);
  \draw [] (5) ..controls (91.0bp,163.92bp) and (91.0bp,148.47bp)  .. (2);
  \draw [] (6) ..controls (43.952bp,48.265bp) and (38.164bp,32.06bp)  .. (0);
  \draw [] (7) ..controls (11.0bp,163.92bp) and (11.0bp,148.47bp)  .. (8);
  \draw [] (7) ..controls (24.809bp,164.67bp) and (37.446bp,146.98bp)  .. (9);
  \draw [] (8) ..controls (24.809bp,106.67bp) and (37.446bp,88.976bp)  .. (6);
  \draw [] (9) ..controls (51.0bp,105.92bp) and (51.0bp,90.474bp)  .. (6);
\end{tikzpicture}
\quad
\begin{tikzpicture}[>=latex,line join=bevel,  scale = .2]
\node[N] [Z] (0) at (51.0bp,11.0bp) [draw=black,circle] {}; 
  \node[N]  (1) at (11.0bp,69.0bp) [draw=black,circle] {}; 
  \node[N] [Z] (2) at (71.0bp,127.0bp) [draw=black,circle] {}; 
  \node[N] [Z] (6) at (91.0bp,69.0bp) [draw=black,circle] {}; 
  \node[N] [Z] (8) at (51.0bp,69.0bp) [draw=black,circle] {}; 
  \node[N]  (3) at (71.0bp,243.0bp) [draw=black,circle] {}; 
  \node[N]  (4) at (51.0bp,185.0bp) [draw=black,circle] {}; 
  \node[N]  (5) at (91.0bp,185.0bp) [draw=black,circle] {}; 
  \node[N]  (7) at (111.0bp,127.0bp) [draw=black,circle] {}; 
  \node[N]  (9) at (31.0bp,127.0bp) [draw=black,circle] {}; 
  \draw [] (1) ..controls (24.809bp,48.668bp) and (37.446bp,30.976bp)  .. (0);
  \draw [] (2) ..controls (78.048bp,106.27bp) and (83.836bp,90.06bp)  .. (6);
  \draw [] (2) ..controls (63.952bp,106.27bp) and (58.164bp,90.06bp)  .. (8);
  \draw [] (3) ..controls (63.952bp,222.27bp) and (58.164bp,206.06bp)  .. (4);
  \draw [] (3) ..controls (78.048bp,222.27bp) and (83.836bp,206.06bp)  .. (5);
  \draw [] (4) ..controls (58.048bp,164.27bp) and (63.836bp,148.06bp)  .. (2);
  \draw [] (5) ..controls (83.952bp,164.27bp) and (78.164bp,148.06bp)  .. (2);
  \draw [] (6) ..controls (77.191bp,48.668bp) and (64.554bp,30.976bp)  .. (0);
  \draw [] (7) ..controls (103.95bp,106.27bp) and (98.164bp,90.06bp)  .. (6);
  \draw [] (8) ..controls (51.0bp,47.921bp) and (51.0bp,32.474bp)  .. (0);
  \draw [] (9) ..controls (38.048bp,106.27bp) and (43.836bp,90.06bp)  .. (8);
\end{tikzpicture}
\quad
\begin{tikzpicture}[>=latex,line join=bevel,  scale = .2]
\node[N] [Z] (0) at (51.0bp,11.0bp) [draw=black,circle] {}; 
  \node[N]  (1) at (31.0bp,69.0bp) [draw=black,circle] {}; 
  \node[N] [Z] (2) at (31.0bp,185.0bp) [draw=black,circle] {}; 
  \node[N] [Z] (6) at (51.0bp,127.0bp) [draw=black,circle] {}; 
  \node[N]  (3) at (31.0bp,301.0bp) [draw=black,circle] {}; 
  \node[N]  (4) at (11.0bp,243.0bp) [draw=black,circle] {}; 
  \node[N]  (5) at (51.0bp,243.0bp) [draw=black,circle] {}; 
  \node[N] [Z] (8) at (71.0bp,69.0bp) [draw=black,circle] {}; 
  \node[N]  (7) at (71.0bp,185.0bp) [draw=black,circle] {}; 
  \node[N]  (9) at (91.0bp,127.0bp) [draw=black,circle] {}; 
  \draw [] (1) ..controls (38.048bp,48.265bp) and (43.836bp,32.06bp)  .. (0);
  \draw [] (2) ..controls (38.048bp,164.27bp) and (43.836bp,148.06bp)  .. (6);
  \draw [] (3) ..controls (23.952bp,280.27bp) and (18.164bp,264.06bp)  .. (4);
  \draw [] (3) ..controls (38.048bp,280.27bp) and (43.836bp,264.06bp)  .. (5);
  \draw [] (4) ..controls (18.048bp,222.27bp) and (23.836bp,206.06bp)  .. (2);
  \draw [] (5) ..controls (43.952bp,222.27bp) and (38.164bp,206.06bp)  .. (2);
  \draw [] (6) ..controls (58.048bp,106.27bp) and (63.836bp,90.06bp)  .. (8);
  \draw [] (7) ..controls (63.952bp,164.27bp) and (58.164bp,148.06bp)  .. (6);
  \draw [] (8) ..controls (63.952bp,48.265bp) and (58.164bp,32.06bp)  .. (0);
  \draw [] (9) ..controls (83.952bp,106.27bp) and (78.164bp,90.06bp)  .. (8);
\end{tikzpicture}
\quad
\begin{tikzpicture}[>=latex,line join=bevel,  scale = .2]
\node[N] [Z] (0) at (51.0bp,11.0bp) [draw=black,circle] {}; 
  \node[N]  (1) at (31.0bp,69.0bp) [draw=black,circle] {}; 
  \node[N] [Z] (2) at (51.0bp,127.0bp) [draw=black,circle] {}; 
  \node[N] [Z] (6) at (71.0bp,69.0bp) [draw=black,circle] {}; 
  \node[N]  (3) at (31.0bp,243.0bp) [draw=black,circle] {}; 
  \node[N]  (4) at (51.0bp,185.0bp) [draw=black,circle] {}; 
  \node[N]  (5) at (11.0bp,185.0bp) [draw=black,circle] {}; 
  \node[N]  (7) at (91.0bp,127.0bp) [draw=black,circle] {}; 
  \node[N] [Z] (8) at (91.0bp,185.0bp) [draw=black,circle] {}; 
  \node[N]  (9) at (81.0bp,243.0bp) [draw=black,circle] {}; 
  \draw [] (1) ..controls (38.048bp,48.265bp) and (43.836bp,32.06bp)  .. (0);
  \draw [] (2) ..controls (58.048bp,106.27bp) and (63.836bp,90.06bp)  .. (6);
  \draw [] (3) ..controls (38.048bp,222.27bp) and (43.836bp,206.06bp)  .. (4);
  \draw [] (3) ..controls (23.952bp,222.27bp) and (18.164bp,206.06bp)  .. (5);
  \draw [] (4) ..controls (51.0bp,163.92bp) and (51.0bp,148.47bp)  .. (2);
  \draw [] (5) ..controls (24.809bp,164.67bp) and (37.446bp,146.98bp)  .. (2);
  \draw [] (6) ..controls (63.952bp,48.265bp) and (58.164bp,32.06bp)  .. (0);
  \draw [] (7) ..controls (83.952bp,106.27bp) and (78.164bp,90.06bp)  .. (6);
  \draw [] (8) ..controls (77.191bp,164.67bp) and (64.554bp,146.98bp)  .. (2);
  \draw [] (9) ..controls (70.615bp,222.61bp) and (61.51bp,205.62bp)  .. (4);
  \draw [] (9) ..controls (84.608bp,221.8bp) and (87.413bp,206.09bp)  .. (8);
\end{tikzpicture}
\quad
\begin{tikzpicture}[>=latex,line join=bevel,  scale = .2]
\node[N] [Z] (0) at (51.0bp,11.0bp) [draw=black,circle] {}; 
  \node[N]  (1) at (11.0bp,69.0bp) [draw=black,circle] {}; 
  \node[N] [Z] (2) at (51.0bp,69.0bp) [draw=black,circle] {}; 
  \node[N]  (3) at (31.0bp,185.0bp) [draw=black,circle] {}; 
  \node[N]  (4) at (51.0bp,127.0bp) [draw=black,circle] {}; 
  \node[N]  (5) at (11.0bp,127.0bp) [draw=black,circle] {}; 
  \node[N] [Z] (6) at (91.0bp,127.0bp) [draw=black,circle] {}; 
  \node[N] [Z] (8) at (91.0bp,69.0bp) [draw=black,circle] {}; 
  \node[N]  (7) at (71.0bp,185.0bp) [draw=black,circle] {}; 
  \node[N]  (9) at (131.0bp,127.0bp) [draw=black,circle] {}; 
  \draw [] (1) ..controls (24.809bp,48.668bp) and (37.446bp,30.976bp)  .. (0);
  \draw [] (2) ..controls (51.0bp,47.921bp) and (51.0bp,32.474bp)  .. (0);
  \draw [] (3) ..controls (38.048bp,164.27bp) and (43.836bp,148.06bp)  .. (4);
  \draw [] (3) ..controls (23.952bp,164.27bp) and (18.164bp,148.06bp)  .. (5);
  \draw [] (4) ..controls (51.0bp,105.92bp) and (51.0bp,90.474bp)  .. (2);
  \draw [] (5) ..controls (24.809bp,106.67bp) and (37.446bp,88.976bp)  .. (2);
  \draw [] (6) ..controls (77.191bp,106.67bp) and (64.554bp,88.976bp)  .. (2);
  \draw [] (6) ..controls (91.0bp,105.92bp) and (91.0bp,90.474bp)  .. (8);
  \draw [] (7) ..controls (63.952bp,164.27bp) and (58.164bp,148.06bp)  .. (4);
  \draw [] (7) ..controls (78.048bp,164.27bp) and (83.836bp,148.06bp)  .. (6);
  \draw [] (8) ..controls (77.191bp,48.668bp) and (64.554bp,30.976bp)  .. (0);
  \draw [] (9) ..controls (117.19bp,106.67bp) and (104.55bp,88.976bp)  .. (8);
\end{tikzpicture}
\quad
\begin{tikzpicture}[>=latex,line join=bevel,  scale = .2]
\node[N] [Z] (0) at (71.0bp,11.0bp) [draw=black,circle] {}; 
  \node[N]  (1) at (11.0bp,69.0bp) [draw=black,circle] {}; 
  \node[N] [Z] (2) at (71.0bp,127.0bp) [draw=black,circle] {}; 
  \node[N] [Z] (4) at (51.0bp,69.0bp) [draw=black,circle] {}; 
  \node[N] [Z] (6) at (91.0bp,69.0bp) [draw=black,circle] {}; 
  \node[N] [Z] (8) at (131.0bp,69.0bp) [draw=black,circle] {}; 
  \node[N]  (3) at (71.0bp,185.0bp) [draw=black,circle] {}; 
  \node[N]  (5) at (31.0bp,127.0bp) [draw=black,circle] {}; 
  \node[N]  (7) at (111.0bp,127.0bp) [draw=black,circle] {}; 
  \node[N]  (9) at (151.0bp,127.0bp) [draw=black,circle] {}; 
  \draw [] (1) ..controls (30.406bp,49.887bp) and (51.89bp,29.836bp)  .. (0);
  \draw [] (2) ..controls (63.952bp,106.27bp) and (58.164bp,90.06bp)  .. (4);
  \draw [] (2) ..controls (78.048bp,106.27bp) and (83.836bp,90.06bp)  .. (6);
  \draw [] (2) ..controls (90.406bp,107.89bp) and (111.89bp,87.836bp)  .. (8);
  \draw [] (3) ..controls (71.0bp,163.92bp) and (71.0bp,148.47bp)  .. (2);
  \draw [] (4) ..controls (58.048bp,48.265bp) and (63.836bp,32.06bp)  .. (0);
  \draw [] (5) ..controls (38.048bp,106.27bp) and (43.836bp,90.06bp)  .. (4);
  \draw [] (6) ..controls (83.952bp,48.265bp) and (78.164bp,32.06bp)  .. (0);
  \draw [] (7) ..controls (103.95bp,106.27bp) and (98.164bp,90.06bp)  .. (6);
  \draw [] (8) ..controls (111.59bp,49.887bp) and (90.11bp,29.836bp)  .. (0);
  \draw [] (9) ..controls (143.95bp,106.27bp) and (138.16bp,90.06bp)  .. (8);
\end{tikzpicture}
\quad
\begin{tikzpicture}[>=latex,line join=bevel,  scale = .2]
\node[N] [Z] (0) at (51.0bp,11.0bp) [draw=black,circle] {}; 
  \node[N]  (1) at (11.0bp,69.0bp) [draw=black,circle] {}; 
  \node[N] [Z] (2) at (71.0bp,185.0bp) [draw=black,circle] {}; 
  \node[N] [Z] (4) at (91.0bp,127.0bp) [draw=black,circle] {}; 
  \node[N] [Z] (6) at (51.0bp,127.0bp) [draw=black,circle] {}; 
  \node[N]  (3) at (71.0bp,243.0bp) [draw=black,circle] {}; 
  \node[N]  (5) at (111.0bp,185.0bp) [draw=black,circle] {}; 
  \node[N] [Z] (8) at (51.0bp,69.0bp) [draw=black,circle] {}; 
  \node[N]  (7) at (31.0bp,185.0bp) [draw=black,circle] {}; 
  \node[N]  (9) at (11.0bp,127.0bp) [draw=black,circle] {}; 
  \draw [] (1) ..controls (24.809bp,48.668bp) and (37.446bp,30.976bp)  .. (0);
  \draw [] (2) ..controls (78.048bp,164.27bp) and (83.836bp,148.06bp)  .. (4);
  \draw [] (2) ..controls (63.952bp,164.27bp) and (58.164bp,148.06bp)  .. (6);
  \draw [] (3) ..controls (71.0bp,221.92bp) and (71.0bp,206.47bp)  .. (2);
  \draw [] (4) ..controls (85.07bp,102.86bp) and (78.388bp,78.359bp)  .. (71.0bp,58.0bp) .. controls (66.307bp,45.066bp) and (59.788bp,30.575bp)  .. (0);
  \draw [] (5) ..controls (103.95bp,164.27bp) and (98.164bp,148.06bp)  .. (4);
  \draw [] (6) ..controls (51.0bp,105.92bp) and (51.0bp,90.474bp)  .. (8);
  \draw [] (7) ..controls (38.048bp,164.27bp) and (43.836bp,148.06bp)  .. (6);
  \draw [] (8) ..controls (51.0bp,47.921bp) and (51.0bp,32.474bp)  .. (0);
  \draw [] (9) ..controls (24.809bp,106.67bp) and (37.446bp,88.976bp)  .. (8);
\end{tikzpicture}
\quad
\begin{tikzpicture}[>=latex,line join=bevel,  scale = .2]
\node[N] [Z] (0) at (51.0bp,11.0bp) [draw=black,circle] {}; 
  \node[N]  (1) at (31.0bp,69.0bp) [draw=black,circle] {}; 
  \node[N] [Z] (2) at (51.0bp,185.0bp) [draw=black,circle] {}; 
  \node[N] [Z] (4) at (71.0bp,127.0bp) [draw=black,circle] {}; 
  \node[N] [Z] (6) at (31.0bp,127.0bp) [draw=black,circle] {}; 
  \node[N]  (3) at (51.0bp,243.0bp) [draw=black,circle] {}; 
  \node[N] [Z] (8) at (71.0bp,69.0bp) [draw=black,circle] {}; 
  \node[N]  (5) at (91.0bp,185.0bp) [draw=black,circle] {}; 
  \node[N]  (7) at (11.0bp,185.0bp) [draw=black,circle] {}; 
  \node[N]  (9) at (111.0bp,127.0bp) [draw=black,circle] {}; 
  \draw [] (1) ..controls (38.048bp,48.265bp) and (43.836bp,32.06bp)  .. (0);
  \draw [] (2) ..controls (58.048bp,164.27bp) and (63.836bp,148.06bp)  .. (4);
  \draw [] (2) ..controls (43.952bp,164.27bp) and (38.164bp,148.06bp)  .. (6);
  \draw [] (3) ..controls (51.0bp,221.92bp) and (51.0bp,206.47bp)  .. (2);
  \draw [] (4) ..controls (71.0bp,105.92bp) and (71.0bp,90.474bp)  .. (8);
  \draw [] (5) ..controls (83.952bp,164.27bp) and (78.164bp,148.06bp)  .. (4);
  \draw [] (6) ..controls (44.809bp,106.67bp) and (57.446bp,88.976bp)  .. (8);
  \draw [] (7) ..controls (18.048bp,164.27bp) and (23.836bp,148.06bp)  .. (6);
  \draw [] (8) ..controls (63.952bp,48.265bp) and (58.164bp,32.06bp)  .. (0);
  \draw [] (9) ..controls (97.191bp,106.67bp) and (84.554bp,88.976bp)  .. (8);
\end{tikzpicture}
\quad
\begin{tikzpicture}[>=latex,line join=bevel,  scale = .2]
\node[N] [Z] (0) at (51.0bp,11.0bp) [draw=black,circle] {}; 
  \node[N]  (1) at (11.0bp,69.0bp) [draw=black,circle] {}; 
  \node[N] [Z] (2) at (71.0bp,127.0bp) [draw=black,circle] {}; 
  \node[N] [Z] (4) at (91.0bp,69.0bp) [draw=black,circle] {}; 
  \node[N] [Z] (6) at (51.0bp,69.0bp) [draw=black,circle] {}; 
  \node[N]  (3) at (51.0bp,185.0bp) [draw=black,circle] {}; 
  \node[N]  (5) at (111.0bp,127.0bp) [draw=black,circle] {}; 
  \node[N]  (7) at (31.0bp,127.0bp) [draw=black,circle] {}; 
  \node[N] [Z] (8) at (91.0bp,185.0bp) [draw=black,circle] {}; 
  \node[N]  (9) at (91.0bp,243.0bp) [draw=black,circle] {}; 
  \draw [] (1) ..controls (24.809bp,48.668bp) and (37.446bp,30.976bp)  .. (0);
  \draw [] (2) ..controls (78.048bp,106.27bp) and (83.836bp,90.06bp)  .. (4);
  \draw [] (2) ..controls (63.952bp,106.27bp) and (58.164bp,90.06bp)  .. (6);
  \draw [] (3) ..controls (58.048bp,164.27bp) and (63.836bp,148.06bp)  .. (2);
  \draw [] (4) ..controls (77.191bp,48.668bp) and (64.554bp,30.976bp)  .. (0);
  \draw [] (5) ..controls (103.95bp,106.27bp) and (98.164bp,90.06bp)  .. (4);
  \draw [] (6) ..controls (51.0bp,47.921bp) and (51.0bp,32.474bp)  .. (0);
  \draw [] (7) ..controls (38.048bp,106.27bp) and (43.836bp,90.06bp)  .. (6);
  \draw [] (8) ..controls (83.952bp,164.27bp) and (78.164bp,148.06bp)  .. (2);
  \draw [] (9) ..controls (91.0bp,221.92bp) and (91.0bp,206.47bp)  .. (8);
\end{tikzpicture}
\quad
\begin{tikzpicture}[>=latex,line join=bevel,  scale = .2]
\node[N] [Z] (0) at (51.0bp,11.0bp) [draw=black,circle] {}; 
  \node[N]  (1) at (31.0bp,69.0bp) [draw=black,circle] {}; 
  \node[N] [Z] (2) at (11.0bp,243.0bp) [draw=black,circle] {}; 
  \node[N] [Z] (4) at (31.0bp,185.0bp) [draw=black,circle] {}; 
  \node[N]  (3) at (11.0bp,301.0bp) [draw=black,circle] {}; 
  \node[N] [Z] (6) at (51.0bp,127.0bp) [draw=black,circle] {}; 
  \node[N]  (5) at (51.0bp,243.0bp) [draw=black,circle] {}; 
  \node[N] [Z] (8) at (71.0bp,69.0bp) [draw=black,circle] {}; 
  \node[N]  (7) at (71.0bp,185.0bp) [draw=black,circle] {}; 
  \node[N]  (9) at (91.0bp,127.0bp) [draw=black,circle] {}; 
  \draw [] (1) ..controls (38.048bp,48.265bp) and (43.836bp,32.06bp)  .. (0);
  \draw [] (2) ..controls (18.048bp,222.27bp) and (23.836bp,206.06bp)  .. (4);
  \draw [] (3) ..controls (11.0bp,279.92bp) and (11.0bp,264.47bp)  .. (2);
  \draw [] (4) ..controls (38.048bp,164.27bp) and (43.836bp,148.06bp)  .. (6);
  \draw [] (5) ..controls (43.952bp,222.27bp) and (38.164bp,206.06bp)  .. (4);
  \draw [] (6) ..controls (58.048bp,106.27bp) and (63.836bp,90.06bp)  .. (8);
  \draw [] (7) ..controls (63.952bp,164.27bp) and (58.164bp,148.06bp)  .. (6);
  \draw [] (8) ..controls (63.952bp,48.265bp) and (58.164bp,32.06bp)  .. (0);
  \draw [] (9) ..controls (83.952bp,106.27bp) and (78.164bp,90.06bp)  .. (8);
\end{tikzpicture}
    \caption{All McCarthy algebras up to size $10$. Each model $\m A$ is presented by its induced $s\ell$-decorated poset, where the nodes decorated by $\circ$ indicate membership in $\SL_{\m A}$, and the operations are computed via \cref{rem:ab}.}
    \label{fig:models10}
\end{figure}

A curiosity arises by inspecting \Cref{fig:models10}: no two McCarthy algebras (up to size 10) share the same induced poset; i.e., seemingly, the decoration is superfluous (up to isomorphism of {\iname}s). 
\begin{openproblem}
    If two McCarthy algebras are order-isomorphic, are the algebras isomorphic?
\end{openproblem}

\section{The lattice of subvarieties of {\iname}s}\label{sec: subvarieties}
In this section, we will return to the algebras described in \cref{t:3elem}, the varieties they generate, and their place in the lower-levels within the lattice of subvarieties of all {\iname}s (see \Cref{fig:subvars}).

Let us first recall that the subvariety of {\iname}s axiomatized by the identity $x\approx\nc{x}$ is term-equivalent to the variety of unital bands (equiv., idempotent monoids), which we will denote by $\mathsf{{uBand}}$. 
The lattice of subvarieties of $\mathsf{uBand}$ has been fully characterized in \cite{VarietiesBandMonoids}. The lower-levels of $\mathsf{uBand}$ find the variety of bounded semilattices $\mathsf{SL}$ as the unique atom, which itself has two covers, the varieties of \emph{left-regular} and \emph{right-regular} unital bands, here denoted as $\mathsf{LB}$ and $\mathsf{RB}$, respectively (their join being the variety of \emph{regular} unital bands, i.e., those satisfying $zxzyz \approx xzyz$). 
The following proposition therefore is immediate from this characterization, and from the fact that the $3$-element semilattice $\m C_3$ is commutative, and $\m L_3$ is left-regular and $\m R_3$ is right-regular with neither of the two commutative (see Figure \ref{f:ten-3elem}). 
 \begin{proposition}\label{t:ubands}
     $\mathsf{SL}=\mathsf{V}(\m{C_3})$,
     $\mathsf{LB}=\mathsf{V}(\m{L_3})$, and $\mathsf{RB}=\mathsf{V}(\m{R_3})$.
 \end{proposition}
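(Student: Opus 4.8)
The plan is to deduce \Cref{t:ubands} as a direct consequence of the classification of the subvariety lattice of $\mathsf{uBand}$ established in \cite{VarietiesBandMonoids}, using only three ingredients: (i) the identity $x\approx\nc{x}$ singles out $\mathsf{uBand}$ as a subvariety of {\iname}s, so the three algebras $\m C_3$, $\m L_3$, $\m R_3$ (whose involution is the identity, cf. \Cref{f:3elem} and the surrounding discussion) are genuinely unital bands; (ii) the description of the bottom of the subvariety lattice of $\mathsf{uBand}$, in which $\mathsf{SL}$ is the unique atom and $\mathsf{LB}$, $\mathsf{RB}$ are precisely its two covers; and (iii) the concrete combinatorial facts recorded in \Cref{f:ten-3elem} and \Cref{f:3elemIdM} that $\m C_3$ is a (non-trivial) semilattice, $\m L_3$ is left-regular but not commutative, and $\m R_3$ is right-regular but not commutative.

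First I would argue $\mathsf{SL}=\mathsf{V}(\m C_3)$. Since $\m C_3$ is the $3$-element bounded semilattice chain, it lies in $\mathsf{SL}$, so $\mathsf{V}(\m C_3)\subseteq\mathsf{SL}$. Conversely $\mathsf{V}(\m C_3)$ is non-trivial (it contains a $3$-element algebra), hence it contains an atom of the subvariety lattice of $\mathsf{uBand}$; as $\mathsf{SL}$ is the unique atom, $\mathsf{SL}\subseteq\mathsf{V}(\m C_3)$. Alternatively, one can simply note that the two-element semilattice $\m C_2$ is a subalgebra (or quotient) of $\m C_3$ and $\m C_2$ generates $\mathsf{SL}$. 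Either way the two varieties coincide.

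Next I would treat $\mathsf{LB}=\mathsf{V}(\m L_3)$ (the case of $\mathsf{RB}=\mathsf{V}(\m R_3)$ being entirely symmetric, obtained by the mirror/opposite construction, or by \Cref{t:DMduals}-style dualization between left- and right-regularity). The operation table $*_\ell$ in \Cref{f:3elemIdM} shows $\m L_3$ satisfies \ref{eq:leftreg} and hence $\m L_3\in\mathsf{LB}$, giving $\mathsf{V}(\m L_3)\subseteq\mathsf{LB}$. For the reverse inclusion, $\mathsf{V}(\m L_3)$ properly contains $\mathsf{SL}$: it contains $\mathsf{SL}$ since $\m C_2$ is a quotient of $\m L_3$ (via the congruence collapsing $\{a,b\}$, cf. \cref{fact:noinverses}), and the containment is proper because $\m L_3$ is not commutative (as $a*_\ell b = a \neq b = b *_\ell a$), so $\m L_3\notin\mathsf{SL}$. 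Therefore $\mathsf{SL}\subsetneq\mathsf{V}(\m L_3)\subseteq\mathsf{LB}$. Since $\mathsf{LB}$ is a cover of $\mathsf{SL}$ in the subvariety lattice of $\mathsf{uBand}$, there is no proper intermediate variety, forcing $\mathsf{V}(\m L_3)=\mathsf{LB}$. The same reasoning — $\m R_3$ right-regular, $\m C_2$ a quotient of $\m R_3$, $\m R_3$ not commutative, $\mathsf{RB}$ a cover of $\mathsf{SL}$ — yields $\mathsf{V}(\m R_3)=\mathsf{RB}$.

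The only real obstacle is citing the structure theorem of \cite{VarietiesBandMonoids} at precisely the right granularity: one must be sure that the paper's description indeed gives $\mathsf{SL}$ as the unique atom of the subvariety lattice of $\mathsf{uBand}$ and $\mathsf{LB}$, $\mathsf{RB}$ as exactly its covers, with no other small varieties sneaking in between. Granting that (as the excerpt already asserts in the paragraph preceding the proposition), the argument is a short ``squeeze'' between an atom and its cover, and no further band-variety combinatorics is needed.
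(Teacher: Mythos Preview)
Your proposal is correct and follows essentially the same approach as the paper, which declares the proposition ``immediate'' from the subvariety-lattice description in \cite{VarietiesBandMonoids} together with the commutativity/left-regularity/right-regularity facts about $\m C_3$, $\m L_3$, $\m R_3$; you simply spell out the squeeze argument that makes this immediate. One minor quibble: the relevant figure for these three algebras is \Cref{f:3elemIdM} (and \Cref{f:ten-3elem}), not \Cref{f:3elem}, and the aside invoking \Cref{t:DMduals} is off-target since that proposition concerns De~Morgan duality rather than the left/right mirror, but neither affects the validity of your argument.
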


We now recall and inspect those algebras in which the unit is an involution fixed-point, but $\nc{}$ is not the (redundant) identity map.
Recall that $\m C^\mathsf{s}_3$ is the {\iname}-reduct of the $3$-element Sugihara monoid, and let $\mathsf{C^s_3}$ denote the variety it generates. 
Also recall the left- and right- variants $\m L^\mathsf{s}_3$ and $\m R^\mathsf{s}_3$ (see Figure \ref{f:ten-3elem}); let us denote the respective varieties they generate by $\mathsf{L^s_3}$ and $\mathsf{R^s_3}$. 

First we show that $\mathsf{C^s_3}$ is a cover for $\mathsf{SL}$,
remarking first about some identities holding for $\mathsf{C^s_3}$.

\begin{remark}\label{rem:Cs3 idens}
    Recall that $\m C^\mathsf{s}_3$ is the three-element {\iname} with multiplication $*_{\mathsf{c}}$ in \Cref{f:3elemIdM} and involution $\prime_{\miden}$, hence it is isomorphic to the algebra with operations given below
    $$ 
    \begin{array}{l|l}
        &\prime   \\\hline
         \miden &\miden\\
         c & \nc{c}\\
         \nc{c} & c
    \end{array}
    \qquad
    \begin{array}{c|lll}
    \jc & \miden & c  & \nc{c}  \\\hline
    \miden & \miden  & c  & \nc{c}  \\
    c  & c  & c  & c  \\
    \nc{c}  & \nc{c}  & c  & \nc{c} 
    \end{array}
    \qquad
    \begin{array}{c|lll}
    \mc & \miden & c  & \nc{c}  \\\hline
    \miden & \miden  & c  & \nc{c}  \\
    c  & c  & c  & \nc{c}  \\
    \nc{c}  & \nc{c}  & \nc{c}  & \nc{c} 
    \end{array}    
    $$
    Clearly $\m C^\mathsf{s}_3$ is commutative and satisfies $\miden \approx \nc{\miden}$. 
    Moreover, $\m C^\mathsf{s}_3$ satisfies the following identities:
    \begin{equation}\label{eq: Cs}
        \text{(a)}~
        \fc{x} \approx \fc{\nc{x}}\approx \nc{\tc{\nc{x}}} \approx  \nc{\tc{x}}
        \qquad
        \text{(b)}~
        \tc{x}\mc x \approx x 
        \qquad
        \text{(c)}~
        \fc{x}\mc x \approx \fc{x}
    \end{equation}
    Indeed, it is easy to see that the identities above hold for $x=\miden$ as $\tc{\miden} =\miden = \fc{\miden}$ and $\miden = \nc{\miden}$, the latter of which entailing that $\miden$ is the unit for both $\mc$ and $\jc$.
    That they hold for $x\in \{c,\nc{c}\}$ follows as $\tc{c}:=c\jc \nc{c} = c $ and $\fc{c}=c\mc\nc{c}= \nc{c}$, so (a) holds by commutativity and (b,c) hold since $\mc$ is idempotent, $c\mc \nc{c} = \nc{c}$, and $c\jc \nc{c} = c$. 
\end{remark}

\begin{proposition}\label{t:C3cover}
    The variety $\mathsf{C^s_3}$ forms a cover for $\mathsf{SL}$ in the lattice of subvarieties of {\iname}s.
\end{proposition}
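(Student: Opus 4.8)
The plan is to establish the two ingredients of a covering relation: that $\mathsf{SL}$ is properly contained in $\mathsf{C^s_3}$, and that there is no variety of {\iname}s lying strictly between them.

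\textbf{Containment.} First I would exhibit $\m C_2$ (the two-element bounded semilattice generating $\mathsf{SL}$) as a quotient of $\m C^\mathsf{s}_3$: the partition $\{\{\miden\},\{c,\nc c\}\}$ is a congruence of $\m C^\mathsf{s}_3$ (cf.\ \cref{fact:noinverses}), and the quotient is a two-element {\iname} whose unit is an involution fixed-point and whose only non-unit element is fixed by $\nc{}$, i.e.\ it is $\m C_2$. Hence $\mathsf{SL}=\mathsf{V}(\m C_2)\subseteq\mathsf{V}(\m C^\mathsf{s}_3)=\mathsf{C^s_3}$, and this inclusion is proper since $\m C^\mathsf{s}_3\not\models x\approx\nc x$ while $\mathsf{SL}\models x\approx\nc x$.

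\textbf{Nothing strictly in between.} Let $\mathsf{V}$ be a variety with $\mathsf{SL}\subseteq\mathsf{V}\subseteq\mathsf{C^s_3}$ and $\mathsf{V}\neq\mathsf{SL}$; I will show $\m C^\mathsf{s}_3\in\mathsf{V}$, which forces $\mathsf{V}=\mathsf{C^s_3}$. Every member of $\mathsf{V}$ is commutative (commutativity being an identity of $\m C^\mathsf{s}_3$), so if $\mathsf{V}$ satisfied $x\approx\nc x$ it would consist of commutative unital bands, i.e.\ of bounded semilattices, whence $\mathsf{V}\subseteq\mathsf{SL}$, contradicting $\mathsf{SL}\subsetneq\mathsf{V}$; therefore there is $\m A\in\mathsf{V}$ and $a\in A$ with $a\neq\nc a$. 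Put $t:=\tc a=a\jc\nc a$ and $f:=\fc a=a\mc\nc a$; by \cref{rem:Cs3 idens}(a), $f=\nc t$, and every identity valid in $\m C^\mathsf{s}_3$ holds in $\mathsf{V}$ — in particular $\miden\approx\nc\miden$, $\tc x\mc x\approx x$, and $\tc{\tc x}\approx\tc x$ (the last being an immediate table check in $\m C^\mathsf{s}_3$). Using $f=\nc t$ together with $\tc{\tc x}\approx\tc x$ one verifies that $\{\miden,t,f\}$ is closed under $\jc,\mc,\nc{}$: for instance $t\jc f=t\jc\nc t=\tc t=\tc{\tc a}=t$ and $t\mc f=t\mc\nc t=\fc t=\nc{\tc t}=f$, and the remaining cases follow from idempotency, commutativity and $\miden$ being the unit. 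Thus $\{\miden,t,f\}$ carries a subalgebra $\m C\leq\m A$. The three elements are pairwise distinct: if $t=\miden$ then $a\jc\nc a=\miden$, so $a=\miden$ (the unit being $\leq_\jc$-least), whence $a=\nc a$, excluded; the case $f=\miden$ reduces to this via $t=\nc f$; and $t=f$ is impossible because in any {\iname} satisfying commutativity and $\tc x\mc x\approx x$ the assumption $\tc a=\fc a$ gives $\nc a=\tc a\mc\nc a=(a\mc\nc a)\mc\nc a=a\mc\nc a=\fc a=\nc{\tc a}$, hence $a=\tc a=\nc a$ by involutivity, again excluded. So $\m C$ is a three-element commutative {\iname} in which the unit is an involution fixed-point but $\nc{}$ is not the identity, and by the classification in \cref{t:3elem} we get $\m C\cong\m C^\mathsf{s}_3$. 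Since varieties are closed under subalgebras, $\m C^\mathsf{s}_3\in\mathsf{V}$, so $\mathsf{C^s_3}=\mathsf{V}(\m C^\mathsf{s}_3)\subseteq\mathsf{V}$, i.e.\ $\mathsf{V}=\mathsf{C^s_3}$. Combining the two parts gives that $\mathsf{C^s_3}$ covers $\mathsf{SL}$.

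The crux — the only step requiring care — is locating a copy of $\m C^\mathsf{s}_3$ inside an arbitrary non-semilattice member of $\mathsf{C^s_3}$. Everything else is routine, but this hinges on two observations: that $\{\miden,\tc a,\fc a\}$ is \emph{always} a subalgebra, which uses that the equation $\tc{\tc x}\approx\tc x$ propagates from $\m C^\mathsf{s}_3$ to all of $\mathsf{C^s_3}$; and the short lemma that $\tc a=\fc a$ can only happen when $a=\nc a$. After that, the isomorphism $\m C\cong\m C^\mathsf{s}_3$ is forced by the enumeration of three-element {\iname}s. A notable feature of this route is that it sidesteps any classification of the subdirectly irreducible members of $\mathsf{C^s_3}$; an alternative plan would be to classify those (obtaining just $\m C_2$ and $\m C^\mathsf{s}_3$) via congruence-distributivity and a Jónsson-type argument, but the subalgebra argument above is shorter and more elementary.
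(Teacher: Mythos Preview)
Your proof is correct and follows essentially the same route as the paper's: exhibit $\m C_2$ as a quotient of $\m C^\mathsf{s}_3$ for the containment, then in any member $\m A$ of a strictly larger subvariety pick $a\neq\nc a$ and show that $\{\miden,\tc a,\fc a\}$ is a three-element subalgebra isomorphic to $\m C^\mathsf{s}_3$. The only cosmetic differences are that you verify closure via the auxiliary identity $\tc{\tc x}\approx\tc x$ and rule out $t=\miden$ using \cref{fact:noinverses} for the $\jc$-monoid, whereas the paper computes $\tc a\mc\fc a=\fc a$ directly from \cref{rem:Cs3 idens}(b,c) and excludes $\miden\in\{\tc a,\fc a\}$ by noting it would force $\tc a=\fc a$; and you invoke the classification in \cref{t:3elem} for the isomorphism where the paper writes down the map explicitly.
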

\begin{proof}
    That $\mathsf{SL}$ is a subvariety of $\mathsf{C^s_3}$; it suffices to verify that $\m C_2$ is a member of $\in \mathsf{C^s_3}$, as it generates the variety $\mathsf{SL}$. 
    Indeed, the map $f: C^\mathsf{s}_3\to C_2$ defined via $\miden \mapsto \top$ and $c,\nc{c}\mapsto \bot$ is easily verified to be a homomorphism from $\m C^\mathsf{s}_3$ onto $\m C_2$. So $\m C_2\in \mathsf{H}(\m C^\mathsf{s}_3)\subseteq \mathsf{C^s_3}$, thus establishing our claim.

    To verify it is a cover, suppose $\mathcal{V}$ is a subvariety of $\mathsf{C^s_3}$ different from $\mathsf{SL}$; we will show $\mathcal{V}=\mathsf{C^s_3}$. 
    Since $\mathsf{SL}$ is properly contained in $\mathcal{V}$, there must exist a member $\m A$ in which $\nc{}$ is not the identity map. 
    So let us fix $a\in A$ such that $\nc{a}\neq a$.
    By \cref{rem:Cs3 idens}, $\m A$ satisfies $\miden \approx \nc{\miden}$ and the identities \ref{eq: Cs}(a,b,c) (as by the assumption $\mathcal{V}\subseteq \mathsf{C^s_3}$).
    We consider the set $C:=\{\miden, \tc{a},\fc{a} \}\subseteq A$: we will show that $C$ is (the universe of) a subalgebra of $\m A$ isomorphic with $\m C^\mathsf{s}_3$. 
    We first observe that $|C|=3$. 
    One the one hand, if $\tc{a}=\fc{a}$ were the case then, from the identities \ref{eq: Cs}(b,c), we find $a =\tc{a}\mc a = \fc{a} \mc a = \fc{a} $, so $a=\fc{a}$ and thus $\nc{a} = \nc{\fc{a}}$, but \ref{eq: Cs}(a) gives $\nc{\fc{a}} = \tc{a} = \fc{a} = a$ which yields $a=\nc{a}$, a contradiction. 
    On the other hand, $\miden\in\{\tc{a},\fc{a}\}$ would entail $\tc{a}=\fc{a}$ since $\miden = \nc{\miden}$ and $\fc{a}=\nc{\tc{a}}$, which we have just shown contradictory. 
    Hence the elements of $C$ are pairwise distinct. 
    That $C$ is closed under the operations follows from the fact that constant $\miden \in C$; it is closed under $\nc{}$ since $\nc{\miden}=\miden$ and $\tc{a} = \nc{\fc{a}}$ (by \ref{eq: Cs}a); and it is multiplicatively closed since $\miden$ is the identity, $\mc$ is idempotent, and $\tc{a}\mc\fc{a} = \fc{a} = \fc{a}\mc\tc{a}$ (by \ref{eq: Cs}(b,c)).
    Finally, the map defined from $\tc{a}\mapsto c$ (hence $\fc{a}\mapsto \nc{c}$) is an isomorphism from $\m A$ onto $\m C^\mathsf{s}_3$. So $\m C^\mathsf{s}_3 \in \mathsf{IS}(\m A)\subseteq \mathcal{V}$. 
    Therefore $\mathsf{C^s_3}$ covers $\mathsf{SL}$.
\end{proof}
\begin{remark}
    In fact, the above proof shows that the variety $\mathsf{C^s_3}$ is subsumed by any variety properly extending $\mathsf{SL}$ and satisfying $\miden\approx\nc{\miden}$ and the identities from Eq.~\ref{eq: Cs} in \Cref{rem:Cs3 idens}.
\end{remark}

\begin{proposition}\label{t:leftsug}
    $\mathsf{L^s_3}$ covers $\mathsf{LB}$, and similarly, $\mathsf{R^s_3}$ covers $\mathsf{RB}$, in the lattice of subvarieties of {\iname}s.
\end{proposition}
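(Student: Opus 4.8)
The plan is to follow the template of \cref{t:C3cover}, first proving that $\mathsf{L^s_3}$ covers $\mathsf{LB}$ and then deducing the statement for $\mathsf{R^s_3}$ over $\mathsf{RB}$ by a mirror-image argument. I would begin by recording the handful of identities of $\m L^\mathsf{s}_3$ that are needed: inspecting its table (multiplication $*_\ell$, involution $\prime_\miden$), $\m L^\mathsf{s}_3$ is left-regular, satisfies $\miden\approx\nc\miden$, and satisfies $x\mc\nc x\approx x$ --- equivalently $x\jc\nc x\approx x$, since $\m L^\mathsf{s}_3\models x\mc y\approx x\jc y$ by \cref{rem:+=*} --- and therefore also $\nc x\mc x\approx\nc x$ by substituting $\nc x$ for $x$. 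All of these hold throughout the variety $\mathsf{L^s_3}$.

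For the inclusion $\mathsf{LB}\subseteq\mathsf{L^s_3}$, recall from \cref{t:ubands} that $\mathsf{LB}=\mathsf{V}(\m L_3)$, so it suffices to exhibit $\m L_3\in\mathsf{V}(\m L^\mathsf{s}_3)$. This is the genuinely new point. Unlike the step $\mathsf{SL}\subseteq\mathsf{C^s_3}$ (a single quotient of $\m C^\mathsf{s}_3$), here $\m L_3$ is neither a subalgebra of a power of $\m L^\mathsf{s}_3$ (the only $\nc{}$-fixed element of $\m L^\mathsf{s}_3$ is $\miden$, so no power of $\m L^\mathsf{s}_3$ has a nontrivial $\nc{}$-fixed point, whereas $\nc{}$ is the identity on $\m L_3$) nor a quotient of $\m L^\mathsf{s}_3$ (whose only proper nontrivial quotient is $\m C_2$); so $\m L_3$ must be produced in $\mathsf{HS}((\m L^\mathsf{s}_3)^2)$. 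Concretely, let $\mathbf T$ be the subalgebra of $(\m L^\mathsf{s}_3)^2$ generated by $(a,a)$ and $(a,\nc a)$; a short finite computation shows that its universe is $\{(\miden,\miden),\alpha,\nc\alpha,\beta,\nc\beta\}$, where $\alpha=(a,a)$ and $\beta=(a,\nc a)$, and that the equivalence relation whose classes are $\{(\miden,\miden)\}$, $\{\alpha,\nc\alpha\}$, and $\{\beta,\nc\beta\}$ is a congruence $\theta$ on $\mathbf T$. In the quotient the two two-element classes are idempotent, each left-absorbing in the other, and $\nc{}$ collapses to the identity, so $\mathbf T/\theta\cong\m L_3$. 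Hence $\m L_3\in\mathsf{HSP}(\m L^\mathsf{s}_3)=\mathsf{L^s_3}$, and the inclusion $\mathsf{LB}\subseteq\mathsf{L^s_3}$ is proper since $\m L^\mathsf{s}_3\in\mathsf{L^s_3}\setminus\mathsf{LB}$ (it fails $x\approx\nc x$).

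For the covering I would argue exactly as in \cref{t:C3cover}. Let $\mathcal V$ be a subvariety of $\mathsf{L^s_3}$ strictly containing $\mathsf{LB}$; since left-regularity holds throughout $\mathsf{L^s_3}$, within $\mathsf{L^s_3}$ the variety $\mathsf{LB}$ is exactly the one defined by $x\approx\nc x$, so there is $\m A\in\mathcal V$ with an element $a$ satisfying $\nc a\neq a$. Using $\miden\approx\nc\miden$, the elements $\miden,a,\nc a$ are pairwise distinct, and idempotency, involutivity, and the identities $x\mc\nc x\approx x$ and $\nc x\mc x\approx\nc x$ force $\{\miden,a,\nc a\}$ to be closed under all the operations, with $\mc$ acting on $\{a,\nc a\}$ exactly as $*_\ell$ and $\nc{}$ trivial on $\miden$; that is, $\{\miden,a,\nc a\}$ is the universe of a subalgebra of $\m A$ isomorphic to $\m L^\mathsf{s}_3$. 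Therefore $\m L^\mathsf{s}_3\in\mathsf{IS}(\m A)\subseteq\mathcal V$, which forces $\mathcal V=\mathsf{L^s_3}$; hence $\mathsf{L^s_3}$ covers $\mathsf{LB}$. The statement for $\mathsf{R^s_3}$ then follows by a mirror-image argument: $\m R^\mathsf{s}_3=(\m L^\mathsf{s}_3)^\mathsf{op}$, $\mathsf{RB}$ is the mirror of $\mathsf{LB}$, and $\m M\mapsto\m M^\mathsf{op}$ commutes with $\mathsf H$, $\mathsf S$, $\mathsf P$ and is an involution on the lattice of subvarieties of {\iname}s, hence a lattice automorphism sending $\mathsf{L^s_3}\mapsto\mathsf{R^s_3}$ and $\mathsf{LB}\mapsto\mathsf{RB}$, which transports the covering just established (alternatively, one reruns the whole argument with $*_\ell$ replaced by $*_r$ and $x\mc\nc x\approx x$ by $x\mc\nc x\approx\nc x$).

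The main obstacle is the two-step realization of $\m L_3$ inside $\mathsf{HS}((\m L^\mathsf{s}_3)^2)$ used for $\mathsf{LB}\subseteq\mathsf{L^s_3}$: one must pin down the correct five-element subalgebra of $(\m L^\mathsf{s}_3)^2$ and the congruence on it whose quotient is $\m L_3$. The remaining ingredients --- the easy identities of $\m L^\mathsf{s}_3$ and the subalgebra-extraction in the covering step --- are routine and parallel to \cref{t:C3cover}.
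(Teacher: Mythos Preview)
Your proof is correct and follows the same two-step template as the paper: realize $\m L_3$ as a quotient of a five-element subalgebra of $(\m L^\mathsf{s}_3)^2$, then extract a copy of $\m L^\mathsf{s}_3$ inside any algebra in $\mathsf{L^s_3}$ containing a non-fixed element. Two differences are worth noting. First, the paper uses the same five-element subalgebra $\m B$ but quotients via the first projection $(x,y)\mapsto x$; as stated this actually lands in $\m L^\mathsf{s}_3$ rather than $\m L_3$ (since $\nc{(a,a)}=(b,b)\mapsto b$, whereas in $\m L_3$ the involution is the identity), so your congruence $\theta$ collapsing each element with its negation is the choice that genuinely yields $\m L_3$. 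Second, for the covering step the paper works with $\tc{a}$ and derives $\tc{a}\mc\nc{\tc{a}}=a\nc{a}\cdot\nc{a}a=a\nc{a}=\tc{a}$ via left-regularity, while you use $a$ directly with the identity $x\mc\nc{x}\approx x$; these are equivalent because $\mathsf{L^s_3}\models x\mc y\approx x\jc y$ forces $\tc{x}\approx\fc{x}\approx x$, so your argument is simply the streamlined version. The mirror-image reduction to $\mathsf{R^s_3}$ matches the paper's.
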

\begin{proof}
Due to symmetry the of the operations, i.e., $\jc_{\m R^\mathsf{s}_3} = \jc^\mathsf{op}_{\m L^\mathsf{s}_3}$, it suffices to establish the claims for one of these cases. 
First we show that $\mathsf{LB}\subseteq \mathsf{L^s_3}$. 
On the one hand, we consider the direct product $ \m L^\mathsf{s}_3\times \m L^\mathsf{s}_3$ and its subalgebra $\m B$ generated by the set $\{ (a,a),(a, b)\}$. It is easily verified that this subalgebra has five elements, namely $B= \{(\miden,\miden), (a,a),(b,b), (a,b), (b,a) \}$ and that the map $(x,y)\mapsto x$ is a homomorphism from $\m B$ onto $\m L_3$. This establishes $\mathsf{LB}\subseteq\mathsf{L^s_3}$, and by the same argument, $\mathsf{RB}\subseteq\mathsf{R^s_3}$.

Now, suppose $\mathcal{V}$ is given such that $\mathsf{LB}\subsetneq\mathcal{V}\subseteq\mathsf{L^s_3}$, and let $\m A$ be any member of $\mathcal{V}$ not contained in $\mathsf{LB}$. 
So $\m A$ contains an element $a$ such that $\nc{a}\neq a$. 
Consider the subalgebra $\m L\leq \m A$ generated by $\tc{a}$. By assumption, $\m A \in \mathsf{L^s_3}$, and so $\m A$ must satisfy \ref{eq:leftreg} and the identity $xy\approx x\jc y$; the latter of which entailing $\tc{x}\approx \fc{x}$ holds as well. 
Now, as $\fc{a} = \tc{a} \in L$, its negation $\nc{\fc{a}} = \tc{\nc{a}} $ is a member of $L$. 
It follows that
$$\tc{a}\mc\nc{\tc{a}} = \tc{a}\mc \fc{\nc{a}} = \fc{a}\mc \fc{\nc{a}} =  a\nc{a} \mc \nc{a}a = a\nc{a} = \fc{a} = \tc{a}, $$
where the last equality follows from left-regularity. 
Similarly, $\nc{\tc{a}}\tc{a} = \nc{\tc{a}}$. 
Since $\nc{\miden} = \miden$ holds, it follows that the subalgebra  $\m L$ consists of exactly three element, $\miden$, $\tc{a}$, and $\nc{\tc{a}}$. 
It is easily verified that $\m L$ is isomorphic with $\m L^\mathsf{s}_3$ via homomorphism generated by the map $\tc{a}\mapsto a$. So $\m L^\mathsf{s}_3$ is a member of the variety $\mathcal{V}$, hence $\mathcal{V}=\mathsf{L^s_3}$. 
Therefore $\mathsf{L^s_3}$ covers $\mathsf{LB}$. 
By essentially the same argument (utilizing, instead, right-regularity), $\mathsf{R^s_3}$ covers $\mathsf{RB}$.
\end{proof}
\begin{remark}
The above proof shows that the variety $\mathsf{L^s_3}$ is subsumed by any variety properly extending $\mathsf{LB}$ that satisfies left-regularity and $x\mc y\approx x\jc y$.
\end{remark}

Lastly, as we have already discussed, the algebra $\m{WK}$ generates the variety of involutive bisemilattices, here denoted $\mathsf{WK}$, and the algebra $\m{SK}$ generates the variety of Kleene lattices, here denoted $\mathsf{SK}$. It is well known that $\mathsf{WK}$ contains $\m 2$ and $\m C_2$, and is a cover for the varieties of Boolean algebras and semilattices (see e.g., \cite{Bonzio16SL,Bonziobook}). It is also clear that $\mathsf{SK}$ covers $\mathsf{BA}$ as well. The fact that McCarthy algebras cover the variety of Boolean algebras was shown in \cref{t:McoversBA}, as well as $\MC^\mathsf{op}$ via \cref{t:MopChar}. We therefore have the following snapshot of the lower-levels for the lattice of subvarieties of {\iname}s, shown in \Cref{fig:subvars}.
\begin{figure}[ht]
    \centering
    \vspace{-1cm}
    \begin{tikzpicture}
        \node[label = {[xshift=0,yshift = -.1cm]\scriptsize \color{gray} $x\approx y$}] (T) at (0,0) {$\bullet$};
        \node (SL) at (-2,.5) {$\mathsf{SL}$};
        \node (BA)  at (2,.5)  {$\BA$};
        \node (LR) at (-6,1.5) {$\mathsf{LB}$};
        \node (RR) at (-4,1.5) {$\mathsf{RB}$};
        \node[label = {[xshift=.5cm, yshift=-.5cm]\scriptsize \color{gray}$\mathsf{uBands}$}](B) at (-7,2.5) {};
        \node (Ls) at (-6,3) {$\mathsf{L_3^s}$};
        \node (Rs) at (-4,3) {$\mathsf{R_3^s}$};
        \node (Cs) at (-1.75, 3) {$\mathsf{C_3^s}$};
        \node (WK) at (0, 3) {$\mathsf{WK}$};
        \node (SK) at (1.75, 3) {$\mathsf{SK}$};
        \node (Ml) at (4,3) {$\MC$};
        \node (Mr) at (6,3) {$\MC^\mathsf{op}$};

        \draw[thick] (T) -- (SL);
        \draw[thick] (SL) -- (LR);
        \draw[thick] (SL) -- (RR);        
        \draw[thick] (SL) -- (Cs);       
        \draw[thick] (SL) -- (WK);
        \draw[thick] (T) -- (BA);
        \draw[thick] (BA) -- (WK);
        \draw[thick] (BA) -- (SK);
        \draw[thick] (BA) -- (Ml);
        \draw[thick] (BA) -- (Mr);
        \draw[dashed,thick, gray,bend left=120]
            (SL) .. controls (-4,5.5) and (4,5.5) .. (BA) node[midway, below] {\scriptsize $xy\approx yx$};
        \draw[dashed,thick, gray,bend left=120]
            (-7,2.5) .. controls (-6,5) and (1,5) .. (SL) node[midway, below left] {\scriptsize $\miden\approx \nc{\miden}$};
        \draw[dashed,thick, gray,bend left=120]
            (-7,2.5) .. controls (-6,4.5) and (-2,4.5) .. (SL) node[midway, below left ] {\scriptsize $xy\approx x\jc y$};
        \draw[dashed, thick, black,bend left=120]
            (-7,2.5) .. controls (-7.4,2.5) and (-3,3) .. (SL) node[midway, below] {\scriptsize\color{gray} $x\approx \nc{x}$};
        \draw[dashed, thick, black]
            (-7,2.5) .. controls (-7.4,.5) and (-3,.5) .. (SL);
        \draw[black, thick, preaction={draw, white, line width=5pt}] (LR) -- (Ls);
        \draw[black, thick, preaction={draw, white, line width=5pt}]  (RR) -- (Rs);
        \draw[black, dashed, thick] (BA) .. controls (-1,4.5) and (3.4,4.5) .. (BA) node[midway, below] {\scriptsize\color{gray} $\mathsf{bIL}$};
    \end{tikzpicture}
    \caption{Some covering relations for the lattice of subvarieties of {\iname}s. The areas enclosed by the dashed lines indicate certain subvarieties; the ones with an identity represent the subvariety of {\iname}s relatively axiomatized by the corresponding identity; while $\mathsf{bIL}$ and $\mathsf{uBands}$ represent, respectively, the variety of bounded involutive lattices and (the term-equivalent) variety of unital bands.}
    \label{fig:subvars}
\end{figure}

With the exception of the varieties $\mathsf{C^s_3}$, $\mathsf{L^s_3}$, and $\mathsf{R^s_3}$, a finite equational basis has been found for each variety of {\iname}s generated by the $3$-element algebras from \cref{t:3elem}.  
To the best of our knowledge, it is an open problem finding an axiomatization for the varieties $\mathsf{C^s_3}$, $\mathsf{L^s_3}$, and $\mathsf{R^s_3}$.
So far, the only interesting identities we have observed, aside from $\miden\approx\nc{\miden}$ (and other than commutativity, left-regularity, and right-regularity, respectively) are those from \cref{t:sIDENS} and \Cref{rem:Cs3 idens} (and, of course, those identities entailed by them). 
The question remains whether or not these, and possibly some additional finite set, of identities axiomatize their respective varieties relative for {\iname}s. 
Specifically:
\begin{openproblem}
What is a finite basis, if any, for the variety $\mathsf{C^\mathsf{s}_3}$ generated by $\m C^\mathsf{s}_3$?
\end{openproblem}

\begin{openproblem}
What is a finite basis, if any, for the variety $\mathsf{L^\mathsf{s}_3}$ ($\mathsf{R^\mathsf{s}_3}$) generated by $\m L^\mathsf{s}_3$ ($\m R^\mathsf{s}_3$)?
\end{openproblem}

\section*{Acknowledgments}
The authors acknowledge the support by the Italian Ministry of Education, University and Research through the PRIN 2022 project DeKLA (``Developing Kleene Logics and their Applications'', project code: 2022SM4XC8). 
S. Bonzio also acknowledges the suppport of the PRIN Pnrr project ``Quantum Models for Logic, Computation and Natural Processes (Qm4Np)'' (cod. P2022A52CR), the Fondazione di Sardegna for the support received by the project MAPS (grant number F73C23001550007), the University of Cagliari for the support by the StartUp project ``GraphNet''. Finally, he gratefully acknowledges also the support of the INDAM GNSAGA (Gruppo Nazionale per le Strutture Algebriche, Geometriche e loro Applicazioni). 
G. St.\,John would like to thank Peter Jipsen for his assistance with and sharing of his \texttt{Prover9}/\texttt{Mace4} \cite{P9M4} repository (see \texttt{https://github.com/jipsen}), which has been an indispensable tool in this work.
\bibliographystyle{abbrv}
\bibliography{MCref}
\end{document}